\newtheorem{thm}{Theorem}[section]
\newtheorem{lem}[thm]{Lemma}
\newtheorem{cor}[thm]{Corollary}
\newtheorem{prop}[thm]{Proposition}
\theoremstyle{remark}
\newtheorem{rem}[thm]{Remark}
\numberwithin{equation}{section}
\newcommand{\cs}{{\mathcal S}}
\newcommand{\cb}{{\mathcal B}}
\newcommand{\ca}{{\mathcal A}}
\newcommand{\R}{{\mathbb{R}}}
\newcommand{\T}{{\mathbb{T}}}
\newcommand{\C}{{\mathbb{C}}}
\newcommand{\Z}{{\mathbb{Z}}}
\newcommand{\N}{{\mathbb{N}}}
\newcommand{\xbm}{(X,\mathcal B,p)}
\newcommand{\ycn}{(Y,{\mathcal C},\nu)}
\newcommand{\tfs}{T_{\varphi,{\mathcal S}}}
\newcommand{\ov}{\overline}
\newcommand{\beq}{\begin{equation}}
\newcommand{\eeq}{\end{equation}}
\newcommand{\vep}{\varepsilon}
\newcommand{\va}{\varphi}
\newcommand{\ot}{\otimes}
\newcommand{\la}{\lambda}
\begin{document}

\title{Rigidity and Non-recurrence along Sequences}
\author{V. Bergelson, A. del Junco, M. Lema\'nczyk, J. Rosenblatt}
\date{February, 2011}

\begin{abstract}
Two properties of a dynamical system, rigidity and non-recurrence,
are examined in detail. The ultimate aim is to characterize the sequences
along which these properties do or
do not occur for different classes of transformations.  The
main focus in this article is to characterize explicitly the
structural properties of sequences which can be rigidity sequences
or non-recurrent sequences for some weakly mixing dynamical system.
For ergodic transformations generally and for weakly mixing transformations
in particular there are both parallels and distinctions between the class of
rigid sequences and the class of non-recurrent sequences.
A variety of classes of sequences with various properties are considered
showing the complicated and rich structure of rigid and
non-recurrent sequences.
\end{abstract}

\maketitle

\section{\bf Introduction}
\label{intro}

Let $(X,\mathcal B,p,T)$ be a dynamical system: that is, we have
a non-atomic probability space $(X,\mathcal B,p)$ and an invertible measure-preserving transformation
$T$ of $(X,\mathcal B,p)$.   We consider here two properties of the dynamical system
$(X,\mathcal B,p,T)$, rigidity and non-recurrence. Ultimately we would
like to characterize the sequences along which these properties do, or
do not occur, for different classes of transformations. The
main focus here is to characterize which subsequences $(n_m)$ in
$\mathbb Z^+$ can be a sequence for rigidity, and which
can be a sequence for non-recurrence, for some
weakly mixing dynamical system. In the process of doing this, we will
see that there are parallels and distinctions between the class of
rigid sequences and the class of non-recurrent sequences, both for
ergodic transformations in general and for weakly mixing transformations
in particular.

The properties of rigidity and non-recurrence along a given sequence $(n_m)$
are opposites of one another. By {\em  rigidity along the
sequence} $(n_m)$ we mean that the powers $(T^{n_m})$ are converging in
the strong operator topology to the identity; that is, $\|f\circ
T^{n_m} - f\|_2 \to 0$ as $m \to \infty$, for all $f \in L_2(X,p)$.
So rigidity along $(n_m)$ means that $p(T^{n_m}A\cap A) \to p(A)$
as $m\to \infty$ for all $A \in \mathcal B$. On the other hand,
{\em non-recurrence along the sequence} $(n_m)$ means that for some
$A \in \mathcal B$ with $p(A) > 0$, we have $p(T^{n_m}A \cap A) = 0$
for all $m \ge 1$. Nonetheless, there are structural parallels between these
two properties of a sequence $(n_m)$. For example, neither property can occur
for an ergodic transformation unless
the sequence is sparse. Also, these two properties
cannot occur without the sequence $(n_m)$ having (or avoiding)  
various combinatorial or algebraic structures. 
These properties can occur simultaneously for a given transformation
if the sequences are disjoint.  For example, we are
able to use Baire category results to show that the generic
transformation $T$ is weak mixing and rigid along some sequence $(n_m)$
such that it is also non-recurrent along $(n_m-1)$.  In proving this,
one sees a connection between rigidity and non-recurrence.  The non-recurrence
along $(n_m-1)$ is created by first using rigidity to take a rigid sequence
$(n_m)$ for $T$ and a set $A, p(A) > 0$,
such that $\sum\limits_{m=1}^\infty p(T^{n_m}A\Delta A) \le \frac 1{100}p(A)$.
This allows one to prove that $T$ is non-recurrent along $(n_m-1)$ for some
subset $C$ of $TA$.  One can extend this argument somewhat and show that
for every whole number $K$, there is a weakly mixing transformation $T$ that is rigid
along a sequence $(n_m)$, such that also for some set $C$, $p(C) > 0$,
$T$ is non-recurrent for $C$ along $(n_m+k)$ for all $k \not= 0, |k| \le K$.

First, in Section~\ref{rigidity}, we discuss some generalities about
rigidity and weak mixing.  We also consider the more restrictive property
of IP-rigidity. We will see that both rigidity and IP-rigidity can be viewed as a
spectral property and therefore characterized in terms of the behavior of
the Fourier transforms $\widehat \nu$ of the positive
Borel measures $\nu$ on $\mathbb T$ that are the spectral measures
of the dynamical system. We will see that rigidity
sequences must be sparse, but later in Section~\ref{ratesofgrowth},
it is made clear that they are not necessarily very sparse.
In addition, we show that rigidity sequences cannot
have certain types of
algebraic structure for rigidity to occur even for an ergodic
transformations, let alone a weakly mixing one.

After this in Section~\ref{methods}, we prove a variety of results
about rigidity that serve to demonstrate how rich and complex is
the structure of rigid sequences.  Here is a sample of what we prove:
\medskip

\noindent {\bf a)} In Proposition~\ref{ratiogrows} we show that
if $\lim\limits_{m \to \infty} \frac {n_{m+1}}{n_m} = \infty$, then
$(n_m)$ is a rigidity sequence for some weakly mixing transformation $T$.
This result uses the Gaussian measure space construction.
Also, by a cutting and stacking construction, we construct an infinite measure preserving
rank one transformation $S$
for which $(n_m)$ is a rigidity sequence.
Under some additional assumptions on $(n_m)$, we can use the cutting and
stacking construction to produce a
weakly mixing rank one transformation $T$ on a probability space for which $(n_m)$
is a rigidity sequence.  See specifically
Proposition~\ref{specialinfrankone} and generally Section~\ref{rankone}.
\medskip

\noindent {\bf b)} In contrast, we show that sequences like $(a^m: m \ge 1)$, and $a
\in \mathbb N, a\not= 1$,
are also rigidity sequences for weakly mixing transformations.  However,
perturbations of them, like
$(a^m+p(m): m \ge 1)$ with $p \in \mathbb Z[x], p \not=0$, are never rigidity
sequences for ergodic transformations,
let alone weakly mixing transformations. See Proposition~\ref{integerratios}
and Remark~\ref{linformeg} c).
\medskip

\noindent {\bf c)} We prove a number of results in
Section~\ref{ratesofgrowth} that show that rigidity
sequences do not necessarily have to grow quickly, but rather can have
their density decreasing to zero
infinitely often slower than any given rate. One consequence may
illustrate what this tells us: we
show that there are rigidity sequences for weakly mixing transformations
that are not Sidon sets.
See Proposition~\ref{ergodicratewm} and Corollary~\ref{notSidon}.
\medskip

\noindent {\bf d)} In Section~\ref{secdisjoint}, we show that there is
no universal rigid sequence.   That is, we show that given a weakly
mixing transformation $T$ that is rigid along some sequence, there is
another weakly mixing transformation $S$ which is rigid along some other
sequence such that $T\times S$ is not rigid along any sequence.
\medskip

\noindent {\bf e)} In Section~\ref{seccocycle}, we show how cocycle
construction can be used to construct rigidity sequences
for weakly mixing transformations. One particular result is
Corollary~\ref{denominators}: if $(\frac {p_n}{q_n}:n\ge 1)$
are the convergents associated with the continued fraction expansion of
an irrational number, then
$(q_n)$ is a rigidity sequence for a weakly mixing transformation.
\medskip

We then consider non-recurrence in Section~\ref{nonrecurrence}.
We show that the sequences exhibiting non-recurrence must be
sparse and cannot have certain types of algebraic structure for there to be
non-recurrence even for ergodic transformation, let alone a
weakly mixing one.
We conjecture that any lacunary sequence is a sequence of non-recurrence
for some
weakly mixing transformation, but we have not been able to prove this
result at this time.
Here are some specific results on non-recurrence that we prove:
\medskip

\noindent {\bf a)}
It is well-known that the generic transformation $T$ is weakly mixing and
rigid.  We show that in addition, there is a
rigidity sequence $(n_m)$ for such a generic $T$, so that for any whole
number $K$, each
$(n_m+k), 0 < |k|\le K$, is a
 non-recurrent sequence for $T$. See Proposition~\ref{revise} and Remark~\ref{notrecshift}.
\medskip

\noindent {\bf b)} We observe in Proposition~\ref{Chacon} that some
weakly mixing transformations,
like Chacon's transformation, are non-recurrent along a lacunary sequence with
bounded ratios.
\medskip

\noindent {\bf c)} We also show that for any increasing sequence $(n_m)$
with $\sum\limits_{m=1}^\infty \frac {n_m}{n_{m+1}} < \infty$,
and a whole number $K$, there is
a weakly mixing
transformation $T$ and
a set $C, p(C) > 0$, such that $(n_m)$ is a rigidity sequence for $T$ and $T$
non-recurrent for $C$ along $(n_m +k)$ for all $k, 0 < |k| \le K$.
See Proposition~\ref{fastworks} and Remark~\ref{notrecshiftagain}.
\medskip

When considering both rigidity and non-recurrence of measure-preserving transformations,
there are often unitary versions of the results that are either almost identical in
statement and proof, or worth more consideration.  When possible, we will take note of
this.  See Krengel~\cite{Krengel} for a general reference on this and other aspects of
ergodic theory used in this article.

There is also a larger issue of considering both rigidity and non-recurrence for
general groups of invertible measure-preserving transformations.  This
will require a careful look at general spectral issues, including the irreducible
representations of the groups.  We plan to pursue this in a later
paper.

\section{\bf Generalities on Rigidity and Weak Mixing}
\label{rigidity}

Suppose we consider a dynamical system $(X,\mathcal B,p,T)$. Unless it
is noted otherwise, we will be assuming that $(X,\mathcal B,p)$ is a
standard Lebesgue probability space i.e. it is measure theoretically
isomorphic to $[0,1]$ in Lebesgue measure. In particular it is non-atomic
and $L_2(X,p)$ has a countable dense subset in the norm topology. We say that the
dynamical system is separable in this case. We also assume that
$T$ is an invertible measure-preserving transformation
$(X,\mathcal B,p)$.

This section provides the background information needed in this article.
First, in Section~\ref{rigidwmboth} we deal with the basic properties of rigidity and
weak mixing in order to give a general version of the well-known fact that the generic transformation
is both weakly mixing and rigid along some sequence.  Second, in Section~\ref{weakmixsec} we
look at weak mixing and aspects of it that are important to this article.  Third, in Section~\ref{rigidonly} we
consider rigidity itself in somewhat more detail.  See Furstenberg and Weiss~\cite{FurstWeiss}
and Queffelec~\cite{Queff}, especially Section 3.2.2,
for background information about rigidity as we consider it, and other types of rigidity
that have been considered by other authors.

\subsection{\bf Rigidity and Weak Mixing in General}\label{rigidwmboth}

Given an increasing sequence $(n_m)$ of integers we consider the
family $$
\ca(n_m)=\{A\in\mathcal B:\:p\left(T^{n_m}A\triangle
A\right)\to 0\}.$$ We now recall some basic and well-known facts
about $\ca(n_m)$. See Walters~\cite{Walters2} for the following
result.

\begin{prop}
$\ca(n_m)\subset\mathcal B$ is a sub-$\sigma$-algebra which is also
$T$-invariant. $\ca(n_m)$ is the maximal $\sigma$-algebra
$\ca\subset\mathcal B$ such that
$$
T^{n_m}|_{\ca}\to Id|_{\ca}\;\;\mbox{as}\;\;m\to\infty.$$
Moreover \beq\label{ww1} L_2(X,\ca(n_m),p)=\{f\in L_2(X,\mathcal
B,p):\:f\circ
T^{n_m}\to f\;\;\mbox{in}\;\;L_2(X,\mathcal B,p)\}.\eeq
\end{prop}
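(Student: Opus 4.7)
The plan is to establish the $\sigma$-algebra properties of $\ca(n_m)$ first, then prove the $L_2$-characterization (\ref{ww1}), and deduce maximality as a corollary of that characterization.

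First I would check that $\ca(n_m)$ is a sub-$\sigma$-algebra stable under $T$ by routine symmetric-difference manipulations. Closure under complements uses $T^{n_m}A^c\triangle A^c=T^{n_m}A\triangle A$; closure under finite unions follows from the elementary inclusion $(A\cup B)\triangle(C\cup D)\subset(A\triangle C)\cup(B\triangle D)$. To pass to countable unions I would approximate $A=\bigcup_j A_j$ by a finite partial union $\bigcup_{j\le N}A_j$ up to measure $\vep$ and then apply the finite-union closure. The $T$-invariance is immediate because $p$ is $T$-invariant, so $p(T^{n_m}(TA)\triangle TA)=p(T^{n_m}A\triangle A)$, and similarly for $T^{-1}$.

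The main content is (\ref{ww1}). The inclusion $L_2(X,\ca(n_m),p)\subset\{f:f\circ T^{n_m}\to f\}$ is soft: the right-hand side is a closed linear subspace of $L_2$, and for $A\in\ca(n_m)$ one has $\|\un_A\circ T^{n_m}-\un_A\|_2^2=p(T^{n_m}A\triangle A)\to 0$, so the subspace contains all indicators of $\ca(n_m)$-measurable sets, hence all $\ca(n_m)$-measurable simple functions, which are $L_2$-dense. The reverse inclusion is the step I expect to be the main obstacle. Given real-valued $f\in L_2$ with $f\circ T^{n_m}\to f$ in $L_2$, I would show that the level sets $\{f>c\}$ lie in $\ca(n_m)$ whenever $p(f=c)=0$, via the elementary inclusion
$$\{f\circ T^{n_m}>c\}\triangle\{f>c\}\subset\{|f\circ T^{n_m}-f|>\delta\}\cup\{|f-c|\le\delta\}.$$
By Chebyshev and $T$-invariance of $p$ this gives $p(T^{n_m}\{f>c\}\triangle\{f>c\})\le\delta^{-2}\|f\circ T^{n_m}-f\|_2^2+p(|f-c|\le\delta)$; letting $m\to\infty$ and then $\delta\to 0$ closes the argument. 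Since all but countably many $c$ are admissible, the level sets generate $\sigma(f)$ modulo nullsets, forcing $f$ to be $\ca(n_m)$-measurable; the complex case reduces to real and imaginary parts.

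Maximality then drops out of (\ref{ww1}) for free. If $\ca\subset\mathcal B$ is any sub-$\sigma$-algebra along which $T^{n_m}|_{\ca}\to Id|_{\ca}$ in the operator sense (i.e.\ $g\circ T^{n_m}\to g$ in $L_2$ for every $g\in L_2(X,\ca,p)$), then in particular $\un_A\circ T^{n_m}\to\un_A$ for every $A\in\ca$, which gives $p(T^{n_m}A\triangle A)\to 0$, i.e.\ $A\in\ca(n_m)$. Hence $\ca\subset\ca(n_m)$.
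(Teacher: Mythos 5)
Your proof is correct. The paper itself does not supply a proof of this proposition: it cites Walters for the result and, in the remark that follows, sketches a different route, namely observing that $\{f\in L_\infty(X,p):\ \|f\circ T^{n_m}-f\|_2\to 0\}$ is a (conjugation-closed, $T$-invariant) subalgebra of $L_\infty$ and invoking the standard correspondence between such subalgebras and factors to produce the $\sigma$-algebra $\ca(n_m)$. Your argument replaces that soft functional-analytic step by a direct measure-theoretic one: the only genuinely nontrivial direction is that $f\circ T^{n_m}\to f$ in $L_2$ forces $f$ to be $\ca(n_m)$-measurable, and your level-set inclusion $\{f\circ T^{n_m}>c\}\triangle\{f>c\}\subset\{|f\circ T^{n_m}-f|>\delta\}\cup\{|f-c|\le\delta\}$ combined with Chebyshev and the fact that $p(f=c)=0$ for all but countably many $c$ handles it cleanly (note that $\ca(n_m)$ contains all $p$-null sets, so measurability modulo null sets suffices). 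What the algebra/factor approach buys is brevity and the automatic identification of $\ca(n_m)$ with a genuine factor of the system; what your approach buys is a self-contained, elementary proof that simultaneously delivers the $L_2$-identity (\ref{ww1}) and makes the maximality statement an immediate corollary rather than a separate verification. One small point worth making explicit when you pass from finite to countable unions is the three-term triangle inequality $p(T^{n_m}A\triangle A)\le 2\,p(A\triangle A_N)+p(T^{n_m}A_N\triangle A_N)$, which is what lets you take $\limsup_m$ before sending $N\to\infty$.
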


\begin{rem} An approach to the above result different than in ~\cite{Walters2}
begins by observing that $\{f\in L_\infty(X,p):
\|f\circ T^{n_m} - f\|_2 \to 0\}$ is an algebra. So there is a corresponding
factor map of $(X,\mathcal B,p,T)$ for which there is an associated
$T$ \-invariant sub-$\sigma$-algebra, namely
$\ca(n_m)\subset\mathcal B$.
\end{rem}

If $\ca(n_m)=\mathcal B$ then one says that $(n_m)$ is a {\em
rigidity sequence} for $(X,\mathcal B,p,T)$. Systems possessing rigidity
sequences are called {\em rigid}. The
fact that $(n_m)$ is a rigidity sequence for $T$ is a {\em spectral property}; that is, it is a
unitary invariant of the associated Koopman operator $U_T$ on
$L_2(X,p)$ given by the formula $U_T(f)=f\circ T$. The following
discussion should make this clear.

First, recall some basic notions of spectral theory (see e.g.
\cite{CFS}, \cite{Ka-Th}, \cite{Pa}).  For each $f\in L_2(X,p)$, the
function $\rho(n) = \langle f\circ T^n,f\rangle$ is a positive-definite
function and hence, by the Herglotz Theorem, is the Fourier
transform of a positive Borel measure on the circle $\mathbb T$.  So
for each $f \in L_2(X,p)$, there is a unique positive Borel
measure $\nu_f^T$ on $\mathbb T$, called the
{\em spectral measure} for $T$ corresponding to $f$
which is determined by $\widehat {\nu_f^T}(n) = \langle f\circ
T^{n},f\rangle$ for all $n \in \mathbb Z$. Spectral measures are
non-negative and have $\nu_f^T(\mathbb T) =\|f\|_2^2$.
We will also need to use the adjoint $\nu^*$ given by $\nu^*(E) = \overline
{\nu(E^{-1})}$ for all Borel sets $E\subset \mathbb T$. The
adjoint has $\widehat {\nu^*}(n) =\overline {\widehat {\nu}(-n)}$
for all $n \in \mathbb Z$.

Absolute continuity of measures is important
here: given two positive Borel measures $\nu_1$ and $\nu_2$ on $\mathbb T$,
we say $\nu_1$ is absolutely continuous with respect to $\nu_2$,
denoted by $\nu_1 \ll \nu_2$, if $\nu_1(E) = 0$ for all Borel sets
$E$ such that $\nu_2(E) = 0$. Now, among all spectral
measures there exist measures $\nu_F^T$ such that all other
spectral measures are absolutely continuous with respect to
$\nu_F^T$. Any one of these is called a {\em maximal spectral measure}
of $T$.  These measures are all mutually
absolutely continuous with respect to one another.
The equivalence class of the maximal spectral measures is denoted by $\nu^T$.
By abuse of notation, we refer to $\nu^T$ as a measure too.
Recall that the type of a finite positive measure (e.g. whether the
measure is singular, absolutely continuous with respect to Lebesgue measure, etc.)
is a property of the equivalence
class of all finite positive measures $\omega$ such that $\omega \ll \nu$
and $\nu \ll \omega$. The type of $\nu^T$ (that is, of a maximal spectral measure $\nu^T_F$)
has a special role in the structure of the transformation. For this
reason the type of $\nu^T$ is called the {\em
maximal spectral type} of $T$. For example, rigid transformations must
have singular maximal spectral type; see Remark~\ref{Rokhlinsingular}. Also, Bernoulli transformations
must have Lebesgue type i.e. their maximal spectral measures are
equivalent to
Lebesgue measure. In general, a strongly mixing transformation does not need to be
of Lebesgue type.  It could be of singular type (this occurs when every
maximal spectral measure is singular but yet has the Fourier transform
tending to zero at infinity).

For a given sequence $(n_m)$, a transformation $T$
and a function $f \in L_2(X,p)$, we say {\em $(n_m)$ is
a rigidity sequence of $T$ for $f$} if
$f\circ T^{n_m}\to f$ in $L_2$-norm. Recall that
$\widehat{\nu^T_f}(n_m)=\langle f\circ
T^{n_m},f\rangle$.

\begin{prop} \label{rigidfacts} Fix the transformation $T$. The
following are equivalent for $f \in L_2(X,p)$:
\begin{enumerate}
\item The sequence $(n_m)$ is
a rigidity sequence for the function $f$.
\item $\langle f\circ T^{n_m},f\rangle=\int_X f\circ T^{n_m}\cdot
\overline{f}\,dp\to \|f\|_2^2$.
\item $\widehat{\nu^T_f}(n_m)\to\|f\|_2^2$.
\item $z^{n_m} \to 1$ in
$L_2(\mathbb T,\nu_f^T)$.
\item $z^{n_m} \to 1$ in measure with respect to
$\nu_f^T$.
\end{enumerate}
\end{prop}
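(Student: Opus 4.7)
The plan is to run a short cycle of implications. The bridge between the dynamical statements $(1)$--$(3)$ on $L_2(X,p)$ and the function-theoretic statements $(4)$--$(5)$ on $L_2(\mathbb T,\nu_f^T)$ is the defining relation $\widehat{\nu_f^T}(n)=\langle f\circ T^{n},f\rangle$ together with $\nu_f^T(\mathbb T)=\|f\|_2^2$, so essentially everything reduces to one calculation done in two different Hilbert spaces.

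First I would establish $(1)\Leftrightarrow(2)$. Since $U_T$ is unitary, $\|f\circ T^{n_m}\|_2=\|f\|_2$, so
$$\|f\circ T^{n_m}-f\|_2^2 \;=\; 2\|f\|_2^2 \;-\; 2\operatorname{Re}\langle f\circ T^{n_m},f\rangle.$$
By Cauchy--Schwarz, $|\langle f\circ T^{n_m},f\rangle|\le \|f\|_2^2$, so the real part tending to $\|f\|_2^2$ automatically forces the imaginary part to zero. Hence the $L_2$-rigidity condition $(1)$ is equivalent to the inner-product condition $(2)$. The equivalence $(2)\Leftrightarrow(3)$ is then just the defining formula for the spectral measure $\nu_f^T$.

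Next, for $(3)\Leftrightarrow(4)$, I would use $|z^{n_m}|=1$ to write
$$\|z^{n_m}-1\|_{L_2(\nu_f^T)}^2 \;=\; \int_{\mathbb T}\bigl(z^{n_m}-1\bigr)\overline{\bigl(z^{n_m}-1\bigr)}\,d\nu_f^T \;=\; 2\nu_f^T(\mathbb T) \;-\; 2\operatorname{Re}\,\widehat{\nu_f^T}(n_m).$$
The same Cauchy--Schwarz observation (now applied inside $L_2(\mathbb T,\nu_f^T)$) collapses real-part convergence to convergence of $\widehat{\nu_f^T}(n_m)$ itself, giving the equivalence. Finally $(4)\Leftrightarrow(5)$ uses the uniform bound $|z^{n_m}-1|\le 2$: Chebyshev yields convergence in measure from $L_2$-convergence, and dominated convergence reverses the implication.

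There is no real obstacle here; all five conditions are, at bottom, the single assertion that $z^{n_m}\to 1$ with respect to the spectral measure of $f$. The only care needed is bookkeeping around real versus complex parts of the inner products, which the Cauchy--Schwarz pinch handles uniformly in both Hilbert spaces.
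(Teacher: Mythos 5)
Your proof is correct and follows essentially the same route as the paper's: the identity $\|f\circ T^{n_m}-f\|_2^2=2\|f\|_2^2-2\operatorname{Re}\langle f\circ T^{n_m},f\rangle$ with the Cauchy--Schwarz pinch for $(1)\Leftrightarrow(2)$, the definition of $\nu_f^T$ for $(2)\Leftrightarrow(3)$, the parallel computation of $\int|z^{n_m}-1|^2\,d\nu_f^T$ for $(3)\Leftrightarrow(4)$, and the uniform bound $|z^{n_m}-1|\le 2$ on the finite measure for $(4)\Leftrightarrow(5)$. No issues.
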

\begin{proof} We have $\|f \circ T^{n_m} - f\|_2^2 =2\|f\|_2^2 -
2\text{Re}\langle f\circ T^{n_m},f\rangle$.
Since $|\langle f\circ T^{n_m},f\rangle| \le \|f\|_2^2$, we see that (1)
is equivalent to (2). Now
(2) is equivalent to (3) by the definition of the spectral measure
$\nu_f^T$. We also have
$\int |z^{n_m}-1|^2\, d\nu_f^T(z) = 2\|f\|_2^2 - 2\text{Re}(\widehat
{\nu_f^T}(n_m))$. Since
$|\widehat {\nu_f^T}(n_m)|\le \|f\|_2^2$, we see that (3) is equivalent
to (4). It is clear
that (4) is equivalent to (5) because $|1-z^{n_m}| \le 2$ and
$\nu_f^T$ is a finite, positive measure.
\end{proof}

\begin{rem} This result is really a fact about a unitary operator
$U$ on a Hilbert space $H$.  That is, a sequence $(n_m)$ and
vector $v \in H$ satisfy
$\lim\limits_{m \to \infty} ||U^{n_m} v - v \|_H = 0$ if and only
if the spectral measure $\nu_v^U$ determined by $\widehat {\nu_v^U}(k)
= \langle U^kv,v\rangle $ for all $k \in \mathbb Z$ has the property that
$\lim\limits_{m \to \infty} \widehat {\nu_v^U}(n_m) = \|v\|_H^2$.
\end{rem}

Proposition ~\ref{rigidfacts} shows that
if $(n_m)$ is a rigidity sequence for $T$ for
a given function $F$, then for any spectral measure
$\nu_f^T \ll \nu_F^T$, we would also have $z^{n_m} \to 1$
in measure with respect to $\nu_f^T$. Hence, $(n_m)$ would
be a rigidity sequence for $T$ for the function $f$ too. It
follows then easily that $T$ is rigid and has a rigidity
sequence $(n_m)$ if and only if $(n_m)$ is a rigidity sequence for
$F$ where $\nu_F^T$ is a maximal spectral measure for
$T$.
\begin{cor}\label{thouvenot}
$T$ is rigid if and only if for each
function $f\in L_2(X,p)$ there exists $(n_m)=(n_m(f))$ such that
$f\circ T^{n_m}\to f$ in $L_2(X,p)$.
\end{cor}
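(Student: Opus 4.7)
The plan is to handle the forward implication trivially: if $T$ is rigid along some sequence $(n_m)$, then this same $(n_m)$ serves as a rigidity sequence for every $f \in L_2(X,p)$, so in particular the hypothesis holds.

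For the converse, the key move is to pass to a single function of maximal spectral type and then spread rigidity to all other functions via absolute continuity of spectral measures. Since the system is assumed separable, the Koopman operator $U_T$ on $L_2(X,p)$ admits a vector $F \in L_2(X,p)$ whose spectral measure $\nu_F^T$ is a maximal spectral measure, i.e.\ $\nu_f^T \ll \nu_F^T$ for every $f \in L_2(X,p)$. This is the standard cyclic-vector construction for unitary operators on a separable Hilbert space, and it is exactly the point already flagged in the paragraph preceding the corollary.

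Now I would apply the hypothesis to this single $F$ to obtain a sequence $(n_m) = (n_m(F))$ with $F\circ T^{n_m}\to F$ in $L_2(X,p)$. By the equivalence of (1) and (5) in Proposition~\ref{rigidfacts}, this is the same as $z^{n_m}\to 1$ in $\nu_F^T$-measure. Given an arbitrary $f \in L_2(X,p)$, the absolute continuity $\nu_f^T \ll \nu_F^T$ immediately transfers convergence in measure, so $z^{n_m}\to 1$ in $\nu_f^T$-measure as well. Applying the reverse implication (5)$\Rightarrow$(1) of Proposition~\ref{rigidfacts} gives $f\circ T^{n_m}\to f$ in $L_2(X,p)$. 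Since $f$ was arbitrary, $(n_m)$ is a rigidity sequence for $T$, completing the proof.

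There is no real obstacle here; the only ingredient beyond Proposition~\ref{rigidfacts} is the existence of a vector of maximal spectral type, which is where separability of $L_2(X,p)$ is used. Morally, the corollary is a direct repackaging of the remark made just before its statement, and the argument essentially writes that remark out.
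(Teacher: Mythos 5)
Your argument is correct and is essentially the paper's own: the text immediately preceding the corollary carries out exactly this reduction to a function $F$ of maximal spectral type, transferring $z^{n_m}\to 1$ in measure from $\nu_F^T$ to every $\nu_f^T\ll\nu_F^T$ and invoking the equivalences of Proposition~\ref{rigidfacts}. Nothing further is needed.
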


\begin{rem}  It is clear that an argument like this works equally
well for a unitary transformation $U$ of a separable Hilbert space
$H$.  That is, there is one sequence $(n_m)$ such that for all $v \in H$,
$\|U^{n_m}v - v\|_H \to 0$ as $m \to \infty$ if and only if
for every vector $v \in H$, there exists a
sequence $(n_m)$ such that
$\|U^{n_m}v - v\|_H \to 0$ as $m \to \infty$
\end{rem}

\begin{rem} J.-P.
Thouvenot was the first to observe that $T$ is rigid if and only if
for each $f \in L_2(X,p)$ (or just for each characteristic
function $f = 1_A, A \in \mathcal B$), there exists $(n_m)$ depending on
$f$ such
that $\|f\circ T^{n_m} - f\|_2 \to 0$ as $m \to \infty$.
There are a number of different ways to prove this. We have
given one such argument above. Another argument
would use the characterization up to isomorphism of
unitary operators as multiplication operators.  Here is
an interesting approach via Krieger's Generator Theorem;
see Krieger~\cite{Krieger}.
It is sufficient to prove rigidity holds assuming that one has the weaker
condition of there being rigidity sequences for each characteristic
function. Suppose that an automorphism $T$ has the property
that for each set
$A\in\mathcal B$ there exists $(n_m)=(n_m(A))$ such that
$p\left(T^{-n_m}A\triangle A\right)\to 0$.
Then all spectral measures of functions of the
form $1_A$, $A\in\mathcal B$ are singular, and since the family of
such functions is linearly dense, the maximal spectral type of $T$
is singular. It follows that $T$ has zero entropy;
see Remark~\ref{Rokhlinsingular}
for an explanation of this point.   Hence, by Krieger's
Generator Theorem, there exists a two element partition
$P=\{A,A^c\}$ which generates $\mathcal B$. Now, let
$(n_m)=(n_m(A))$ and notice that for each $k\geq1$ and for each
$B\in\bigvee_{i=0}^{k-1}T^iP$ we have $p(T^{n_m}B\triangle
B)\to0$. Hence by approximating the $L_2(X,p)$ functions by
simple functions, $(n_m)$ is a rigidity sequence for $T$.
\end{rem}

\begin{rem}\label{Rokhlinsingular}
 From Proposition~\ref{rigidfacts}, we see that a maximal spectral
measure $\nu^T$ of a rigid transformation is
a {\em Dirichlet measure}.  This means that for some increasing sequence
$(n_m)$, we have $\gamma^{n_m} \to 1$ in measure with respect to $\nu^T$
as $m \to \infty$.  Hence, as in Proposition~\ref{rigidfacts},
we have $\widehat{\nu^T}(n_m)\to \nu^T(\mathbb T)$ as $m \to \infty$. A measure
with this property is also sometimes called a {\em rigid measure}.  Note that a
measure absolutely continuous with respect to a Dirichlet measure
is a Dirichlet measure. So by the Riemann-Lebesgue Lemma,
there is no
non-zero positive measure $\nu$ which is absolutely continuous with
respect to Lebesgue measure
such that $\nu \ll\nu_f^T$ for a non-zero spectral measure $\nu_f^T$ of a
rigid transformation.
Therefore, for a rigid transformation, all spectral measures, and
$\nu^T$ itself, are Dirichlet measures and hence
singular measures.  So $T$ has singular maximal spectral type.
Rokhlin shows in his
classical paper ~\cite{Rokhlin} that if $T$ has positive
entropy, then for every maximal spectral measure $\nu_F^T$, there is a
non-zero spectral
measure $\nu_f^T \ll \nu_F^T$ that is equivalent
to (mutually absolutely continuous with respect to) Lebesgue measure.
Therefore, all rigid transformations have zero entropy.
\end{rem}

One can often use Baire category arguments to distinguish the
behavior of transformations. For this we use the Polish group
$Aut(X,\mathcal B,p)$ of invertible measure-preserving transformations
on $(X,\mathcal B,p)$, with the topology of strong operator convergence.  That
is, a sequence $(S_n)$ in $Aut(X,\mathcal B,p)$ converges to $S \in
Aut(X,\mathcal B,p)$ if and only if $\|f\circ S_n- f\circ S\|_2 \to 0$
as $n\to \infty$ for all $f \in L_2(x,p)$.  By a {\em generic property}, we mean that the
property holds on at least a dense $G_\delta$ subset of $Aut(X,\mathcal
B,p)$, and any set containing a dense $G_\delta$
set is called a {\em generic set}.  So a generic property is one that holds on a set whose
complement is first category.
For example, it is well-known that the generic dynamical system is
weakly mixing. See Halmos~\cite{Halmos} where this was used to give
a Baire category argument for the existence of weakly mixing transformations
that are not strongly mixing. Also, the generic transformation has
a rigidity sequence. See Katok and Stepin~\cite{Ka-St} and
Walters~\cite{Walters2}.
Hence, the generic transformation is
weakly mixing, rigid, and has zero entropy (see Remark~\ref{Rokhlinsingular}).
We will show this in a slightly more
general setting.

First, in order to see that a generic transformation is rigid we will
prove the following stronger result. This result may be well-known,
but we provide a proof because we could not find
a good reference for it. In this proof, and then later in
Section~\ref{rankone},
{\em rank one transformations} arise. These are transformations obtained
by cutting and stacking where at each inductive stage only one Rokhlin
tower is used.  See Nadkarni~\cite{Nadkarni} and
Ferenczi~\cite{Ferenczi} for background information about rank one transformations.

\begin{prop}\label{folklore1} Given an increasing sequence $(n_m)$ of
natural
numbers, let ${\mathcal G}_{(n_m)}$ be the set consisting of all
$S\in Aut\xbm$ such that
$S^{n_{m_k}}\to Id$ in the strong operator topology
for some subsequence $(n_{m_k})$ of
$(n_m)$. Then ${\mathcal G}_{(n_m)}$ is a generic subset of
$Aut\xbm$.
\end{prop}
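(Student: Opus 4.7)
The plan is to exhibit $\mathcal G_{(n_m)}$ as a dense $G_\delta$ subset of $Aut\xbm$. Fix a countable family $(A_i)_{i\ge1}$ that is dense in $\mathcal B$ under the pseudo-metric $d(A,B)=p(A\triangle B)$, and for each triple of positive integers $j,k,N$ let
\[
W_{j,k,N}=\bigcup_{m\ge N}\bigl\{S\in Aut\xbm:\ p(S^{n_m}A_i\triangle A_i)<1/k\ \text{for every}\ i\le j\bigr\}.
\]
Each set inside the union is open in the strong operator topology, so $W_{j,k,N}$ is open. A standard diagonal extraction shows that $S\in\bigcap_{j,k,N}W_{j,k,N}$ if and only if some subsequence $(n_{m_r})$ of $(n_m)$ satisfies $S^{n_{m_r}}\to Id$ strongly (at stage $r$ pick $m_r$ so that $p(S^{n_{m_r}}A_i\triangle A_i)<1/r$ for all $i\le r$), hence $\mathcal G_{(n_m)}=\bigcap_{j,k,N}W_{j,k,N}$ is $G_\delta$.

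The content of the proposition is the density of each $W_{j,k,N}$. Fix $T\in Aut\xbm$ and a basic neighborhood $V$ of $T$ cut out by some $\epsilon>0$ and some finite index $j'\ge j$ using the pseudo-metric on $Aut\xbm$. Since aperiodic transformations are dense in $Aut\xbm$, I may assume $T$ is aperiodic. Pick $m\ge N$ with $h:=n_m>4/\epsilon$, and apply Rokhlin's lemma to produce a base $B\in\mathcal B$ such that $B,TB,\dots,T^{h-1}B$ are pairwise disjoint and the complement $F^c$ of their union $F$ satisfies $p(F^c)<\min(\epsilon/4,\,1/(2k))$. Define $S\in Aut\xbm$ to coincide with $T$ on $B\cup TB\cup\cdots\cup T^{h-2}B$, to send $T^{h-1}y\mapsto y$ for each $y\in B$, and to act as the identity on $F^c$; this is measure-preserving because $S$ cyclically permutes the $h$ tower levels, which have equal mass $p(F)/h$. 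Then $S^h=Id$ on $F$, giving $p(S^{n_m}A_i\triangle A_i)\le 2p(F^c)<1/k$, so $S\in W_{j,k,N}$. Moreover $S$ disagrees with $T$ only on $T^{h-1}B\cup F^c$, a set of measure at most $1/h+p(F^c)<\epsilon/2$; a short computation using measure-preservation shows that $p(S^{\pm1}A\triangle T^{\pm1}A)<\epsilon/2$ for every $A\in\mathcal B$, which places $S$ inside $V$.

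The sole non-trivial ingredient is Rokhlin's lemma, which supplies arbitrarily tall towers for any aperiodic $T$; once this is in hand, the construction of $S$ is essentially forced. The main obstacle I anticipate is purely bookkeeping: placing $S$ simultaneously in $V$ and in $W_{j,k,N}$ requires $F^c$ and $1/h$ to both be small relative to the two independent parameters $\epsilon$ and $1/k$, which one arranges by choosing $m\ge N$ large first and then invoking Rokhlin's lemma with a sufficiently small error set. The Baire category theorem applied to $\mathcal G_{(n_m)}=\bigcap_{j,k,N}W_{j,k,N}$ then delivers the conclusion.
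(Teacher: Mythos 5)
Your proof is correct and follows essentially the same Baire-category scheme as the paper's: realize $\mathcal G_{(n_m)}$ as a countable intersection of open sets and prove density by cyclically permuting the levels of a Rokhlin-type tower tied to some $n_m$ with $m\ge N$, so that $S^{n_m}=Id$ while $S$ differs from the given transformation only on a set of measure $O(1/n_m)$ plus the residual set. The only cosmetic difference is that the paper perturbs a rank-one approximant by subdividing a tall tower into subtowers of height $n_q$, whereas you perturb an aperiodic approximant on a single Rokhlin tower of height exactly $n_m$ and bound the metric displacement by the measure of the disagreement set; both give the same error estimate, so this is an implementation detail rather than a different route.
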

\begin{proof}
We can obtain a metric $d$ for $Aut\xbm$ that is
compatible with the strong topology as follows.  Take
$\{A_i:\:i\geq1\}$ which is a dense subset
in $(\mathcal B,p)$.  Let $d$ be given by
\beq\label{metric}d(R,S)=\sum_{i=1}^\infty\frac1{2^i}
(p\left(RA_i\triangle SA_i\right)+p\left(R^{-1}A_i\triangle
S^{-1}A_i\right))\eeq
It follows that given $n\in\Z$ and $\vep>0$ the set
$$
\{S\in Aut\xbm:\:d\left(S^n,Id\right)<\vep\}$$ is open, and
therefore the set
$$
A_{k,\vep}:=\{S\in Aut\xbm:\: d\left(S^{n_q},Id\right)<\vep \
\text{for some}\ q \ge k\}$$ is open as well.
Also, $A_{k,\vep}$  is dense.
Indeed, given $A_1,\ldots,A_m$ and $R\in Aut\xbm$, we can construct $S\in
A_{k,\vep}$ so that \beq\label{p1} p(SA_i\triangle RA_i)\;\mbox{is
as close to zero as we like,}\eeq and also for some $q\geq k$
\beq\label{p2}p(S^{n_q}A_i\triangle A_i)\;\mbox{is as close to
zero as we like}\eeq for $i=1,\ldots,m$.  Actually, as needed,
the argument below will show the
same facts hold if we replace $R$ and $S$ by their inverses. With no loss of
generality, we can assume that $R$ is of rank one as this family
is dense in $Aut\xbm$; see ~\cite{Nadkarni} and ~\cite{Ferenczi}.
This allows us to approximate
the sets $A_1,\ldots,A_m$ by unions of levels of large Rokhlin towers for $R$. Now fix
$n_q$.  We will see that the only condition on $n_q$ will be that
$n_q\to\infty$.  Let $h_s$ be the height of a Rokhlin tower for
$R$ so that the levels of the tower can be used to
 approximate the sets $A_1,\ldots,A_m$. We also assume
 without loss of generality that $h_s$ is a (large) multiple of
 $n_q$.  We now divide
this tower into consecutive subtowers (without changing the
levels) of height $n_q$.  This is done by taking the first $n_q$ levels, then
the next $n_q$ levels, etc.  Then define $S$ in the following way:
inside each subtower of height $n_q$, the automorphism $S$ acts as
$R$ except on the top level of the subtower on which we require that $S$ sends
this level into the bottom level of that subtower.  For example,
 for the first
subtower, the first level is sent into the second, the second to the third,
and so on, but the $n_q$-th level is mapped to the first.  This same
pattern is used on the rest of the subtowers.  Now, if $h_s/n_q$ is sufficiently
large, then we can see that we can well approximate each $A_i$ by a union of levels
of some of the $h_s/n_q$ Rokhlin subtowers (of height
$n_q$).   Taking this approach, the errors in
(\ref{p1}),~(\ref{p2}) come only from the fact that these subtowers are
cyclically permutated by $S$. The total error is hence of order
$$
\frac{h_s}{n_q}\cdot\frac1{h_s}=\frac1{n_q}.$$
Hence, to get the approximations
 we need, we only need to
know that $n_q\to \infty$.
Now take $0<\vep_l\to0$ and notice that the set
$$
\bigcap_{l=1}^\infty\bigcap_{k=1}^\infty A_{k,\vep_l}
$$
is included in ${\mathcal G}_{(n_m)}$.
\end{proof}

\begin{rem} This argument should be compared with the beginning of
the proof of Proposition~\ref{revise}.  Also, see the end of
Section~\ref{contfrac} for another
approach using continued fractions that gives generic results.
\end{rem}

\begin{rem}\label{metric1} We also notice that the metric $d$
defined in~(\ref{metric}) has the following properties:
$d(R,S)=d(R^{-1},S^{-1})$ and $d(TR,TS)=d(R,S)$ once $T$ commutes
with $R$ and $S$. Denote $\|T\|=d(T,Id)$. Then
$$
\|T^{n+m}\|=d(T^{n},T^{-m}\|\leq
d(T^n,Id)+d(Id,T^{-m})=\|T^n\|+\|T^m\|.$$
\end{rem}

\vspace{1ex}

\subsection{Weak Mixing Specifically} \label{weakmixsec}
Now we consider weakly mixing transformations.  Recall
that $T$ is {\em weakly mixing} if and only if for all $A,B \in \mathcal B$,
we have
$$\lim\limits_{N\to\infty}
\frac 1N\sum\limits_{n=1}^N |p(T^nA\cap B) - p(A)p(B)| = 0.$$
So $T$ is weakly mixing if and only if
for all  mean-zero $f \in L_2(X,p)$, we have
$$\lim\limits_{N\to \infty} \frac 1N
\sum\limits_{n=1}^N |\langle f\circ T^{n},f\rangle| = 0.$$
Now recall Wiener's Lemma: given a positive Borel measure
$\nu$ on $\T$ we have
$$\lim\limits_{N \to \infty} \frac 1{2N+1}
\sum\limits_{n=-N}^N |\widehat{\nu}(n)|^2 =
\sum_{\gamma \in\T}\nu^2(\{\gamma\}).$$
It follows that $\nu$ is continuous (i.e. has no point masses)
if and only if $\lim\limits_{N \to \infty} \frac 1{2N+1}
\sum\limits_{n=-N}^N |\widehat{\nu}(n)|^2 =0$. The latter
condition is well-known to be equivalent to the fact that
$\widehat{\nu}(n)$ tends to zero along a subsequence of
density $1$. Here we say that a set $S \subset \mathbb N$
of density one if
\[\lim\limits_{N\to\infty} \frac 1N\#(S\cap
\{1,2,\ldots,N\}) = 1.\]

So a transformation $T$ is weakly mixing if and only if $\nu_f^T$ is
continuous for each $f \in L_2(X,p)$ which is mean-zero, that is
(by Wiener's Lemma) we have $\langle f\circ T^{n},f\rangle$ tends
to zero along a sequence of density one. Denote a given such
density one sequence by $\mathcal N^T_f$. As $f$ changes, this
sequence generally might need to change. However, it is a well-known
fact that because $(X,\mathcal B,p)$ is separable,
we can choose a subsequence of density~$1$ which works for
all $L_2$-functions. We give a proof of this fact for the reader's
convenience.  This proof is different than the one in
Petersen~\cite{Petersen}.  See also Jones~\cite{LeeJones}.

\begin{prop} \label{wmone} Assume that $T$
 is weakly mixing. Then
there is a sequence $(n_m)$ in $\mathbb Z^+$ of density one
such that for all mean-zero $f \in L_2(X,p)$, one has
$\lim\limits_{m \to \infty} \langle f\circ T^{n_m},f\rangle = 0$.
\end{prop}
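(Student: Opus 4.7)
The plan is to reduce the claim to countably many functions using separability of $L_2(X,p)$, apply Wiener's Lemma to each individually to obtain density-one subsequences, diagonalize these into a single density-one subsequence $S$, and finally extend the convergence to all mean-zero $f$ by $L_2$-approximation.

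First, because $L_2(X,p)$ is separable I will fix a countable set $\{f_k:k\ge1\}$ which is dense in the subspace of mean-zero functions. For each $k$, weak mixing of $T$ forces $\nu^T_{f_k}$ to be continuous, so Wiener's Lemma gives
\[\lim_{N\to\infty}\frac1N\sum_{n=1}^N|\widehat{\nu^T_{f_k}}(n)|^2=0,\]
and hence a set $S_k\subset\Z^+$ of density one along which $\langle f_k\circ T^n,f_k\rangle=\widehat{\nu^T_{f_k}}(n)\to0$.

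The main obstacle is to merge the $S_k$ (which depend on $k$) into a single density-one set that works for every $f_k$ simultaneously. I will use the usual diagonal device. Set $B_k=\bigcup_{j\le k}(\Z^+\setminus S_j)$, so $B_1\subset B_2\subset\cdots$ and each $B_k$ has density zero. Choose $N_k\uparrow\infty$ with $|B_k\cap[1,N]|/N<1/k$ whenever $N\ge N_k$, and define
\[B=\bigcup_{k\ge1}\bigl(B_k\cap[N_k,N_{k+1})\bigr),\qquad S=\Z^+\setminus B.\]
For $N\in[N_k,N_{k+1})$ any $n\in B\cap[1,N]$ lies in some $B_j\cap[N_j,N_{j+1})$ with $j\le k$, so $n\in B_j\subset B_k$; hence $B\cap[1,N]\subset B_k\cap[1,N]$ and $|B\cap[1,N]|/N<1/k$, proving $S$ has density one. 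Moreover, if $n\in S$ and $n\ge N_k$, picking $j\ge k$ with $n\in[N_j,N_{j+1})$ gives $n\notin B_j\supseteq B_k$, hence $n\in S_k$. So $S\cap[N_k,\infty)\subset S_k$ for every $k$, and in particular $\langle f_k\circ T^n,f_k\rangle\to0$ along $n\in S$ for every $k$.

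For arbitrary mean-zero $f\in L_2(X,p)$ and $\vep>0$ I will pick $f_k$ with $\|f-f_k\|_2<\vep$; the Cauchy--Schwarz bound
\[\bigl|\langle f\circ T^n,f\rangle-\langle f_k\circ T^n,f_k\rangle\bigr|\le\|f-f_k\|_2\bigl(\|f\|_2+\|f_k\|_2\bigr)\le\vep(2\|f\|_2+\vep),\]
combined with $\langle f_k\circ T^n,f_k\rangle\to0$ along $S$, yields $\limsup_{n\in S}|\langle f\circ T^n,f\rangle|\le\vep(2\|f\|_2+\vep)$, and letting $\vep\to0$ finishes the proof. Enumerating $S$ in increasing order as $(n_m)$ produces the required sequence. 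The genuine content is the diagonalization; everything else is a routine consequence of Wiener's Lemma and the separability of $L_2(X,p)$.
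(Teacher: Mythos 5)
Your proof is correct, but it follows a genuinely different route from the paper's. You apply Wiener's Lemma separately to each spectral measure $\nu^T_{f_k}$ to obtain a density-one set $S_k$ for each $k$, and then carry the load with a combinatorial diagonalization: the standard device of choosing thresholds $N_k$ and gluing the complements $B_k\cap[N_k,N_{k+1})$ so that the resulting exceptional set still has density zero while $S\cap[N_k,\infty)\subset S_k$ for every $k$. Your verification of both halves of that diagonalization is sound, and the final Cauchy--Schwarz approximation step is the same as the paper's. The paper instead avoids the diagonalization entirely by a harmonic-analysis trick: it forms the single continuous measure $\omega=\sum_{s\ge1}\frac{1}{2^s\|f_s\|_2^4}\,\nu^T_{f_s}\ast(\nu^T_{f_s})^*$, whose Fourier transform is the non-negative sum $\sum_s 2^{-s}\|f_s\|_2^{-4}|\widehat{\nu^T_{f_s}}(n)|^2$; one application of Wiener's Lemma to $\omega$ produces a single density-one sequence $(n_m)$ with $\widehat{\omega}(n_m)\to0$, and positivity of each summand then forces $\widehat{\nu^T_{f_s}}(n_m)\to0$ for every $s$ simultaneously. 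In effect, the paper trades your combinatorial lemma (a countable family of density-one sets admits a common density-one ``diagonal intersection'') for the observation that domination by a single continuous measure with non-negative Fourier transform does the merging automatically. Both arguments are standard and of comparable length; the paper's remark that its proof ``is different than the one in Petersen'' suggests the diagonalization route you chose is the more classical one. One small point worth making explicit in your write-up: Wiener's Lemma as stated gives the symmetric average over $[-N,N]$, so you should note that $|\widehat{\nu^T_{f_k}}(-n)|=|\widehat{\nu^T_{f_k}}(n)|$ (since $\widehat{\nu^T_{f_k}}(-n)=\overline{\widehat{\nu^T_{f_k}}(n)}$) to pass to the one-sided average you use, and that a bounded non-negative sequence with vanishing Ces\`aro means tends to zero along a set of density one (the Koopman--von Neumann lemma). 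These are routine, but they are the only unstated ingredients.
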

\begin{proof} Let $(f_s)$ be a sequence of non-zero mean-zero functions
which is dense in the subspace of $L_2(X,p)$
consisting of the mean-zero functions. Consider the measure
$\omega = \sum\limits_{s=1}^\infty \frac 1{2^s\|f_s\|_2^4}\,
\nu_{f_s}^T\ast(\nu_{f_s}^T)^*$. This is a continuous measure with a
positive Fourier transform.
Hence, there is a sequence $(n_m)$ in $\mathbb Z^+$
of density one such that $\widehat {\omega}(n_m) \to 0$ as $m \to
\infty$. For every $s\geq1$, we have $2^s\|f_s\|_2^4 \,\widehat
{\omega}(n)\ge |\widehat {\nu_{f_s}^T}(n)|^2$ for all
$n \in \mathbb Z$. So it follows that for every $s\geq1$, we also
have $\widehat {\nu_{f_s}^T}(n_m) \to 0$ as $m \to \infty$. 
Then, by a standard approximation argument, for any mean-zero function
$f \in L_2(X,p)$, we have $\widehat
{\nu_f^T}(n_m) \to 0$ as $m \to \infty$.
\end{proof}

\begin{rem}  This result also holds for a unitary operator $U$ on
a separable Hilbert space $H$.  That is, if all the spectral measures
$\nu_v^U$ for $v \in H$ are continuous, then there exists a sequence
$(n_m)$ of density one such that $\widehat {\nu_v^U}(n_m) \to 0$ as
$m \to \infty$.
\end{rem}

Let $L_{2,0}(X,p)$ denote the mean-zero functions in
$L_2(X,p)$.  We can rewrite the assertion of Proposition~\ref{wmone} as
\beq\label{ww3} \mbox{$U_T^{n_m}\to 0$ weakly in the space
$L_{2,0}(X,p)$}.\eeq Each sequence $(n_m)$ (not necessarily of
density~$1$) of integers for which~(\ref{ww3}) holds is called a
{\em mixing sequence} for $T$. Any
transformation possessing a mixing sequence is weakly mixing.

The following result about mixing subsequences is also folklore.

\begin{prop}\label{mixresidual}Given an increasing sequence $(n_m)$ of
natural
numbers, consider the set $\mathcal M_{(n_m)}$ that consists of all
$S\in Aut\xbm$
such that $S^{n_{m_k}}\to 0$ weakly in $L_{2,0}(X,p)$, for some
subsequence $(n_{m_k})$
of $(n_m)$. Then $\mathcal M_{(n_m)}$ is a generic subset of
$Aut\xbm$.\end{prop}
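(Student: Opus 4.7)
The plan is to exhibit $\mathcal M_{(n_m)}$ as a dense $G_\delta$ subset of $Aut\xbm$ and then invoke the Baire category theorem, in direct analogy with the proof of Proposition~\ref{folklore1}. First, I fix a countable dense sequence $(f_i)_{i\ge 1}$ in $L_{2,0}(X,p)$ (for instance the recentered indicators $\un_{A_i}-p(A_i)$, where $(A_i)$ is dense in $(\mathcal B,p)$). For positive integers $N,l,k$, I set
$$
B_{N,l,k}\;=\;\{S\in Aut\xbm:\;\exists\,q\ge k,\;\forall\,i,j\le N,\;|\langle f_i\circ S^{n_q},f_j\rangle|<1/l\}.
$$
For each fixed $q,i,j$ the map $S\mapsto \langle f_i\circ S^{n_q},f_j\rangle$ is continuous in the strong topology on $Aut\xbm$, so each $B_{N,l,k}$ is open.

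The key step is to show that each $B_{N,l,k}$ is dense, and here I would cite the classical fact that the strongly mixing automorphisms --- indeed those isomorphic to a Bernoulli shift --- form a dense subset of $Aut\xbm$; see Halmos~\cite{Halmos}. For any strongly mixing $S$ and any $i,j$, one has $\langle f_i\circ S^n,f_j\rangle\to 0$ as $n\to\infty$ because $f_i,f_j$ are mean zero. Thus, given $N,l,k$, one may choose $q\ge k$ large enough that $|\langle f_i\circ S^{n_q},f_j\rangle|<1/l$ for all $i,j\le N$, so every strongly mixing $S$ belongs to $B_{N,l,k}$, giving density.

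Finally I would check $\bigcap_{N,l,k} B_{N,l,k}\subseteq\mathcal M_{(n_m)}$. For $S$ in this intersection, enumerate the pairs $(N_r,l_r)$ and extract diagonally an increasing sequence $q_1<q_2<\cdots$ with $|\langle f_i\circ S^{n_{q_r}},f_j\rangle|<1/l_r$ for all $i,j\le N_r$. The subsequence $(n_{q_r})$ then satisfies $\langle f_i\circ S^{n_{q_r}},f_j\rangle\to 0$ for every $i,j$, and an $\varepsilon/3$ approximation using density of $(f_i)$ and the uniform bound $\|U_S^{n_{q_r}}\|=1$ upgrades this to weak convergence of $U_S^{n_{q_r}}$ to $0$ on all of $L_{2,0}(X,p)$, so $S\in\mathcal M_{(n_m)}$. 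Baire category then yields the genericity.

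The only substantive obstacle is the density of $B_{N,l,k}$, which I dispose of via Halmos' density result. If one wished to avoid that, one would need to produce, near any given rank one $R\in Aut\xbm$, an explicit approximant $S$ such that some $S^{n_q}$ is nearly independent of $A_1,\ldots,A_N$ in the sense $|p(S^{n_q}A_i\cap A_j)-p(A_i)p(A_j)|<1/l$. This can be done by a rank one cutting-and-stacking construction similar to that in Proposition~\ref{folklore1}: approximate $R$ by a rank one tower of height $Mn_q$, partition it into $M$ subtowers of height $n_q$, and let $S$ act as $R$ within each subtower while permuting the subtowers according to a permutation $\pi$ on $M$ letters chosen so that the induced joint distribution on levels is approximately a product. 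However, quoting the known density of Bernoulli maps bypasses this construction entirely.
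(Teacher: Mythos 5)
Your proposal is correct and follows essentially the same route as the paper: both exhibit $\mathcal M_{(n_m)}$ as containing a dense $G_\delta$ built from open sets defined by smallness of the correlations $\langle f\circ S^{n_q},g\rangle$ over a countable dense family of mean-zero functions (the paper uses the sets $A_i$ and the quantities $p(S^{-n_q}A_i\cap A_j)-p(A_i)p(A_j)$, which are exactly your inner products for $f_i=\un_{A_i}-p(A_i)$), and both obtain density of those open sets from the density of the strongly mixing transformations in $Aut\xbm$. The only differences are cosmetic (a weighted sum over all pairs versus a finite truncation, and your explicit $\varepsilon/3$ upgrade at the end, which the paper leaves implicit).
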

\begin{proof} Let $\{A_i:\:i\geq1\}$ be a dense family in
$(\mathcal B,d)$. Take $\vep>0$ and set
$$
{\mathcal M}(k,\vep)=\{S\in Aut(X,\mathcal B,p):\:\sum_{i,j=1}^\infty
\frac1{2^{i+j}}\left|p(S^{-n_k}A_i\cap
A_j)-p(A_i)p(A_j)\right|<\vep\}.
$$
Notice that ${\mathcal M}(k,\vep)$ is open and, for
$0<\vep_i\to0$, consider the set
$$
{\mathcal M}= \bigcap_{i=1}^\infty\bigcup_{k=i}^\infty {\mathcal
M}(k,\vep_i).$$ It is not hard to check that $\bigcup_{k=i}^\infty
{\mathcal M}(k,\vep_i)$ is dense (each mixing
transformation belongs to it), so $\mathcal M$ is a $G_\delta$ and
dense. If $T\in{\mathcal M}$ then for each $i\geq1$ there exists
$k_i\geq i$ such that
$$
\sum\limits_{r,s=1}^\infty\frac1{2^{r+s}}\left| p(T^{-n_{k_i}}A_r\cap
A_s)-p(A_r)p(A_s)\right|<\vep_i.$$ Hence for each $r,s\geq1$
$$
\left| p(T^{-n_{k_i}}A_r\cap
A_s)-p(A_r)p(A_s)\right|\to0\;\mbox{when}\;i\to\infty$$ and
therefore $(n_{k_i})$ is a mixing sequence for $T$.
\end{proof}

\begin{rem}\label{wmisgeneric}  It is easy to see that
Proposition~\ref{mixresidual} shows that weakly mixing transformations $\mathcal W$
are generic set because
they can be characterized as having only the trivial eigenvalue $1$ with
the eigenvectors being the constant functions.  Now taking $n_m = m$ for
all $m$, we have any transformation with a non-trivial eigenvalue
must be in  $\mathcal M(n_m)^c$.  So $\mathcal W^c \subset \mathcal M(n_m)^c$,
and $\mathcal M(n_m) \subset \mathcal W$.  Actually, it is also well-known
that $\mathcal W$ itself is a $G_\delta$ set.
\end{rem}

Combining our two basic category results, Proposition~\ref{folklore1} and
Proposition~\ref{mixresidual}, gives the following.

\begin{prop} \label{together} Given an increasing sequence $(n_m)$ of
natural
numbers, consider the set $\mathcal B_{(n_m)}$ that consists of all
$S\in Aut\xbm$
such that $S^{n_{m_k(1)}}\to Id$ weakly in $L_{2,0}(X,p)$, for some
subsequence $(n_{m_k(1)})$
of $(n_m)$ and
such that $S^{n_{m_k(2)}}\to 0$ weakly in $L_{2,0}(X,p)$, for some
subsequence $(n_{m_k(2)})$
of $(n_m)$.  Then $\mathcal B_{(n_m)}$ is a generic subset of
$Aut\xbm$.
\end{prop}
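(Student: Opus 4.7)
The plan is to observe that $\mathcal B_{(n_m)}$ is precisely the intersection $\mathcal G_{(n_m)} \cap \mathcal M_{(n_m)}$ of the two generic sets produced in Proposition~\ref{folklore1} and Proposition~\ref{mixresidual}, after which genericity follows automatically from the Baire category theorem applied in the Polish group $Aut\xbm$ equipped with the metric $d$ of~(\ref{metric}).

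First I would unpack the two defining conditions for membership in $\mathcal B_{(n_m)}$. The second condition — existence of a subsequence $(n_{m_k(2)})$ with $S^{n_{m_k(2)}}\to 0$ weakly in $L_{2,0}(X,p)$ — is, verbatim, the condition defining $\mathcal M_{(n_m)}$. For the first condition, I would note that for the unitary operator $U_S$, weak convergence $U_S^{n_{m_k(1)}}\to Id$ on $L_{2,0}(X,p)$ is equivalent to strong convergence: this is the standard computation
\[
\|U_S^{n_{m_k(1)}}v-v\|_2^2 = 2\|v\|_2^2-2\mathrm{Re}\langle U_S^{n_{m_k(1)}}v,v\rangle,
\]
together with the fact that $U_S$ acts as the identity on constants. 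Hence the first condition is exactly the defining condition of $\mathcal G_{(n_m)}$, and therefore $\mathcal B_{(n_m)}=\mathcal G_{(n_m)}\cap\mathcal M_{(n_m)}$.

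Next I would invoke Proposition~\ref{folklore1} and Proposition~\ref{mixresidual} to conclude that each of $\mathcal G_{(n_m)}$ and $\mathcal M_{(n_m)}$ contains a dense $G_\delta$ subset of $Aut\xbm$. Since $(Aut\xbm,d)$ is a Polish space, the Baire category theorem guarantees that the intersection of two dense $G_\delta$ sets is again a dense $G_\delta$, so $\mathcal B_{(n_m)}$ is generic.

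There is no real obstacle: the substantive work has already been carried out in the two preceding propositions — the constructive step of simultaneously approximating $R$ by some $S$ with $S^{n_q}$ close to $Id$ (in Proposition~\ref{folklore1}), and the observation that mixing transformations witness each $\mathcal M(k,\vep_i)$ densely (in Proposition~\ref{mixresidual}). The only tiny point worth flagging is the equivalence between the ``weak in $L_{2,0}$'' formulation used in the statement and the ``strong in $L_2$'' formulation used in Proposition~\ref{folklore1}, and this is the two-line unitary identity recorded above.
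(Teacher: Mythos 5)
Your proposal is correct and is essentially the paper's own argument: the paper proves this proposition simply by "combining" Proposition~\ref{folklore1} and Proposition~\ref{mixresidual}, i.e.\ by intersecting the two dense $G_\delta$ sets, exactly as you do. Your extra remark identifying the weak-to-$Id$ condition with the strong-operator-topology condition of Proposition~\ref{folklore1} via $\|U^{n}v-v\|_2^2=2\|v\|_2^2-2\mathrm{Re}\langle U^{n}v,v\rangle$ is a correct and worthwhile clarification of a point the paper leaves implicit.
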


\begin{rem}\label{wmandrigid} The category result in Proposition~\ref{together}
also holds if we ask
for the stronger property that  $S^{\sigma}\to Id$ weakly in $L_{2,0}(X,p)$,
as $\sigma \to \infty(IP)$ for the IP set generated by some
subsequence $(n_{m_k})$ of $(n_m)$.  See Proposition~\ref{spectralIP} and the
discussion before it for the definition and basic characterization of IP rigidity.
\end{rem}

\begin{rem}  We can also formulate unitary versions of Proposition~\ref{folklore1},
Proposition~\ref{mixresidual}, and Proposition~\ref{together}.
\end{rem}

\subsection{Rigidity Specifically} \label{rigidonly}

In addition to the examples given inherently by Proposition~\ref{together}, each
ergodic transformation with discrete spectrum is rigid. One
can see this in several ways. One way is to note that $T$ is rigid
for each eigenfunction $f$ since if $\gamma \in \mathbb T$ there is
a sequence $(n_m)$ such that $\gamma^{n_m} \to 1$ in $\mathbb T$. Then
use the principle of Corollary~\ref{thouvenot}. Alternatively, in
order to see this via the Halmos-von Neumann Theorem, consider an ergodic
rotation $Tx=x+x_0$ where $X$ is a compact metric monothetic
group, $x_0$ is its topological cyclic generator, and $p$ stands
for Haar measure of $X$. Take any increasing sequence $(n_t)$ of
integers, and consider $(n_t\cdot x_0)$. By passing to a
subsequence if necessary, we can assume that $n_tx_0\to y\in X$.
This is equivalent to saying that $T^{n_t}\to S$, where $Sx=x+y$.
Because the convergence is taking part in the strong operator
topology, it is not hard to see that we will obtain
$$
T^{n_{t_{k+1}}-n_{t_k}}\to S\circ S^{-1}=Id,
$$
and therefore $T$ is rigid (indeed, $n_{t_{k+1}}-n_{t_k}\to\infty$ by
Proposition~\ref{sparse} below). These arguments show that
each purely atomic measure is a Dirichlet measure.
Moreover, we have also shown that in the discrete spectrum case
the closure of $\{T^n: n \in \mathbb Z\}$ in the strong
operator topology is compact.  The converse is also true.
See for example Bergelson and Rosenblatt~\cite{bergros} and
Ku\v{s}hnirenko~\cite{Ku}.  It also is not difficult to see that
the centralizer of $T$ in $Aut\xbm$, denoted by $C(T)$, can be identified with
this closure and so is compact in the strong operator topology.
The converse of this is also true (see again, e.g.\ \cite{Ku}).
Moreover, $T$ is isomorphic to the translation by $T$ on $C(T)$ considered
with Haar measure.
We will see
later that ergodic transformations with discrete spectrum are completely determined by their
rigidity sequences (see Corollary~\ref{AA2} below).  There we will be
using the information summarized here.

A positive finite Borel measure $\nu$ on $\T$ is called a {\em
Rajchman measure} if its Fourier transform vanishes at infinity,
that is
\beq\label{ww7} \widehat{\nu}(n)\to 0\;\;\mbox{when}\;
|n|\to \infty.\eeq
So the spectral measures of a strongly mixing transformation
are Rajchman measures, and $T$ is strongly mixing if and only if the
maximal spectral type $\nu^T$ is a
Rajchman measure. Moreover, by the {\em Gaussian measure space construction}
(GMC) discussed in Remark~\ref{GMC}, any
Rajchman measure is one of the spectral measures for some strongly mixing
transformation. It is not hard to
see that a measure absolutely continuous with respect to a
Rajchman measure is Rajchman. Also, Rajchman measures and 
Dirichlet measures are mutually singular.

\begin{rem} It would be interesting to characterize
the sets $\mathcal N = \{n_m\}$ of density one that occur in
Proposition~\ref{wmone}. This means
characterizing sets $\mathcal L$ of density zero that are the
complements of such sets. Characterizing rigidity
sequences for weakly mixing transformations means
characterizing certain types of sets $\mathcal L$.
However, this may not capture all sets in $\mathcal N$.
For example, it may be possible for a set $\mathcal N$
to fail to have a rigidity sequence in its complement,
but contain a set of the form $\{n\ge 1: |\widehat {\mu}(n)| \ge \delta \}$
for some $\delta > 0$, e.g. with $\mu$ that is a spectral measure
for a mildly mixing, not strongly mixing, transformation.
\end{rem}

Proposition~\ref{wmone} certainly shows that rigidity
sequences for weakly mixing transformations are density zero. 
Proposition~\ref{sparse} below shows also that more than this is
true without the assumption that $T$ is weakly mixing.
There is a general principle in play here, but the
argument has to be different when there are eigenfunctions. If the
system is not ergodic, or if some power $T^n$ is not ergodic, then
there can exist a non-zero, mean-zero function $f \in L_2(X,p)$ and
a periodic sequence $(n_m)$ such that $f\circ T^{n_m} = f$ for all
$m \ge 1$. Otherwise, the only way a sequence can exhibit
rigidity for a function, or for the whole dynamical system, is
when the sequence has gaps tending to $\infty$, and hence is
certainly of density zero.  We recall that our probability spaces
are standard Lebesgue spaces and so have no atoms.  This is important
in the next result where the Rokhlin Lemma is used.

\begin{prop}
\label{sparse} Let $(n_m)$ be an increasing sequence of integers.
\medskip

\noindent a) Let $T$ be totally ergodic. If $\|f_0\circ T^{n_m} -f_0
\|_2 \to 0$ as $m \to \infty$ for some non-zero, mean-zero $f_0 \in
L_2(X,p)$, then the sequence $(n_m)$ has gaps
tending to $\infty$ and hence has zero density.
\medskip

\noindent b) Suppose $T$ is ergodic. If $\|f\circ T^{n_m} -f \|_2
\to 0$ as $m \to \infty$ for all $f \in L_2(X,p)$, then $(n_m)$ has
gaps tending to $\infty$ and hence has zero density.
\end{prop}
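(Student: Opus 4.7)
For both parts the strategy is identical: if the gap sequence $d_m := n_{m+1}-n_m$ fails to tend to $\infty$, then some integer $d\ge 1$ occurs as a gap infinitely often, and I will show this forces a $T^d$-invariance that contradicts the hypotheses. Concretely, fix a subsequence $(m_k)$ with $n_{m_k+1}-n_{m_k}=d$ for all $k$. Since $U_T$ is an isometry of $L_2(X,p)$, the triangle inequality gives
\[
\|f\circ T^d - f\|_2 = \|f\circ T^{n_{m_k+1}} - f\circ T^{n_{m_k}}\|_2 \le \|f\circ T^{n_{m_k+1}}-f\|_2 + \|f\circ T^{n_{m_k}}-f\|_2,
\]
and rigidity along $(n_m)$ drives the right side to $0$. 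Hence $f\circ T^d = f$ for any $f$ to which the rigidity hypothesis applies.

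For part (a), this is applied to the given $f_0$. Total ergodicity ensures $T^d$ is ergodic, so $f_0\circ T^d = f_0$ forces $f_0$ to be $p$-a.e. constant; together with $\int f_0\,dp=0$ this contradicts $f_0\ne 0$. So $n_{m+1}-n_m\to\infty$.

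For part (b), the rigidity hypothesis applies to every $f\in L_2(X,p)$, in particular to $f=\un_A$ for each $A\in\mathcal B$. The argument above then yields $\un_A\circ T^d = \un_A$, i.e.\ $T^{-d}A = A$ (mod null), for every $A\in\mathcal B$. Thus $T^d$ is the identity modulo null sets. To derive a contradiction I would invoke the Rokhlin Lemma for the ergodic (hence aperiodic, as $(X,\mathcal B,p)$ is non-atomic) transformation $T$: for the height $d+1$ and some small $\vep>0$ there exists $F\in\mathcal B$ with $p(F)>0$ such that $F, TF,\ldots, T^dF$ are pairwise disjoint. But $T^dF = F$ by the previous step, contradicting disjointness. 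Hence $n_{m+1}-n_m\to\infty$ in this case as well.

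Finally, in either case, the gap condition $n_{m+1}-n_m\to\infty$ implies density zero: since $n_m\ge n_1+\sum_{j=1}^{m-1}(n_{j+1}-n_j)$, the Ces\`aro average of the gaps goes to infinity, so $n_m/m\to\infty$ and hence $\#\{m: n_m\le N\}/N\to 0$. The only non-routine step is part (b); everything else is a direct consequence of the $L_2$-isometry property of $U_T$ plus a pigeonhole argument on the gaps. The mild obstacle in (b) is that ergodicity alone does not make $T^d$ ergodic, so one cannot mimic (a); the Rokhlin Lemma (or equivalently the aperiodicity of ergodic automorphisms of non-atomic standard spaces) is the appropriate tool.
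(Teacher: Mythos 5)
Your proposal is correct and follows essentially the same route as the paper: pigeonhole on the gaps to extract a recurrent gap $d$, deduce $f\circ T^d=f$ via the isometry of $U_T$, rule this out in (a) by total ergodicity and in (b) by the Rokhlin Lemma applied to the aperiodic ergodic $T$. The only cosmetic difference is that the paper contradicts $f_0\circ T^d=f_0$ for a specific mean-zero $f_0$ supported on the Rokhlin base, whereas you first conclude $T^d=\mathrm{Id}$ mod null and then contradict the tower's disjointness; these are the same argument.
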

\begin{proof} In a) we claim that $n_{m+1} - n_m
\to \infty$ as $m \to \infty$. Otherwise, there would be a value
$d \ge 1$ such that $d = n_{m+1} - n_m$ infinitely often. It
follows that $f_0\circ T^d = f_0$. This is not possible since
$f_0$ is non-zero and mean-zero, and $T$ is totally ergodic. To
prove b), one again argues that $n_{m+1} - n_m \to \infty$ as $m
\to \infty$ since otherwise there exists $d \ge 1$ such that $d =
n_{m+1} - n_m$ for infinitely many $m$, and hence $f \circ T^d =
f$ for all $f \in L_2(X,p)$. But this is impossible since our
system is ergodic. Indeed, for any $d_0$, using the Rokhlin Lemma,
there is a set $B$ of positive measure such that $T^jB$ are
pairwise disjoint for all $j, 1 \le j \le d_0$. Take $f_0$
supported on $B$ that is non-zero and mean-zero. Then $f_0\circ
T^j \not= f_0$ for all $j, 1 \le j \le d_0$. Hence, once $d_0 > d$,
we cannot have $f_0\circ T^d = f_0$.
\end{proof}

\begin{rem}  Consider part b) above in the case of suitable unitary
operators.  Since we used the Rokhlin Lemma, we would need a different
proof to show that a rigidity sequence for a unitary operator
has gaps tending to infinity.  This can be seen by the above if
the operator has an infinite discrete spectrum.  An
additional argument is needed in case all the non-trivial spectral
measures are continuous.  Then using the GMC (see Remark~\ref{GMC})
and the result in part a) gives
the result in this case too.
\end{rem}

We will be constructing various examples of rigidity sequences
in Section~\ref{methods}.  To have some contrast with these
constructions, it is worthwhile to make some remarks now about
sequences that cannot be rigidity sequences. We have seen from the
above, that rigidity sequences must have gaps growing to infinity.
But much more structural information is needed to guarantee that the
sequence can be a rigidity sequence.
For example, we have the following basic result.

\begin{prop} \label{unifdist}
Suppose $(n_m)$ is an increasing sequence such that
$(n_mx\mod 1)$
is uniformly distributed for all but a countable set of values $x \in \mathbb R`$. Then
$(n_m)$ cannot be a rigidity sequence for a weakly mixing transformation.
\end{prop}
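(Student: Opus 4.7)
The plan is to argue by contradiction using Cesaro averaging of the spectral identity for rigidity, combined with Weyl's criterion. Suppose $T$ is a weakly mixing transformation for which $(n_m)$ is a rigidity sequence. Pick any non-zero mean-zero $f\in L_2(X,p)$; since $T$ is weakly mixing, the spectral measure $\nu_f^T$ is continuous, and by Proposition~\ref{rigidfacts} one has $\widehat{\nu_f^T}(n_m)\to\|f\|_2^2>0$.

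I then pass to Cesaro averages. On one side, $\widehat{\nu_f^T}(n_m)\to\|f\|_2^2$ gives $\frac{1}{M}\sum_{m=1}^M\widehat{\nu_f^T}(n_m)\to\|f\|_2^2$. On the other side, Fubini yields
$$\frac{1}{M}\sum_{m=1}^M\widehat{\nu_f^T}(n_m)=\int_{\T}\frac{1}{M}\sum_{m=1}^M z^{n_m}\,d\nu_f^T(z),$$
and it suffices to show the right-hand side tends to $0$, which will give the desired contradiction.

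Here Weyl's criterion enters: if $(n_m x\bmod 1)$ is uniformly distributed, where $z=e^{2\pi i x}$, then $\frac{1}{M}\sum_{m=1}^M z^{n_m}\to 0$. By hypothesis this holds for $z$ outside a countable subset $E\subset\T$. Continuity of $\nu_f^T$ forces $\nu_f^T(E)=0$, so the Cesaro averages $\frac{1}{M}\sum_{m=1}^M z^{n_m}$ tend to $0$ for $\nu_f^T$-almost every $z$. Each such partial average has modulus at most $1$, so bounded convergence delivers the integral tending to $0$, whence $\|f\|_2^2=0$, a contradiction.

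There is essentially no serious obstacle here; the only point requiring care is that one should work with Cesaro averages of $\widehat{\nu_f^T}(n_m)$ rather than try to exclude pointwise convergence $z^{n_m}\to 1$ directly, since a subsequence of a uniformly distributed sequence may still converge to $1$ at a given point. Weyl's criterion in its full Cesaro form is the natural bridge between the hypothesis and the spectral picture of rigidity.
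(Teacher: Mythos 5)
Your proof is correct and follows essentially the same route as the paper: both arguments apply Weyl's criterion to get $\frac 1M\sum_{m=1}^M \gamma^{n_m}\to 0$ off a countable set, use continuity of the spectral measure to make that set null, and pass the limit through the integral to contradict $\widehat{\nu_f^T}(n_m)\to\|f\|_2^2$. The only cosmetic difference is that the paper phrases the contradiction via convergence in measure of the Cesaro averages rather than via the averaged Fourier coefficients.
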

\begin{proof} For any continuous measure $\nu$ on $\mathbb T$, we would
have $\int \frac 1M\sum\limits_{m=1}^M \gamma^{n_m} \,d\nu(\gamma) \to 0$ as
$M \to \infty$ because $\frac 1M\sum\limits_{m=1}^M \gamma^{n_m} \to 0$
as $M\to \infty$ for all but countable many $\gamma$. Hence, we cannot
have $\frac 1M\sum\limits_{m=1}^M \gamma^{n_m} \to 1$ in measure with
respect
to $\nu$ as $M \to \infty$.
\end{proof}

\begin{rem} \label{UDeg}  See Kuipers and
Niederreiter~\cite{KN} for information about uniform distribution of sequences.
For example, Vinogradov proved that the prime numbers $(p_m)$ in
increasing order satisfy the hypothesis in Proposition~\ref{unifdist}.
So the prime numbers cannot
be a rigidity sequence for a weakly mixing dynamical system. Of
course, the property in Proposition~\ref{unifdist} also shows that
they cannot be a rigidity sequence for an ergodic rotation
of $\mathbb T$.  Actually, they cannot be
a rigidity sequence for any ergodic transformation
with discrete spectrum on a Lebesgue space by Proposition~\ref{linform}.  However,
the property in Proposition~\ref{unifdist} cannot be used to argue this
because there are ergodic transformations with discrete spectrum
whose spectral measures are supported on the roots of unity.  Other examples
using Proposition~\ref{unifdist} include polynomial sequences $(p(m): m \ge 1)$, with $p$
a non-zero polynomial with integer coefficients.
However, we can reach the same conclusion for
polynomial sequences $(p(m): m \ge 1)$ by a simpler argument using successive
differences. See Remark~\ref{linformeg} b) below.
\end{rem}

The proof that certain sequences, like the prime numbers, satisfy the
hypothesis in Proposition~\ref{unifdist} is linked to another property
that prohibits rigidity.  First, we consider what happens when a linear
form on the sequence has bounded values.  Recall that our underlying
probability space is a standard Lebesgue space.

\begin{prop}\label{linform}  Suppose $F(x_1,\ldots,x_K) = \sum\limits_{k=1}^K c_kx_k$
where $c_1,\ldots,c_K \in \mathbb Z\backslash \{0\}$. Suppose $(n_m)$ is
a sequence of whole numbers.  Assume that for some non-zero $d\in \mathbb Z$,
we know that for any $M \ge 1$, there are
$m_k \ge M$ for all $k=1,\ldots,K$, such that
$F(n_{m_1},\ldots,n_{m_K}) = d$.  Then $(n_m)$ is not a rigidity sequence
for an ergodic transformation.
\end{prop}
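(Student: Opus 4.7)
The plan is to argue by contradiction, using the spectral reformulation of rigidity in Proposition~\ref{rigidfacts}. Suppose $(n_m)$ were a rigidity sequence for an ergodic transformation $T$, and let $\nu^T$ denote its maximal spectral type. Then $z^{n_m}\to 1$ in $\nu^T$-measure. The goal is to deduce from the linear form hypothesis that $z^{d}=1$ for $\nu^T$-a.e.\ $z\in\mathbb{T}$, which forces $T^{d}=\mathrm{Id}$ and contradicts ergodicity on a non-atomic standard Lebesgue space.

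The key preliminary observation is that for any fixed $c\in\mathbb{Z}\setminus\{0\}$, $z^{n_m}\to 1$ in $\nu^T$-measure implies $z^{c n_m}\to 1$ in $\nu^T$-measure, via the elementary bound $|z^{c}-1|\le |c|\,|z-1|$ valid on the unit circle (with $z^{-1}:=\bar z$). Now invoke the hypothesis: for each $j\ge 1$, choose indices $m_1(j),\ldots,m_K(j)\ge j$ with $F(n_{m_1(j)},\ldots,n_{m_K(j)})=d$. Since each $m_k(j)\to\infty$ as $j\to\infty$, the preceding observation yields $z^{c_k n_{m_k(j)}}\to 1$ in $\nu^T$-measure for every $k$. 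A finite product of sequences converging to $1$ in measure on a finite measure space still converges to $1$ in measure, so
\[
z^{d} \;=\; \prod_{k=1}^{K}z^{c_k n_{m_k(j)}} \;\longrightarrow\; 1 \qquad\text{in } \nu^T\text{-measure as } j\to\infty.
\]
The left-hand side does not depend on $j$, so $z^{d}=1$ for $\nu^T$-a.e.\ $z$.

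Since every spectral measure $\nu_f^T$ satisfies $\nu_f^T\ll\nu^T$, this yields $\widehat{\nu_f^T}(d)=\int_{\mathbb{T}}z^{d}\,d\nu_f^T=\|f\|_2^{2}$, and hence $\|f\circ T^{d}-f\|_2=0$ for every $f\in L_2(X,p)$. Thus $T^{d}=\mathrm{Id}$, which is impossible by the Rokhlin-lemma argument in Proposition~\ref{sparse}(b): for any $d_0>|d|$ there is a set $B$ of positive measure with $T^{j}B$ pairwise disjoint for $1\le j\le d_0$, and a non-zero mean-zero $f_0$ supported on $B$ then satisfies $f_0\circ T^{d}\ne f_0$. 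The only real subtlety lies in the middle step—simultaneously controlling all $K$ coordinate sequences $(n_{m_k(j)})_j$—but since each $m_k(j)$ individually tends to infinity, convergence in $\nu^T$-measure is preserved coordinate by coordinate, and the rest is routine.
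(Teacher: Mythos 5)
Your proof is correct and follows essentially the same route as the paper's: pass to the spectral side, observe that the linear relation forces $\gamma^{d}\to 1$ (hence $\gamma^{d}=1$) in measure for every spectral measure, and derive a contradiction with ergodicity. You merely make explicit the steps the paper leaves implicit (the bound $|z^{c}-1|\le|c|\,|z-1|$ and the stability of convergence in measure under finite products), which is fine.
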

\begin{proof}  Suppose $\nu$ is a Borel probability measure on $\mathbb T$
such that $\widehat {\nu}(n_m) \to 1$ as $m \to \infty$.  Hence, $\gamma^{n_m} \to 1$
in measure with respect to $\nu$ as $m\to \infty$.  Then also
$\gamma^d = \gamma^{F(n_{m_1},\ldots,n_{m_K})} \to 1$ in measure with respect
to $\nu$ as $m_k \to \infty$, for all $k=1,\ldots,K$.  That is, $\nu$ is
supported on the $d$-th roots of unity.  However, if $T$ is ergodic, we cannot have all
the spectral measures supported in the $d$-th roots of unity.
\end{proof}

\begin{rem}\label{linformeg} a) The proof is showing that given the
hypothesis of Proposition~\ref{linform}, $(n_m)$ cannot
be a rigidity sequence for a transformation $T$ and a specific function $f\in L_2(X,p)$
unless $f \circ T^d = f$.  So if $T$ is totally ergodic, $(n_m)$ cannot be
a rigidity sequence for any non-trivial function, let alone a rigidity sequence
for all functions.
\medskip

\noindent b)  It is easy to see that polynomial sequences $(p(n): n \ge 1)$ satisfy
the hypothesis of Proposition~\ref{linform} and therefore cannot be rigidity
sequences for weakly mixing transformations.  The easiest way to see this is to
note that the difference $q(n) = p(n+1)-p(n)$ is a polynomial of less degree than $p$.
So successive differences will eventually lead to a constant.  For example,
if $p(n) = n^2$, then let $q(n) = p(n+1) - p(n)$.  Then $q(n+1)- q(n) = 2$.
So $p(n+2) - 2p(n+1) + p(n) = p(n+2)-p(n+1) - (p(n+1) - p(n)) = q(n+1) - q(n) = 2$.  So
let $F(x,y,z) = x - 2y +z$.  Then for all (large) $n$, $F(p(n+2),p(n+1),p(n)) = 2$.
Hence, $(n^2)$ cannot be a rigidity sequence for a weakly mixing transformation.
\medskip

\noindent c)  Here are other simple examples of the above.  Suppose $n_m= 2^m+1$.
Then $2n_m - n_{m+1} = 1$.  So if $F(x,y) = 2x-y$, then
$F(n_m,n_{m+1}) =1$ for all $m$.  Also, suppose $n_m = 2^m+m$.
Then $2n_m - n_{m+1} = m -1$.  Hence, $2n_{m+1} - n_{m+2} - (2n_m - n_{m+1}) = 1$
for all $m$.  So with $F(x,y,z) = 2y-z-(2x - y) = 3y - z -2x$, we have
$F(n_m,n_{m+1},n_{m+2}) = 1$ for all $m$.
Therefore, both $(2^n +1)$ and $(2^n+n)$ are not
rigidity sequences for a weakly mixing transformation.  It is not hard to see that
a calculation of this sort can also be carried out for any sequence $(2^n+p(n))$ where
$p$ is a non-zero polynomial with integer coefficients.  Hence, such sequences
are generally not rigidity sequences for weakly mixing transformations.
See Proposition~\ref{integerratios} which shows that $(2^m)$ itself
is a rigidity sequence for a weakly mixing transformation.
\medskip

\noindent d) Another example will give some idea of other issues that can arise
in using Proposition~\ref{linform}.  Take the sequence $(2^n+p_n)$ where $(p_n)$
is the prime numbers in increasing order.  If this is a rigidity sequence, then
so is $(a_n) = (p_{n+1} - 2p_n)$ since $a_n = 2^{n+1} + p_{n+1} - 2(2^n +p_n)$.
It is not clear what holds for this resulting
sequence.  For example, does this $(a_n)$ satisfy the hypothesis in Proposition~\ref{unifdist}?
\medskip

\noindent e) Proposition~\ref{linform} can also be used in a positive way.
For example, start with the fact that $(2^n)$ is a rigidity sequence for a weakly
mixing transformation proved in Proposition~\ref{integerratios}.  It follows
that for any $m_1,\ldots,m_K$ and $N_1,\ldots,N_K$, the sequence $(n_m) =
(\sum\limits_{k=1}^K m_k2^{N_k+m})$ is also a rigidity sequence.
\end{rem}

Proposition~\ref{linform} in turn can be used to prove the following result.
This result was suggested by the fact that certain sequences, like the prime
numbers, were originally seen to satisfy the hypothesis of Proposition~\ref{unifdist}
by using analytic number theory arguments which also gave the hypothesis
of Proposition~\ref{sumset}.  We will use in the next proposition the notion
of upper density: given a set $A = \{a_n:n \ge 1\}$ of  integers, the
upper density of $A$ is
\[\limsup\limits_{N\to \infty} \frac {\#(\{a_n: n \ge 1\}\cap \{-N,\ldots,N\})}{2N+1}.\]

\begin{prop}\label{sumset}  Suppose that $\mathbf a$ is a sequence of whole numbers.  Assume that
the set $A = \{a_n:n\ge 1\}$ has the property
that for some integers $c_1,\ldots,c_L$ the set of sums $c_1A+\ldots+c_LA =
\{\sum\limits_{l=1}^L c_la_{n_l}: n_1,\ldots,n_L \in A\}$ has
positive upper density.  Then $\mathbf a$ cannot be a rigidity sequence for an
ergodic transformation.
\end{prop}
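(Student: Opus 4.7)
Our strategy is to transfer the positive upper density of $B := c_1 A + \ldots + c_L A$ to the spectral side. Suppose for contradiction that $\mathbf{a}$ is a rigidity sequence for an ergodic transformation $T$, with maximal spectral measure $\nu$. By Proposition~\ref{rigidfacts}, $\gamma^{a_n} \to 1$ in $\nu$-measure as $n \to \infty$, hence also $\gamma^{c_l a_n} \to 1$ in $\nu$-measure for each fixed integer $c_l$.

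The key combinatorial ingredient is to show that the tail sumsets $B_M := c_1 A_M + \ldots + c_L A_M$, with $A_M = \{a_n : n \geq M\}$, retain positive upper density uniformly bounded below in $M$ by some $\delta_\infty > 0$. This is proved by induction on $L$: one decomposes $B \setminus B_M$ into a finite union of translates of the $(L-1)$-fold sumsets $\sum_{l' \neq l} c_{l'} A$. If every such sumset has upper density zero then $d^*(B_M) = d^*(B) > 0$ directly, while otherwise the inductive hypothesis provides a tail $(L-1)$-fold sumset of positive upper density, and the inclusion $c_l a_M + \sum_{l' \neq l} c_{l'} A_M \subseteq B_M$ transfers this positivity. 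Now, given $\epsilon > 0$, pick $M = M(\epsilon)$ large enough that $\nu\{\gamma : |\gamma^{a_n} - 1| > \epsilon/L\} < \epsilon/L$ for all $n \geq M$; then for any $b \in B_M$ with a representation using indices $\geq M$, union-bound and triangle-inequality arguments give $|\widehat{\nu}(b) - \nu(\mathbb T)| \leq C_L \epsilon$ uniformly on $B_M$, where $C_L$ depends only on $L$ and $\nu(\mathbb T)$.

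Decompose $\nu = \nu_c + \nu_d$ into continuous and discrete parts. Since $|\widehat{\nu_c}(b)| \leq \nu_c(\mathbb T)$ and $|\widehat{\nu_d}(b)| \leq \nu_d(\mathbb T)$, while $\widehat{\nu}(b)$ lies close to the real sum $\nu_c(\mathbb T) + \nu_d(\mathbb T)$, the extremal case of the triangle inequality forces $|\widehat{\nu_c}(b)| \geq \nu_c(\mathbb T) - O(\epsilon)$ on $B_{M(\epsilon)}$. Wiener's Lemma applied to $\nu_c$ gives $\tfrac{1}{2N+1}\sum_{|n| \leq N} |\widehat{\nu_c}(n)|^2 \to 0$; however, restricted to the positive-density subsequence $B_{M(\epsilon)}$ the same average is bounded below by $\delta_\infty(\nu_c(\mathbb T) - O(\epsilon))^2$, so letting $\epsilon \to 0$ forces $\nu_c = 0$. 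Hence $\nu$ is purely atomic. For each atom $\gamma_i$ of positive mass $a_i$, the $\nu_d$-measure version of the preceding estimate forces $|\gamma_i^b - 1| \leq \eta$ for $b \in B_{M(\eta)}$ once $\eta < a_i$; were $\gamma_i$ not a root of unity, equidistribution of $(\gamma_i^b)$ would make $\{b : |\gamma_i^b - 1| < \eta\}$ have density $O(\eta)$, incompatible with the uniform density $\delta_\infty$ of $B_{M(\eta)}$. So every atom is a root of unity. If their orders are bounded, $T$ has finite discrete spectrum and is isomorphic to a rotation on a finite group, contradicting the non-atomicity of $(X, \mathcal B, p)$; if unbounded, then for every $N$ the elements of $B_{M(\eta)}$ (for $\eta$ sufficiently small depending on $N$) must be divisible by $\mathrm{lcm}(d_1,\ldots,d_N) \to \infty$, so $B_{M(\eta)}$ has density at most $1/\mathrm{lcm}(d_1,\ldots,d_N) \to 0$, again contradicting the uniform lower bound $\delta_\infty$. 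The main difficulty is the inductive step establishing uniform positivity of $d^*(B_M)$; once that is in hand, the spectral argument closes out as above.
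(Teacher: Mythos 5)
Your proof is correct, and its combinatorial core coincides with the paper's: both arguments rest on the observation that the tail sumsets $B_M$ (all indices $\ge M$) retain positive upper density, which you establish by induction on $L$ and the paper establishes by first passing to a minimal $L$ (so that every shorter sumset has upper density zero and $B\setminus B_M$ is negligible). Where you diverge is in how that positive-density tail is exploited. The paper pigeonholes: a set of upper density $D$ contains, inside every tail, infinitely many pairs differing by a fixed nonzero $d\le 2/D$, which produces a $2L$-variable linear form taking the constant value $d$ on arbitrarily late indices; Proposition~\ref{linform} then forces the spectral measures onto the $d$-th roots of unity, contradicting ergodicity. You instead work spectrally with the entire positive-density set $B_M$: the lower bound $|\widehat{\nu_c}(b)|\ge\nu_c(\mathbb T)-O(\epsilon)$ on a set of upper density $\ge\delta_\infty$ is incompatible with Wiener's Lemma unless $\nu_c=0$, and the atoms are then pinned to roots of unity of bounded order by equidistribution and the divisibility/density argument, after which ergodicity on a non-atomic space gives the contradiction. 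Your route is longer because it re-derives, in expanded form, essentially the content of Proposition~\ref{linform} (continuous spectral mass is impossible, atoms must be roots of unity of bounded order, and that is incompatible with ergodicity); the paper's reduction is more economical since that lemma is already available. On the other hand, your argument is self-contained and makes explicit that only the uniform positive upper density of the tails is used on the spectral side, not the existence of a single recurring difference, which is a mildly more robust formulation. Both proofs need the same care at the final step (the case where all eigenvalues are roots of unity of bounded order is ruled out by $T^d=Id$ being impossible for an ergodic transformation of a non-atomic Lebesgue space, as in Proposition~\ref{sparse}), and you handle that correctly.
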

\begin{proof}
Suppose  that the condition on the set of sums holds and $L$ is the smallest possible value
for which this holds for some $c_1,\ldots,c_L$.  Let $B = c_1A+\ldots+c_LA$ and
assume it has upper density  $D > 0$.
Consider the set of sums $B_N =\{\sum\limits_{l=1}^L c_l a_{n(l)}:
n(1),\ldots,n(L) \ge N\}$.  $B_N$ can be obtained from $B$ by deleting a finite set and
a finite number of translates
of sets of sums  of the form $c_{i_1}A+\ldots+c_{i_{L^\prime}}A$ with $L^\prime < L$.   Since these sets of sums
are assumed to all be of upper density zero, $B_N$ also has upper density $D$.
Then there are infinitely many pairs $\sigma_1 < \sigma_2$ with $\sigma_1,\sigma_2 \in B_N$
and $\sigma_2 = d + \sigma_1$ for some non-zero $d \le 2\frac 1D$.  So let $K = 2L$ and $F(x_1,\ldots,x_K) =
\sum\limits_{k=1}^Lc_kx_k - \sum\limits_{k=L+1}^Kc_{k-L}x_k$.  With this linear form $F$, we have shown
that $A$ satisfies the hypothesis of Proposition~\ref{linform}.
\end{proof}

\begin{rem} \label{sumseteg}
Proposition~\ref{sumset}
 clearly applies to squares $S = \{n^2: n \ge 0\}$.  Indeed, $(n+1)^2 - n^2 = 2n+1$,
 so the odd numbers are a subset of $S - S$. It also applies to the prime numbers $P$,
because of the well-known fact that for some whole number $K$, the sum of $K$
copies of $P$ contains all of the whole numbers $n \ge 3$.
\end{rem}
\bigskip

By using spectral measures and the {\it Gaussian measure space
construction} (denoted here by GMC), we can see that our basic
desire to characterize rigidity sequences
is equivalent to a fact about Fourier transforms of
measures.

\begin{rem}\label{GMC} The GMC is a standard method of creating a weakly mixing transformation $G_{\nu}$
such that one of its spectral measures is a given continuous measure $\nu$.
The transformation $G_{\nu}$ itself is a coordinate shift on an infinite product space,
but the probability measure on the product space that it leaves invariant must be constructed
specifically with $\nu$ in mind.  See Cornfeld, Fomin, and Sinai~\cite{CFS} for details
about the GMC. We will use the notation $G_{\nu}$ for the transformation
obtained by applying GMC to the positive Borel measure $\nu$. One
can actually see that this method applies to all measures by using
complex scalars, but it is traditional to apply it to symmetric
measures whose Fourier transform is real-valued so that GMC gives
a real centered stationary Gaussian process. In all of our
applications, we can symmetrize the measures $\nu$ that we construct
by replacing them with $\nu_s = \nu\star\nu^*$, or with $\nu_s = \nu +
\nu^*$. This
will allow us to use the GMC in its traditional form, while still
preserving the properties that we need the GMC to give us.
\end{rem}

\begin{rem}\label{Poisson}  We will also have occasion to use another
general method, that of Poisson suspensions.   See
Cornfeld, Fomin, and Sinai~\cite{CFS},
Kingman~\cite{Poisson}, and
Neretin~\cite{Poisson1}
for information about Poisson suspensions.
Here is briefly the idea.
Let $T$ be a transformation of a standard Lebesgue space $(X,{\mathcal
B},\mu)$, where $\mu$ is a $\sigma$-finite, infinite positive measure.  We define a
probability space $(\widetilde{X},\widetilde{\mathcal B},
\widetilde{\mu})$. The points of the configuration space
$\widetilde{X}$ are infinite countable subsets
$\widetilde{x}=\{x_n:\:n\geq1\}$ of $X$. Given a set $A\in{\mathcal
B}$ of finite measure we define $N_A:\widetilde{X}\to\N\cup \{\infty\}$ by
setting
$$
N_A(\widetilde{x})=\#\{n\in \N:\; x_n\in A\}.
$$
Then $\widetilde{\cb}$ is defined as the smallest $\sigma$-algebra
of subsets of $\widetilde{X}$ making all variables $N_A$,
$\mu(A)<+\infty$, measurable. The measure $\widetilde{\mu}$ is the
only probability measure (see \cite{Poisson} for details) such
that \begin{itemize}\item the variables $N_A$ satisfy the Poisson
law with parameter $\mu(A)$;
\item
for each family $A_1,\ldots,A_k$ of pairwise disjoint subsets of
$X$ of finite measure the corresponding variables
$N_{A_1},\ldots,N_{A_k}$ are independent.
\end{itemize}
The space $(\widetilde{X},\widetilde{\mathcal B}, \widetilde{\mu})$ is
a standard Lebesgue probability space. Then, we define
$\widetilde{T}$ on $\widetilde{X}$ by setting
$$\widetilde{T}(\{x_n\})= (\{Tx_n\})$$ and obtain a transformation
of  $(\widetilde{X},\widetilde{\mathcal B}, \widetilde{\mu})$ which is
called the Poisson suspension of $T$. Then $\widetilde{T}$ is
ergodic if and only if $T$ has no non-trivial invariant sets of
finite measure. In this case, $\widetilde{T}$ turns out to be
weakly mixing and moreover $\widetilde{T}$ is spectrally
isomorphic to the GMC transformation $G$ given by the unitary
operator $U_T$ acting on $L_2(X,{\mathcal B},\mu)$, i.e.\ the unitary
operators $U_{\widetilde{T}}$ and $U_G$ are equivalent.
\end{rem}

\begin{prop} \label{spectral}
The sequence $(n_m)$ is a rigidity sequence for some
weakly mixing dynamical system if and only if there is a
continuous Borel probability measure $\nu$ on $\mathbb T$ such
that $\lim\limits_{m \to \infty} \widehat {\nu}(n_m)= 1$.
\end{prop}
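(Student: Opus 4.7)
The plan is to prove the two directions separately, both of which reduce to a straightforward application of results already in hand, namely Proposition~\ref{rigidfacts} and the GMC of Remark~\ref{GMC}.

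For the forward direction, suppose $T$ is weakly mixing on $(X,\mathcal B,p)$ and rigid along $(n_m)$. Pick any non-zero mean-zero $f \in L_2(X,p)$, and form the spectral measure $\nu_f^T$. Weak mixing forces $\nu_f^T$ to be continuous (Section~\ref{weakmixsec}), while rigidity together with Proposition~\ref{rigidfacts} gives $\widehat{\nu_f^T}(n_m) \to \|f\|_2^2$. Normalizing by $\|f\|_2^2$ produces a continuous Borel probability measure $\nu$ on $\T$ with $\widehat{\nu}(n_m) \to 1$, as desired.

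For the backward direction, start with a continuous Borel probability measure $\nu$ with $\widehat{\nu}(n_m)\to 1$ and first pass to a symmetric, positive, continuous probability measure for which GMC applies in its traditional form. Set $\nu_s=\nu\star\nu^*$; then $\widehat{\nu_s}(n_m) = |\widehat{\nu}(n_m)|^2 \to 1$, and $\nu_s$ remains continuous (since convolution with any finite measure preserves continuity of the other factor). Feed $\nu_s$ into the GMC to obtain a transformation $T = G_{\nu_s}$ on a standard Lebesgue probability space; $\nu_s$ is realized as the spectral measure on the first chaos. Because $\nu_s$ is continuous, $T$ has no eigenvalues other than $1$, so $T$ is weakly mixing.

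It remains to verify rigidity of $T$ along $(n_m)$. The maximal spectral type of a Gaussian dynamical system on $L_{2,0}$ is (equivalent to) $\sum_{k\ge 1} \nu_s^{*k}/k!$, so it suffices by Proposition~\ref{rigidfacts} (in its unitary formulation) to check that $z^{n_m} \to 1$ in measure with respect to each $\nu_s^{*k}$. But $\widehat{\nu_s^{*k}}(n_m) = \widehat{\nu_s}(n_m)^k \to 1$ as $m \to \infty$, which via Proposition~\ref{rigidfacts} gives exactly that convergence. Hence every spectral measure of $T$ has $z^{n_m}\to 1$ in measure, so $(n_m)$ is a rigidity sequence for the weakly mixing $T$. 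The only nontrivial point in the argument is the symmetrization trick and the observation that Fourier transforms behave multiplicatively under convolution so that rigidity for $\nu$ automatically propagates to all higher chaoses; beyond that the proof is essentially bookkeeping around results already established in Section~\ref{rigidity}.
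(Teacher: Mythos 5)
Your proof is correct and follows essentially the same route as the paper: the forward direction reads off continuity and $\widehat{\nu_f^T}(n_m)\to\|f\|_2^2$ from Proposition~\ref{rigidfacts}, and the converse feeds the (symmetrized) measure into the GMC. The only difference is that you spell out the step the paper dismisses with ``it is not hard to see,'' namely that rigidity of $\nu_s$ propagates to the whole Gaussian system because the maximal spectral type on $L_{2,0}$ is equivalent to $\sum_{k\ge 1}\nu_s^{*k}/k!$ and $\widehat{\nu_s^{*k}}(n_m)=\widehat{\nu_s}(n_m)^k\to 1$; this is a welcome addition and is correct.
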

\begin{proof}
First, if $f \in L_2(X,p)$ is norm one, then rigidity along the
sequence $(n_m)$ for $f$ means that we will have $\lim\limits_{m \to
\infty} \widehat {\nu_f^T}(n_m) = 1$. So when $T$ is weakly
mixing and $f$ is mean-zero, then $\nu_f^T$ is continuous.

Conversely, if there is a continuous Borel probability measure
$\nu$ on $\mathbb T$ such that $\widehat {\nu}(n_m) \to 1$ as $m
\to \infty$, then the GMC gives us a weakly mixing
dynamical system $(X,\mathcal B,p, T)$, with $T = G_{\nu}$, and a mean-zero function $f \in
L_2(X,p)$ with $\|f\|_2^2 = 1$ and $\widehat {\nu}(n) = \langle
f\circ T^{n},f\rangle$ for all $n$. Indeed, it is not hard to see
that this construction gives us a weakly mixing dynamical system
such that $T$ is rigid along the sequences $(n_m)$.
\end{proof}

\begin{rem}  The second part of this proof is saying that
the GMC preserves rigidity. In particular, if $\nu=\nu^S$
for some $S\in Aut(Y,\mathcal B_Y,p_Y)$ then $S$ and $T:=G_\nu$ have
the same rigidity sequences.
\end{rem}

\begin{rem}  The unitary version of this result is that there is a
continuous Borel probability measure $\nu$ on $\mathbb T$ such that
$\lim\limits_{m \to \infty} \widehat {\nu}(n_m) = 1$ if and only
if there is a unitary operator $U$ on a Hilbert space $H$ such
that 1) for no non-zero $v \in H$ is the orbit $\{U^kv: k \mathbb Z\}$
precompact in the strong operator topology, and
2) for all $v \in H$, $\lim\limits_{m\to \infty} \|U^{n_m} v -v \|_H = 0$.
The first property here 1) is saying that $U$ is called a {\em weakly mixing unitary
operator}.  See Bergelson and Rosenblatt~\cite{bergros}.
\end{rem}

The result in Proposition~\ref{spectral} holds
with appropriate changes if we ask for the
stronger property that we have rigidity along the IP
set generated by a sequence. Some notation is useful for
understanding this. The {\em IP set generated by a sequence} consists
of all finite sums of elements with distinct indices in the sequence. So the
notation $\Sigma = FS(n_m)$ for this IP set makes sense. Here
given
$m_1 < \ldots < m_k$, let $\sigma
= \sigma(n_{m_1},\ldots,n_{m_k}) = n_{m_1}+\ldots+n_{m_k}$. We say
$\sigma \to \infty (IP)$ if $\sigma = m_1$ tends to $\infty$.
Also, given the dynamical system, we say that $T$ is {\em
IP-rigid} along $\Sigma$ if $T^{\sigma} \to Id$ in the strong
operator topology as $\sigma \to \infty (IP)$. With this
understood, it is easy to see the following.

\begin{prop}\label{spectralIP}  There is a weakly mixing
dynamical system that is IP-rigid along $\Sigma$ if and only if there
is a continuous Borel probability measure $\nu$ on $\mathbb T$
such that $\widehat {\nu}(\sigma) \to 1$ as $\sigma \in FS(n_m: m
\ge 1)$ tends to $\infty$ (IP).
\end{prop}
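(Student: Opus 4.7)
The plan is to mirror the proof of Proposition~\ref{spectral} at the IP level. The spectral equivalences in Proposition~\ref{rigidfacts} transfer verbatim with the integer $n_m$ replaced by an IP-sum $\sigma = n_{m_1}+\cdots+n_{m_k}$ and the limit taken as $\sigma \to \infty(IP)$, since the only computations involved—$\|f\circ T^\sigma - f\|_2^2 = 2\|f\|_2^2 - 2\operatorname{Re}\langle f\circ T^\sigma, f\rangle$ and $\int |z^\sigma - 1|^2\,d\nu_f^T(z) = 2\|f\|_2^2 - 2\operatorname{Re}\widehat{\nu_f^T}(\sigma)$—depend only on the exponent.

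For the forward direction, suppose $T$ is weakly mixing on $(X,\mathcal B,p)$ and IP-rigid along $\Sigma = FS(n_m)$. Pick any norm-one mean-zero $f \in L_2(X,p)$; weak mixing forces $\nu := \nu_f^T$ to be continuous, and the IP analogue of Proposition~\ref{rigidfacts} applied to $f$ gives $\widehat{\nu}(\sigma) \to 1$ as $\sigma \to \infty(IP)$ along $FS(n_m)$, which is exactly the measure required.

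For the reverse direction, suppose $\nu$ is a continuous Borel probability measure on $\mathbb T$ with $\widehat{\nu}(\sigma) \to 1$ along $FS(n_m)$. Following Remark~\ref{GMC}, first pass to the symmetrization $\nu_s = \tfrac12(\nu + \nu^*)$, which remains a continuous probability measure and satisfies $\widehat{\nu_s}(\sigma) = \operatorname{Re}\widehat{\nu}(\sigma) \to 1$ along $FS(n_m)$, and then apply the GMC to obtain a weakly mixing $T = G_{\nu_s}$. The main obstacle is promoting IP-rigidity from the first chaos (where the spectral type is $\nu_s$) up to all of $L_{2,0}(X,p)$: the maximal spectral type of $T$ on $L_{2,0}(X,p)$ is equivalent to $\mu \sim \sum_{k \ge 1} c_k \nu_s^{*k}$ from the Fock space decomposition of the Gaussian shift, with positive weights summing to $1$. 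Since $\widehat{\nu_s^{*k}}(\sigma) = \widehat{\nu_s}(\sigma)^k$ and $|\widehat{\nu_s}(\sigma)^k - 1| \le k|\widehat{\nu_s}(\sigma)-1|$, a simple truncation in $k$—given $\varepsilon > 0$, pick $K$ so $2\sum_{k > K} c_k < \varepsilon/2$, then push $\sigma$ deep enough into $FS(n_m)$ to make $\sum_{k \le K} c_k\,k\,|\widehat{\nu_s}(\sigma)-1| < \varepsilon/2$—yields $\widehat{\mu}(\sigma) \to 1$ along $FS(n_m)$. By the IP analogue of Proposition~\ref{rigidfacts}, every $f \in L_{2,0}(X,p)$ is then IP-rigid along $\Sigma$, and rigidity on constants is automatic, so $T$ is IP-rigid along $\Sigma$.
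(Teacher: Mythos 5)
Your proof is correct and follows the same route the paper intends: it explicitly states that Proposition~\ref{spectralIP} is obtained from Proposition~\ref{spectral} ``with appropriate changes,'' i.e.\ the spectral equivalences of Proposition~\ref{rigidfacts} in one direction and the GMC applied to the symmetrized measure in the other. Your only addition is to spell out the detail the paper leaves as ``easy to see'' --- that IP-rigidity on the first chaos propagates to all of $L_{2,0}$ because the maximal spectral type of the Gaussian system is $\sum_k c_k\nu_s^{*k}$ and $\widehat{\nu_s^{*k}}(\sigma)=\widehat{\nu_s}(\sigma)^k$ --- which is a correct and welcome elaboration, not a different method.
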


There is another way of phrasing the Fourier transform condition
above for rigidity, and IP-rigidity, that is very useful. We have
already pointed this out in terms of spectral measures in Proposition
~\ref{rigidfacts}. We leave
the routine proof to the reader.

\begin{prop} \label{inmeasure} Given a sequence $(n_m)$ and a
positive Borel measure on $\mathbb T$, we have $\widehat
{\nu}(n_m)$ tends to $\nu(\mathbb T)$ if and only if $\gamma^{n_m} \to
1$ in measure with respect to $\nu$ as $m \to \infty$. Also,
we have $\widehat {\nu}(\sigma)$ tends to $1$ as $\sigma \to \infty$ (IP)
if and only if $\gamma^{\sigma}\to 1$ in measure
with respect to $\nu$ as $\sigma \to \infty$ (IP).
\end{prop}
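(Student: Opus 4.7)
The plan is to reduce both equivalences to a single calculation involving $|1-\gamma^k|^2$, exactly as was done in the proof of Proposition~\ref{rigidfacts}. For any integer $k$, expanding gives the identity
\[
\int_{\T}|1-\gamma^{k}|^{2}\,d\nu(\gamma)
=2\nu(\T)-2\,\mathrm{Re}\bigl(\widehat{\nu}(k)\bigr),
\]
since $\widehat{\nu}(k)=\int\gamma^{k}\,d\nu(\gamma)$ and $|\gamma|=1$ on the support of $\nu$. The first step is to record this identity and note the trivial inequality $|\widehat{\nu}(k)|\le\nu(\T)$, which forces $\widehat{\nu}(k)\to\nu(\T)$ to be equivalent to $\mathrm{Re}(\widehat{\nu}(k))\to\nu(\T)$ (a complex number of modulus at most $r$ whose real part tends to $r$ must itself tend to $r$).

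Next, I would use the displayed identity with $k=n_m$ to conclude that $\widehat{\nu}(n_m)\to\nu(\T)$ if and only if $\int|1-\gamma^{n_m}|^{2}\,d\nu(\gamma)\to 0$, i.e.\ $\gamma^{n_m}\to 1$ in $L_{2}(\T,\nu)$. Finally, since $|1-\gamma^{n_m}|\le 2$ and $\nu$ is a finite measure, the bounded convergence theorem together with the standard fact that a uniformly bounded sequence converges in $L_{2}(\nu)$ if and only if it converges in $\nu$-measure yields the equivalence between $L_{2}(\nu)$-convergence and convergence in $\nu$-measure. Chaining these equivalences produces the first assertion.

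For the IP version one applies the identical calculation with $k=\sigma$ and takes the limit along $\sigma\to\infty\,(IP)$; nothing in the argument uses that $(n_m)$ is a sequence rather than a directed net indexed by finite sums, so the same chain of equivalences goes through verbatim. There is no real obstacle here: the only place requiring a moment of care is observing that convergence of $\mathrm{Re}(\widehat{\nu})$ to $\nu(\T)$ suffices to pin down the complex value $\widehat{\nu}$, and that the uniform bound $|1-\gamma^{k}|\le 2$ legitimately converts convergence in measure into $L_{2}(\nu)$-convergence.
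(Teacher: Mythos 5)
Your argument is correct and is exactly the computation the paper has in mind: the authors leave the proof of Proposition~\ref{inmeasure} to the reader, pointing back to Proposition~\ref{rigidfacts}, whose proof uses the same identity $\int|1-\gamma^{k}|^{2}\,d\nu = 2\nu(\mathbb T)-2\,\mathrm{Re}(\widehat\nu(k))$, the bound $|\widehat\nu(k)|\le\nu(\mathbb T)$, and the equivalence of $L_2(\nu)$-convergence with convergence in $\nu$-measure for the uniformly bounded functions $\gamma^{k}$. Your observation that the argument applies verbatim to the net indexed by finite sums handles the IP case just as intended.
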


\begin{rem}\label{moreIP} a) The Fourier transform characterizations of rigidity sequences
above suggests that we might be able to take this further by
finding the correct growth/sparsity condition on a strictly
increasing sequence $(n_m)$ to be a rigidity sequence or
IP-rigidity sequence for a weakly mixing dynamical system. First,
consider the property of IP-rigidity. It is easy to see, from the
spectral measure characterization of IP-rigidity above, that it is
sufficient to have a criterion that guarantees there is an
uncountable Borel set of points $K \subset \mathbb T$ such that
for all $\gamma \in K$, we have $\gamma^{\sigma}\to 1$ as $\sigma
\to \infty (IP)$. See the beginning of the proof of Proposition~\ref{ABCD}
where the same point is made.  Let us consider this in the parametrization of
$\mathbb T$ where $\gamma = \exp(2\pi i x)$ for $x \in [0,1)$. We
also denote by $K$ the set of $x$ corresponding to $\exp(2\pi ix)
\in K$. Our pointwise criterion then means that for all $x \in K$,
we have both $\cos(2\pi \sigma x) \to 1$ and $\sin(2 \pi \sigma x)
\to 0$ as $\sigma \to \infty\,(IP)$. It is enough to just have
$\sin(2 \pi \sigma x) \to 0$ as $\sigma \to \infty\,(IP)$. Indeed,
with the usual notation that
$\{z\} = z - \lfloor z \rfloor$ is the fractional
part of a real number $z$, the $\{y = \{2x\}: x \in K\}$ will give
an uncountable set of values $y$ such that both $\cos(2\pi \sigma
y) \to 1$ and $\sin(2 \pi \sigma y) \to 0$ as $\sigma \to
\infty\,(IP)$
\medskip

\noindent b) The pointwise spectral property above is equivalent to
having rigidity along $\Sigma$ for ALL functions whose spectral
measure in a given dynamical system is supported in $K$. This is
stronger than what is needed for rigidity along $\Sigma$ for some
weakly mixing dynamical system. This weaker notion is equivalent
to having a continuous positive measure $\nu$ supported on $K$
such that $\exp(2\pi i\sigma x) \to 1$ in measure with respect to
$\nu$ as $\sigma \to \infty\, (IP)$.
\end{rem}

\section{\bf Constructions of Rigid Sequences for Weakly Mixing
Transformations}
\label{methods}

We give a number of different approaches here for constructing weakly
mixing transformations that have a specific type of sequence as a rigidity
sequence. These methods are sometimes overlapping, but the different
approaches give us insights into the issues nonetheless. Also, there
are a variety of number theoretic and harmonic analysis connections
with some of these methods; these are also explored in this section.

When this paper was in the final draft, we learned of the work of
Eisner and Grivaux~\cite{EG}.  Our papers are largely complementary,
although we do cover some of the same basic issues.
We will cite their work more in place later in this section.

Besides the question of the structure of rigidity sequences in general, we would
like to be able to answer the following questions:
\medskip

\noindent{\bf Questions}: Which rigidity sequences of an ergodic
transformation with discrete spectrum can be rigidity sequences
for weakly mixing transformation?  Which rigidity sequences for
weakly mixing transformations can be rigidity sequences for an
ergodic transformation with discrete spectrum?
\medskip

\begin{rem}\label{whyquestion}  a) At this time, we do not know
if there is a counterexample to either of the questions above.
\medskip

\noindent b) As discussed later in this section,
one viewpoint to answering these questions is to consider, for
fixed $(n_m)$, the group $\mathcal R(n_m) =
\{\gamma \in \mathbb T: \lim\limits_{m\to \infty} \gamma^{n_m} =1\}$.
We will see that if $\mathcal R(n_m)$ is uncountable, then $(n_m)$ is a rigidity
sequence for an ergodic rotation of $\mathbb T$ and for some weakly
mixing transformation.  If $\mathcal R(n_m)$ is countably infinite,
then there is an ergodic transformation $T$ with discrete spectrum such that $(n_m)$
is a rigidity sequence for $T$, but we do not know in general if there
is a weakly mixing transformation with $(n_m)$ as a rigidity sequence.
For example, sequences
like $(2^n)$ cannot be a rigidity sequence for an ergodic
rotation of the circle, but can be a rigidity sequence for a weakly
mixing transformation.  This does not make this sequence a
counterexample to the second question above because this sequence is a rigidity sequence for
the ergodic generator of another compact abelian group $G$.
Just take $G$ to be the inverse limit of the finite groups
$\mathcal R_n = \{\gamma \in \mathbb T: \gamma^{2^n} = 1\}$.
\medskip

\noindent c) Our results, in particular Proposition~\ref{integerratios} or Proposition~\ref{intlac},
and the structure of subgroups of $\mathbb T$ in the discrete topology,
show that all rigidity sequences for transformations
with discrete spectrum are rigidity sequences for some weakly mixing transformation
if the following is true: given an element $\gamma \in \mathbb T$
of infinite order and a sequence $(n_m)$ such that $\gamma^{n_m} \to 1$
as $m \to \infty$, the sequence $(n_m)$ is a rigidity sequence for some
weakly mixing transformations.
\medskip

\noindent d) If $\mathcal R(n_m)$ is finite, then no ergodic
transformation with discrete spectrum has this sequence as a rigidity
sequence, but it might be possible to construct weakly mixing transformations
with $(n_m)$ as a rigidity sequence.  This is not at all clear yet.  A good
example of a candidate sequence for this case is $n_m = 2^m+3^m$ for
which $\mathcal R(n_m) = \{1\}$.  To see this, note that
$n_{m+1} - 2n_m = 3^m$ and $3n_m-n_{m+1} = 2^m$.  So any $\gamma \in \mathcal R(n_m)$
must have both $\gamma^{3^m}$ and $\gamma^{2^m}$ tending to $1$
as $m \to \infty$.  Hence, $\gamma$ is simultaneously a root of unity
for a power of $2$ and a power of $3$, and hence $\gamma = 1$.
\end{rem}
\medskip

A sequence being just density zero is clearly not
enough for rigidity for an ergodic transformation
because a sequence can be density zero and
have infinitely many pairs of terms $(n_m,n_{m+1})$ with, say,  $n_{m+1}
- n_m \le 10$. More sparsity is needed than just density zero. In this
direction, lacunary sequences might seem to be good candidates to be
rigidity sequences for some weakly mixing transformations because
they certainly have the necessary sparseness. We take lacunarity
here to be as usual: $(n_m)$ is lacunary if and only if there exists
$\rho >1$ such that $n_{m+1}/n_m \ge \rho$ for all $m \ge 1$.
However, even in this class of sequences the situation is not
clear as there are lacunary sequences which cannot even be rigidity
sequences for any ergodic transformations.  For example, $(2^n+1)$
is lacunary but cannot be a rigidity sequence
even for an ergodic transformation with discrete spectrum.
See Remark~\ref{linformeg} c).  This answers the question in
Eisner and Grivaux~\cite{EG}, p. 5.

\subsection{\bf Diophantine approach}
\label{diophantine}

\subsubsection{\bf Existence of Supports}\label{existenceofsupports}

Denote by $[x]$ the nearest integer to $x \in \mathbb R$, choosing
$\lfloor x \rfloor$ if $\{x\} = 1/2$. So $|x - [x]|$ would be the
distance of $x$ to $\mathbb Z$. We will denote this by $\|x\|$.
The distance $\|x\| = \{x\}$ if $\{x\} \le \frac 12$ and $\|x\| =
1 - \{x\}$ if $\frac 12 \le \{x\}$. We have the following result.

\begin{prop} \label{ABCD}  The following are equivalent:

\noindent a) there exists some
infinite perfect compact set $K \subset [0,1)$,  such that
$\sin(2\pi\sigma x) \to 0$ as $\sigma \to \infty
(IP)$ for all $x \in K$,

\noindent b) for some uncountable set of $x$ values, we have
\begin{equation}\label{ABS}
\sum\limits_{m=1}^\infty \|n_mx\| < \infty.
\end{equation}
\end{prop}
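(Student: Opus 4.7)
The strategy is to reformulate (a) in the style of (b) using Remark~\ref{moreIP}: since $|\sin(2\pi t)|$ being small forces $\|2t\|$ to be small, the doubling trick $K \mapsto K' = \{\{2x\}: x \in K\}$ turns the sine condition into $\|\sigma y\| \to 0$ as $\sigma \to \infty(IP)$. The two directions then reduce, respectively, to a Cantor--Bendixson argument and to a lemma identifying IP-smallness modulo~$1$ with absolute summability of the distances.

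For (b)$\Rightarrow$(a), I would observe that $f(x) = \sum_{m=1}^\infty \|n_m x\|$ is lower semicontinuous as the increasing limit of continuous partial sums, so $E_N := f^{-1}([0,N])$ is closed in $\T$ for every $N$. If (b) holds, $\bigcup_N E_N$ is uncountable, hence some $E_N$ is an uncountable closed (thus compact) subset of $\T$, and by the Cantor--Bendixson theorem its perfect kernel is a non-empty perfect compact subset $K$. For every $x \in K$ the tails $\sum_{m \ge M}\|n_m x\|$ tend to $0$, so subadditivity of $\|\cdot\|$ gives $\|\sigma x\| \le \sum_{j=1}^k \|n_{m_j}x\|$ for any IP-sum $\sigma = n_{m_1}+\dotsb+n_{m_k}$ with $m_1 \ge M$, proving $\sin(2\pi\sigma x) \to 0$ as $\sigma \to \infty(IP)$ for all $x \in K$.

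For (a)$\Rightarrow$(b), passing to $K'$ as above yields an uncountable set (image of a perfect set under an at-most-$2$-to-$1$ continuous map) on which $\|\sigma y\| \to 0$ as $\sigma \to \infty(IP)$. The conclusion then follows from the key lemma: \emph{if $(a_m)$ is a real sequence such that for every $\varepsilon > 0$ there exists $M$ with $\|\sum_{m \in F} a_m\| < \varepsilon$ for every finite $F \subset [M,\infty)$, then $\sum_m \|a_m\| < \infty$}. Applied to $a_m = n_m y$ for each $y \in K'$, this yields (b).

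I expect the lemma to be the main obstacle. To prove it, fix $\varepsilon < 1/4$ and the corresponding $M$. Taking $F$ to be a singleton gives $\|a_m\|<\varepsilon$ for $m\ge M$, so the mod-$1$ representatives $a_m' \in (-1/2,1/2]$ satisfy $|a_m'| = \|a_m\| < \varepsilon$. Consider the monotone non-negative sums of the positive parts, $P_n := \sum_{M \le m \le n,\; a_m' > 0} a_m'$. By hypothesis $\|P_n\| < \varepsilon$. Suppose, for contradiction, that $P_{n_0} \ge \varepsilon$ for some $n_0 \ge M$ and take the least such $n_0$; then $P_{n_0} = P_{n_0 - 1} + (a_{n_0}')^+ \in [\varepsilon, 2\varepsilon) \subset [\varepsilon, 1/2)$, whence $\|P_{n_0}\| = P_{n_0} \ge \varepsilon$, contradicting the hypothesis. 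Hence $P_n < \varepsilon$ for all $n \ge M$, and $\sum_m (a_m')^+ \le \varepsilon$. The symmetric argument for negative parts gives the same bound, so $\sum_{m \ge M}\|a_m\| \le 2\varepsilon$, completing the lemma.
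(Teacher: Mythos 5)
Your proof is correct, and at the top level it follows the same strategy as the paper: the doubling trick $x\mapsto\{2x\}$ to convert the sine condition into $\|\sigma y\|\to 0$ as $\sigma\to\infty(IP)$, and a first-crossing (upcrossing) argument for the hard direction. The differences are in the execution, and in each case your version is arguably cleaner. For (b)$\Rightarrow$(a) the paper expands $\sin(2\pi\sigma x)$ via repeated use of the addition formula as $\sum_j f_j\sin(2\pi n_{m_j}x)$ with $|f_j|\le 1$ and dominates it by the tail of the absolutely convergent series, whereas you simply use subadditivity $\|\sigma x\|\le\sum_j\|n_{m_j}x\|$ together with $|\sin(2\pi t)|\le 2\pi\|t\|$; also, the paper obtains the perfect set by citing the general theorem that an uncountable Borel set contains a perfect compact subset, while your lower-semicontinuity plus Cantor--Bendixson argument produces it directly from a closed sublevel set $\{f\le N\}$ (a small bonus: $\sum_m\|n_mx\|\le N$ uniformly on your $K$). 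For (a)$\Rightarrow$(b) the paper argues by contradiction: it splits the indices according to whether $\|n_mx_0\|$ equals $\{n_mx_0\}$ or $1-\{n_mx_0\}$, runs the divergent sub-sum up into the window $(\tfrac18,\tfrac38)$, and contradicts the smallness of $\|\sum_{m\in I}n_mx_0\|$ via $|\sin(2\pi\sum_{m\in I}n_mx_0)|\ge\tfrac1{\sqrt2}$. Your key lemma is the same idea carried out on the signed representatives $a_m'\in(-\tfrac12,\tfrac12]$, but it avoids the sine function entirely and yields the quantitative tail bound $\sum_{m\ge M}\|a_m\|\le 2\varepsilon$ rather than just a contradiction; isolating it as a standalone statement about real sequences is a nice structural improvement. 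The only point worth tidying is that your perfect kernel should be taken inside a compact piece of $[0,1)$ (e.g.\ intersect $\{f\le N\}$ with $[0,1-\delta]$ for suitable $\delta$, or work on $\mathbb T$ and discard a point), but this is the same triviality the paper glosses over.
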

\begin{proof}
The condition $\sum\limits_{m=1}^\infty \|n_mx\| < \infty$ describes
a Borel set of values $x$.  Hence, the set theoretic aspects of this proposition work
because any uncountable Borel set in $[0,1)$ contains an
infinite perfect compact subset. See Sierpi\'nski~\cite{Sierpinski},
p. 228.

Assume that
$\sum\limits_{m=1}^\infty \|n_mx\| < \infty$
for an uncountable set of $x$ values.
Consider separately the
values of $\|n_mx\|$ where it is $\{n_mx\}$ or it is $1 - \{n_mx\}$.
In the first case, as $m \to \infty$, $0\le \sin(2\pi n_mx) \sim
2\pi \{n_mx\} = 2\pi\|n_mx\|$.
In the second case, as $m \to \infty$, $0\ge \sin(2\pi n_mx) \sim
-2\pi (1 -\{n_mx\}) = -2\pi\|n_mx\|$.
So Equation\,(\ref{ABS}) implies that
$\sum\limits_{m=1}^\infty \sin(2\pi n_mx)$ converges absolutely
for an uncountable set. Using the formula
\[\sin(\alpha+\beta) = \sin(\alpha)\cos(\beta)+\cos(\alpha)\sin(\beta)\]
repeatedly, we see that $\sin(2\pi x\sigma) = \sum\limits_{j=1}^k
f_j \sin(2\pi x n_{m_j})$ with coefficients $f_j$ that
suitable products of cosines. Here $|f_j| \le 1$. So the
convergence in Equation\,(\ref{ABS}) tells us that for an
uncountable set we have $\sin(2 \pi \sigma x) \to 0$ as $\sigma
\to \infty\ (IP)$.

Conversely, suppose we have for an uncountable set of $x$ such that
$\sin(2 \pi
\sigma x) \to 0$ as $\sigma \to \infty\,(IP)$. By doubling the $x$
values, we see that this means that for an uncountable set $K$, if
$x \in K$, and $\epsilon > 0$, we can choose $M_{\epsilon} \ge 1$
such that for all finite sets $F$ of whole numbers, all no smaller
than $M_{\epsilon}$, we have $ \|\sum\limits_{m \in F} n_mx \|
\le \epsilon$. In particular, for all $x\in K$, we have $\|n_mx\|
\to 0$ as $m \to \infty$. It follows that there is an uncountable
set $K_0 \subset K$ and some $M_0$ such that for all $x \in K_0$
and all finite sets $F$ of whole numbers all no smaller than
$M_0$, we have $ \| \sum\limits_{m\in F} n_mx \| \le \frac
1{100}$. In particular, if $m \ge M_0$, $\| n_mx_0\| \le \frac
1{100}$. Now suppose $\sum\limits_{m=1}^\infty \| n_mx_0\| =
\infty$ for some $x_0 \in K_0$. Then also
$\sum\limits_{m=M_0}^\infty \| n_mx_0\| = \infty$. Each
$\|n_mx_0\|$ is either $\{n_mx_0\}$ or it is $1 - \{n_mx_0\}$.
Say $I_1$ is the set of $m \ge M_0$ where the first formula holds,
and $I_2$ is the set of $m \ge M_0$ where the second formula
holds. Then either $\sum\limits_{m\in I_1} \|n_mx_0\| = \infty$
or $\sum\limits_{m\in I_2} \|n_mx_0\| = \infty$. Assume it is the
first case. Then by an upcrossing argument, we can choose a
finite set $I \subset I_1$ such that $\sum\limits_{m\in I}\|
n_mx_0 \|\in (\frac 18,\frac 38)$. But then $\sin(2\pi
\sum\limits_{m \in I} n_mx_0) = \sin(2\pi \sum\limits_{m \in I} \|
n_mx_0 \|) \ge \frac 1{\sqrt 2}$. This is not possible for $x\in
K_0$ because we know that $ \|\sum\limits_{m\in I} n_m x_0 \|$ is
small and so $|\sin(2\pi \sum\limits_{m\in I} n_mx_0)| =
|\sin(2\pi \| \sum\limits_{m\in I}n_mx_0 \|)| \le 2\pi
\|\sum\limits_{m\in I} n_mx_0 \| \le 2\pi \frac 1{100}$. In the
second case, again by an upcrossing argument, we can choose a
finite set $I \subset I_2$ such that $\sum\limits_{m\in I}\|
n_mx_0 \|\in (\frac 18,\frac 38)$. So then $\sin(2\pi
\sum\limits_{m \in I} n_mx_0) = -\sin(2\pi \sum\limits_{m \in I}
\| n_mx_0 \|) \le -\frac 1{\sqrt 2}$. Again this is not possible
for $x\in K_0$ because we know that $ \|\sum\limits_{m\in I} n_m
x_0 \|$ is small and so $|\sin(2\pi \sum\limits_{m\in I} n_mx_0)|
= |\sin(2\pi \| \sum\limits_{m\in I}n_mx_0 \|)| \le 2\pi
\|\sum\limits_{m\in I} n_mx_0 \| \le 2\pi \frac 1{100}$.
\end{proof}

This result, and Remark~\ref{moreIP} a) at the end of Section~\ref{rigidity}, give
this basic result.

\begin{prop}\label{OKforIP} If there exists an uncountable set of
$x$ values such that $\sum\limits_{m=1}^\infty \|n_mx\| < \infty$,
then there is a weakly mixing transformation that is rigid along the
$IP$ set generated by $(n_m)$.
\end{prop}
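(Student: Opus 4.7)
The plan is to invoke Proposition~\ref{spectralIP}: it suffices to produce a continuous Borel probability measure $\nu$ on $\T$ such that $\widehat{\nu}(\sigma) \to 1$ as $\sigma \to \infty\,(IP)$ along $FS(n_m)$. The natural candidate for $\nu$ is a measure supported on the set of good Diophantine points for $(n_m)$, and I would verify the Fourier asymptotic by a pointwise estimate plus dominated convergence.

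\emph{Step 1: construct a Cantor-like support.} First I would observe that
\[E = \bigl\{x \in [0,1) : \sum_{m=1}^\infty \|n_m x\| < \infty\bigr\}\]
is Borel, being the convergence set of a series of Borel functions. By hypothesis $E$ is uncountable, so as in the proof of Proposition~\ref{ABCD} (see Sierpi\'nski~\cite{Sierpinski}) $E$ contains an infinite perfect compact subset $K \subset [0,1)$.

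\emph{Step 2: choose $\nu$.} Any infinite perfect compact subset of $[0,1)$ carries an atomless Borel probability measure; I would fix such a $\nu$, and regard it as a measure on $\T$ via $x \mapsto e^{2\pi i x}$. Since $K$ is perfect, $\nu$ is continuous in the sense used throughout this section.

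\emph{Step 3: pointwise IP-convergence of the character.} For any $x \in K$ and any finite $F \subset \N$, writing $\sigma = \sum_{m\in F} n_m$, the triangle inequality for the distance-to-integers norm gives
\[\|\sigma x\| \le \sum_{m \in F} \|n_m x\|.\]
Because $\sum_m \|n_m x\| < \infty$, the tail $\sum_{m \ge M} \|n_m x\|$ tends to $0$, so if $\min F \to \infty$ then $\|\sigma x\| \to 0$, and hence $e^{2\pi i \sigma x} \to 1$. In particular, along every sequence $\sigma_k \to \infty\,(IP)$ one has $e^{2\pi i \sigma_k x} \to 1$ pointwise on $K$.

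\emph{Step 4: integrate and conclude.} The bounded convergence theorem then yields $\widehat{\nu}(\sigma_k) = \int_K e^{2\pi i \sigma_k x}\,d\nu(x) \to 1$ for every sequence $\sigma_k \to \infty\,(IP)$, and a routine contradiction argument (if the net did not converge, extract a sequence along which $|\widehat{\nu}(\sigma_k) - 1| \ge \varepsilon$) upgrades this to $\widehat{\nu}(\sigma) \to 1$ as $\sigma \to \infty\,(IP)$. Proposition~\ref{spectralIP} then produces the desired weakly mixing transformation.

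\emph{Main obstacle.} The only genuinely subtle point is the mismatch between sequential dominated convergence and the directed IP-net, which is why I would work through arbitrary sequences $\sigma_k \to \infty\,(IP)$ rather than quoting a net version of dominated convergence. Everything else---Borelness of $E$, the perfect-kernel fact, the subadditivity $\|\sigma x\| \le \sum_{m \in F} \|n_m x\|$, and the atomlessness of $\nu$---is standard, so the substance of the proof really lies in the tail estimate combined with this sequential extraction.
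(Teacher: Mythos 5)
Your proof is correct, and its overall skeleton (a perfect compact subset of the convergence set, an atomless probability measure supported on it, pointwise convergence of the characters along the IP net, bounded convergence, and finally Proposition~\ref{spectralIP}) matches the paper's. The difference is in how the pointwise step is obtained. The paper's proof is a citation: it combines Proposition~\ref{ABCD}, which converts absolute convergence of $\sum_{m}\|n_mx\|$ on an uncountable set into $\sin(2\pi\sigma x)\to 0$ as $\sigma\to\infty\,(IP)$ on a perfect compact set by repeatedly expanding $\sin(2\pi\sigma x)$ with the sine addition formula into $\sum_j f_j\sin(2\pi n_{m_j}x)$ with $|f_j|\le 1$, with Remark~\ref{moreIP}~a), which then uses a doubling trick (passing to $y=\{2x\}$) to upgrade the vanishing of the sine alone to the full convergence $e^{2\pi i\sigma y}\to 1$. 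Your subadditivity estimate $\|\sigma x\|\le\sum_{m\in F}\|n_mx\|$ reaches the same pointwise conclusion in one line and bypasses both the trigonometric bookkeeping and the doubling step; it is the more direct route for this implication, the only thing given up being the converse direction that Proposition~\ref{ABCD} also records, which is not needed here. Your sequential extraction to reconcile the directed IP net with the sequential form of bounded convergence is exactly the right care to take, and the remaining ingredients (Borelness of the convergence set, the perfect-kernel fact from Sierpi\'nski, and the existence of a continuous measure on a perfect compact set) are all used in the same way elsewhere in the paper.
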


\begin{rem}\label{IPinfo}
It is not clear what growth property for $(n_m)$ corresponds to
Equation\,(\ref{ABS}) holding for an uncountable set of points. At
least this analysis shows that we can use results from
Erd\H{o}s and Taylor~\cite{ET}. In particular, they show that a
sufficient condition for the absolute convergence on an
uncountable set that we need is that $\sum\limits_{m=1}^\infty
n_m/n_{m+1}$ converges. See the beginning of the proof of
Proposition~\ref{fastworks}. Sometimes much less of a growth
condition is needed. For example, if one knows $n_{m+1}/n_m$
tends to infinity and $n_{m+1}/n_m$ is eventually a whole number,
then we have $\sum\limits_{m=1}^\infty \|n_mx\|$
converging for an uncountable set of $x$ values.
However, Erd\H{o}s and Taylor~\cite{ET} also give an example where $\lim\limits_{m\to \infty}
n_{m+1}/n_m = \infty$, but the ratio is infinitely often not a whole number, and yet
$\sum\limits_{m=1}^\infty \|n_mx\|$ converges only
for a countable set of values.
\end{rem}

Erd\H{o}s and Taylor~\cite{ET} observe that an earlier result
of Eggleston~\cite{Egg} is relevant here. Eggleston~\cite{Egg}
showed that it is necessary to have some hypothesis on $(n_m)$
that prevents $n_{m+1}/n_m$ from being bounded because
if these ratios are bounded then one
has $\exp(2\pi i n_mx) \to 1$ as $m \to \infty$ for at most a
countable set of values $x$. This fact is related to the weaker
version of the result above that is worth observing here, the
pointwise criterion needed for rigidity along the sequence itself.
The question is: what growth condition on a strictly increasing
sequence $(n_m)$ is needed for the sequence to admit a weakly
mixing dynamical system for which there is a non-trivial rigid
function along $(n_m)$? The same method as above, using the GMC,
shows that it is sufficient to know when there is an uncountable
set of points $K \subset [0,1]$ such that for all $x \in K$, we
have $\exp(2\pi in_mx) \to 1$ as $m \to \infty$. Hence, this
property can be characterized by having
\begin{equation}\label{termwise}
\lim\limits_{m \to \infty} \|n_mx\| = 0
\end{equation}
for an uncountable set of $x$ values. As with convergence along
an IP set, this property is stronger than what is needed to
produce just one weakly mixing dynamical system with rigidity
along $(n_m)$. This property guarantees that any dynamical
system, weakly mixing or not, whose non-trivial spectral measures
are supported on a subset of $K$, would have $(n_m)$ as a rigidity sequence.

These characterizations,
Equation\,(\ref{termwise}) and Equation\,(\ref{ABS}), show
the difference between having rigidity along a sequence
versus having rigidity along the IP set that the sequence
generates for all spectral measures supported in the set.
Eggleston shows the following in ~\cite{Egg}.

\begin{prop}\label{ratiogrows}
If $\lim\limits_{m\to \infty} n_{m+1}/n_m = \infty$, then
Equation\,(\ref{termwise}) holds for an uncountable set. Hence,
if $\lim\limits_{m\to \infty} n_{m+1}/n_m = \infty$, then $(n_m)$
is a rigidity sequence for some weakly mixing transformation.
\end{prop}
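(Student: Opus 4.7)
The plan is to build, by a Cantor-type construction, a perfect compact set $K\subset[0,1)$ on which $\|n_m x\|\to 0$, then to put a continuous probability measure on $K$ and invoke Proposition~\ref{spectral}.

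For the first step, fix a sequence $\epsilon_m\downarrow 0$ with $\epsilon_m> 2n_m/n_{m+1}$; this is possible since $n_m/n_{m+1}\to 0$. Let $E_m=\{x\in[0,1):\|n_m x\|<\epsilon_m\}$, which is a union of $n_m+1$ open intervals of length $2\epsilon_m/n_m$ centered at the points $k/n_m$, $0\le k\le n_m$. I would inductively select a nested family of closed intervals $J_s$, indexed by finite binary strings $s\in\{0,1\}^m$, with the following properties: $J_s\subset E_{|s|}$; $J_{s0}$ and $J_{s1}$ are disjoint and both contained in $J_s$; and $\mathrm{length}(J_s)\to 0$. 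The feasibility of splitting $J_s$ at stage $m$ into two disjoint closed subintervals of $E_{m+1}$ rests on a length comparison: $J_s$ has length at least (say) $\epsilon_m/n_m$, while the centers of the component intervals of $E_{m+1}$ are $1/n_{m+1}$ apart; by the choice of $\epsilon_m$, $J_s$ is much longer than $2/n_{m+1}$, so it contains many full components of $E_{m+1}$, and I pick two of them well apart. Then $K=\bigcap_m\bigcup_{s\in\{0,1\}^m}J_s$ is a non-empty perfect compact set, and by construction every $x\in K$ lies in $E_m$ for every $m$, so $\|n_m x\|<\epsilon_m\to 0$. In particular $K$ is uncountable, which establishes (\ref{termwise}) for uncountably many $x$.

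For the second step, I would define a continuous Borel probability measure $\nu$ on $K$ by the usual coin-flipping procedure attached to the Cantor construction: assign mass $2^{-m}$ to each $J_s$ with $|s|=m$ (this is consistent because each $J_s$ is split into two pieces $J_{s0},J_{s1}$, each receiving half of the mass of $J_s$). Since $\mathrm{length}(J_s)\to 0$ and each singleton is assigned mass $0$, $\nu$ is continuous. For $x\in K$ we have $e^{2\pi i n_m x}\to 1$, so by dominated convergence
\[
\widehat{\nu}(n_m)=\int_K e^{2\pi i n_m x}\,d\nu(x)\longrightarrow 1.
\]

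Finally, Proposition~\ref{spectral} applied to this continuous probability measure $\nu$ produces a weakly mixing transformation (via the GMC) for which $(n_m)$ is a rigidity sequence.

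The main obstacle is the length bookkeeping in the Cantor construction: one needs to verify simultaneously that $J_s\subset E_{|s|}$, that two disjoint subintervals of $J_s$ both lie inside $E_{|s|+1}$, and that the lengths shrink to $0$ so that $\nu$ is continuous. The growth hypothesis $n_{m+1}/n_m\to\infty$ is used precisely here, to guarantee that each chosen interval at stage $m$ is wide enough (relative to the $1/n_{m+1}$ spacing of components of $E_{m+1}$) to accommodate the split at stage $m+1$; everything else is routine.
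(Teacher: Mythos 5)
Your argument is correct, and it actually supplies a proof that the paper does not: for the first assertion the authors simply cite Eggleston~\cite{Egg}, and for the second they rely on the surrounding discussion (an uncountable Borel set contains a perfect compact subset, as in the proof of Proposition~\ref{ABCD}; one puts a continuous probability measure on it and invokes Proposition~\ref{spectral} via the GMC). Your Cantor-type construction produces the perfect set and the continuous measure simultaneously, which is essentially the standard proof of Eggleston's lemma and is surely what the authors have in mind; the dominated-convergence step $\widehat{\nu}(n_m)\to1$ and the appeal to Proposition~\ref{spectral} are exactly the paper's route from the pointwise statement to the weakly mixing system. The one point to tighten is the bookkeeping you flag yourself: with $\epsilon_m>2n_m/n_{m+1}$, a full component of $E_m$ has length $2\epsilon_m/n_m>4/n_{m+1}$, which guarantees it contains at least three lattice points $k/n_{m+1}$ but not necessarily two components of $E_{m+1}$ lying \emph{entirely} in its interior. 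Since $n_{m+1}/n_m\to\infty$ you can instead choose $\epsilon_m\to0$ with $\epsilon_m n_{m+1}/n_m\to\infty$ (for instance $\epsilon_m=\max\left(\sqrt{n_m/n_{m+1}},\,1/m\right)$), after which each $J_s$ contains arbitrarily many full components of $E_{|s|+1}$ once $|s|$ is large, and the splitting, the shrinking of lengths, and the continuity of the coin-flipping measure all go through without further comment.
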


\begin{rem}\label{boundedornot}
It is not necessary to have $\lim\limits_{m\to \infty} n_{m+1}/n_m = \infty$
for Equation\,(\ref{termwise}) to hold on an uncountable set.
Indeed, it is not hard to construct examples of
strictly increasing sequences $(n_m)$ with Equation\,(\ref{termwise})
holding on an uncountable set, and yet there are
arbitrarily long pairwise disjoint blocks $B_k \subset \mathbb N$
such that $n_{m+1}/n_m = 2$
for all $m \in B_k$ and all $k$.  However, as commented above,
Eggleston also shows in ~\cite{Egg}
that for Equation\,(\ref{termwise}) to hold on an uncountable set,
it is certainly necessary to know that the ratios $n_{m+1}/n_m$ are
not uniformly bounded.  Also, Eisner and Grivaux~\cite{EG} show in Proposition 3.8
that one can weaken the hypothesis of Proposition~\ref{ratiogrows}
to just $\limsup\limits_{m\to\infty} \frac {n_{m+1}}{n_m}= \infty$
if $\frac {n_{m+1}}{n_m}$ is always a whole number.  This also follows
from using Proposition~\ref{integerratios}.
\end{rem}

\begin{rem}\label{egdisjoint}  a) Proposition~\ref{ratiogrows} allows us
to give interesting examples of disjoint weakly mixing dynamical systems with common
rigidity sequences.  Here disjointness is the standard disjointness from Furstenberg~\cite{Furstdisjoint},
their product is their only non-trivial joining.  This property means they also do not have
any common factors, and so of course are not isomorphic.  We construct weakly mixing $T$ and $S$ as follows.
Let $(a_m) = (2^{m^2})$, and let $(b_m)$ be the sequence which is $2^{m^2}$
for even $m$ and $2^{m^2}+1$ for odd $m$.  Since $a_{m+1}/a_m \to \infty$ and
$b_{m+1}/b_m \to \infty$ as $m \to \infty$, by Proposition~\ref{ratiogrows}
there exists $T$ which is weakly mixing and rigid along $(a_m)$ and $S$ which is weakly
mixing and rigid along $(b_m)$.  We see that $T$ and $S$ are rigid along the sequence
$(2^{(2m)^2})$.  Now take  $\nu^T$ and $\nu^S$ to be the maximal spectral
types of $T$ and $S$ on $L_{2,0}(X,p)$.  If we show that $\nu^T$ and $\nu^S$ are
mutually singular, then by Hahn and Parry~\cite{HP}, $T$ and
$S$ are disjoint.  But if $\omega \ll \nu^T$ and $\omega \ll \nu^S$,
we have $\omega$ rigid along both $(2^{(2m+1)^2})$ and $(2^{(2m+1)^2}+1)$.  It follows
that $\omega$ would have to be concentrated at $\{1\}$, which means $\omega = 0$
because $\nu^T(\{1\}) = \nu^S(\{1\}) = 0$.
\medskip

\noindent b)  Given ergodic transformations $T$ and $S$, with $T$ rigid along $(a_m)$ and
$S$ rigid along $(b_m)$, such that $b_m = a_m + p(m)$ for a non-zero polynomial, one can
argue in the style above, by taking successive differences, that the only spectral overlap of $T$ and $S$ can be with
eigenvalues that are $d$-th roots of unity where $d = p(0)$.  So if either $T$ or $S$
is totally ergodic (or even say weakly mixing), then $T$ and $S$ are disjoint.
\end{rem}

We want to make some general observations about the values of $x$ such
that $\|n_mx\| \to 0$ as $m\to \infty$. Alternatively, consider this set
in its
representation in $\mathbb T$; we are then looking for all $\gamma \in
\mathbb T$
such that $\gamma^{n_m} \to 1$ as $m \to \infty$. The first important point
is that this is a subgroup of $\mathbb T$, which we have denoted by
$\mathcal R(n_m)$. It
is easy to see that it is a Borel set, indeed it is clearly an
$\mathcal F_{\delta\sigma}$ because
\[\mathcal R(n_m) = \bigcap\limits_{k=1}^\infty \bigcup\limits_{M=1}^\infty
\bigcap\limits_{m=M}^\infty \{\gamma: |\gamma^{n_m} -1| \le 1/k\}.\]

There is quite a bit of literature
about such subgroups, and there is some interesting descriptive set
theory involved
in the study of the structure of this set too. First, consider this set
in the situation
that the $n_m = q_m$ are the denominators $q_m$ of the convergents
$\frac {p_m}{q_m}$
of the continued fraction
expansion of some fixed $\alpha \in [0,1]$, $\alpha$ irrational. Sometimes the ratios $\frac
{q_{m+1}}{q_m}$ are bounded.
In this case Larcher~\cite{Larcher} showed that $\mathcal R(n_m)$ is
just $\mathbb Z\alpha +
\mathbb Z$ i.e. in $\mathbb T$, we have $\mathcal R(n_m)$ just the
circle group generated
by $\exp(2\pi i\alpha)$. See also Kraaikamp and Liardet~\cite{Kr-Li} who
discuss issues of
speed of approach of $\|q_m\alpha\|$ to $0$. See also Host, M\'ela, and
Parreau~\cite{Ho-Me-Pa}
where this subgroup is considered extensively in the context of spectral
analysis of
dynamical systems.

Also, one can reverse the question of the structure of subgroups
$\mathcal R(n_m)$, by asking
which subgroups of $\mathbb T$ can be realized as such subgroups. It is
generally known that
there are subgroups of the circle that are not even Lebesgue measurable,
let alone Borel measurable.
In communication with S. Solecki, we learned of references that are very
thorough in evaluating
the descriptive set theoretic structure of subgroups of the circle. For
example, see the
articles by Klee~\cite{Klee}, Mauldin~\cite{Mauldin},
Solecki~\cite{Solecki1}, and Farah and Solecki~\cite{Farah-Solecki}.
In addition, Solecki~\cite{Solecki2}
has pointed out that there are even subgroups that are $\mathcal
F_{\delta\sigma}$ sets
which are not of the form $\mathcal R(n_m)$ for some $(n_m)$. Moreover,
subgroups like
$\mathcal R(n_m)$ are {\em Polishable} (see Farah and
Solecki~\cite{Farah-Solecki}), and not all
subgroups that are $\mathcal F_{\delta\sigma}$ sets are Polishable. In
addition, he points out
that there are Polishable $\mathcal F_{\delta\sigma}$ subgroups which
are not of the
form $\mathcal R(n_m)$ for some $(n_m)$. These results suggest that
there is unlikely to
be a descriptive set theoretic characterization of the class of
subgroups of the form
$\mathcal R(n_m)$.

\subsubsection{\bf Rank One Constructions} \label{rankone}

In this section, given an increasing sequence $(n_m)$
such that either $\frac {n_{m+1}} {n_m} \to \infty$ or
$\frac {n_{m+1}} {n_m}$ is a whole number for each $m$,
we will explicitly construct an infinite
measure-preserving rank one map $T$ such that $T^{n_m} \to Id$
in the strong operator topology.  As observed
in Remark~\ref{Poisson}, the Poisson suspension gives an example
of a weakly mixing finite measure-preserving transformation $S$ such
that $S^{n_m} \to Id$ in the strong operator topology.
  The Poisson suspension is an appealingly natural
construction  in that
   no spectral measure intervenes. However, it is also
  worth noting that  by the following lemma
  $U_{T}$ automatically has continuous spectrum
  so that one may  apply the GMC to its maximal spectral type to
  obtain the desired  $S$. We note that any rank one $T$ is
  necessarily ergodic.

  \begin{lem}\label{wmnoeigen}
      If $T$ is an ergodic measure-preserving automorphism of an
      infinite measure space $(X,\mathcal B,\mu)$ then $U_{T}$ has
      continuous spectrum.
  \end{lem}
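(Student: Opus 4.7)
The plan is to argue by contradiction: suppose $U_T$ has a non-trivial point in its spectrum, meaning there exist $\lambda \in \mathbb{T}$ and a non-zero $f \in L_2(X,\mathcal B,\mu)$ with $U_T f = \lambda f$, i.e.\ $f\circ T = \lambda f$ $\mu$-a.e. The key step is to pass to absolute values: since $|\lambda|=1$, we get $|f|\circ T = |f|$, so $|f|^2$ is a $T$-invariant non-negative function.

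Now I would invoke ergodicity of $T$ to conclude that $|f|^2$ is constant $\mu$-a.e. Here is where the infiniteness of $\mu$ enters decisively: a non-zero constant function cannot lie in $L_2(X,\mathcal B,\mu)$ when $\mu(X)=\infty$, so the constant must be $0$, contradicting $f\neq 0$. Hence $U_T$ has no eigenvalues, i.e.\ no spectral measure $\nu^{U_T}_f$ has an atom. This is precisely the statement that $U_T$ has continuous spectrum.

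The only subtle point is the first step: ergodicity is a statement about invariant sets, not about invariant $L_2$-functions, so strictly speaking one should check that $\{|f|^2 > c\}$ is a $T$-invariant measurable set for every $c\geq 0$, and each such set must have measure $0$ or $\mu(X)=\infty$ by ergodicity. Choosing $c$ with $0<c<\|f\|_\infty$ (or using the distribution function of $|f|^2$) then forces the set $\{|f|^2>c\}$ to have infinite measure, which is incompatible with $f\in L_2$ unless $f=0$. I expect this tiny measure-theoretic packaging to be the only thing worth writing down carefully; the rest of the argument is essentially one line.
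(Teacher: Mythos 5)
Your proof is correct and follows essentially the same route as the paper's: take an eigenfunction $f$ with $f\circ T=\lambda f$, note $|\lambda|=1$ so $|f|$ is $T$-invariant and hence constant by ergodicity, and conclude $f=0$ since a non-zero constant is not square-integrable on an infinite measure space. The extra care you take with level sets $\{|f|^2>c\}$ is a reasonable way to make the ergodicity step precise, but it is the same argument the paper gives in compressed form.
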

  \begin{proof}
      Suppose $f \in L_{2}(X,\mu)$ and $f \circ T = \lambda f$. Then $|\lambda|
      = 1$ so $|f|$ is $T$-invariant and it follows that $|f|$ is constant. Since $\mu$ is
      infinite it follows that $f=0$.
  \end{proof}

We use below the notation $a:=b$ or $b=:a$ to mean that $a$ is defined to be $b$.
 We assume that the reader has some familiarity with
 rank one constructions but
 the following is  a quick refresher. For more details see ~\cite{Nadkarni}
 or ~\cite{Ferenczi}.
 Suppose $T$ is a rank one map preserving a finite or infinite
 measure $\mu$ and $\{\tau_N\}$ is a refining sequence of rank one
 towers for $T$. This means that $\tau_{N+1}$ may be viewed as
 constructed from $\tau_N$ by
 cutting $\tau_N$ into columns of equal width and
 stacking them above each other, with the possible addition of
  spacer levels between the columns. We will refer to these
 columns of $\tau_N$ as copies of $\tau_N$. The crucial condition
 that makes $T$ rank one is that  the towers
 $\{\tau_N\}$  are required to
 converge to the full sigma-algebra of the space in
 the sense that for any measurable set $E$  of finite measure
 and $\epsilon > 0$ there is
 an $E'$ which is a union of levels of some $\tau_N$ (and hence of
 all $\tau_N$ for $N$ sufficiently large) such that $\mu(E \triangle
 E') < \epsilon$. We let $X_N$ denote  the union of the levels of $\tau_N$.

 Any such $T$ may be realized concretely as a map of
 an interval $I \subset \R$ as follows. We take
 $X_{0}$ to be a finite sub-interval of $\R$ and let $\tau_{0}$
 be the tower of height $1$ consisting of the single level $X_{0}$.
 Now suppose that $\tau_{1}, \ldots , \tau_N$ have been constructed,
 each $\tau_{i}$ a tower whose levels are intervals and the union
 of the levels of each $\tau_{i}$ is an interval $X_{i}$. At this point $T$ is
 partially defined on $X_N$ by mapping each level of $\tau_N$ to
 the level directly above it by the appropriate translation, except
 for the top level, where $T$ remains undefined as yet.
Divide the base of $\tau_N$ into  $q$ subintervals of equal width $w$
and denote the columns of $\tau_N$ over these by $C_{1}, \ldots,
C_{q}$.
Let $r \geq 0$, take $S$ to be an
  interval of width $rw$ adjacent to the interval $X_N$,
   divide $S$ into  $r$ spacer
 intervals of width $w$,
 stack $C_{1}, \ldots, C_{q}$ in order above each other and
 interleave the $r$ spacer intervals in any way between, below and
 above the columns $C_{1}, \ldots, C_{q}$.
 We  then define $T$ partially on $X_{N+1}$ using $\tau_{N+1}$ in the
 same way it was defined on $X_N$ and this is evidently consistent
 with the definition of $T$ on $X_N$. Thus, in the limit $T$ is
 almost everywhere defined on $I = \bigcup\limits_{N=1}^\infty X_N$ and is evidently
 rank one.  Note that in the concrete model the convergence of
 $\tau_N$ to the Borel $\sigma$-algebra of $I$ is automatic.

 We let  $S_N = X_{N+1} \backslash X_N$    and
 $\epsilon_N = \frac {\mu(S_N)} {\mu(X_{N+1})}$, the fraction of the
 levels of $\tau_{N+1}$ which are not contained in a level of
 $\tau_N$; that is, they are spacers added at stage $n$ of the
 construction. We observe that $\mu$ is infinite precisely when
 $\sum\limits_{N=1}^\infty \epsilon_N = \infty$.

 For a fixed $\tau_N$ we will say a time $N > 0$ is
 $\epsilon$-rigid for $\tau_N$ if for each level $E$ of $\tau_N$ we have
 $\mu(T^{N}E \triangle E) < \epsilon \mu(E)$. Note that one then has the
 same inequality for any $E$ which is a union of levels of $\tau_N$.
 Consequently if $N$ is  $\epsilon$-rigid for $\tau_N$
 then it is also $\epsilon$-rigid for any
 $\tau_{M}, \ M \leq N$.
  We will say that the sequence $(n_m)$ is {\em rigid for a set $E$
  of finite measure} if
 $\mu(T^{n_{m}}E \triangle E) \to 0$; and that {\em $(n_{m})$ is rigid
 for $\tau_N$, $N$ fixed}  if $(n_{m})$ is rigid for each level
 of $\tau_N$ (equivalently, for the base of $\tau_N$).
 Finally note that if $(n_{m})$ is rigid for every
 $\tau_N$ then $T$ is rigid along $(n_m)$.

\begin{prop} \label{infrankone}
    Suppose $\frac {n_{m+1}} {n_m} \to \infty $ as $m \to \infty$ or $\frac {n_{m+1}}
    {n_m} $ is a whole number, $\frac {n_{m+1}}
    {n_m} \ge 2$ for all $m$. Then there exists an
    infinite measure-preserving, weakly mixing, rank one transformation  $T$ such that $T$ is rigid along $(n_m)$.
\end{prop}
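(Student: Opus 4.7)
The plan is to construct $T$ as an infinite measure-preserving rank-one transformation via cutting and stacking. At each stage I will cut $\tau_N$ into $q_N$ equal columns $C_1, \ldots, C_{q_N}$ and insert $s_N$ spacers in a chosen configuration. Ergodicity is automatic for rank-one $T$, so Lemma~\ref{wmnoeigen} yields continuous spectrum of $U_T$, which is the relevant notion of weak mixing in the infinite-measure setting. Thus the substantive content is choosing parameters so that $\sum_N \epsilon_N = \infty$ (making $\mu$ infinite) and so that $(n_m)$ is a rigidity sequence.

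The elementary observation is that with appropriate spacer placement---for instance, placing all $s_N$ spacers above the stacked columns $C_1,\ldots,C_{q_N}$---the copies of $\tau_N$ inside $\tau_{N+1}$ sit at an arithmetic progression of levels $\{0, h_N, 2h_N, \ldots, (q_N-1)h_N\}$, so that for $1 \leq k < q_N$,
\[
\mu(T^{k h_N} B_N \triangle B_N) \leq (2k/q_N)\,\mu(B_N),
\]
where $B_N$ denotes the base of $\tau_N$. Iterating across stages, the copies of $\tau_N$ inside $\tau_M$ for $M > N$ sit at the multi-radix set $\bigl\{\sum_{j=N}^{M-1} a_j h_j : 0 \leq a_j < q_j\bigr\}$, and any representation $n = \sum_j b_j h_j$ with $|b_j| < q_j$ contributes a positive fraction to the rigidity proportion $\mu(T^n B_N \cap B_N)/\mu(B_N)$; summing over all such representations controls the total rigidity error.

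The two cases of the hypothesis correspond to distinct parameter strategies. In Case 1 ($n_{m+1}/n_m \to \infty$) I will take $h_{N+1} = n_{N+1}$ (so that each $n_m$ equals $h_m$ exactly), with $q_N$ growing to infinity thanks to the ratio-growth hypothesis, and with $s_N$ large enough (say, comparable to $h_N$) to give $\sum_N \epsilon_N = \infty$; then each $n_m$ is a rigidity time with error at most $2/q_{m-1} \to 0$, for each fixed $N$, by the single-stage bound above. In Case 2 ($n_{m+1}/n_m$ an integer $\geq 2$) the ratios may be bounded (e.g.\ always $2$), so single-stage rigidity at one $n_m$ per stage is not sufficient; instead the construction must exploit the divisibility $n_N \mid n_m$ for $m \geq N$ to produce a multi-radix representation of every $n_m$ in terms of the tower heights $h_j$ with many zero digits, making the summed fraction tend to $1$ as $m \to \infty$.

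The main obstacle I expect is the verification in Case 2 that every $n_m$ (not merely those aligned with tower heights) becomes a rigidity time as $m \to \infty$. This reduces to a combinatorial count of multi-radix representations of $n_m$ in terms of the $h_j$'s: the integer-ratio hypothesis ensures the divisibilities needed to produce sufficiently many such representations, which in aggregate yield the required rigidity, while the choice of $s_N$ proportional to $h_N$ simultaneously secures $\sum_N \epsilon_N = \infty$. Case 1 is by comparison immediate because the rapid ratio growth already guarantees $q_{m-1} \to \infty$, so the single-stage bound suffices for every $n_m$ directly.
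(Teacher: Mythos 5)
Your overall architecture matches the paper's: a cutting-and-stacking construction, Lemma~\ref{wmnoeigen} to pass from ergodicity of a rank one infinite measure-preserving map to continuous spectrum, spacers chosen so that $\sum_N \epsilon_N = \infty$ while $\epsilon_N \to 0$, and a split into the two cases. Case~1 is essentially the paper's argument, except for your parenthetical ``$s_N$ comparable to $h_N$'': that gives $\epsilon_N \sim n_N/n_{N+1}$, which need not sum to infinity (take $n_m = 2^{m^2}$). One must instead take the spacer count comparable to $h_{N+1}/N$, i.e.\ $\epsilon_N \approx 1/N$, which still leaves $s_N = q_N - p_N \to \infty$ copies of $\tau_N$ inside $\tau_{N+1}$ and hence rigidity error $O(1/s_N) \to 0$.

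The genuine gap is in Case~2, which you rightly identify as the crux but propose to resolve with a mechanism that cannot work. Take $n_m = 2^m$ and tower heights $h_j = 2^{c_j}$ for any increasing $(c_j)$; then $q_j = 2^{c_{j+1}-c_j}$ and each $2^M$ with $c_m \le M < c_{m+1}$ has the \emph{unique} multi-radix representation $2^{M-c_m}\,h_m$ --- a single nonzero digit $b_m = 2^{M-c_m}$ which ranges up to $q_m/2$. There is no representation ``with many zero digits'' and small digit-to-base ratios, and your single-stage bound $2k/q_N$ evaluates to about $1$ at $k = q_m/2$, so iterating or summing it yields nothing. The idea that actually closes the argument (and is the paper's key step) is different: choose the tower heights to be a sparse subsequence of the $n_m$ (e.g.\ $h_j = 2^{j^2}$, so $q_j \to \infty$ and one can arrange $\sum_j \epsilon_j = \infty$ with $\epsilon_j \to 0$), add spacers only in blocks whose lengths are multiples of $h_j$, and observe that every block of length $h_m$ in $\tau_{m+2}$ then starts at a multiple of $h_m$ and lies entirely in $X_m$, $S_m$, or $S_{m+1}$. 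Since $2^M$ is a multiple of $h_m$ and is less than $h_{m+1}$, and the top $(m+1)$-block of $\tau_{m+2}$ is pure spacer, $T^{2^M}$ carries each piece of a level $E$ of $\tau_m$ to a level of $\tau_{m+2}$ at the same offset inside some $m$-block --- hence either back into $E$ or into a spacer block. The error is therefore bounded by the spacer density $\epsilon_m + \epsilon_{m+1}/(1-\epsilon_m) \to 0$, \emph{independently of the size of the digit} $b_m$. Without this alignment observation, a combinatorial count of multi-radix representations does not control the rigidity error.
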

\begin{proof}
    Suppose first that $\frac {n_{m+1}} {n_m} \to \infty $ as $m \to \infty$.
   We introduce the notation $h_{m}:=n_m$ as, in this case, these will be the
     heights of the rank one towers we construct.
    Write $h_{m+1} = q_m h_m + r_m$, $0 \leq r_m < h_m$.
     Define  $p_m < q_m$  to be the least integer $l \geq 0$
     such that,  $   \frac { r_m + lh_m} {h_{m+1}} > \frac 1 m$ ($p_m$ may be
   zero) and let $\epsilon_{m} = \frac { r_m + p_mh_m} {h_{m+1}}$.
   Thus we have $\sum\limits_{m=1}^\infty  \epsilon_m = \infty$. Moreover $\epsilon_m -
   \frac 1m < \frac {h_m} {h_{m+1 }} \to 0$ so $\epsilon_m \to 0$.

   Construct $T$ as follows.
     Start with  a tower $\tau_{1}$ of height
      $h_{1}$ and suppose the towers $\tau_{1}, \ldots, \tau_m$
      have been constructed.   Form $\tau_{m+1}$ by slicing $\tau_m$ into
   $s_m: = q_m - p_m$ columns, stacking these directly above
   each other and then following them
   by ${ h_{m+1}- s_mh_m} =  r_m + p_m h_m$ spacers
   to create a tower $\tau_{m+1}$ of height
   $h_{m+1}$. Thus $\frac {\mu(S_m)} {\mu(X_{m+1})} = \epsilon_m$,
   and since  $\sum\limits_{m=1}^\infty   \epsilon_m = \infty$ we see that the measure
   of the space we have   constructed is infinite.  By Lemma~\ref{wmnoeigen}, $T$
   is weakly mixing.

   We now check that $T$ is rigid along $(h_m)$. If $E$ is a
   level of $\tau_m$ and $E_{s_1}, \ldots, E_{s_m}$ are its pieces
   in $\tau_{m+1}$ then $T^{h_m}E_{i} = E_{i+1}$, except for $i =
   s_m$.
   It follows that $\mu(T^{h_m}E \backslash E) < \frac 1 {s_m} \mu(E) =:
   \delta_m\mu(E)/2$ so
   $h_m$ is $\delta_m$-rigid for $\tau_m$. Since this holds for
   every $m$ it follows that, for any fixed  $n$, $h_m$ is $\delta_m$
   rigid for $\tau_{n}$, for each $m > n$. Since $\delta_m \to 0$, it
   follows that  for each fixed $m$,
   $(h_m)$ is rigid for $\tau_{m}$.  So $T$ is rigid along $(h_m)$.
   This concludes the argument in case $\frac {n_{m+1}} {n_m}
   \to \infty$.

   Now suppose that $\frac {n_{m+1}} {n_m} $ is a whole number as large as $2$ for all
   $m$. For simplicity we will consider only the case   $n_m = 2^{m}$.
   The general case is no more difficult.   Let $h_m = 2^{m^{2}}$
   so $q_m:= h_{m+1} / h_m= 2^{2m+1} \to \infty$. Let
   $p_{m} \geq 0$ be the least
   integer $r$ so that
   $\frac {r h_m} {h_{m+1}} \geq \frac 1 m$
  and let $\epsilon_m = \frac {p_m h_m} {h_{m+1}}$.
   As before we have $\sum\limits_{m=1}^\infty  \epsilon_m = \infty$ and $\epsilon_m \to 0$.

   We construct the towers $\tau_m$ for  $T$ as before,
   by concatenating $s_m: = q_m - p_m$ copies
   of $\tau_m$ and adding $p_mh_m$ spacers to get the tower
   $\tau_{m+1}$ of height $h_{m+1}$. As before the space on which $T$
   acts has infinite measure and we need only check the rigidity of
   the sequence
   $(n_{m}) = (2^{m})$.

  Now suppose $E$ is a level of
  $\tau_m$ and $E_{1}, \ldots, E_{l}$, $l=s_ms_{m+1}$, are its pieces in
  $\tau_{m+2}$.  These occur with period $h_m$ in
  $\tau_{m+2}$, except for gaps corresponding to the spacers  in $S_m$
  and $S_{m+1}$. More precisely,
  let us divide $\tau_{m+2}$ into $q_mq_{m+1}$
    blocks of length $h_m$ and also into $q_{m+1}$
    blocks of length $h_{m+1}$ and refer to these as
    $m$-blocks and     $(m+1)$-blocks respectively. Each $m$-block is
    contained in either $X_m$,  $S_m$ or $  S_{m+1}$.
    Call these three types $X_m$-blocks, $S_m$-blocks and
    $S_{m+1}$-blocks
     and let
    the numbers of the three types  be
    $a= s_ms_{m+1}, b = p_{m}s_{m+1}$ and $c=q_mp_{m+1}$.

  Now  suppose that $M >0$ and let $m = m_M > 0$ be the integer such
  that $m^{2} \leq M < (m+1)^{2} $. Since there is at
  least one $(m+1)$-block at the top of
  $\tau_{m+2}$  which  is contained in $S_{m+1}$ we see that for each
  $i$,
  $1 \leq i \leq l$,
    $T^{2^{M}} E_{i}$ is
  still a level of $\tau_{m+2}$. Thus, if it is not contained in $E$ it
  must lie  in an $S_m$-block  or an $S_{m+1}$-block. It follows that
    \begin{align*}
    \frac   {\mu(T^{2^{M}} E \backslash E)} {\mu(E)} \leq \frac {b+c} {a}
      = & \frac {p_ms_{m+1} + q_mp_{m+1}} {s_ms_{m+1}} \\
       = & \frac{p_m} {s_m} + (\frac {q_m} {s_m})
      (\frac       {p_{m+1}} {s_{m+1}})
	= \epsilon_m + \frac 1 {1-\epsilon_m} \epsilon_{m+1} =: \delta_m/2.
       \end{align*}
	       This shows $2^{M}$ is $\delta_{m_{M}}$-rigid for
       $\tau_{m_{M}}$.  Fixing any $k \leq n_{M}$, it follows that
       $2^{M}$ is $\delta_{m_{M}}$-rigid for $\tau_{k}$. Letting $M \to
       \infty$ we have $m_{M} \to \infty$ and $\delta_{m_{M}} \to 0$.
       So we see
       that the sequence $\{2^{M}\}$ is rigid for $\tau_{k}$, and since $k$
       is arbitrary it follows that $T$ is rigid along  $(2^{M})$, as desired.

\end{proof}

We also note that there is a special case of Proposition~\ref{infrankone}
where we can
provide a direct construction of a finite measure-preserving rank one
$T$ such that $T^{n_m} \to Id$.
  \begin{prop}\label{specialinfrankone}
      Suppose that $ {n_{m+1}} = q_{m}{n_m} + r_{m}$, $0 \leq
      r_{m} < n_m$, $q_{m} \to \infty$, $\sum\limits_{n}^{\infty} \frac {r_{m}}
      {n_{m+1}} < \infty$ and $r_{m} \neq 0$ infinitely often.
      Then there is a finite measure-preserving, weakly mixing, rank one transformation
$T$ that is rigid along $(n_m)$.
  \end{prop}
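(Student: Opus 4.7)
The plan is to modify the rank-one cutting-and-stacking construction of Proposition~\ref{infrankone} so that the resulting transformation preserves a finite measure and is weakly mixing, using both new hypotheses. Take $\tau_1$ to be a tower of height $n_1$ over a base interval of width $w_1$. Inductively, given $\tau_m$ of height $n_m$ with levels of common width $w_m$, cut $\tau_m$ into $q_m$ subcolumns of equal width $w_{m+1}=w_m/q_m$, stack them in order, and insert $r_m$ spacer levels to form $\tau_{m+1}$ of height $n_{m+1}=q_m n_m + r_m$. Place the spacers as follows: at every $m$ with $r_m\neq 0$ (and $q_m\ge 2$, which holds for all large $m$), put one spacer between the first two subcolumns and the remaining $r_m-1$ above the whole tower; otherwise, do nothing. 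In the limit this produces a rank-one transformation $T$ of $X=\bigcup_m X_m$ with Lebesgue measure $\mu$.

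To verify $\mu(X)<\infty$, note that the measure introduced at stage $m$ is $r_m w_{m+1}=r_m w_1/(q_1\cdots q_m)$. The recursion $n_{m+1}=q_m n_m\bigl(1+r_m/(q_m n_m)\bigr)$ gives
\[
\frac{n_{m+1}}{n_1}=\Bigl(\prod_{k=1}^m q_k\Bigr)\prod_{k=1}^m\Bigl(1+\frac{r_k}{q_k n_k}\Bigr).
\]
Since $r_k<n_k$ and $q_k\ge 2$ eventually, $q_k n_k\ge n_{k+1}/2$, hence $r_k/(q_k n_k)\le 2 r_k/n_{k+1}$, and the infinite product converges by hypothesis. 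Thus $q_1\cdots q_m\ge C\,n_{m+1}$ for some constant $C>0$, and $\sum_m r_m w_{m+1}\le C^{-1}w_1\sum_m r_m/n_{m+1}<\infty$. Rigidity along $(n_m)$ follows by a standard count: for a level $E$ of $\tau_m$ with pieces $E_1,\ldots,E_{q_m}$ in $\tau_{m+1}$, the map $T^{n_m}$ sends $E_i$ to $E_{i+1}$ except for at most the two indices affected by the interior spacer and by the top boundary, so $\mu(T^{n_m}E\,\triangle\, E)\le (4/q_m)\mu(E)\to 0$. Density of level sets in $L_2(X,\mu)$ then yields $T^{n_m}\to Id$ strongly, after normalizing $\mu$ to a probability.

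The main obstacle is weak mixing. Since any rank-one transformation is automatically ergodic, it suffices to show $T$ has no non-trivial eigenvalues. The single interior spacer inserted at each stage $m$ with $r_m\neq 0$ plays the role of a Chacon-type twist. For a candidate eigenvalue $\lambda=e^{2\pi i\alpha}$, the standard spectral criterion for rank-one eigenfunctions approximable by indicators of tower levels forces a sum involving $\|\alpha (n_m+\delta_m)\|$ to be finite, where $\delta_m$ records the cumulative spacer displacement produced by the twist. Rigidity already gives $\|n_m\alpha\|\to 0$, so the twist contribution $\|\delta_m\alpha\|$ must also vanish; but because $\delta_m$ records the insertion of single spacers at infinitely many stages, and the displacements produced this way are not commensurate with $(n_m)$ modulo $\mathbb{Z}$ for any nonzero $\alpha$, one concludes $\alpha=0$. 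Carrying out this cocycle calculation so that rigidity (few spacers inside the tower) and the twist (enough spacers inside to exclude eigenvalues) coexist is the technical core of the proof, and it is here that the hypothesis $r_m\neq 0$ infinitely often is essentially used.
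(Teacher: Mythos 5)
Your construction and your verification of finite total measure and of rigidity along $(n_m)$ match the paper's proof in all essentials: the paper also cuts $\tau_m$ into $q_m$ columns, inserts a single interior spacer at each stage with $r_m\neq 0$ and puts the remaining $r_m-1$ spacers on top, and uses $\sum_m r_m/n_{m+1}<\infty$ to get a finite measure. The genuine gap is in the weak mixing step, which is the entire content of the proposition beyond Proposition~\ref{infrankone}. You reduce correctly to excluding nontrivial eigenvalues, but then you only assert that a ``standard spectral criterion'' plus the claim that the cumulative spacer displacements are ``not commensurate with $(n_m)$ modulo $\mathbb{Z}$ for any nonzero $\alpha$'' forces $\alpha=0$. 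That non-commensurability statement is precisely what must be proved, no criterion is stated or verified, and you yourself defer ``the technical core of the proof.'' As written, weak mixing is not established.

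There is also a concrete reason why your particular spacer placement obstructs the natural way to close this gap. The paper places the single interior spacer after $a_m=[q_m/3]$ of the $q_m$ columns, so that the block $E_1$ of levels below the spacer and the block $E_2$ of the same length above it each occupy a fixed positive fraction of the space (measure at least $\tfrac14$ for large $m$). If $f\circ T=\lambda f$ with $|f|=1$ and $f'$ is an $L_2$-approximation of $f$ constant on levels of $\tau_m$, comparing $f'$ with $f'\circ T^{s_m+1}$ on $E_1$ and with $f'\circ T^{s_m}$ on $E_2$ (where $s_m=a_mn_m$) gives $|\lambda^{s_m+1}-1|$ and $|\lambda^{s_m}-1|$ bounded by a constant times $\epsilon/\|f'|_{E_i}\|_2$; the lower bound $\|f'|_{E_i}\|_2>\tfrac12$ makes these estimates effective, and dividing yields $|\lambda-1|\le 8\epsilon$, hence $\lambda=1$. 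With your spacer placed between the first two subcolumns, $E_1$ is a single copy of $\tau_m$ of relative measure about $1/q_m\to 0$, so $\|f'|_{E_1}\|_2\to 0$ and the corresponding estimate degenerates. You should either relocate the interior spacer so that both adjacent blocks have measure bounded below, as the paper does, or actually carry out in full the eigenvalue-exclusion computation you only sketch.
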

  \begin{proof}
	 We construct the rank
      one towers $\tau_{m}$ of height $h_{m}: = n_m$ for $T$ as follows.
      Start with a tower of height $h_{1}$. When $r_{m} = 0$,
      we construct $\tau_{m+1}$ from $\tau_{m}$ by  simply
      concatenating $q_{m}$ copies of $\tau_{m}$  to obtain the tower
      $\tau_{m+1}$ of height
      $h_{m+1}$. When  $r_{m} \neq 0$, we
      place $a_{m}:= [q_m/3]$ consecutive copies of $\tau_{m}$ followed
      by one spacer, followed by $q_m - a_{m}$ consecutive copies
      of $\tau_{m}$, followed by $r_{m} - 1$ spacers, again giving
      $\tau_{m+1}$  of height $h_{m+1}$.
	    The resulting $T$ is finite measure-preserving because we have
      assumed $\sum\limits_{m=1}^{\infty} \frac {r_{m}}
      {h _{m+1}} < \infty$ and  it is very easy see that
       $T$ is rigid along $(h_m)$.

       We now check that $T$ is weakly mixing.   Suppose that $f \in
       L_{2}(X,\mu)$ and $f \circ T = \lambda f$. Without loss of generality
       $|f| = 1$. Given $\epsilon > 0$, find $n = n_{m}$ such that
       $r_m \neq 0$, and $f'$ which is a
       linear combination of the characteristic functions of the
       levels of $\tau_{m}$ such that $\|f -f'\|_{2} < \epsilon$.
       In addition we may assume that $|f(x)| = 1$ for all $x
       \in X_{m}$.
       \def\close#1{\ {\mathop{\sim}\limits^{#1}}\ }
       We agree to write $g \close \delta h$ whenever $g,h \in L_{2}(X,\mu)$ and
       $\| g- h\|_{2} < \delta $.  Note that  for any $k$ we have
       \begin{equation*}
	   f' \circ T^{k} \close \epsilon f \circ T^{k} = \lambda^{k} f \close
	   \epsilon \lambda^{k} f'.
       \end{equation*}

       Let $E_{1}$ denote the union of the first $s_{m}:= a_{m}h_{m}$ levels of
       $\tau_{m+1}$ and $E_{2}$ the union of the $s_{m}$ levels
       after the first spacer in $\tau_{m+1}$. We observe that
       $$
       f'|_{E_{1}} = (f' \circ T^{s_{m}+1})|_{E_{1}} \close {2\epsilon}
       \lambda^{s_{m}+1} f'|_{E_{1}}.
       $$
       It follows that $\lambda^{s_{m} + 1}
       \close {2\epsilon/ \|f'|_{E_{1}}\|_{2}} 1$. By taking $m$
       sufficiently large we may assume that $\mu(E_{1}) \geq \frac 1
       4$ and so
       $$\|f'|_{E_{1}}\|_{2} = \sqrt {\mu(E_{1})} > \frac 1 2.$$
       Thus, $\lambda^{s_{m} + 1} \close {4\epsilon} 1$. A similar argument with
       $E_{2}$ replacing $E_{1}$ shows that $\lambda^{s_{m}} \close {4\epsilon}
       1$ so we get $\lambda^{s_{m} + 1} \close {8\epsilon} \lambda^{s_{m}}$.
       Since $|\lambda| = 1$ it follows that $\lambda \close {8\epsilon} 1$ and
      since $\epsilon > 0$ is arbitrary we conclude that $\lambda = 1$.
  \end{proof}

\subsubsection{\bf Rates of Growth}\label{ratesofgrowth}

Here is some information on the question of rates of growth
of the gaps in a rigidity sequence. These results show in various ways that
although rigidity sequences $(n_m)$ have the gaps $n_{m+1} - n_m$
tending to infinity, they do not need to have these gaps growing quickly.
Indeed, there is no rate, no matter how slow, that these gaps must
grow for either ergodic rotations of the circle or weakly mixing
transformations.

Suppose we have an increasing sequence $\mathbf{n} = (n_m)$. We let
\[D(N,\mathbf{n}) =\frac {\#(\{n_m:m \ge 1\}\cap \{1,\ldots,N\})}N.\]
We say that $\mathbf{n}$ has {\em density zero} if
$D(N,\mathbf{n}) \to 0$ as $N \to \infty$. The following result
can be improved, see Corollary~\ref{bestdecay} below.  We prove
this here because it gives insight into the ideas in
Proposition~\ref{ergodicratewm}.

\begin{prop} \label{ergodicrate}
Given any sequence $(d_N:N\ge 1)$ such that $d_N \to 0$
as $N \to \infty$, and any ergodic rotation $T$ of $\mathbb T$, there
exists a rigidity sequence $\mathbf{n} = (n_m)$ for $T$ such that
$D(N,\mathbf{n}) > d_N$ for infinitely many $N \ge 1$.
\end{prop}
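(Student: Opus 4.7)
The plan is to use Weyl equidistribution to find many approximate rigidity points within each long range. Write $Tx = x + \alpha$ with $\alpha$ irrational, so that $(n_m)$ is a rigidity sequence for $T$ precisely when $\|n_m \alpha\| \to 0$. For each $\epsilon > 0$ set $R(\epsilon) = \{n \geq 1 : \|n\alpha\| < \epsilon\}$; by Weyl's theorem applied to the rotation by $\alpha$, this set has natural density $2\epsilon$.

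I would construct $\mathbf{n}$ in blocks. Inductively pick an increasing sequence $0 = B_0 < B_1 < B_2 < \ldots$ and let $\mathbf{n}$ be the increasing enumeration of $\bigcup_{k \ge 1} \mathcal{I}_k$, where $\mathcal{I}_k := R(1/k) \cap (B_{k-1}, B_k]$. At step $k$ choose $B_k$ large enough that (a) $B_k \ge 2 B_{k-1}$, (b) $d_N < 1/(2k)$ for all $N \ge B_k$ (possible since $d_N \to 0$), and (c) by equidistribution, $|\mathcal{I}_k| \ge (B_k - B_{k-1})/k$, i.e.\ at least half of the expected count $2(B_k - B_{k-1})/k$. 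All three conditions are compatible because (c) is merely a ``$B_k$ sufficiently large'' requirement, for fixed $k$ and $B_{k-1}$.

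Rigidity of the resulting sequence is immediate: every $n \in \mathcal{I}_k$ satisfies $\|n\alpha\| < 1/k$, so as $m \to \infty$ the index passes through blocks of increasing $k$ and $\|n_m\alpha\| \to 0$. For the density lower bound, at $N = B_k$,
\[
D(B_k, \mathbf{n}) \ge \frac{|\mathcal{I}_k|}{B_k} \ge \frac{B_k - B_{k-1}}{k B_k} \ge \frac{1}{2k} > d_{B_k},
\]
using (a) for the middle step and (b) for the last. This holds for every $k$, and in particular for infinitely many $N$.

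There is no serious obstacle here; the only point needing any care is (c), which uses Weyl equidistribution rather than any Diophantine input on $\alpha$. Because only a qualitative statement is required, namely that the density of $R(1/k)$ in $[1,N]$ tends to $2/k$ as $N \to \infty$ with $k$ fixed, no uniformity in $k$ and no quantitative remainder estimate are needed, so the construction works for every irrational rotation $T$.
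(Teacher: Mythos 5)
Your proof is correct and follows essentially the same approach as the paper: both reduce rigidity for the rotation to $\|n_m\alpha\|\to 0$, use uniform distribution of $(n\alpha)$ to find a proportion roughly $2\epsilon$ of $\epsilon$-return times up to a large scale, and choose those scales only after $d_N$ has dropped well below the tolerance. The only difference is organizational — you harvest return times from disjoint blocks $(B_{k-1},B_k]$ with tolerance $1/k$, while the paper takes a union of nested initial segments $[1,M_k]$ with tolerance $2^{-k}$ — and this does not change the substance of the argument.
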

\begin{proof}
We have some
$\gamma \in \mathbb T$ of infinite order such that $T(\alpha) =
\gamma \alpha$ for all $\alpha \in \mathbb T$.
Because $d_N\to 0$
as $N\to \infty$, we can choose an increasing sequence $(N_k)$ such
that for all $N \ge N_k$, we have
$d_N \le \frac 1{4^k}$. Now we construct a
suitable $(n_m)$ that is rigid for $T$. First, we can inductively
choose an increasing sequence $(M_k)$ so that we have $\#\{n \in [1,M_k]:
|\gamma^n-1| \le \frac 1{2^k}\} \ge \frac {M_k}{2^k}$. This is possible
because
the Lebesgue measure of the arc $\{\alpha: |\alpha - 1|\le \frac 1{2^k}\}$
is $\frac 2{2^k}$ and $(\gamma^n:n\ge 1)$ is uniformly distributed in
$\mathbb T$. In the process
of choosing $(M_k)$, there is no obstruction to taking each $M_k \ge N_k$.
Now let $(n_m)$ be the increasing sequence whose terms are
$\bigcup\limits_{k=1}^\infty \{n \in [1,M_k]: |\gamma^n-1| \le \frac
1{2^k}\}$.
By the construction, $(n_m)$ is rigid for $T$. Now
we claim that $D(N,\mathbf{n}) \ge d_N$ for infinitely many $N$.
Indeed, $D(M_k,\mathbf{n}) \ge \frac 1{2^k}$ by the choice
of $M_k$ and the definition of $\mathbf {n}$.
However, since $M_k \ge N_k$, we have $d_{M_k} \le \frac 1{4^k}$.
\end{proof}

\begin{cor} \label{rotslow} Given any sequence $G(m)$ tending to infinity
and any ergodic rotation $T$ of $\mathbb T$,
there exists a rigidity sequence $(n_m)$ for $T$ such that
$\limsup\limits_{m\to \infty} \frac {G(m)}{n_{m+1} - n_m} = \infty$.
\end{cor}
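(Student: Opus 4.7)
The plan is to deduce Corollary~\ref{rotslow} from Proposition~\ref{ergodicrate} by converting a density lower bound into a small-gap statement at a large index. First I would replace $G$ by its monotone lower envelope $\tilde G(m) := \inf_{j\ge m} G(j)$: this is non-decreasing, still tends to infinity, and $G\ge\tilde G$, so it suffices to prove the conclusion with $\tilde G$ in place of $G$. Accordingly, I assume below that $G$ is non-decreasing.

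Next I would tailor a sequence $d_N\to 0$ to $G$. For each $k\ge 1$, pick $L_k\ge k$ with $G(L_k)\ge 4^k$; this is possible since $G\to\infty$. Set $N_k=2^{k+1}L_k$ (adjusting so $N_k$ is strictly increasing) and define $d_N=2^{-k}$ for $N_k\le N<N_{k+1}$. Then $d_N\to 0$, and for $N$ in the $k$th block $\lfloor Nd_N/2\rfloor\ge L_k$, whence
\[
d_N\cdot G(\lfloor Nd_N/2\rfloor)\ \ge\ 2^{-k}\cdot 4^k\ =\ 2^k,
\]
which tends to $\infty$ uniformly in $N$ as $N\to\infty$.

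Now apply Proposition~\ref{ergodicrate} with this $d_N$ to obtain a rigidity sequence $\mathbf n=(n_m)$ for $T$ with $D(N,\mathbf n)>d_N$ for infinitely many $N$. For each such $N$, put $J=\#\{m:n_m\le N\}$, so $J>Nd_N$. The gaps $n_{j+1}-n_j$, $1\le j\le J-1$, sum to at most $N$, so at most $Nd_N/4\le J/4$ of them can exceed $4/d_N$; hence at least $3J/4-1$ are $\le 4/d_N$. Since only $J/2-1$ indices satisfy $j<J/2$, at least $J/4$ small-gap indices have $j\ge J/2\ge Nd_N/2$. Picking such a $j=j(N)$, one has $j(N)\to\infty$ (because $L_k\to\infty$) and
\[
\frac{G(j(N))}{n_{j(N)+1}-n_{j(N)}}\ \ge\ \frac{G(\lfloor Nd_N/2\rfloor)\,d_N}{4}\ \longrightarrow\ \infty,
\]
giving $\limsup_{m\to\infty} G(m)/(n_{m+1}-n_m)=\infty$, as required.

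The main obstacle I anticipate is precisely this pigeonhole step: a priori the density lower bound only yields many small gaps somewhere inside $[1,N]$, not necessarily at indices where the lower envelope of $G$ is already large. The counting above resolves it by forcing at least a quarter of the small-gap indices into the upper half of $[1,J]$, and the construction of $d_N$ is calibrated precisely so that $G$ at those large indices overcomes the factor $d_N$ appearing in the bound on the gap.
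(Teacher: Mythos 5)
Your proof is correct, and it rests on the same key ingredient as the paper's, namely Proposition~\ref{ergodicrate}, but the combinatorial step converting the density lower bound into a small gap is organized differently. The paper argues by contraposition: it fixes an auxiliary $g$ with $G(m)/g(m)\to\infty$ and proves a Claim that any sequence whose gaps satisfy $n_{m+1}-n_m\ge g(m)$ for all large $m$ has $D(N,\mathbf{n})\le 2d_N$ for an explicit $d_N\to 0$ read off from the extremal sequence $n_{m+1}=n_m+\lceil g(m)\rceil$; Proposition~\ref{ergodicrate} then produces a rigidity sequence violating this bound infinitely often, so $n_{m+1}-n_m<g(m)$ infinitely often and $G(m)/(n_{m+1}-n_m)>G(m)/g(m)\to\infty$ along that subsequence. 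This sidesteps entirely the issue you flag as the main obstacle, namely locating small gaps at indices where $G$ is already large, because the hypothesis being contradicted is a statement about all large $m$ simultaneously. Your direct route must face that issue, and your pigeonhole (at least a quarter of the small-gap indices lie in the upper half of $[1,J]$, where $j\ge Nd_N/2$) together with the block calibration making $d_N\,G(\lfloor Nd_N/2\rfloor)\to\infty$ handles it correctly; the monotone-envelope reduction at the start is also legitimate since $G\ge\tilde G$ only increases the ratios, and the off-by-one slack in your counts is harmless because $J>Nd_N\to\infty$ along the good values of $N$. What your version buys is an explicit quantitative lower bound (of order $2^k$ at scale $N_k$) on the ratio at the selected indices; what the paper's contrapositive buys is brevity and the cleaner intermediate statement that the constructed rigidity sequence satisfies $n_{m+1}-n_m<g(m)$ infinitely often.
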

\begin{proof}
We have some
$\gamma \in \mathbb T$ of infinite order such that $T(\alpha) =
\gamma \alpha$ for all $\alpha \in \mathbb T$.
Take $g(m)$ tending to infinity.
\medskip

\noindent {\bf Claim}: There is a sequence $(d_N)$ tending to zero,
determined
by $g$ alone, such that for any
sequence $\mathbf {n} =(n_m)$ that has $n_{m+1} - n_m \ge g(m)$ for all
$m \ge 1$,
we would have $D(N,\mathbf{n}) \le d_N$ for all $N \ge 1$.
\smallskip

\noindent{\bf Proof of Claim}: Observe that among sequences with
$n_{m+1} - n_m \ge g(m)$ for all $m \ge 1$,
$\#\{n_m \le N\}$ is largest in the case that we take the explicit sequence
$n_1 =1$ and $n_{m+1} = n_m + \lceil g(m)\rceil$ for all $m\ge 1$. So,
take this
as our sequence. Let $g(0) = 0$. Then
\[\#\{n_m \le N\} \le \sup\{m \ge 1: 1+\sum\limits_{k=0}^{m-1} \lceil
g(k)\rceil \le N\}.\]
Thus, let $d_N = \frac {\sup\{m: 1+\sum\limits_{k=0}^{m-1} \lceil
g(k)\rceil \le N\}}N$,
which tends to zero as $N \to \infty$ because $g(m) \to \infty$ as $m
\to \infty$.
We have $D(N,\mathbf n) \le d_N$ for all $N \ge 1$.
\medskip

\noindent Continuing now with our proof, for any sequence $\mathbf {n}
=(n_m)$ that has $n_{m+1} - n_m \ge g(m)$ for all $m \ge M$,
then there exists $N_M$ such that $D(N,\mathbf{n}) \le 2d_N$ for all $N
\ge N_M$.
Using $(2d_N)$ in place of $(d_N)$, Proposition~\ref{ergodicrate} and
Claim~\ref{rotslow} above
how that we can construct a rigidity
sequence $(n_m)$ for $T$ such that $n_{m+1} - n_m < g(m)$ for infinitely
many $m$.
Now, for any $G(m)$ increasing to $\infty$, we
can construct $g(m)$ tending to $\infty$ so that
$\lim\limits_{m\to\infty} \frac {G(m)}{g(m)} =\infty$.
Using this $g$ above, we have shown that
there is a rigid sequence $(n_m)$ for $T$ such that
$\limsup\limits_{m\to\infty}
\frac {G(m)}{n_{m+1} - n_m} = \infty$.
\end{proof}

\begin{cor} \label{notexp} For any ergodic rotation $T$ of $\mathbb T$,
there is a rigidity sequence
$(n_m)$ such that $\liminf\limits_{m\to \infty} \frac {n_{m+1}}{n_m} = 1$.
\end{cor}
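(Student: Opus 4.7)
The plan is to deduce this directly from Corollary~\ref{rotslow}, which already produces rigidity sequences whose consecutive gaps grow arbitrarily slowly. Concretely, I would apply that corollary with $G(m)=m$, which tends to infinity, to obtain a rigidity sequence $(n_m)$ for $T$ satisfying
\[ \limsup_{m\to\infty} \frac{m}{n_{m+1}-n_m} = \infty, \]
or equivalently $\liminf_{m\to\infty} \frac{n_{m+1}-n_m}{m} = 0$. Extract a subsequence $(m_k)$ along which $\frac{n_{m_k+1}-n_{m_k}}{m_k}\to 0$.

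To convert this additive gap estimate into the desired multiplicative ratio estimate, I would use the trivial bound $n_m\ge m$, which holds for any strictly increasing sequence in $\mathbb Z^+$. Then along the subsequence,
\[ 1 \;\le\; \frac{n_{m_k+1}}{n_{m_k}} \;=\; 1 + \frac{n_{m_k+1}-n_{m_k}}{n_{m_k}} \;\le\; 1 + \frac{n_{m_k+1}-n_{m_k}}{m_k} \;\longrightarrow\; 1, \]
so $\liminf_{m\to\infty} n_{m+1}/n_m \le 1$. The reverse inequality is automatic since strictly increasing integers satisfy $n_{m+1}/n_m > 1$ for all $m$, giving $\liminf_{m\to\infty} n_{m+1}/n_m = 1$.

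There is no real obstacle here: all the substantive work has been carried out in Propositions~\ref{ergodicrate} and Corollary~\ref{rotslow}, and the present corollary is essentially a reformulation of those gap results in multiplicative language, with the passage from gaps to ratios handled by the crude bound $n_m\ge m$.
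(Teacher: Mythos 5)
Your proof is correct and is essentially the paper's own argument: the paper applies Corollary~\ref{rotslow} with $G(m)=\sqrt m$ to get $n_{m+1}-n_m\le\sqrt m$ infinitely often and then invokes $n_m\ge m$, while you take $G(m)=m$ and use the same bound $n_m\ge m$ to pass from gaps to ratios. The choice of $G$ is immaterial, so this is the same proof.
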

\begin{proof} Let $G(m) = \sqrt m$. Then use Corollary~\ref{rotslow} to
construct
a rigidity sequence for $T$ such that $n_{m+1} - n_m \le \sqrt m$
infinitely often. Since
$n_m \ge m$, we have $\liminf\limits_{m\to \infty} \frac {n_{m+1}}{n_m}
= 1$.
\end{proof}

Our next result, and some that follow, show that although rigidity
sequences are sparse sets,
they are not always {\em thin sets}
in certain senses that are commonly used in harmonic analysis. See Lopez
and Ross~\cite{LR}
for background information on thin sets in harmonic analysis. In
particular, we will see that
rigidity sequences are not
always {\em Sidon sets}. By a Sidon set here we mean a subset $\mathcal
S$ of the integers
such that given any bounded complex-valued function $\psi$ on $\mathcal
S$, there exists a complex-valued Borel
measure $\nu$ on $\mathbb T$ such that $\widehat {\nu} = \psi$ on
$\mathcal S$. Originally, this
property was observed for lacunary sets, and finite unions of lacunary
sets, but the general notion of Sidon sets gives a larger class of sets
to work with that in a general sense will have similar harmonic analysis properties.

\begin{cor} \label{notlac}
Given any ergodic rotation $T$ of $\mathbb T$, there exists a rigidity
sequence for $T$ which is not a Sidon set, and so is not the
union of a finite number of lacunary sequences.
\end{cor}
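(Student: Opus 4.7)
The plan is to combine Proposition~\ref{ergodicrate} with the classical density bound for Sidon subsets of $\mathbb Z$. Recall Rudin's theorem: if $\mathcal S \subset \mathbb Z$ is a Sidon set, then there is a constant $C$ (depending only on the Sidon constant of $\mathcal S$) such that $\#(\mathcal S \cap [1,N]) \le C\log N$ for all $N \ge 2$. Also recall that every lacunary sequence is a Sidon set, and finite unions of Sidon sets are Sidon, so a finite union of lacunary sequences must also satisfy the $O(\log N)$ density bound. Thus to prove the corollary it is enough to exhibit a rigidity sequence for $T$ whose initial segments violate this $O(\log N)$ bound infinitely often.

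To that end, choose a sequence $(d_N)$ with $d_N \to 0$ but $d_N\cdot N/\log N \to \infty$; the concrete choice $d_N = 1/\sqrt{\log N}$ works, since then $d_N N = N/\sqrt{\log N}$ dominates $C\log N$ for every constant $C$ once $N$ is sufficiently large. Apply Proposition~\ref{ergodicrate} to this $(d_N)$ and the given ergodic rotation $T$ to obtain a rigidity sequence $\mathbf n = (n_m)$ for $T$ with $D(N,\mathbf n) > d_N$ for infinitely many $N$. For each such $N$ we have
\[
\#(\{n_m:m\ge 1\}\cap [1,N]) = N\cdot D(N,\mathbf n) > d_N N,
\]
so $\#(\{n_m\}\cap [1,N])/\log N \to \infty$ along this subsequence of $N$'s.

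By Rudin's bound this prevents $\mathbf n$ from being a Sidon set, and therefore also from being a finite union of lacunary sequences. The only genuinely delicate point is the choice of $(d_N)$: it must tend to zero so as to be admissible in Proposition~\ref{ergodicrate}, while still forcing the density along some subsequence to overwhelm any logarithmic bound. Once the rate $d_N = 1/\sqrt{\log N}$ (or any other rate with the same two features) is fixed, the rest of the argument is essentially a bookkeeping step.
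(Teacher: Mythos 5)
Your proposal is correct and follows essentially the same route as the paper: invoke the $O(\log N)$ density bound for Sidon sets (the paper cites Lopez--Ross, Corollary 6.11, for $D(N,\mathbf n)\le C\log N/N$), pick $d_N\to 0$ with $d_N N/\log N\to\infty$, and feed it into Proposition~\ref{ergodicrate}. The only difference is that you make the admissible rate explicit ($d_N=1/\sqrt{\log N}$) where the paper merely asserts its existence.
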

\begin{proof}
It is a standard fact that finite unions of lacunary sequences are Sidon
sets. See
for example ~\cite{LR}. Also in ~\cite{LR}, Corollary 6.11, is the proof
that if $\mathbf n = (n_m)$ is a Sidon set then there is a constant $C$
such that
$D(N,\mathbf n) \le \frac {C\log N}N$. By the argument above, there exists
some $d_n \to 0$ as $n \to \infty$ such that for all $C$, eventually
$d_N \ge C \frac {\log N}N$. Using this $(d_N)$, construct a rigidity
sequence $\mathbf {n}$
for $T$ as in Proposition~\ref{ergodicrate}. This choice of $(d_N)$
shows that $\mathbf {n}$ is not a Sidon set.
\end{proof}

\begin{rem} We did not need it here, but sometimes when dealing with
classes of
sequences, it is good to have a result as follows. Suppose we have a
sequence of sequences $(d_N(s):N\ge 1)$ where
$d_N(s) \to 0$ as $N \to \infty$ for every $s$. Then there exists a sequence
$(d_N)$ which also has $d_N \to 0$ as $N \to \infty$, but also for all
$s$, $d_N \ge d_N(s)$
for large enough $N$. This is a standard result. First, let $d_N^*(k) =
\max(d_N(1),\ldots,d_N(k))$. Then for all $k$, again $d_N^* \to 0$ as
$N\to \infty$.
Choose an increasing sequence $(N_k)$ such that $d_N^*(k) \le \frac 1{2^k}$
all $N \ge N_k$. Let $d_N = 1$ for all $1 \le N < N_1$, and for $k \ge
1$, let $d_N =
d_N^*(k)$ for $N_k \le N < N_{k+1}$. Then $d_N \le \frac 1{2^k}$ for $N
\ge N_k$. Also,
for all $j$, $d_N \ge d_N^*(k) \ge d_N(j)$ for any $N \ge N_k$ with $k
\ge j$.
\end{rem}

Now we extend the construction above to give weakly mixing
transformations with
rigidity sequences that satisfy similar slow decay properties.

\begin{prop}\label{ergodicratewm}
Given any sequence $(d_N:N\ge 1)$ such that $d_N \to 0$
as $N \to \infty$, there exists a weakly mixing transformation
and a rigidity sequence $\mathbf {n} =(n_m)$ for $T$ such that
$D(N,\mathbf{n}) > d_N$ for infinitely many $N$.
\end{prop}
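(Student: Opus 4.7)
The plan is to follow the template of Proposition~\ref{ergodicrate}, but replacing the point mass $\delta_{\gamma}$ at an irrational $\gamma\in\mathbb{T}$ with a carefully chosen continuous Borel probability measure $\nu$. By Proposition~\ref{spectral} and Proposition~\ref{inmeasure}, it is enough to produce such a $\nu$ together with an increasing sequence $(n_m)$ satisfying $\widehat{\nu}(n_m)\to 1$ and $D(N,(n_m))>d_N$ for infinitely many $N$; the Gaussian measure space construction applied to a symmetrization of $\nu$ (see Remark~\ref{GMC}) then yields a weakly mixing transformation for which $(n_m)$ is a rigidity sequence.

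As in the proof of Proposition~\ref{ergodicrate}, choose $M_k\nearrow\infty$ with $d_N\le 1/4^k$ for $N\ge M_k$. Build $\nu$ as a weak-$*$ limit of uniform probability measures $\nu_k$ supported on nested compact sets $K_1\supset K_2\supset\cdots\subset\mathbb{T}$, where each $K_k$ is a disjoint union of $L_k$ arcs of common length $\delta_k$ centered at rationals of the form $a/Q_k$, with $Q_k\mid Q_{k+1}$ and $L_k\to\infty$ so that $\nu$ is continuous. At stage $k$, add to the sequence a set of multiples of $Q_k$ in $(M_{k-1},M_k]$; since we may take $Q_k$ no larger than $1/d_{M_k}\le 4^k$, at least $d_{M_k}M_k$ such multiples are available, enough to push the cumulative count past $d_{M_k}M_k$. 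For $n$ a multiple of $Q_k$ and $x=a/Q_k+\eta\in K_k$ with $|\eta|\le\delta_k/2$, one has $\|nx\|=\|n\eta\|\le M_k\delta_k/2$, so if $\delta_k$ is chosen small enough then $|e^{2\pi i n x}-1|\le 2^{-k}$ uniformly for $x\in K_k$. This gives $|\widehat{\nu_j}(n)-1|\le 2^{-k}$ for every $j\ge k$ and, in the limit, $|\widehat{\nu}(n)-1|\le 2^{-k}$; since any integer added at stage $k$ exceeds $M_{k-1}$, it follows that $\widehat{\nu}(n_m)\to 1$.

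I expect the main obstacle to be the scale-balancing required at each inductive step: rigidity forces $\delta_k$ small (roughly $\delta_k\le c\cdot 2^{-k}/M_k$), while ensuring that $K_{k+1}$ contains at least two sub-arcs inside each arc of $K_k$ (so that $L_k\to\infty$ and $\nu$ is continuous) requires $\delta_k$ large enough relative to $1/Q_{k+1}$. When $(d_N)$ decays very slowly, $M_k$ is forced to be very large and this tension becomes delicate; it can be resolved by exploiting the freedom to take $M_k$ substantially larger than the minimum value forced by $d_N\le 1/4^k$ and by letting $Q_k$ grow fast enough to accommodate the nesting. Once the parameters $M_k,Q_k,\delta_k,L_k$ are simultaneously calibrated, the construction closes routinely via weak-$*$ compactness, yielding the desired measure $\nu$ and sequence $(n_m)$.
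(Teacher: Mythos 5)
Your construction is correct in substance, but it takes a genuinely different route from the paper. The paper starts from a rationally independent perfect set $\mathcal K\subset\mathbb T$ (Rudin's independent Kronecker-type set) and runs a Cantor construction inside it: at stage $k$ it picks $2^k$ points of $\mathcal K$, uses simultaneous uniform distribution of $(\gamma_1^n,\ldots,\gamma_{2^k}^n)$ in $\mathbb T^{2^k}$ to find a window $[1,M_k]$ in which a proportion $\tfrac12(\tfrac 2{3^k})^{2^k}$ of the times $n$ bring all these points (and small arcs around them) close to $1$, and compensates for this doubly-exponentially small proportion by demanding $d_{M_k}\le(\tfrac1{4^k})^{4^k}$. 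You instead center the arcs at rationals $a/Q_k$ with $Q_k\mid Q_{k+1}$ and take the good times to be the multiples of $Q_k$ in $(M_{k-1},M_k]$, which makes the set of good times completely explicit (an arithmetic progression of density $1/Q_k$) and avoids any equidistribution input; the price is that the supporting Cantor set consists of well-approximable points rather than an independent set, whereas the paper's version keeps the support inside a Kronecker set and thus parallels the rotation case of Proposition~\ref{ergodicrate} more closely. Your argument does close, but only because of the quantifier order you spell out in your last paragraph: the nesting constraint forces $Q_{k+1}\gtrsim 2/\delta_k\gtrsim M_k2^k$, so the inequality in your second paragraph is off --- one cannot take ``$Q_k\le 1/d_{M_k}\le 4^k$'' (the inequality $d_{M_k}\le 1/4^k$ gives $1/d_{M_k}\ge 4^k$, and in any case $Q_k$ is typically far larger than $4^k$). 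The correct order is: fix $Q_k$ from the stage-$(k-1)$ nesting requirement first, \emph{then} choose $M_k$ so large that $d_{M_k}<1/(2Q_k)$ and $M_k>Q_k$ (possible since $d_N\to0$), then take $\delta_k\le\min\bigl(2^{-k}/(\pi M_k),\,1/(2Q_k)\bigr)$; with that ordering the density at $N=M_k$ is at least $\max(1,M_k/Q_k-1)/M_k>d_{M_k}$, the arcs nest and branch uniformly so the weak-$*$ limit $\nu$ is continuous, and the GMC applied to a symmetrization of $\nu$ finishes the proof exactly as in the paper.
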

\begin{proof}
To carry out this construction, we
start with a closed perfect set $\mathcal K$ in $\mathbb T$ such that
every finite set $F\subset \mathcal K$
generates a free abelian group of order $\#F$. Except that this is in
the circle, it is the
same as constructing a closed perfect set $\mathcal K$ in $[0,1]$ of
rationally independent
real numbers modulo $1$ (i.e. $\mathcal K\cup \{1\}$ is rationally
independent in the real numbers).
See Rudin~\cite{rudin}.
The rational independence tells us that for any $\gamma_1,\ldots,\gamma_L \in \mathcal K$, the
sequence of powers $(\gamma_1^n,\ldots,\gamma_L^n), n \ge 1$
is uniformly distributed in $\mathbb T^L$.

Because $d_N\to 0$
as $N\to \infty$, we can choose an increasing sequence $(N_k)$ such
that for all $N \ge N_k$, we have
$d_N \le (\frac 1{4^k})^{4^k}$.

We now inductively construct $K$, a closed subset of $\mathcal K$, and
$(M_k)$ with certain properties. This
will be a Cantor set type of construction. First,
choose distinct $\gamma(i_1), i_1 = 1,2$, in $\mathcal K$. Then choose
$M_1 \ge N_1$
so that $\#\{n \in [1,M_1]: \text {for}\, i_1=1,2,\, |\gamma(i_1)^n - 1| \le
\frac 13\} \ge (\frac 23)^2 \frac {M_1}2$. This is possible because the
set in $\mathbb T^2$
consisting of $(\alpha_1(1),\alpha_1(2))$ with $|\alpha_1(i) - 1| \le
\frac 13$ for both $i=1,2$ has
Lebesgue measure $(\frac 23)^2$, and $((\gamma(1),\gamma(2))^n:n\ge 1)$
is uniformly distributed
in $\mathbb T^2$. We then choose two disjoint closed arcs $B(i_1),
i_1=1,2$ with $\gamma(i_1) \in \text{int}(B(i_1))$
for $i_1=1,2$ and such that for any $\omega(i_1) \in B(i_1)$, and
$n \in [1,M_1]$ such that $|\gamma(i_1)^n - 1| \le \frac 13$ for
$i_1=1,2$, we have the
somewhat weaker inequality $|\omega(i_1)^n - 1| \le \frac 23$. Let
$A(i_1) = B(i_1)\cap \mathcal K$ for $i_1=1,2$.
Let $A(i_0) = K_0 = \mathcal K$.

We have to continue this inductively. Suppose for fixed $k \ge 1$,
we have constructed $K_l$ and
$M_l \ge N_l$ for all $l=1,\ldots,k$ with the following properties. Each
$K_l \subset K_{l-1}$
and each $K_l$ is a union of a finite number
of pairwise disjoint, non-empty, closed perfect sets $A(i_1,\ldots,i_l)=
B(i_1,\ldots,i_l)\cap \mathcal K$,
given by $i_j=1,2$ for all $j=1,\ldots,l$. Here the sets
$B(i_1,\ldots,i_l)$ are closed arcs , with $\text
{int}(B(i_1,\ldots,i_l))\cap\mathcal K$
not empty, for all $i_j=1,2, j=1,\ldots,l$.
For each $(i_1,\ldots,i_l)$, we have $A(i_1,\ldots,i_l) \subset
A(i_1,\ldots,i_{l-1})$.
In addition, consider the set $E_l$ of $n \in [1,M_l]$ such that for all
$\omega(i_1,\ldots,i_l)
\in B(i_1,\ldots,i_l)$, we have $|\omega(i_1,\ldots,i_l)^n - 1| \le \frac
2{3^l}$. We assume
inductively that $\#E_l \ge (\frac 2{3^l})^{2^l} \frac {M_l}2$.

Now, for each $(i_1,\ldots,i_k)$, choose two distinct
$\gamma(i_1,\ldots,i_k,i_{k+1}) \in A(i_1,\ldots,i_k)$
where $i_{k+1} =1,2$. We can choose $\gamma(i_1,\ldots,i_k,i_{k+1})
\in \text {int}(B(i_1,\ldots,i_k))$. Then the
point $p$ in $\mathbb T^{2^{k+1}}$ with coordinates
$\gamma(i_1,\ldots,i_k,i_{k+1})$, listed in any order,
has $(p^n: n\ge 1)$ uniformly distributed in $\mathbb T^{2^{k+1}}$. So,
there exists $M_{k+1} \ge
N_{k+1}$ such that $\#E_{k+1} \ge (\frac 2{3^{k+1}})^{2^{k+1}} \frac
{M_{k+1}}2$ where $E_{k+1}$ is the set of
$n \in [1,M_{k+1}]$ such that for all $(i_1,\ldots,i_{k+1})$ we have
$|\gamma(i_1,\ldots,i_{k+1})^n-1|
\le \frac 1{3^{k+1}}$. Choose pairwise disjoint closed arcs
$B(i_1,\ldots,i_{k+1})$
with $\gamma(i_1,\ldots,i_{k+1})\in \text{int}(B(i_1,\ldots,i_{k+1}))$
such that we have
the following. Consider any $\omega(i_1,\ldots,i_{k+1})\in
B(i_1,\ldots,i_{k+1})$,
and any $n \in [1,M_{k+1}]$ such that $|\gamma(i_1,\ldots,i_{k+1})^n-1|
\le \frac 1{3^{k+1}}$. Then we have the somewhat weaker inequality
$|\omega(i_1,\ldots,i_{k +1})^n-1|
\le \frac 2{3^{k+1}}$. There is no difficulty in also having
$B(i_1,\ldots,i_{k+1}) \subset B(i_1,\ldots,i_k)$ because we chose
$\gamma(i_1,\ldots,i_k,i_{k+1})
\in \text {int}(B(i_1,\ldots,i_k))$.
We now let
\[A(i_1,\ldots,i_{k+1}) = B(i_1,\ldots,i_{k+1})\cap \mathcal K.\]
Since $B(i_1,\ldots,i_{k+1}) \subset B(i_1,\ldots,i_k)$, we have
$A(i_1,\ldots,i_{k+1}) \subset A(i_1,\ldots,i_k)$.
Let $K_{k+1} = \bigcup\limits_{(i_1,\ldots,i_{k+1})}A(i_1,\ldots,i_{k+1})$.
This completes the inductive step.

To finish this construction, let $K = \bigcap\limits_{k=1}^\infty K_k$.
Then $K$
is a closed perfect subset of $\mathcal K$. Let $\mathbf {n} = (n_m)$ be the
increasing sequence whose terms are all values of $n$
such that $n \in [1,M_k]$ for some $k$, and for all $\omega(i_1,\ldots,i_k)
\in A(i_1,\ldots,i_k)$, we have $|\omega(i_1,\ldots,i_k)^n - 1| \le \frac
2{3^k}$. By the construction,
for all $\omega \in K$, we have $\omega^{n_m} \to 1$ as $m \to \infty$.
Indeed, given $m\ge 1$ choose $k_m$ to be the largest $k$ so that $n_m >
M_k$.
Notice that $k_m\to\infty$ when $m\to \infty$, and the term $n_m$ was
chosen from $[1,M_{s_m}]$ with $s_m > k_m$ and satisfying
$|\omega(i_1,\ldots,i_{s_m})^{n_m}-1|< \frac 2{3^{s_m}}$
for all $\omega(i_1,\ldots,i_{s_m}) \in B(i_1,\ldots,i_{s_m})$. In
particular, if $\omega \in \mathcal K$,
then $|\omega^{n_m}-1| \le \frac 2{3^{s_m}} \le \frac 2{3^{k_m}}$
because $\omega \in B(i_1,\ldots,i_{s_m})$ for some
choice of $(i_1,\ldots,i_{s_m})$.

Hence, for any Borel probability measure $\nu$ supported in $K$, we have
$\widehat {\nu}(n_m) \to 1$
as $m \to \infty$. Since $K$ is a closed perfect set, there are continuous
Borel probability measures $\nu$ supported in $K$. Take the
symmetrization of any such measure and use
the GMC to construct the corresponding
weakly mixing transformation $T$. Then $\mathbf {n}$ is a rigidity
sequence for $T$.
But also we claim that $D(N,\mathbf{n}) > d_N$ for infinitely many $N$.
Indeed, our construction guarantees that
$D(M_k,\mathbf{n}) = \#\{m: n_m \in [1,M_k]\} \ge \frac 12(\frac
2{3^k})^{2^k}$ for
all $k$. But also, because we have chosen $M_k \ge N_k$ for all $k$,
we have $d_{M_k} \le (\frac 1{4^k})^{4^k}$. Therefore,
$d_{M_k}<D(M_k,{\bf n})$.
\end{proof}

\begin{cor} \label{wmslow} Given any sequence $G(m)$ tending to
infinity, there exists a weakly mixing transformation
and a rigidity sequence $(n_m)$ for $T$ such that
$\limsup\limits_{m\to \infty} \frac {G(m)}{n_{m+1} - n_m} = \infty$.
\end{cor}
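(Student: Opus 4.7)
The plan is to mimic the proof of Corollary~\ref{rotslow}, replacing the use of Proposition~\ref{ergodicrate} by that of Proposition~\ref{ergodicratewm}. The whole density/gap-size dichotomy used there is transformation-free: it only relies on the fact that a density lower bound forces gaps that are not too large. So the weakly mixing version of the result should follow by the same two-step mechanism.

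First, I would choose an auxiliary function $g(m) \to \infty$ with $G(m)/g(m) \to \infty$; this is always possible (e.g.\ take $g(m) = \sqrt{G(m)}$ eventually). Next, I would reproduce the claim inside Corollary~\ref{rotslow}: there is a sequence $(d_N)$, depending only on $g$, with $d_N \to 0$, such that every strictly increasing sequence $\mathbf{n} = (n_m)$ satisfying $n_{m+1} - n_m \ge g(m)$ for all $m$ has $D(N,\mathbf{n}) \le d_N$. The proof is unchanged: the extremal sequence $n_1=1$, $n_{m+1} = n_m + \lceil g(m)\rceil$ maximizes $\#\{n_m \le N\}$, and the resulting upper density tends to zero because $g(m)\to\infty$. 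A standard shift argument then handles the case where the gap inequality $n_{m+1}-n_m\ge g(m)$ only holds from some index $M$ onwards, at the cost of multiplying $d_N$ by $2$ for all large $N$.

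Now I would apply Proposition~\ref{ergodicratewm} to the sequence $(2d_N)$: this yields a weakly mixing transformation $T$ and a rigidity sequence $\mathbf{n} = (n_m)$ for $T$ such that $D(N,\mathbf{n}) > 2d_N$ for infinitely many $N$. By the contrapositive of the claim above (applied in its shifted form), the inequality $n_{m+1}-n_m \ge g(m)$ cannot hold eventually; hence $n_{m+1}-n_m < g(m)$ for infinitely many $m$. Along this subsequence,
\[
\frac{G(m)}{n_{m+1}-n_m} \ \ge \ \frac{G(m)}{g(m)} \ \longrightarrow \ \infty,
\]
so $\limsup_{m\to\infty} G(m)/(n_{m+1}-n_m) = \infty$, as required.

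There is no serious obstacle here: the only place where weak mixing enters is through the invocation of Proposition~\ref{ergodicratewm}, which has already been established; the combinatorial density-versus-gap argument is purely about sequences of integers and is identical to the one used for ergodic rotations. The one small point to be careful about is making the passage from ``gap inequality holds eventually'' to ``$D(N,\mathbf{n}) \le 2d_N$ eventually'' explicit, but this is a finite-shift adjustment and is handled exactly as in Corollary~\ref{rotslow}.
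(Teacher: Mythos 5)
Your proposal is correct and is exactly the paper's argument: the authors' proof of this corollary consists of the single sentence that it ``proceeds in the same manner as Corollary~\ref{rotslow},'' i.e.\ the transformation-free density-versus-gap claim combined with Proposition~\ref{ergodicratewm} in place of Proposition~\ref{ergodicrate}. Your write-up just makes the finite-shift adjustment explicit, which the paper leaves implicit.
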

\begin{proof}
The proof proceeds in the same manner as Corollary~\ref{rotslow}.
\end{proof}

Using the same argument as given in Corollary~\ref{notexp}, one can see from
Corollary~\ref{wmslow} that there is a weakly mixing transformation $T$
and a rigidity sequence
$(n_m)$ for $T$ such that $\liminf\limits_{m\to \infty} \frac
{n_{m+1}}{n_m} = 1$.
See also Remark~\ref{wazna} for another construction of this type.
However, this is actually
a pervasive principle.

\begin{prop} \label{rationear1} Given any rigid weakly mixing transformation
$T$, there is a rigidity sequence $(n_m)$ for $T$ such that
$\liminf\limits_{m\to \infty} \frac {n_{m+1}}{n_m} = 1$.
\end{prop}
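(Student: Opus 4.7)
The plan is to thicken a given rigidity sequence $(k_j)$ for $T$ by inserting short blocks of consecutive multiples $k_j, 2k_j, \ldots, L_j k_j$, where $L_j \to \infty$ slowly enough to preserve rigidity but fast enough to force the ratios within a block to approach $1$. Weak mixing of $T$ plays no role in the argument; only the hypothesis that $T$ is rigid is used.

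First, I would invoke the metric $d$ from (\ref{metric}) and the subadditivity noted in Remark~\ref{metric1}: writing $\|S\| := d(S,\mathrm{Id})$, one has $\|T^{lk}\| \leq l\|T^{k}\|$ for every $l \geq 1$. Since $T$ is rigid, after passing to a subsequence we may assume the original rigidity sequence $(k_j)$ satisfies $\|T^{k_j}\| < j^{-2}$ and grows so fast that $j\cdot k_j < k_{j+1}$ for every $j$. Then the concatenation
\[
k_1,\; k_2,\, 2k_2,\; k_3,\, 2k_3,\, 3k_3,\; \ldots,\; k_j,\, 2k_j,\, \ldots,\, jk_j,\; \ldots
\]
is a strictly increasing sequence of positive integers, which we denote $(n_m)$.

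Next I would verify that $(n_m)$ is a rigidity sequence for $T$. For any $n_m$ in the $j$-th block we have $n_m = lk_j$ with $1 \leq l \leq j$, hence $\|T^{n_m}\| \leq l\|T^{k_j}\| \leq j/j^{2} = 1/j$. As $m \to \infty$ the block index $j \to \infty$, so $T^{n_m} \to \mathrm{Id}$ in the strong operator topology, giving rigidity of $T$ along $(n_m)$.

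Finally, for the liminf condition, the consecutive ratios within the $j$-th block are exactly $(l+1)k_j/(lk_j) = 1 + 1/l$ for $1 \leq l \leq j-1$; choosing $l = j-1$ produces a ratio $1 + 1/(j-1)$, which tends to $1$ as $j \to \infty$. Since $(n_m)$ is strictly increasing in positive integers, every ratio is strictly greater than $1$, and therefore $\liminf_{m\to\infty} n_{m+1}/n_m = 1$. The only delicate point is bookkeeping to make the concatenated sequence increasing, which is handled by the initial growth condition $jk_j < k_{j+1}$, so no substantial obstacle arises.
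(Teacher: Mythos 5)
Your proof is correct, and it takes a different route from the paper's, which is a very terse argument: pass to a subsequence $(N_{m_k})$ of a rigidity sequence and use the IP set it generates, written in increasing order and rarified enough that consecutive terms such as $N_{m_{k+1}}$ and $N_{m_k}+N_{m_{k+1}}$ have ratio $1+N_{m_k}/N_{m_{k+1}}\to 1$, while $\|T^{N_{m_k}+N_{m_{k+1}}}\|\le\|T^{N_{m_k}}\|+\|T^{N_{m_{k+1}}}\|\to 0$ by the subadditivity of Remark~\ref{metric1}. You instead thicken the sequence with the block of multiples $k_j,2k_j,\ldots,jk_j$, using the same subadditivity in the iterated form $\|T^{lk_j}\|\le l\|T^{k_j}\|$ together with the quantitative normalization $\|T^{k_j}\|<j^{-2}$, so that the ratio $j/(j-1)$ inside the $j$-th block drives the liminf to $1$. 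Both arguments rest on the same mechanism --- sums of rigid times are again rigid times, with error controlled by the triangle inequality for $d(\cdot,Id)$ --- but yours is completely explicit about which additive combinations are retained, and this is a genuine advantage: the paper's assertion that the \emph{entire} IP set in increasing order is a rigidity sequence is, read literally, too strong, since the sums $N_{m_1}+N_{m_k}$ satisfy $\|T^{N_{m_1}+N_{m_k}}\|\to\|T^{N_{m_1}}\|>0$ and must be discarded; your block construction makes that bookkeeping transparent. The one step you should state rather than leave implicit is that $d(T^{n_m},Id)\to 0$ is equivalent to rigidity of $T$ along $(n_m)$; this is exactly the compatibility of the metric in (\ref{metric}) with the strong operator topology, but it is where the definition of rigidity actually enters. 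You are also right that weak mixing plays no role in the proof.
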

\begin{proof} Take a rigidity sequence $(N_m)$ for $T$. We can replace
this by a subsequence so that the
IP set it generates when written in increasing order is also a rigidity
sequence $(n_m)$ for $T$. We can also
arrange that this IP set is sufficiently rarified (by excluding more
terms from $(N_m)$ if necessary)
so that $(n_m)$ has $\liminf\limits_{m\to \infty} \frac {n_{m+1}}{n_m} = 1$.
\end{proof}

\begin{rem} The syndetic nature of recurrence noted at the beginning of
the proof of Proposition~\ref{bestgrowth} below
shows that the above can be modified to give a rigidity sequence $(n_m)$
for $T$, either in the
ergodic rotation case, or the weakly mixing case, for which the ratios
$\frac {n_{m+1}}{n_m}$
are near one for arbitrarily long blocks of values $m$, infinitely often.
\end{rem}

\begin{rem}\label{EisnerGrivaux} In Example 3.18, Eisner and Grivaux~\cite{EG} 
construct a weakly mixing transformation $T$ and a rigidity sequence $(n_m)$ for $T$
such that $\lim\limits_{m\to \infty} \frac {n_{m+1}}{n_m} = 1$.  However,
their example does not necessarily give the type of upper density rate
results of Proposition~\ref{ergodicratewm} and Proposition~\ref{wmslow}.
\end{rem}

The techniques used in the above constructions give the following
important consequence in Corollary~\ref{notSidon}. If one looks at
all of the other constructions of rigidity sequences given in this
article, one might expect
the opposite of what Corollary~\ref{notSidon} gives us. Also, given this type of example of a
rigidity sequence
for a weakly mixing transformation, and the others in this
article, it seems that it may be very difficult to characterize these
sequences
in any simple structural fashion. See also Remark~\ref{linformeg}, c) for a
different viewpoint
on the issue of characterizing rigidity sequences.

\begin{cor} \label{notSidon} There is a weakly mixing transformation $T$
and a rigidity sequence
$(n_m)$ for $T$ such that $(n_m)$ is not a Sidon set, and so is not
the union of a finite number of lacunary sequences.
\end{cor}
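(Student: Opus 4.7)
The plan is to derive this corollary directly by combining Proposition~\ref{ergodicratewm} with the known density upper bound for Sidon sets used in the proof of Corollary~\ref{notlac}. Recall that if $\mathbf n = (n_m)$ is a Sidon set then by \cite{LR}, Corollary 6.11, there is a constant $C$ with $D(N,\mathbf n) \le \frac{C \log N}{N}$ for all $N$. So my goal is to exhibit, via Proposition~\ref{ergodicratewm}, a weakly mixing $T$ whose rigidity sequence has upper density exceeding any given multiple of $\frac{\log N}{N}$ infinitely often.

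First I would choose a reference sequence $(d_N)$ with $d_N \to 0$ but decaying slower than $\frac{\log N}{N}$; for instance $d_N = \frac{1}{\log \log N}$ for large $N$ (with $d_N = 1$ for small $N$). Then $d_N \to 0$ while $\frac{d_N N}{\log N} = \frac{N}{\log N \log \log N} \to \infty$, so for every constant $C$ one has $d_N > \frac{C \log N}{N}$ for all sufficiently large $N$. Applying Proposition~\ref{ergodicratewm} to this $(d_N)$ produces a weakly mixing transformation $T$ and a rigidity sequence $\mathbf n = (n_m)$ for $T$ such that $D(N, \mathbf n) > d_N$ for infinitely many $N$.

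Now suppose for contradiction that $\mathbf n$ were a Sidon set. Then by the Lopez--Ross bound there would exist $C$ with $D(N, \mathbf n) \le \frac{C \log N}{N}$ for every $N$. But by construction of $(d_N)$ there are infinitely many $N$ with $D(N, \mathbf n) > d_N > \frac{C \log N}{N}$, a contradiction. Hence $\mathbf n$ is not a Sidon set, and in particular it cannot be a finite union of lacunary sequences (since finite unions of lacunary sequences are Sidon, as noted in the proof of Corollary~\ref{notlac}).

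I do not expect any real obstacle here: Proposition~\ref{ergodicratewm} does all the heavy lifting of constructing the weakly mixing example with prescribed density behavior, and the Sidon bound $D(N,\mathbf n) \le C(\log N)/N$ is a standard harmonic-analysis fact already cited in the proof of Corollary~\ref{notlac}. The only point to verify is that the comparison sequence $(d_N)$ can be chosen simultaneously to vanish and to eventually dominate every multiple of $(\log N)/N$, which the choice $d_N = 1/\log\log N$ (or any similarly slowly decaying function) accomplishes.
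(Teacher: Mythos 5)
Your proposal is correct and is essentially the paper's own argument: the paper proves Corollary~\ref{notSidon} by running the proof of Corollary~\ref{notlac} verbatim with Proposition~\ref{ergodicratewm} in place of Proposition~\ref{ergodicrate}, i.e.\ choosing $d_N \to 0$ that eventually dominates $C\log N/N$ for every $C$ and invoking the Lopez--Ross density bound for Sidon sets. Your explicit choice $d_N = 1/\log\log N$ is a perfectly good instance of the comparison sequence the paper leaves implicit.
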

\begin{proof} The proof is just like the proof of
Corollary~\ref{notlac}, only here we use Proposition~\ref{ergodicratewm} instead of
Proposition~\ref{ergodicrate}.
\end{proof}

\begin{rem} Again, Example 3.18 in Eisner and Grivaux~\cite{EG} gives a rigidity sequence
for a weakly mixing transformation such that $\lim\limits_{m\to \infty} \frac {n_{m+1}}{n_m} = 1$.  Such a sequence cannot
be a Sidon set because its density $D(N,\mathbf {n})$ is not bounded by $C\log N/N$
for any constant $C$.
\medskip

\noindent There are other types of sequences that our technique here could apply
to, and show that they
cannot characterize rigidity sequences. For example, consider the
sequences studied by Erd\H{o}s
and Tur\'an~\cite{ETuran}, which they call Sidon sets but are now given a
different name (they are
called $\mathcal B_2$ sequences) because of
the current use of the term Sidon sets mentioned above. They show
there sets have density at most $C\frac {N^{1/4}}N$.
So again, we can construct rigidity sequences that cannot be a finite
union of such $\mathcal B_2$
sequences.
\end{rem}

Another direction we can seek for constructing special rigidity sequences
$(n_m)$ is to try and construct them with $n_m \le \Psi(m)$ where
$\Psi(m)$ is growing slowly, or to prove instead that this would force
$\Psi(m)$ to grow quickly. First, we have this result.

\begin{prop} \label{bestgrowth}
Suppose $\Psi(m)\ge m$ for all $m \ge 1$,
and $\lim\limits_{m\to \infty} \frac {\Psi(m)}m = \infty$. Then for any
ergodic rotation $T$ of $\mathbb T$, there
is a rigidity sequence $(n_m)$ and a constant $C$ such that $n_m \le
C\Psi(m)$
for all $m \ge 1$.
\end{prop}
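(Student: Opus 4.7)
The plan is a greedy construction exploiting the syndetic nature of recurrence for an ergodic rotation. Write $T\alpha=\gamma\alpha$ with $\gamma\in\T$ of infinite order. Minimality of the rotation implies that for each $\varepsilon>0$ the return set $R_\varepsilon := \{n\ge 1:|\gamma^n-1|\le\varepsilon\}$ is syndetic in $\N$; denote by $g_k$ the supremum of the gaps between consecutive elements of $R_{1/k}$, so that $g_k<\infty$ (and grows with $k$).

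To work with a monotone version of $\Psi(m)/m$, I would set $f(m):=\inf_{j\ge m}\Psi(j)/j$. Then $f$ is non-decreasing, $f(m)\to\infty$, and $f(m)\le\Psi(m)/m$ for every $m$. Define $k(m)$ as the largest $k$ with $g_k\le f(m)/2$; this is well-defined once $f(m)\ge 2g_1$, say for $m\ge m_0$, and is non-decreasing with $k(m)\to\infty$.

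Now I construct the sequence greedily: set $n_m:=m$ for $m<m_0$, and for $m\ge m_0$ let $n_m$ be the smallest element of $R_{1/k(m)}$ strictly larger than $n_{m-1}$. Since $R_{1/k(m)}$ has gaps bounded by $g_{k(m)}\le f(m)/2$, we obtain $n_m-n_{m-1}\le f(m)/2$, and the monotonicity of $f$ (together with $f(j)\le f(m)$ for $j\le m$) gives
\[
n_m \le (m_0-1)+\sum_{j=m_0}^m \tfrac12 f(j) \le (m_0-1)+\tfrac{m}{2}f(m) \le (m_0-1)+\tfrac12\Psi(m).
\]
Using $\Psi(m)\ge m\ge 1$, a constant $C$ depending only on $m_0$ absorbs the additive term and yields $n_m\le C\Psi(m)$ for every $m\ge 1$. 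Finally, $|\gamma^{n_m}-1|\le 1/k(m)\to 0$, so by the spectral characterization in Proposition~\ref{rigidfacts} (applied to the discrete spectral measure of $T$, which is supported on the closed subgroup generated by $\gamma$) the sequence $(n_m)$ is a rigidity sequence for $T$.

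The only real hurdle is verifying finiteness of $g_k$, but this is immediate from minimality: for any neighborhood $U$ of $1\in\T$ the set $\{n:\gamma^n\in U\}$ is syndetic. The regularization $\Psi\mapsto f$ is forced on us because $\Psi(m)/m$ need not be monotone, and the rest is routine bookkeeping on the greedy construction.
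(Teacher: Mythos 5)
Your proof is correct and rests on the same two ingredients as the paper's: the syndeticity of the return sets $\{n\ge 1:|\gamma^n-1|\le\varepsilon\}$ (from minimality of the rotation) and a regularization of $\Psi$ making $\Psi(m)/m$ effectively monotone before comparing it with the syndeticity constants. Where the paper organizes the selection into $K_L$ explicit blocks of length $N_L$, you make a greedy choice with gap bound $g_{k(m)}\le f(m)/2$; this is only a difference in bookkeeping (note that $g_k\to\infty$ since $\gamma$ has infinite order, so your ``largest $k$'' is well defined), and the telescoping estimate $n_m\le (m_0-1)+\tfrac m2 f(m)\le C\Psi(m)$ together with $\gamma^{jn_m}\to1$ for each $j$ is sound.
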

\begin{proof}  First choose $\gamma \in \mathbb T$ of infinity order so that
$T(\alpha) = \gamma \alpha$ for all $\alpha \in \mathbb T$.
Fix open arcs $A_s$ centered on $1$ with
$\lambda_{\mathbb T}(A_s) = \epsilon_s$, where $(\epsilon_s)$
is a sequence decreasing to $0$ as $s \to \infty$. For each
$s$, the sequence $(n\ge 1: \gamma^n \in A_s)$ is syndetic. That
is, there is some $N_s \ge 1$ such that for all
$M$, there exists $n\in [M+1,\ldots,M+N_s]$ such that $\gamma^n \in A_s$.
The syndetic property here is automatic from minimality.  But it is easy to see
this explicitly in this case.
Indeed, fix an open arc $B_s$ centered at $1$ with $\lambda_{\mathbb
T}(B_s) = \epsilon_s/4$.
Then $\{\gamma^nB_s: n\ge 1\}$ covers $\mathbb T$ and so by compactness
there exists
$(\gamma^{n_1}B_s,\ldots,\gamma^{n_K}B_s\}$ which also covers $\mathbb
T$. But
then, for all $M$, there exists some $n_k$ such that $\gamma^{n_k}B_s
\cap \gamma^{-M}B_s
\not= \emptyset$. Hence, $\gamma^{n_k+M} \in B_sB_s^{-1} \subset A_s$. So
if we let $N_s = \max\{n_1,\ldots,n_K\}$, then there exists
$n\in [M+1,\ldots,M+N_s]$ such that $\gamma^n \in A_s$, i.e. $|\gamma^n
-1|< \epsilon_s/2$.

By replacing $\Psi$ by a more slowly growing function, we can arrange
without loss of generality for the additional property that $\Psi(m)/m$
is increasing.
We write $\Psi(m) = m\theta(m)$. Let $C \ge N_1$ and take an
increasing sequence of whole numbers $(K_L: L \ge 1)$ such that
$\theta(K_L) \ge
N_{L+1}$ for all $L \ge 1$. Let $K_0 = N_0 = 0$.
Consider the blocks in the integers, for $L \ge 1$,
of the form
\[B(L,j) =
[K_0N_0+\ldots+K_{L-1}N_{L-1}+jN_L+1,K_0N_0+\ldots+K_{L-1}N_{L-1}+(j+1)N_L]\]
where $j=0,\ldots,K_L-1$. Then we have a sequence of $K_L$ blocks of length
$N_L$. So we can choose $n(L,j) \in B(L,j)$ such that
$|\gamma^{n(L,j)} -1| < \epsilon_L/2$. Let $(n_m)$ be the increasing
sequence
consisting of all such choices $n(L,j)$ where $L\ge 1$ and
$j=0,\ldots,K_L-1$.
By construction, $(n_m)$ is a rigidity sequence for $T$.

We want to show that $n_m \le Cm\theta(m)$ for all $m$. But the values of
$m$ here are of the form $K_0+\ldots+K_{L-1}+j+1$.
For each such $m$, the corresponding $n_m$ is being chosen in
the interval $B(L,j)$. So, it is sufficient for us to prove that
\[K_0N_0+\ldots+K_{L-1}N_{L-1}+(j+1)N_L \le C\Psi(K_0+\ldots+K_{L-1}+j+1).\]
For $L=1$, we need to have $(j+1)N_1 \le C\Psi(j+1)$.
But this follows since $\Psi(m) \ge m$ for all $m\ge 1$, and $C \ge N_1$.
For $L \ge 2$, we see that the inequality is
\[K_1N_1+\ldots+K_{L-1}N_{L-1}+(j+1)N_L \le
C(K_1+\ldots+K_{L-1}+j+1)\theta(K_1+\ldots+K_{L-1}+j+1).\]
But we have
\[C(K_1+\ldots+K_{L-1}+j+1)\theta(K_1+\ldots+K_{L-1}+j+1)\ge\]
\[(K_1+\ldots+K_{L-1}+j+1)\theta(K_{L-1}) \ge (K_1+\ldots+K_{L-1}+j+1)N_L\]
\[\ge K_1N_1+\ldots+K_LN_{L-1}+(j+1)N_L.\]
\end{proof}

\begin{cor}\label{bestdecay}
Given any sequence $(d_N:N\ge 1)$ such that $d_N \to 0$
as $N \to \infty$, and any ergodic rotation $T$ of $\mathbb T$, there
exists a rigidity sequence $\mathbf{n} = (n_m)$ for $T$ and
a constant $c > 0$ such that
$D(N,\mathbf{n}) > cd_N$ all $N \ge 1$.
\end{cor}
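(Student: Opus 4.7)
The idea is to apply Proposition~\ref{bestgrowth} to a function $\Psi$ manufactured from the target density $(d_N)$ itself.

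First I would pass to the monotonized sequence $\tilde d_N = \sup_{M\ge N} d_M$, which is nonincreasing, tends to zero, and dominates $d_N$, so it suffices to prove the statement for $(\tilde d_N)$. After discarding an initial segment we may also assume $\tilde d_N \le 1$ for all $N$. Define
\[
\Psi(m) = \left\lceil \frac{m}{\tilde d_m}\right\rceil.
\]
Then $\Psi(m) \ge m$ and $\Psi(m)/m \ge 1/\tilde d_m \to \infty$, so Proposition~\ref{bestgrowth} yields a constant $C\ge 1$ and a rigidity sequence $(n_m)$ for $T$ with $n_m \le C\Psi(m)$ for every $m$. Prepending finitely many positive integers to $(n_m)$ preserves rigidity (which depends only on the tail), so we may further arrange that $1$ is a term of the sequence.

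For the density bound, given $N \ge 1$ set $m = \lfloor N\tilde d_N/(3C)\rfloor$. When $m \ge 1$ the monotonicity $\tilde d_m \ge \tilde d_N$ (valid since $m \le N$) gives
\[
n_m \le C\Psi(m) \le C\Bigl(\frac{m}{\tilde d_m}+1\Bigr) \le \frac{N}{3}+C,
\]
which is at most $N$ in the regime $N\tilde d_N \ge 3C$ (which forces $N \ge 3C$ because $\tilde d_N \le 1$). Hence $|\{k : n_k \le N\}| \ge m$, and a short calculation yields $D(N,\mathbf n) \ge \tilde d_N/(6C)$ as soon as $N\tilde d_N \ge 6C$. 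In the complementary regime $N\tilde d_N \le 6C$, the trivial bound $D(N,\mathbf n) \ge 1/N$ (available because $1$ belongs to the sequence) already exceeds $\tilde d_N/(6C)$. Taking $c$ to be any positive constant smaller than $1/(6C)$ then yields $D(N,\mathbf n) > c\tilde d_N \ge c d_N$ for every $N \ge 1$.

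The main obstacle, and the only substantive idea, is guessing the right $\Psi$: the choice $\Psi(m) = m/d_m$ is self-dual in the sense that the growth bound $n_m \le C\Psi(m)$ inverts precisely into a counting bound $|\{k : n_k \le N\}| \gtrsim Nd_N$ along a cofinal set of $N$'s. Everything else is bookkeeping of constants and a dichotomy between $N\tilde d_N$ small (where the trivial bound $1/N$ wins) and $N\tilde d_N$ large (where the main estimate wins).
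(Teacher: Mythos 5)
Your proposal is correct and is essentially the paper's own argument: both monotonize $(d_N)$, feed $\Psi(m)\approx m/d_m$ into Proposition~\ref{bestgrowth}, and invert the resulting growth bound $n_m\le C\Psi(m)$ into the density lower bound (the paper brackets $N$ between $n_m$ and $n_{m+1}$ and chains $d_m\ge d_{n_m}\ge d_N$, whereas you pick $m=\lfloor N\tilde d_N/(3C)\rfloor$ directly, which is only a bookkeeping difference). Your extra care with small $N$ (prepending $1$ and the dichotomy on $N\tilde d_N$) is a sensible touch that the paper glosses over.
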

\begin{proof} We assume without loss of generality that $(d_N)$
is decreasing. Take the sequence $(n_m)$ constructed in
Proposition~\ref{bestgrowth}
with $\Psi(m) = \frac m{d_{m-1}}$ for $m \ge 2$.
Fix $N$ with $n_m \le N < n_{m+1}$. Then
\begin{eqnarray*} \frac {\#\{n_k:k\ge 1\}\cap\{1,\ldots,N\}}N &=& \frac mN
\ge \frac m{n_{m+1}} \\
&\ge& \frac m{C\Psi(m+1)}
\ge \frac {d_m}{2C}\\
&\ge& \frac {d_{n_m}}{2C}
\ge \frac {d_N}{2C}.
\end{eqnarray*}
\end{proof}

\begin{rem} The arguments above for an ergodic rotation of the circle
can easily be
generalized to an ergodic rotation of any compact metric abelian group
$G$. The details of this
are a straightforward generalization of the arguments given here.
Indeed, an ergodic rotation $T$
of $G$ is given by $T(g)=g_0g$ for some $g_0 \in G$ that generates a
dense subgroup of $G$. But then also for
any $U$, an open neighborhood of the identity in $G$, we have $\{n \ge
1: g_0^n \in U\}$ is a
syndetic sequence. Again, the syndetic property here
is automatic from minimality.  But it is easy to see
this explicitly in this general case by a proof similar to the one given at the
beginning of
Proposition~\ref{bestgrowth}. Another approach to
Proposition~\ref{bestgrowth} and Corollary~\ref{bestdecay},
and their generalizations to compact metric abelian groups, would be to
use discrepancy estimates and
the fact the $\{g_0^n: n \ge 1\}$ is uniformly distributed in $G$.
See Kuipers and Niederreiter~\cite{KN} for background information about
uniform distribution in
compact abelian groups.
\end{rem}

\begin{rem} It has not yet been possible (and may not be) to carry out a construction
as in Proposition~\ref{bestgrowth} or Corollary~\ref{bestdecay} for some
weakly mixing transformation.
\end{rem}

\subsection{\bf Symbolic approach}
\label{symbolic}

In this section, we use shifts on products of finite spaces,
a standard model that appears in symbolic dynamics.  We look at a construction
of rigid sequences for weakly mixing transformations that is given by taking the usual
coordinate shift on the product of certain finite sets, and giving a careful construction of a
measure on this product space.  This will allow us to show that
certain types of sequences $(n_m)$ can be rigidity sequences for weakly mixing
transformations, even though the sequences do not have the pointwise
behavior of the sequences in Section~\ref{diophantine} i.e. the set $\mathcal R(n_m)$
contains no elements at all in $\mathbb T$ of infinite order, let alone an infinite
perfect set of points.
After this, in Section~\ref{products}, we take an approach to
the basic construction in this section, but we use Riesz products to get the results.
We consider these Riesz product constructions here partly because the issues that one needs to handle there anticipate the
results in Section~\ref{secdisjoint} where we show there are no universal rigid sequences.

We have seen that if $n_{m+1}/n_m$ is bounded then a pointwise approach
to the rigidity question will not work. So when this happens, the next
result
is giving us weakly mixing transformations that are rigid along $(n_m)$ even
though $\mathcal R(n_m)=\{\gamma \in \mathbb T: \lim\limits_{m\to \infty}
\gamma^{n_m} =1\}$ is countable.

\begin{prop} \label{integerratios} Given an increasing sequence
$(n_m)$ such that $n_{m+1}/n_m$ is always a whole number, there is a weakly
mixing dynamical system for which there is rigidity along $(n_m)$.
\end{prop}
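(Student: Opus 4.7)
The plan is to produce a continuous Borel probability measure $\nu$ on $\mathbb{T}$ with $\widehat{\nu}(n_m)\to 1$, then invoke Proposition~\ref{spectral}. By Proposition~\ref{inmeasure} this is equivalent to arranging $\gamma^{n_m}\to 1$ in $\nu$-measure, and the divisibility hypothesis $n_m\mid n_{m+1}$ is the key structural input: it is exactly what lets one iteratively refine the dynamics at scale $n_{m+1}$ without disturbing the rigidity already established at scale $n_m$.

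Rather than build $\nu$ directly on $\mathbb{T}$, I would obtain it as a spectral measure of a transformation already constructed in the paper. Set $k_m := n_{m+1}/n_m\ge 2$. By Proposition~\ref{infrankone}, since each $k_m$ is a whole number, there is an infinite-measure-preserving rank one transformation $T$ on a $\sigma$-finite Lebesgue space $(X,\mathcal{B},\mu)$ with $U_T^{n_m}\to Id$ in the strong operator topology on $L_2(X,\mu)$; and by Lemma~\ref{wmnoeigen} the operator $U_T$ automatically has continuous spectrum. Pick any unit vector $f\in L_2(X,\mu)$ and let $\nu:=\nu^T_f$ be its spectral measure. Then $\nu$ is a continuous probability measure on $\mathbb{T}$, and rigidity gives $\widehat{\nu}(n_m)=\langle U_T^{n_m}f,f\rangle\to\|f\|_2^2=1$.

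To conclude, I would apply the Gaussian measure space construction from Remark~\ref{GMC} to the symmetrization $\nu_s:=\nu\ast\nu^*$, which remains continuous and satisfies $\widehat{\nu_s}(n_m)=|\widehat{\nu}(n_m)|^2\to 1$: this yields a weakly mixing probability-preserving transformation $G_{\nu_s}$ whose spectral type contains $\nu_s$, hence is rigid along $(n_m)$ by Proposition~\ref{spectral}. Equivalently, one could form the Poisson suspension $\widetilde{T}$ from Remark~\ref{Poisson}: it is weakly mixing because the rank one $T$ acts ergodically on an infinite measure space and hence has no non-trivial invariant set of finite measure, and the decomposition $U_{\widetilde{T}}=\bigoplus_{k\ge 0}U_T^{\odot k}$ on the symmetric Fock space transports rigidity of $U_T$ along $(n_m)$ directly to rigidity of $U_{\widetilde{T}}$ along $(n_m)$.

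The hard part is already packaged inside Proposition~\ref{infrankone}, whose cutting-and-stacking construction is most delicate precisely in the bounded-ratio case (for instance $n_m=2^m$). There the pointwise rigidity set $\mathcal{R}(n_m)$ is only countable, so the diophantine and perfect-set approaches of Section~\ref{diophantine} cannot produce the required continuous $\nu$ directly; the Gaussian/Poisson transfer above is what bypasses the need for any pointwise spectral support.
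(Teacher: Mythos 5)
Your argument is correct, but it is not the route the paper takes for this proposition. The paper's own proof is a direct symbolic construction: writing $x=\sum_m b_m/n_m$ in the mixed-radix expansion with digits $b_m\in\{0,\dots,a_m-1\}$, $a_{m+1}=n_{m+1}/n_m$, it builds a continuous product measure $\nu=\prod_k\nu_k$ on blocks of coordinates, with $\nu_k$ giving mass $1-\epsilon_k$ to the all-zero word ($\sum\epsilon_k=\infty$, $\epsilon_k\to0$), and then verifies by hand that $\|n_Mx\|\to0$ in $\nu$-measure; Proposition~\ref{spectral} finishes. You instead invoke Proposition~\ref{infrankone} to get an infinite measure-preserving rank one $T$ rigid along $(n_m)$, use Lemma~\ref{wmnoeigen} to see that every spectral measure of $U_T$ is continuous, and transfer to a finite measure-preserving weakly mixing system via the GMC applied to a symmetrized spectral measure (or via the Poisson suspension). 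This is legitimate and non-circular --- Proposition~\ref{infrankone} is proved by cutting and stacking with no reference to Proposition~\ref{integerratios} --- and in fact the paper itself advertises exactly this transfer at the start of Section~\ref{rankone}. Two caveats are worth noting. First, your route hides the real work inside Proposition~\ref{infrankone}, whose integer-ratio case the paper only writes out in full for $n_m=2^m$ (``the general case is no more difficult''), whereas the symbolic proof of Proposition~\ref{integerratios} handles arbitrary integer ratios explicitly and exhibits the measure concretely. Second, minor points: since $(n_m)$ is strictly increasing with integer ratios, $n_{m+1}/n_m\ge2$ is automatic, so the hypothesis of Proposition~\ref{infrankone} is indeed met; and since $\mu$ is infinite, every nonzero $f\in L_2(X,\mu)$ has continuous spectral measure, so any unit vector works. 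Your closing observation --- that for bounded ratios the group $\mathcal R(n_m)$ is countable, so no pointwise (perfect-set) argument can supply the continuous measure and one must go through a transformation or an explicit product measure --- is exactly the point the paper makes when motivating Section~\ref{symbolic}.
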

\begin{proof}
let $a_1 = n_1$ and $a_{m+1} = n_{m+1}/n_m$ for all $m \ge 1$.
So $n_m = \prod\limits_{k=1}^m a_k$ and
$\frac {n_m}{n_M} = \prod\limits_{k=M+1}^m a_k$ for all $m \ge M+1$.
Consider the series representations for $x \in [0,1)$ of the form
$x =\sum\limits_{m=1}^\infty \frac {b_m}{n_m}$ where $b_m$ is
a whole number such that $0 \le b_m < a_m$. Except for a set
of Lebesgue measure zero, such series are uniquely determined
by $x$, and vice versa. We will write $b_m(x)$ to indicate
the dependence of $(b_m)$ on $x$. Let $\Pi =
\prod\limits_{m=1}^\infty \{0,\ldots,a_m-1\}$ and let
$\pi = \prod\limits_{m=1}^\infty \pi_m$ where $\pi_m$
is the uniform counting
measure on $\{0,\ldots,a_m-1\}$. It is easy to see that
there is a one-to-one, onto Borel mapping $\Phi$ of
$[0,1)$ to $\Pi$ such that $\pi\circ \Phi^{-1}$ is Lebesgue
measure.

Now we use a block construction to build a positive
continuous Borel measure $\nu$ such that $\|n_mx\| \to 0$ in
measure with respect to $\nu$ as $m \to \infty$.
Consider disjoint intervals $I_k = [N_k+1,\ldots,N_{k+1}]$, with
$0 = N_0 < N_1 < N_2 < \ldots$, and $N_{k+1} - N_k = |I_k|$, the length
of $I_k$, increasing to $\infty$. Fix $(\epsilon_k)$ with
$0 < \epsilon_k \le \frac 12$ such that $\sum\limits_{k=1}^\infty \epsilon_k
= \infty$ and $\lim\limits_{k \to \infty} \epsilon_k = 0$.
Define $\nu_k$ on $\Pi$ as follows. Let $\overline {0_k}$ be
the element in $\prod_{I_k} \{0,\ldots,a_m-1\}$ whose entries are
all $0$. Let $\nu_k(\overline {0_k})
= 1 - \epsilon_k$ and $\nu_k$ uniformly distributed over the points
in $\prod_{I_k} \{0,\ldots,a_m-1\}\backslash \{\overline {0_k}\}$ with
the total mass $\nu_k(\prod_{I_k} \{0,\ldots,a_m-1\}
\backslash \{\overline {0_k}\}) =
\epsilon_k$. Then let
$\nu = \prod\limits_{k=1}^\infty \nu_k$ and consider
the measure $\nu\circ \Phi$ on $[0,1)$.

We know that $\nu$ and $\nu\circ \Phi$ are
regular Borel probability measures because all finite Borel measures
are regular in this situation. Moreover, $\nu$
and $\nu\circ \Phi$ are
continuous, i.e. they have no point masses. Indeed, $\nu(\{\overline 0\})
= \prod\limits_{k=1}^\infty (1 - \epsilon_k) = 0$ because
$\sum\limits_{k=1}^\infty \epsilon_k = \infty$, and by the definition
of $\nu$ for every other point $\overline x \in \Pi$, we have
$\nu(\{\overline x\}) \le \nu(\{\overline 0\})$.

We claim that $\|n_Mx\| \to 0$ in measure
with respect to $\nu\circ \Phi$ as $M \to \infty$.
We see that $\|n_Mx\| = \sum\limits_{m=M+1}^\infty \frac
{b_m}{n_m/n_M} = \sum\limits_{m=M+1}^\infty
\frac {b_m}{a_{M+1}\ldots a_m}$.
For any $M \ge 1$, we have
$M \in I_k$ for a unique $k = k(M)$.
Clearly, as $M \to \infty$, we have $k(M) \to \infty$.
Consider the set $D_k$ of vectors
$\overline b = (b_m) \in \Pi$ such that $b_m = 0$ for
$m \in I_k \cup I_{k+1}$. We have $\nu(D_k) =
(1 - \epsilon_k)(1 - \epsilon_{k+1}) \to 1$ as $k \to \infty$.
But also for $\overline b \in D_{k(M)}$, we have
$b_m = 0$ for all
$m, N_{k(M)}\le m \le N_{k(M)+2}$. Hence, if $\Phi(x) \in D_{k(M)}$,
then
$\|n_Mx\| = \sum\limits_{m=N_{k(M)+2}+1}^\infty
\frac {b_m}{a_{M+1}\ldots a_m} \le \sum\limits_{m=N_{k(M)+2}+1}^\infty
\frac 1{a_{M+1}\ldots a_{m-1}} \le \sum\limits_{m=N_{k(M) +2}+1}\frac
1{2^{m-M-1}} = 2^{M+1}\frac 1{2^{N_{k(M)+2}}}$.
But because we chose
$|I_k|$ increasing to $\infty$, we have
$2^{M+1}\frac 1{2^{N_{k(M)+2}}} \to 0$ as $M \to \infty$.
This means that, as $M \to \infty$,
we have $\nu\circ \Phi(\Phi^{-1}D_{k(M)}) \to 1$
and for $x \in \Phi^{-1} D_{k(M)}$, we have $\|n_Mx\| \to 0$.
\end{proof}

This result leads to the following important special case.

\begin{cor} \label{powers} Given any whole number $a \ge 2$, there is a
weakly
mixing dynamical system for which there is rigidity along
$(a^m: m \ge 1)$. However, there is never rigidity along $\sigma
\in FS(a^m: m \ge 1)$ as $\sigma \to \infty (IP)$.
\end{cor}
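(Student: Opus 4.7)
For the first claim, Proposition~\ref{integerratios} applies directly, since the ratio $a^{m+1}/a^m = a$ is a whole number; this produces a weakly mixing transformation rigid along $(a^m)$.

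For the second claim the plan is to argue by contradiction, invoking Proposition~\ref{spectralIP}: if some weakly mixing system were IP-rigid along $FS(a^m)$, there would exist a continuous Borel probability measure $\nu$ on $\T$ with $\widehat\nu(\sigma) \to 1$ as $\sigma \to \infty\,(IP)$ in $FS(a^m)$. After replacing $\nu$ by the symmetrization $\nu*\nu^*$ I may assume $\widehat\nu \ge 0$ (the reduction preserves continuity and the IP limit since $|\widehat\nu(\sigma)|^2 \to 1$). Given small $\epsilon > 0$, fix $M$ so that $\widehat\nu(\sigma) \ge 1-\epsilon$ for every $\sigma = \sum_{i\in F}a^i$ with $F$ a finite subset of $[M,\infty)$ (the case $F=\emptyset$ is covered by $\widehat\nu(0)=1$). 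The engine of the proof is the Riesz-style trigonometric polynomial
\[
P_K(\gamma) := \prod_{i=M}^{M+K-1}\bigl(1+\gamma^{a^i}\bigr) = \sum_{F \subset [M,M+K-1]}\gamma^{\sum_{i\in F}a^i},
\]
whose $2^K$ expanded characters all lie in $FS(a^m)$ with indices $\ge M$. Integrating gives $\int P_K\,d\nu = \sum_F \widehat\nu(\sigma(F)) \ge 2^K(1-\epsilon)$, while $|P_K| \le 2^K$ pointwise. A routine Markov argument (from $0 \le |P_K|/2^K \le 1$ with mean $\ge 1-\epsilon$) yields $\nu(A_K) \ge 1-\sqrt\epsilon$ for $A_K := \{\gamma : |P_K(\gamma)| \ge (1-\sqrt\epsilon)\,2^K\}$.

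Writing $\gamma = e^{2\pi i x}$ and using the identity $|1+\gamma^{a^i}|/2 = \cos(\pi\|a^i x\|)$, membership in $A_K$ translates into $\prod_{i=M}^{M+K-1}\cos(\pi\|a^i x\|) \ge 1-\sqrt\epsilon$. Taking logarithms and using the elementary bound $\log\cos(\pi t) \le -2t^2$ on $[0,1/2]$ gives $\sum_{i=M}^{M+K-1}\|a^i x\|^2 \le C\sqrt\epsilon$ on $A_K$. Each factor $\cos(\pi\|a^i x\|)$ lies in $[0,1]$, so $|P_K|/2^K$ is non-increasing in $K$; hence the $A_K$ are nested and $\nu(A_\infty) \ge 1-\sqrt\epsilon$ for $A_\infty := \bigcap_K A_K$. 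On $A_\infty$ the full tail series $\sum_{i=M}^\infty \|a^i x\|^2$ converges, so in particular $\|a^i x\| \to 0$.

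To finish I will verify that $\|a^i x\| \to 0$ forces $x \in \{p/a^L : p \in \Z, L \ge 0\}$. Expanding $x = \sum_k c_k a^{-k}$ in base $a$ with digits $c_k \in \{0,\ldots,a-1\}$, the fractional part $\{a^i x\} = \sum_{j \ge 1}c_{i+j}a^{-j}$ can tend to $0$ only if $c_k$ is eventually $0$ and to $1$ only if $c_k$ is eventually $a-1$; in either case $x$ is a rational with denominator a power of $a$. This exceptional set is countable, so continuity of $\nu$ forces $\nu(A_\infty) = 0$, contradicting $\nu(A_\infty) \ge 1-\sqrt\epsilon$ for any $\epsilon < 1$. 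The main delicacy is the bookkeeping of the two base-$a$ cases $\{a^i x\} \to 0$ versus $\{a^i x\} \to 1$, but both funnel into the same countable set of $a$-adic rationals, so no genuine obstacle arises.
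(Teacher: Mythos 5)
Your proof is correct, but for the second assertion it takes a genuinely different route from the paper. The paper's own argument is arithmetic--dynamical: since every multiple of $a^M$ is a sum of at most $a-1$ elements of $FS(a^m:m\ge M)$, IP-rigidity would give a nonzero mean-zero $f$ and an $M$ with $\|f\circ T^{a^Ms}-f\|_2\le \frac12$ uniformly in $s$, which is incompatible with the weak mixing of $T^{a^M}$ (this is the same mechanism as the more general Proposition~\ref{ipodometer}); no spectral measure is needed. Your argument instead passes through Proposition~\ref{spectralIP} and runs a Riesz-product estimate: the polynomial $P_K=\prod(1+\gamma^{a^i})$ converts the IP Fourier condition into $\sum_i\|a^ix\|^2<\infty$ on a set of $\nu$-measure at least $1-\sqrt\epsilon$, forcing $\nu$ to charge the countable set of $a$-adic rationals. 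This is longer but buys more: it localizes exactly where a putative IP-rigidity measure would have to live (it is, in effect, a proof of the converse direction of Proposition~\ref{ABCD} in this special case, upgrading convergence in measure to pointwise convergence on a large set), and the same template works whenever one can identify $\{x:\|n_mx\|\to 0\}$ as countable. All the individual steps check out --- the symmetrization giving $\widehat\nu\ge 0$, the Markov bound, the monotonicity of $|P_K|/2^K$ and hence nestedness of the $A_K$, and the bound $\log\cos(\pi t)\le -2t^2$ on $[0,\tfrac12]$. The only place that deserves one more line is the final digit bookkeeping: $\|a^ix\|\to0$ does not a priori say that $\{a^ix\}$ converges to $0$ or to $1$ (it could oscillate between the two ends), so rather than arguing digit-by-digit it is cleaner to note that $\|a^{i+1}x\|=a\,\|a^ix\|$ whenever $\|a^ix\|\le\frac1{2a}$, so that a nonzero value of $\|a^ix\|$ must grow back above $\frac1{2a}$; hence $\|a^ix\|\to0$ forces $\|a^ix\|=0$ eventually, i.e.\ $x\in\{p/a^L\}$. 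With that patch the contradiction with continuity of $\nu$ is complete.
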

\begin{proof} The first statement follows
from Proposition~\ref{integerratios}.
For the second part, see Proposition~\ref{ipodometer}.
But here is at least the idea.  First
consider the sequence $(2^j: j \ge 1)$. When we take all finite
sums $2^{j_1}+\ldots+2^{j_k}$ with $m \le j_1 < \ldots < j_k$, then
we get all whole numbers $2^ms, s \ge 1$. Hence, to have rigidity
for $f \circ T^{\sigma}$ as $\sigma \to \infty (IP)$, with $\sigma
\in \Sigma = FS(2^j:j \ge 1)$ would imply that $\|f \circ T^{2^ms}
- f\|_2 \to 0$ as $m \to \infty$, independently of $s \ge 1$.
Assume that $f$ is mean-zero and not zero. Then we fix $m$ such
that $\|f\circ T^{2^ms} - f\|_2 \le \frac 12$ for all $s\ge 1$.
But if $T$ is weakly mixing, so is $T^{2^m}$ and hence
\[\lim\limits_{s \to \infty} \langle f\circ T^{2^ms},f\rangle = 0.\]
This is not possible. The same argument works with any sequence
$(a^j: j \ge 1)$ in place of $(2^j)$ with $a \in \mathbb Z^+, a
\ge 2$. The only difference is that we would need to use some
fixed number of finite sums of elements in $\Sigma$ when $a \ge
3$.
\end{proof}

\begin{rem} \label{notIP} a) See Proposition~\ref{ipodometer} for
a different approach to a version of the above result.
\medskip

\noindent b) The contrast of this result with the examples
in Remark~\ref{linformeg} c) is clear.  While $(2^n)$ is a
rigidity sequence for a weakly mixing transformation, certain
simple perturbations of it, that are still lacunary sequences,
like $(2^n+1)$ are not rigidity sequences for weakly mixing
transformations.  This leads to the obvious question:
if we assume that $(n_m)$ is lacunary, but not
necessarily as above a power of a fixed whole number $a \ge 2$,
when is there still a continuous Borel probability measure on
$[0,1)$ with $\widehat {\nu}(n_m) \to 1$ as $m \to \infty$? This
seems to be a difficult problem because it is not just about the
growth rate inherent in lacunarity, but also about the algebraic
nature of the sequence $(n_m)$.
\medskip

\noindent c) On the other hand, it is not clear what happens for
the lacunary sequence $(2^m+3^m)$. That is, can
there be a continuous Borel probability measure
$\nu$ on $\mathbb T$ such that $\widehat {\nu}(2^m+3^m) \to 1$
as $m \to \infty$? By Proposition~\ref{rigidfacts},
if this happens, then $z^{2^m+3^m} \to 1$
in measure with respect to $\nu$. So $z^{2(2^m+3^m)}\to 1$ and
$z^{2^{m+1}+3^{m+1}} \to 1$ in measure with respect to $\nu$.
Taking the ratio gives $z^{3^m}\to 1$ in measure with respect
to $\nu$, and so again taking the appropriate ratio both $z^{2^m} \to 1$
and $z^{3^m} \to 1$ in measure with respect to $\nu$. But the
converse is also true by taking the product of these. So Proposition
~\ref{rigidfacts} shows that $\widehat {\nu}(2^m+3^m) \to 1$
as $m \to \infty$ if and only if both $\widehat {\nu}(2^m) \to 1$
and $\widehat {\nu}(3^m) \to 1$ as $m \to \infty$. Therefore, using
the GMC, what we are seeking is a weakly mixing
transformation which has both $(2^m)$ and $(3^m)$ as rigidity sequences.
\medskip

\noindent d) The example of Eisner and Grivaux~\cite{EG} of an increasing sequence $(n_m)$ such
that $\lim\limits_{m \to \infty} \frac {n_{m+1}}{n_m} = 1$, for
which there is rigidity along $(n_m)$ for some weakly mixing
dynamical system, is one that is constructed inductively.
Concretely, is there rigidity along $(n_m)$ for
some weakly mixing dynamical system if $(n_m)$ is the sequence
obtained by writing $\{2^k3^l: k,l \ge 1\}$ in increasing order?
Note: results of Ajtai, Havas, and
Koml\'os~\cite{AHK} show that for any sequence $(\epsilon_m) $
decreasing to $0$, no matter how slowly, there exists an
increasing sequence $(n_m)$ such that $\frac {n_{m+1}}{n_m} \ge 1
+ \epsilon_m$ but there is not rigidity along $(n_m)$. Indeed,
they give examples where $\lim\limits_{M \to \infty} \frac 1M
\sum\limits_{m=1}^M \exp(2\pi i
n_m x) = 0$ for all $x \in (0,1)$, and hence for any positive
Borel measure $\nu \not= \delta_0$, $\lim\limits_{M \to \infty} \frac 1M
\sum\limits_{m=1}^M \widehat
{\nu}(n_m) = 0$.
\end{rem}

\subsection{\bf Riesz Products}
\label{products}

In this section, we will use products of discrete measures
to give an alternative approach to the results in Section~\ref{symbolic}.
There are several benefits to looking at this approach.  One
is that it may with more work end up being more flexible than
the method in Proposition~\ref{integerratios}.  Also, it gives
us interesting examples that relate to the question of disjointness: the construction of
two weakly mixing dynamical systems with rigid sequences for
which the product action has no rigid sequences.   Disjointness
turns out to be a very difficult fact to prove in  explicit cases, requiring analytical
methods that are not yet fully developed.  However, in Section~\ref{secdisjoint},
we will see how to use random methods to create this disjointness in a
very general way.

The approach we are taking here to defining continuous Borel probability measures $\nu$
with $\widehat {\nu}(n_m) \to 1$ as $m \to \infty$ currently
works just in cases like $(n_m) = (a^m)$ for any integer $a \ge 2$.  We have
some indication that this method can be made more flexible.   This
method is like the classical method of Riesz products. There
are many references for Riesz products, but one that is closely
related to the considerations in this article is Host, M\'ela,
and Parreau~\cite{Ho-Me-Pa}. But unlike the classical approach,
where one is taking Riesz products
in the frequency variable, we are taking Riesz products
in the space variable, using finitely supported
discrete measures. Of course, this too has been used
by others to give constructions of measures.
See Brown and Moran~\cite{BM1, BM2} and
Graham and McGehee~\cite{GMcG}. This method may possibly give a way
to handle cases that we have not been able to handle before, but
that is not clear. However, some other harmonic analysis issues
come into play here that lead to interesting conclusions in
ergodic theory.

The basic approach is as follows. Choose $(a_k)$ and $(b_k)$ positive,
with $a_k + b_k =1$. We assume that $\lim\limits_{k \to \infty}
b_k = 0$ and $\prod\limits_{k=1}^{\infty} a_k =
\prod\limits_{k=1}^\infty (1 - b_k) = 0$. So we assuming
$\sum\limits_{k=1}^\infty b_k = \infty$.
Take a sequence of points $(x_k)$ from $[0,1)$ and let $\omega_k =
a_k\delta_1 + b_k\delta_{\exp(2\pi ix_k)}$. Let $\nu_k = \omega_k
\ast \omega_k^* = (a_k^2 + b_k^2)\delta_1 +
a_kb_k\delta_{\exp(2\pi ix_k)} + a_kb_k\delta_{\exp(-2\pi ix_k)}$.
So we have $\widehat {\nu_k}(j) = |\widehat {\omega_k}(j)|^2 = 1 -
2a_kb_k(1 - \cos(2\pi jx_k))$. In particular, $0 \le \widehat
{\nu_k}(j) \le 1$ for all $k$ and $j$. Consider the infinite
product $\nu = \prod\limits_{k=1}^\infty \nu_k$. We take this as
defined in the weak$^*$ sense; that is, we know that we have the
Fourier transforms of the partial products $\Pi_K =
\prod\limits_{k=1}^K \nu_k$ converging and hence they give the
Fourier transform of some Borel probability $\nu$ in the limit.
Then we evaluate $\widehat {\nu}(n_m)$ by looking at what the
infinite product does.

Here is an example of this approach. Take $n_m = 2^m$ for all $m
\ge 1$ and let $x_k = \frac 1{2^k}$ for all $k \ge 1$. We
specifically choose $b_k = \frac 1{k+1}$, so $a_k = 1 -\frac
1{k+1}$. We defined $\nu$ by the spectral condition
\[\widehat {\nu}(j) = \prod\limits_{k=1}^\infty
\left (1 - 2\frac {(1 - \frac 1{k+1})}{k+1}(1 - \cos (2\pi \frac
j{2^k}))\right ).\]
Now consider the values $\widehat {\nu}(2^m)$. We always have
\[\widehat {\nu}(2^m) = \prod\limits_{k=m+1}^\infty
\left (1 - 2\frac {(1 - \frac 1{k+1})}{k+1}(1 - \cos (2\pi \frac
{2^m}{2^k}))\right ).\]
So we need to know what this does as $m \to \infty$.

But we claim that $\widehat {\nu}(2^m) \to 1$ as $m \to \infty$.
That is, by taking the natural logarithm of the expression, we
need to show that
\[\lim\limits_{m \to \infty} \sum\limits_{k=m+1}^\infty 2\frac {(1 -
\frac 1{k+1})}{k+1}(1 -\cos(2\pi \frac 1{2^{k-m}})) = 0.\]
However, this is the same as showing
\[\lim\limits_{m \to \infty} \sum\limits_{k=m+1}^\infty 2\frac {(1 -
\frac 1{k+1})}{k+1} \, \frac 1{2^{2(k-m)}} = 0.\]
But $\sum\limits_{k=m+1}^\infty 2\frac {(1 -
\frac 1{k+1})}{k+1} \, \frac 1{2^{2(k-m)}} \le \frac {2^{2m+1}}m
\sum\limits_{k=m+1}^\infty \frac 1{2^{2k}} \le \frac {2^{2m}}m
\frac 1{2^{2m}} = \frac 1m$. So we have the estimate that we
needed.

We also need to make an argument that $\nu$ has no atoms. First,
each of the partial products $\Pi_K = \prod\limits_{k=1}^K \nu_k$
is a purely atomic measure. The choice of $a_k$ and $b_k$
guarantees that for $K \ge 1$, these partial products have a
maximal mass at $1$ with the value $\prod\limits_{k=1}^K
(a_k^2+b_k^2) = \prod\limits_{k=1}^K (1 - 2a_kb_k)$. Suppose that
$\nu$ is suspected of having a non-trivial point mass at some
point $\gamma_0$. For a fixed value of $K$, choose an open dyadic
arc $A = \{\exp(2\pi i x): \frac {j-1}{2^K} < x < \frac
{j+1}{2^K}\}$ which contains $\gamma_0$. Then there is a
continuous function $h_K, 0 \le h_K \le 1$, with $h_K(\gamma_0)
=1$ and the support of $h_K$, that is the closure of $\{h_K >
0\}$, contained in $A$. We have for any $M$, $\int h_K \, d\Pi_M
\le \Pi_M(A)$. We can easily see that for $M \ge K$, $\Pi_M(A) \le
\epsilon_K$ with $\lim\limits_{K \to \infty} \epsilon_K = 0$. But
then $\int h_K\, d\Pi_M \le \epsilon_K$ too, so letting $M$ tend
to $\infty$ shows that $\int h_K \, d\nu \le \epsilon_K$. Then
letting $K \to \infty$, this proves that $\nu(\{\gamma_0\}) = 0$
because for all $K$, $\nu(\{\gamma_0\}) \le \int h_K \, d\nu$.

But the estimate we need is easy because, for
$M \ge K$, we can write the partial product $\Pi_M$ as a sum
$\sum\limits_{t=1}^{2^M} c_t(M) \delta_{\exp(2\pi i \frac
{t-1}{2^M})}$ with positive coefficients $c_t(M)$ such that
$\sum\limits_{t=1}^{2^M} c_t(M) = 1$. The value of $\Pi_M(A)$ can
be seen to be no larger than the value that the measure $\mu$
given by
\[ \left (c_{j-1}(K)\delta_{\exp(2\pi i\frac {j-1}{2^K})}+
c_j(K)\delta_{\exp(2\pi i \frac j{2^K})} +
c_{j+1}(K)\delta_{\exp(2\pi i \frac {j+1}{2^K})}\right )
\prod\limits_{k={K+1}}^M
\nu_k\]
gives to $A$ because for the other point masses $\delta_{\exp(2\pi
i \frac {t-1}{2^K})}$, the measure $\delta_{\exp(2\pi i \frac
{t-1}{2^K})}\prod\limits_{k=K+1}^M \nu_k$ has support disjoint
from $A$. Since $\prod\limits_{k={K+1}}^M \nu_k$ itself is a
probability measure, this shows that $\Pi_M(A) \le c_{j-1}(K) +
c_j(K) + c_{j+1}(K) \le 3\prod\limits_{k=1}^K (a_k^2+b_k^2)$. We
take $\epsilon_K= 3\prod\limits_{k=1}^K (a_k^2+b_k^2) =
3\prod\limits_{k=1}^K(1 - 2a_kb_k)$. The choice of $(a_k)$ shows
that $\epsilon_K \to 0$ as $K \to \infty$. Indeed,
$\sum\limits_{k=1}^\infty b_k = \infty$ and $\lim\limits_{k \to
\infty} a_k = 1$, so $\prod\limits_{k=1}^\infty (1 - 2a_kb_k) =
0$.

\begin{rem} a) The method above suggests that we might be able
to choose $(x_k)$ for more general sequences, including perhaps
ones that are not lacunary. But the method implicitly needs
inductively chosen good choices of $x$ where $\exp(2\pi i n_mx)
\to 1$ as $m \to \infty$. It is not clear for a fixed $(n_m)$ how
to characterize when the set of $x$ such that $\lim\limits_{m \to
\infty} \exp(2\pi i n_mx) = 1$ is empty, when it is finite, when
it is countably infinite, or when it is uncountable!
\smallskip

\noindent b) We believe that what is needed generally for the
construction as above of a continuous Borel probability measure
$\nu$ with $\lim\limits_{m \to \infty}
\widehat {\nu}(n_m) = 1$ as $m \to \infty$ is
\begin{itemize}
\item[(a)] $\lim\limits_{k \to \infty} b_k = 0$,
\item [(b)] $\sum\limits_{k=1}^\infty b_k = \infty$, and
\item[(c)] there is a sequence $(x_k)$ such that
$\lim\limits_{m \to \infty} \sum\limits_{k=1}^\infty b_k
\|n_mx_k\|^2=0$
\end{itemize}
For example, this method would work not only for the sequences
above, but also for sequences as in Proposition~\ref{integerratios}.
\smallskip

\noindent c) We have tried to prove directly that the examples
above have a Fourier transform converging to $0$ at $\infty$ along
a sequence of positive density. For example, one would want to
show that for some sequence of values $j$ with positive density,
we have
$$\lim\limits_{j \to \infty} \sum\limits_{k=1}^\infty b_k(1
-\cos(2\pi \frac j{2^k})) = \infty.$$
But currently we do not see how to prove this.
\end{rem}

\subsubsection{\bf Disjoint Rigidity}
\label{secdisjoint}

The method above can perhaps give us some good examples to look at
for which we have two systems that are rigid but not
simultaneously, indeed when the
convolution of their maximal spectral types is strongly mixing.
For example, take the measures $\nu(2)$ and $\nu(3)$ where

\[\widehat {\nu(2)}(j) = \prod\limits_{k=1}^\infty
\left (1 - 2a_kb_k(1 - \cos (2\pi \frac
j{2^k}))\right )\]
and
\[\widehat {\nu(3)}(j)= \prod\limits_{k=1}^\infty
\left (1 - 2a_kb_k(1 - \cos (2\pi \frac
j{3^k}))\right ).\]

We have $\widehat {\nu(2)}(2^m) \to 1$ and $\widehat {\nu(3)}(3^m)
\to 1$ as $m \to \infty$. But does
\beq\label{disjoint}
\widehat {\nu(2)}(j)\widehat {\nu(3)}(j) \to 0 \ \ \text {as} \ \ j
\to \infty?\eeq
This is the same as asking if the series
\[\sum\limits_{k=1}^\infty b_k\left (
1 - \cos (2\pi \frac j{2^k}) + 1 - \cos (2\pi \frac j{3^k})\right
) \] tends to $\infty$ as $j$ tends to $\infty$. This will not
work with $b_k = \frac 1k$; one can see this by taking $j = 6^k$
for large values of $k$. But it may work with $b_k = \frac
1{\sqrt k}$.

While we cannot answer the question in Equation~\ref{disjoint} at this time, we can
modify a construction of LaFontaine~\cite{lafontaine} to
obtain Proposition~\ref{AA0} below. We will need two procedural lemmas.
First, given a finite Borel measure $\omega$ on $\mathbb R$, we
let $FT_B(\omega)(n) = \int\limits_0^B \exp(-2\pi i n\frac xB)\ d\omega(x)$.
We use the notation $\widehat {\omega}$ for the
Fourier transform on $\mathbb R$ given by $\widehat {\omega}(t) =
\int \exp(-2\pi itx)\ d\omega(x)$.
Let $m=m_{\mathbb R}$ denote the Lebesgue measure on $\mathbb R$, and let
$L_2(m)$ denote the Lebesgue space $L_2(\mathbb R,\mathcal B_m,m)$.

\begin{lem}\label{transforms} Suppose we have a positive
Borel measure $\omega$ on $\mathbb R$ that has compact
support. Then $\frac {d\omega}{dm} \in L_2(m)$ if and only
if $\sum\limits_{n=-\infty}^\infty |\widehat {\omega}(n)|^2 < \infty$.
\end{lem}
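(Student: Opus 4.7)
The plan is to reduce the statement to classical Plancherel on the torus $\T = \R/\Z$ by passing to the periodization of $\omega$. I will define the periodization $\wt{\omega}$ on $\T = [0,1)$ by $\wt{\omega}(A) = \sum_{k \in \Z} \omega(A+k)$ for Borel $A \subset [0,1)$; the compact support hypothesis guarantees this is a finite sum and that $\wt{\omega}$ is a finite positive Borel measure. A direct substitution $y = x + k$, combined with the periodicity $e^{-2\pi i n k} = 1$ for $n, k \in \Z$, will give
\begin{equation*}
c_n(\wt{\omega}) = \int_0^1 e^{-2\pi i n x}\, d\wt{\omega}(x) = \sum_{k \in \Z} \int_k^{k+1} e^{-2\pi i n y}\, d\omega(y) = \widehat{\omega}(n),
\end{equation*}
so the Fourier coefficients of $\wt{\omega}$ on $\T$ are exactly the integer samples of the $\R$-Fourier transform $\widehat{\omega}$.

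By the standard Plancherel theorem on $\T$, $\wt{\omega}$ has density in $L_2(\T,m)$ if and only if $\sum_{n \in \Z} |c_n(\wt{\omega})|^2 < \infty$, i.e. if and only if $\sum_{n \in \Z} |\widehat{\omega}(n)|^2 < \infty$. The claim therefore reduces to showing that $d\omega/dm \in L_2(m)$ is equivalent to $d\wt{\omega}/dm \in L_2(\T,m)$. The forward direction is easy: if $\omega = f\, dm$ with $f \in L_2(m)$ of compact support, then $\wt{\omega}$ has density $\sum_k f(\cdot + k)$, which is a finite sum of $L_2$ functions and hence in $L_2(\T)$.

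For the converse, I would suppose $\wt{\omega} = g \, dm$ with $g \in L_2(\T)$, write $\omega_k := \omega|_{[k, k+1)}$, and let $T_{-k}\omega_k$ denote its translate into $[0,1)$. From the defining identity $\wt{\omega}(A) = \sum_k T_{-k}\omega_k(A)$ and the fact that $\omega$ is positive, each $T_{-k}\omega_k$ is dominated by $\wt{\omega}$ as a measure on $\T$. Radon--Nikodym then produces a density $g_k$ for $T_{-k}\omega_k$ with $0 \leq g_k \leq g$ a.e., so $\omega_k$ has density $g_k(\cdot - k) \in L_2(\R)$. Only finitely many $\omega_k$ are nonzero (compact support), and they are supported on the disjoint intervals $[k, k+1)$, so summing them yields an $L_2(\R)$ density for $\omega$. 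The main thing to be careful about is precisely this converse step: positivity of $\omega$ is essential, since without it cancellations in the periodization could allow $\wt{\omega}$ to be $L_2$ while $\omega$ itself fails to be.
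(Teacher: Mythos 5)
Your proposal is correct and follows essentially the same route as the paper's proof: periodize $\omega$ onto $[0,1)$, identify the Fourier coefficients of the periodization with the integer samples $\widehat{\omega}(n)$, apply the classical $L_2$ criterion on the circle, and then use positivity to dominate each translated piece $\omega|_{[k,k+1)}$ by the periodization so that each piece (and hence $\omega$) inherits an $L_2$ density. Your explicit remark that positivity is what makes the converse step work is exactly the point the paper emphasizes in the remark following the lemma.
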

\begin{proof}
By translating $\omega$, we may assume without loss of generality
that the support of $\omega$ is a subset $[0,B]$ for some whole number
$B$. Then
\begin{eqnarray*}
\widehat {\omega}(n) &=& \int\ \exp(-2\pi i n x)\ d\omega(x)
= \sum\limits_{k=0}^{B-1} \int\limits_{k}^{k+1} \exp(-2\pi i n x) \
d\omega(x)\\
&=& \sum\limits_{k=0}^{B-1} \int\limits_0^1\exp(-2\pi i n (x+k)) \
d\omega(x+k)\\
&=& \sum\limits_{k=0}^{B-1} \int\limits_0^1\exp(-2\pi i n x) \
d\omega(x+k)
=\int\limits_0^1 \exp(-2\pi i n x)\ d\Omega(x).
\end{eqnarray*}
where $d\Omega(x) = 1_{[0,1)}(x)\sum\limits_{k=0}^{B-1} \omega(x+k)$
is a positive Borel measure supported on $[0,1]$. That is,
$\widehat {\omega}(n) = FT_1(\Omega)(n)$ for all $n$. The
usual classical argument shows that $\frac {d\Omega}{dm} \in L_2(m)$
if and only if $\sum\limits_{n=-\infty}^\infty |FT_1(\Omega)(n)|^2 < \infty$
because $\Omega$ is supported on $[0,1)$.
Therefore, $\sum\limits_{n= -\infty}^\infty |\widehat {\omega}(n)|^2 <
\infty$
is equivalent to knowing that $\frac {d\Omega}{dm} \in L_2([0,1),m)$.

So we conclude that if $\frac {d\Omega}{dm} \in L_2([0,1),m)$, then
for each $k=0,\ldots,B-1$, the positive measure
$1_{[0,1]} d\omega(x+k)$ satisfies
$0 \le 1_{[0,1]} d\omega(x+k) \le d\Omega(x)$, and so it also has a
density in $L_2([0,1),m)$.
Hence, by translating the terms back again and adding them together, we
also know that
$\frac {d\omega}{dm}$ is in $L_2(\mathbb R,m)$. Of course also
conversely, if
$\frac {d\omega}{dm}$ is in $L_2(\mathbb R,m)$, then $\frac
{d\Omega}{dm} \in L_2([0,1),m)$.
This proves that $\frac {d\omega}{dm} \in L_2(m)$ if and only if
$\sum\limits_{n= -\infty}^\infty |\widehat {\omega}(n)|^2 < \infty$.
\end{proof}

\begin{rem}\label{othertransforms} This remark and
Lemma~\ref{transforms} are related to the ideas behind
Shannon sampling and the Nyquist frequency for band-limited signals.
In Lemma~\ref{transforms}, if we replace $\omega$ by a dilation of it,
then one can see that more generally for a positive Borel measure
$\omega$ on $\mathbb R$ with compact support, we have $\frac {d\omega}{dm}
\in L_2(m)$ if and only if for some $b > 0$ (or for all $b > 0$),
we have $\sum\limits_{n=-\infty}^\infty |\widehat {\omega}(bn)|^2 < \infty$.

The assumption that $\omega$ is positive is necessary here.
Indeed, suppose we have a compactly
supported complex-valued Borel measure $\omega$ on $\mathbb R$. Suppose
that the support of $\omega$ is a subset of $[0,B]$.
Then by the usual classical argument, we know that
$\frac {d\omega}{dm} \in L_2(m)$ if and only if
$\sum\limits_{n= -\infty}^\infty |FT_B(\omega)(n)|^2 < \infty$.
But $\widehat {\omega}(\frac nB) = FT_B(\omega)(n)$
because $\omega$ is supported on $[0,B]$.
So we have in this case, $\frac {d\omega}{dm} \in L_2(m)$ if and only if
$\sum\limits_{n= -\infty}^\infty |\widehat{\omega}(\frac nB)|^2 < \infty$.
Here, $B$ can be replaced by any larger value, but not necessarily
by a smaller value. For example, if we take $\omega_0$ supported
on $[0,1)$ and define $d\omega(x) = d\omega_0(x) - d\omega_0(x-1)$,
then our value of $B = 2$, but
$\widehat {\omega}(n)=0$ for all $n$. However, the measure $\omega_0$
could be singular to $m$ and hence $\omega$ might not
have an $L_2(\mathbb R,m)$-density.
\end{rem}

We also want to make a few observations about the differences between
convolving measures on $\mathbb T$ and convolving their associated
measures on $\mathbb R$. Given a Borel measure $\omega$
on $\mathbb T$, let $\omega_{\mathbb R}$ denote the Borel measure on
$\mathbb R$ given by $\omega_{\mathbb R}=\omega\circ E$
where $E:[0,1)\to\mathbb T$ by $E(x) = \exp(2\pi i x)$. Let
$m_{\mathbb T}$ denote the usual Lebesgue measure on $\mathbb T$
i.e. $m_{\mathbb T}\circ E = 1_{[0,1)}m_{\mathbb R}$.

\begin{lem}\label{linetocircle} If $\mu$ and $\nu$ are Borel
measures on $\mathbb T$, then $\mu\ast\nu$
is absolutely continuous with respect to $m_{\mathbb T}$ if
$\mu_{\mathbb R}\ast\nu_{\mathbb R}$ is absolutely continuous with
respect to $m_{\mathbb R}$.
\end{lem}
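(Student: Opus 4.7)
The plan is to exploit that the exponential map $E(x)=\exp(2\pi i x)$ is a continuous group homomorphism from $(\mathbb R,+)$ onto $(\mathbb T,\cdot)$, and that convolution commutes with the pushforward under a group homomorphism. Since $\mu_{\mathbb R}$ and $\nu_{\mathbb R}$ are concentrated on $[0,1)$ (where $E$ is a bijection onto $\mathbb T$), the pushforwards $E_\ast \mu_{\mathbb R}$ and $E_\ast \nu_{\mathbb R}$ recover $\mu$ and $\nu$ respectively. The homomorphism property then yields
\[
E_\ast(\mu_{\mathbb R}\ast\nu_{\mathbb R}) \;=\; (E_\ast\mu_{\mathbb R})\ast(E_\ast\nu_{\mathbb R}) \;=\; \mu\ast\nu,
\]
and this is the identity that will translate absolute continuity on the line into absolute continuity on the circle.

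Next, I would pin down the measure-theoretic consequence. Because $\mu_{\mathbb R}\ast\nu_{\mathbb R}$ is supported on $[0,2)$, the push forward formula gives, for every Borel $A\subset\mathbb T$,
\[
(\mu\ast\nu)(A) \;=\; (\mu_{\mathbb R}\ast\nu_{\mathbb R})\bigl(E^{-1}(A)\cap[0,2)\bigr).
\]
Now $E\colon[0,2)\to\mathbb T$ is a two-to-one map that sends $m_{\mathbb R}|_{[0,2)}$ to $2\, m_{\mathbb T}$. Hence $m_{\mathbb T}(A)=0$ forces $m_{\mathbb R}\bigl(E^{-1}(A)\cap[0,2)\bigr)=0$.

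Assume now that $\mu_{\mathbb R}\ast\nu_{\mathbb R}\ll m_{\mathbb R}$. Let $A\subset\mathbb T$ be Borel with $m_{\mathbb T}(A)=0$. By the previous step, $m_{\mathbb R}(E^{-1}(A)\cap[0,2))=0$, so absolute continuity on $\mathbb R$ gives $(\mu_{\mathbb R}\ast\nu_{\mathbb R})(E^{-1}(A)\cap[0,2))=0$, and the displayed identity yields $(\mu\ast\nu)(A)=0$. Therefore $\mu\ast\nu\ll m_{\mathbb T}$, which is the conclusion.

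The whole argument is essentially a bookkeeping exercise; the only point that needs a bit of care is the interchange of convolution with pushforward under $E$, which I would verify by a Fubini computation: for any bounded Borel $g$ on $\mathbb T$,
\[
\int g\, d\,E_\ast(\mu_{\mathbb R}\ast\nu_{\mathbb R}) \;=\; \iint g\bigl(E(x+y)\bigr)\,d\mu_{\mathbb R}(x)\,d\nu_{\mathbb R}(y) \;=\; \iint g(\zeta\eta)\,d\mu(\zeta)\,d\nu(\eta),
\]
using $E(x+y)=E(x)E(y)$ and the definitions of $\mu_{\mathbb R}$, $\nu_{\mathbb R}$. No genuine obstacle arises; the statement is really a soft compatibility between $\ast$ on $\mathbb R$ and on $\mathbb T$ via $E$.
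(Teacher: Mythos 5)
Your proof is correct and follows essentially the same route as the paper: both hinge on the identity $E_\ast(\mu_{\mathbb R}\ast\nu_{\mathbb R})=\mu\ast\nu$, verified by the same Fubini computation using $E(x+y)=E(x)E(y)$. If anything, your transfer of absolute continuity via null sets (accounting for $E$ being two-to-one on $[0,2)$) is slightly more careful than the paper's closing remark that the density is ``$F\circ E^{-1}$'', which glosses over the fact that the correct density on $[0,1)$ is $F(x)+F(x+1)$.
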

\begin{proof} For $f \in C(\mathbb T)$, we have
\begin{eqnarray*}
\int f(\gamma) d(\mu\ast\nu)(\gamma) &=& \int\int
f(\gamma_1\gamma_2)d\mu(\gamma_1)d\nu(\gamma_2)\\
&=& \int\int f(E(x)E(y)) d\mu_{\mathbb R}(x)d\nu_{\mathbb R}(y)\\
&=& \int f\circ E(x+y) d\mu_{\mathbb R}(x)d\nu_{\mathbb R}(y)\\
&=& \int f\circ E(z) d(\mu_{\mathbb R}\ast\nu_{\mathbb R})(z).
\end{eqnarray*}
Hence, it is clear that if $\mu_{\mathbb R}\ast \nu_{\mathbb R}$
is absolutely continuous with respect to $m_{\mathbb R}$ with
density $F$, then $\mu\ast\nu$ is absolutely continuous with respect
to $m_{\mathbb T}$ with density $F\circ E^{-1}$.
\end{proof}

\begin{rem} The converse statement to Lemma~\ref{linetocircle} is
not true without additional assumption, for example the assumption
that both measures are positive. For example, take a non-zero measure
$\omega_0$ on $\mathbb T$ supported in the arc $E([0,1/2))$. Let
$\omega = \omega_0 - \omega_0\ast \delta_{-1}$. Then let $\nu = \delta_1+
\delta_{-1}$, a positive discrete measure on $\mathbb T$. We have
\begin{eqnarray*}
\omega\ast\nu &=& \omega_0 -
\omega_0\ast\delta_{-1}+\omega_0\ast\delta_{-1} -
\omega_0\ast\delta_1\\
&=&\omega_0-\omega\ast\delta_1\\
&=&0.
\end{eqnarray*}
However, $\omega_{\mathbb R} = (\omega_0)_{\mathbb R}
-(\omega_0)_{\mathbb R}\ast\delta_{1/2}$ and $\nu_{\mathbb R} =
\delta_0+\delta_{1/2}$. So
\begin{eqnarray*}
\omega_{\mathbb R} \ast \nu_{\mathbb R}
&=&(\omega_0)_{\mathbb R} -(\omega_0)_{\mathbb R}\ast\delta_{1/2}+
(\omega_0)_{\mathbb R}\ast\delta_{1/2}-(\omega_0)_{\mathbb R}\ast\delta_1\\
&=& (\omega_0)_{\mathbb R} -(\omega_0)_{\mathbb R}\ast\delta_1.
\end{eqnarray*}
This measure is not zero as a measure on $\mathbb R$.
If also $\omega_0$ is singular to $m_{\mathbb T}$, then we
have $\omega\ast\nu$ absolutely continuous with respect to
$m_{\mathbb T}$, since it is $0$, while
$\omega_{\mathbb R} \ast \nu_{\mathbb R}$ is not absolutely continuous
with respect to $m_{\mathbb R}$.
\end{rem}

These two lemmas will help in proving the following.

\begin{prop}\label{AA0} Assume that
$(X,\mathcal B_X,p_X,T)$ is a rigid, weakly mixing dynamical system.
Then there is a weakly mixing, rigid dynamical system
$(Y,\mathcal B_Y,p_Y,S)$ such that the maximal spectral type of
$U_{T\times S}$ in the orthocomplement of
$F:=L_2(X,p_X)\otimes1_Y\oplus1_X\otimes L_2(Y,p_Y)$ is Rajchman.
In other words, for all $f \in L_2(X\times Y,p_X \otimes p_Y)$
that are orthogonal to both the $X$-measurable functions and the
$Y$-measurable functions, we have $\langle f \circ (T \times
S)^n,f\rangle \to 0$ as $n \to \infty$. In fact, $U_{T\times S}$
has absolutely continuous spectrum on $F^\perp$.
\end{prop}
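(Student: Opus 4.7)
\textbf{Plan for Proposition~\ref{AA0}.}
The first move is to unravel the spectral structure of $U_{T\times S}$. Writing
\[
L_2(X\times Y) \;=\; \mathbb{C}(1_X\otimes 1_Y)\;\oplus\;(L_{2,0}(X)\otimes 1_Y)\;\oplus\;(1_X\otimes L_{2,0}(Y))\;\oplus\;(L_{2,0}(X)\otimes L_{2,0}(Y)),
\]
one recognizes $F$ as the direct sum of the first three summands, so $F^\perp = L_{2,0}(X)\otimes L_{2,0}(Y)$. For $f\in L_{2,0}(X)$, $g\in L_{2,0}(Y)$, one has $\langle (f\otimes g)\circ(T\times S)^n,f\otimes g\rangle = \widehat{\nu_f^T}(n)\widehat{\nu_g^S}(n) = \widehat{\nu_f^T\ast\nu_g^S}(n)$. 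Hence the maximal spectral type of $U_{T\times S}$ on $F^\perp$ is (the equivalence class of) $\nu^T_0\ast\nu^S_0$, where $\nu^T_0,\nu^S_0$ denote the maximal spectral types on the mean-zero parts. The goal thus reduces to constructing a weakly mixing rigid $S$ such that $\nu^T_0\ast\nu^S_0$ is absolutely continuous with respect to $m_{\mathbb{T}}$; absolute continuity then yields Rajchman on $F^\perp$ by Riemann--Lebesgue.

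Next I would build $\nu^S$ as a random Riesz-type product in the spirit of Section~\ref{products} and then obtain $S$ by applying the GMC to its symmetrization. Fix a rigidity sequence $(n_m)$ for $T$ and positive weights $(a_k),(b_k)$ with $a_k+b_k=1$, $b_k\to 0$, $\sum b_k=\infty$. Choose a rapidly growing subsequence $(m_k)$ and random perturbations $\eta_k$, independent uniform on a small interval $[-\epsilon_k,\epsilon_k]$, and set $x_k = n_{m_k}^{-1}+\eta_k$. Define
\[
\omega_k \;=\; a_k\delta_1 + b_k\delta_{\exp(2\pi i x_k)},\qquad \nu_k \;=\; \omega_k\ast\omega_k^*,\qquad \nu^S \;=\; \prod_{k=1}^{\infty}\nu_k,
\]
the product understood in the weak-$\ast$ sense as in Section~\ref{products}. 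The localization $x_k\approx n_{m_k}^{-1}$ ensures $\widehat{\nu_k}(n_{m_k})\to 1$, so by a telescoping argument $\widehat{\nu^S}(n_{m_k})\to 1$, giving rigidity of $\nu^S$ along $(n_{m_k})$. The condition $\sum b_k=\infty$, together with the atomless-limit argument from Section~\ref{products}, ensures $\nu^S$ is continuous, hence GMC produces a weakly mixing rigid $S$ whose $\nu^S_0$ is equivalent to $\nu^S$.

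The crux, and the main obstacle, is to show that for a.e.\ choice of $(\eta_k)$ the convolution $\nu^T_0\ast\nu^S$ is absolutely continuous on $\mathbb{T}$ (in fact with $L_2$-density). By Lemma~\ref{linetocircle} it suffices to establish absolute continuity of the lifted convolution on $\mathbb{R}$ after truncation, and by Lemma~\ref{transforms} (combined with Remark~\ref{othertransforms}) it is enough to prove
\[
\sum_{n\in\mathbb{Z}} \bigl|\widehat{\nu^T_0}(n)\bigr|^2\,\bigl|\widehat{\nu^S}(n)\bigr|^2 \;<\;\infty.
\]
Expanding $|\widehat{\nu^S}(n)|^2 = \prod_k\bigl(1-2a_kb_k(1-\cos 2\pi nx_k)\bigr)$ and using independence of the $\eta_k$, the expectation over the random parameters satisfies $\mathbb{E}|\widehat{\nu^S}(n)|^2 \le \exp\bigl(-c\sum_k a_kb_k\,\mathbb{E}[1-\cos 2\pi nx_k]\bigr)$; for a generic $n$ the averaging over $\eta_k$ yields $\mathbb{E}[1-\cos 2\pi nx_k]\ge c'>0$, so that $\mathbb{E}|\widehat{\nu^S}(n)|^2$ decays fast enough to make $\sum_n\mathbb{E}|\widehat{\nu^S}(n)|^2<\infty$. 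By Fubini and a Borel--Cantelli argument, following LaFontaine~\cite{lafontaine}, one concludes that almost surely the series displayed above converges, even against the bounded multiplier $|\widehat{\nu^T_0}(n)|^2\le 1$. The delicate balancing is that the $\epsilon_k$ must be chosen large enough to decouple the Fourier coefficients at most frequencies $n$ (so the variance estimate kicks in) yet small enough to preserve $\widehat{\nu_k}(n_{m_k})\approx 1$ and so preserve rigidity; this is where the bulk of the technical work lies. Once such $\nu^S$ is produced, the resulting $S$ is weakly mixing and rigid, and $\nu^T_0\ast\nu^S_0$ is absolutely continuous, so $U_{T\times S}$ has absolutely continuous spectrum on $F^\perp$, completing the proof.
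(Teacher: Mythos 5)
Your opening reduction is correct and matches the paper: $F^\perp=L_{2,0}(X)\otimes L_{2,0}(Y)$, the maximal spectral type there is $\nu^T_0\ast\nu^S_0$, and the problem becomes producing a continuous \emph{rigid} measure $\nu$ with $\nu^T_0\ast\nu$ absolutely continuous (the paper additionally notes that since $\mu\ast\nu^{\ast k}=(\mu\ast\nu)\ast\nu^{\ast(k-1)}$, absolute continuity passes to $\mu\ast\sum_k 2^{-k}\nu^{\ast k}$, which is needed because the maximal spectral type of the Gaussian system $G_\nu$ is the exponential of $\nu$, not $\nu$ itself --- your claim that $\nu^S_0$ is ``equivalent to $\nu^S$'' is false but easily repaired). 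The construction you then propose, however, is self-defeating. You arrange $\widehat{\nu^S}(n_{m_k})\to 1$ along a subsequence $(n_{m_k})$ of the rigidity sequence $(n_m)$ of $T$. But $T$ is rigid along $(n_m)$, hence along $(n_{m_k})$, so $\widehat{\nu^T_0}(n_{m_k})\to\nu^T_0(\mathbb T)>0$, and therefore $\widehat{\nu^T_0\ast\nu^S}(n_{m_k})=\widehat{\nu^T_0}(n_{m_k})\,\widehat{\nu^S}(n_{m_k})\to\nu^T_0(\mathbb T)\neq 0$. The convolution is then a (nonzero) Dirichlet measure and cannot be Rajchman, let alone absolutely continuous (the paper records that Dirichlet and Rajchman measures are mutually singular). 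The essential difficulty of Proposition~\ref{AA0} is precisely that $S$ must be rigid along a sequence that is spectrally decoupled from every rigidity sequence of $T$; no amount of random perturbation of the atoms can rescue a construction whose rigidity is anchored to $(n_m)$ itself.

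Your quantitative step also fails on its own terms: you ask for $\sum_n\mathbb E\,|\widehat{\nu^S}(n)|^2<\infty$, but if $\nu^S$ is almost surely rigid then $|\widehat{\nu^S}(n_{m_k})|\to 1$ almost surely, so $\mathbb E\,|\widehat{\nu^S}(n_{m_k})|^2\to 1$ and the series diverges; indeed $\sum_n|\widehat{\nu^S}(n)|^2<\infty$ would force $\nu^S$ to have an $L_2$ density, which is incompatible with rigidity. What the paper actually controls is the \emph{weighted} sum $\sum_n a_n|\widehat{\nu}(n)|^2$ with $a_n=\frac1n\int_0^n|\widehat\mu(t)|^2\,dt\to 0$ (Wiener's lemma for the continuous measure $\mu=\nu^T_0$), obtained by integrating $J(\lambda)=\sum_n|\widehat{\mu_\lambda}(n)|^2|\widehat\nu(n)|^2$ over the dilation parameter $\lambda\in[0,1]$. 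A measure $\nu$ that is continuous, rigid, and satisfies $\sum_n a_n|\widehat\nu(n)|^2<\infty$ is supplied by Salem's constructions (via LaFontaine); one then picks a generic $\lambda$ with $J(\lambda)<\infty$, so that $\mu\ast\nu_{1/\lambda}$ is absolutely continuous by Lemma~\ref{transforms}, and recovers rigidity of $\nu_{1/\lambda}$ along an \emph{integer} sequence from the uniform distribution of $(n_k\lambda)$ mod $1$. That last step is exactly where the rigidity sequence of $S$ gets divorced from that of $T$, and it has no counterpart in your proposal.
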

\begin{proof} From the spectral point of view we want
to show that given a continuous Dirichlet measure $\mu$,
there is a continuous Dirichlet probability measure $\nu$ on
$\mathbb T$ such that $\mu\ast \nu$ is an absolutely continuous
(hence a Rajchman measure). Indeed, we then can take $\mu=\nu^T$ and we
let $(Y,\mathcal B_Y,p_Y,S)$ be given by $S=G_\nu$. As $\mu\ast\nu^{\ast
k}=(\mu\ast\nu)\ast\nu^{\ast(k-1)}$ we easily check that
$\mu\ast\sum\limits_{k=1}^\infty\frac1{2^k}\nu^{\ast k}$ is still
absolutely continuous.
Given the references we are using,
it is better to carry out our construction in $\mathbb R$.
So consider first the measure $\mu\circ E$ on $\mathbb R$.
Lemma~\ref{linetocircle} shows that, to get our result, it will
be enough to construct a suitable positive Borel measure $\nu\circ E$
on $\mathbb R$, with support in $[0,1)$,
such that $(\mu\circ E)\ast(\nu\circ E)$ is absolutely continuous.
For notational convenience we will denote $\mu\circ E$
and $\nu\circ E$ by $\mu$ and $\nu$ in the rest of this proof.

Hence, suppose we have a continuous positive Borel measure $\mu$ on
$\R$, which is
compactly supported. Consider the dilation $\mu_{\lambda}$,
$\lambda > 0$, given by $\mu_{\lambda}(E) = \mu(\frac 1{\lambda}
E)$ for all Borel sets $E\subset\R$. Then $\widehat
{\mu_{\lambda}}(t) = \int \exp(-2\pi itx) \, d\mu_{\lambda}(x) = \int
\exp(-2\pi it\lambda x) \, d\mu(x) = \widehat {\mu}(\lambda t)$.
We would like to construct a suitable continuous positive Borel
measure $\nu$ supported in $[0,1]$ such that $J(\lambda) =
\sum\limits_{n=-\infty}^\infty |\widehat {\mu_{\lambda}}(n)|^2
|\widehat {\nu}(n)|^2$ is finite. But
\[\int\limits_0^1 J(\lambda) \, dm(\lambda)
= \sum\limits_{n=-\infty}^\infty |\widehat {\nu}(n)|^2 \left
(\int\limits_0^1 |\widehat {\mu}(\lambda n)|^2 \,
dm(\lambda)\right ).\] Using Wiener's lemma for measures on $\R$
(e.g.\ \cite{Ka}, Chapter VI.2) and the fact that $\mu$ is
continuous
$$a_n := \int\limits_0^1 |\widehat {\mu}(\lambda n)|^2
\, dm(\lambda)=\frac1n\int_0^n|\widehat{\mu}(t)|^2\,dt \to 0$$ as
$|n| \to \infty$. So, we are seeking a suitable $\nu$ such that
\beq\label{ww10}\sum\limits_{n = -\infty}^\infty |\widehat
{\nu}(n)|^2a_n<+\infty.\eeq
Clearly, if $\nu$ were actually
absolutely continuous with respect to $m$ with a square integrable
density, then we would have this condition. But $\nu$ could not
be rigid in this situation. However, as LaFontaine points out,
the two articles of Salem~\cite{salem1,salem2} give a construction
of a Borel probability measure $\nu$ with support in $[0,1)$ with
this property, and which is also continuous and rigid. See
LaFontaine~\cite{lafontaine} and Salem~\cite{salem1,salem2} for
the details. It follows that there exists a continuous probability
measure $\nu$ supported on $[0,1)$ such that for some increasing sequence
$(n_m)$ of integers \beq\label{ww11} \widehat{\nu}(n_m)\to 1\eeq
and (\ref{ww10}) holds.

It follows from (\ref{ww10}) that for $m$-a.e.\ $\lambda\in[0,1]$
\beq\label{ww12} \sum\limits_{n=-\infty}^\infty
|\widehat{\mu_\lambda\ast\nu}(n)|^2=
\sum\limits_{n=-\infty}^\infty |\widehat
{\mu_\lambda}(n)|^2|\widehat {\nu}(n)|^2 =J(\lambda) < \infty.\eeq
The measure $\mu_\lambda\ast\nu$
is supported in $[0,1+\frac1\lambda]$ and in view
of Equation~(\ref{ww12}) and Lemma~\ref{transforms}, it must be
absolutely continuous with respect to $m$, with
$\frac {d(\mu_\lambda\ast\nu)}{dm} \in L_2(m)$.
Thus, if we choose any value of $\lambda$
satisfying~(\ref{ww12}), we have
\[\int\limits_{-\infty}^\infty
|\widehat {\mu}(\lambda t)|^2 |\widehat {\nu}(t)|^2 \,dm(t) =
\frac 1{\lambda}\int\limits_{-\infty}^\infty |\widehat
{\mu}(t)|^2|\widehat {\nu}(\frac t{\lambda})|^2\, dm(t)\]
is finite. It follows that the measure $\mu\ast \nu_{1/\lambda}$ is
compactly supported and absolutely continuous. In view of~(\ref{ww11}),
$\widehat
{\nu_{1/\lambda}}(\lambda n_m)$ converges to $1$ as $m\to \infty$.
Moreover, with respect to $m$, for almost every
$\lambda$, we would know that $(n_k\lambda: k \ge 1)$ is uniformly
distributed modulo $1$. Hence, for some subsequence $(n_{k_j})$
and some sequence of integers $(m_j)$, we have $\lim\limits_{j
\to\infty}\left( n_{k_j}\lambda - m_j\right) = 0$. Then by the
uniform continuity of $\widehat {\nu_{1/\lambda}}$, we would also
have $\widehat {\nu_{1/\lambda}}(m_j) \to 1$ as $j \to \infty$.
Hence, with respect to $m$, almost every choice of $0<\lambda < 1$
gives $\nu_{1/\lambda}$ supported on $[0,\lambda]\subset [0,1]$, which is a
continuous rigid Borel probability measure, such that $\mu\ast
\nu_{1/\lambda}$ is absolutely continuous.
\end{proof}

\begin{rem}\label{notjtrigid} The transformations $T$ and $S$ in Proposition~\ref{AA0}
must be disjoint in the sense that their only joining is their product.
See Example~\ref{egdisjoint} a).  As a first step,  assume that $T^{n_m}\to Id$
in the strong operator topology.  By passing to a subsequence if
necessary we can assume that $S^{n_m}\to \Phi$ in the
weak operator topology, where $\Phi:L_2(Y,p_Y)\to L_2(Y,p_Y)$
is a Markov operator.  We claim that
$\Phi(g)\neq g$ for each non-zero $g\in L_{2,0}(Y,p_Y)$.
Indeed suppose for some non-zero $g$, $\Phi(g)=g$.  Take any non-zero $f\in L_{2,0}(X,p_X)$. Then
$\langle (T\times S)^{n_m}(f\otimes g),f\otimes g\rangle \to \langle f\otimes g,f\otimes g\rangle$.
So the spectral
measure of $f\otimes g$ is a Dirichlet measure, contrary to construction
in Proposition~\ref{AA0}.  Now as a second step,
take a joining $J$ of $T$ and $S$.  On the operator level, this means that
we have a Markov operator $W=W_J$ corresponding to $J$ such that $W: L_2(X,p_X)\to L_2(Y,p_Y)$ and $WT=SW$.
Then $WT^{n_m}=S^{n_m}W$,
and by passing to limits we obtain $W=\Phi W$.  So by our first step,
we have $W(L_{2,0}(Y,p_Y))= \{0\}$.  But this means
that $W$ is the Markov operator for the product joining, and so $J$
is the product measure $p_T\otimes p_S$.
Thus, $T$ and $S$ are disjoint. For background,
see Glasner~\cite{Glasner}, Chapter 6, Section 2.
\end{rem}

\begin{rem} The abstract argument used in Proposition~\ref{AA0}
gives some motivation for finding concrete examples of 
continuous rigid measures whose convolution has a
Fourier transform vanishing at infinity.  For example, we were considering
this in Equation~\ref{disjoint} with the Riesz product constructions for powers of $2$ and
powers of $3$.
\end{rem}

\begin{rem} We can argue differently that if a continuous
probability measure $\rho$ supported on $[0,1]\subset\R$ satisfies
$\widehat{\rho}(r_k)\to1$ for some sequence of reals
$r_k\to\infty$, then, as a circle measure, $\rho$ is Dirichlet.
Indeed, consider the flow $V_t(f)(x)=e^{itx}f(x)$ on
$L_2(\R,\rho)$. Our assumption says that $V_{r_k}\to Id$. Consider
then $V_{\{r_k\}}$, $k\geq1$, which replaces $r_k$ by its
fractional part $\{r_k\}$. By passing to a subsequence if
necessary and using the continuity of the unitary representation
$\R\ni t\mapsto V_t$, we have $V_{\{r_k\}}\to V_s$ for some
$s\in[0,1]$. It follows that $V_{[r_k]}\to Id\circ
(V_s)^{-1}=V_{-s}$. Then, $([r_{k+1}]-[r_k])$ is a rigidity
sequence for $\rho$.\end{rem}

\begin{rem} Clearly, there is an IP version of Proposition~\ref{AA0}
that follows by passing to a subsequence of the rigidity sequence
for $S$.
\end{rem}

This result shows that whenever we have a weakly mixing rigid
transformation $T$, then there is a weakly mixing rigid transformation
$S$ such that $T\times S$ is not rigid for any sequence.  More
generally, we can prove the following result.  We again use
the closure of $\{T^n: n \in \mathbb Z\}$ in the strong operator
topology, which can be identified with the centralizer of $T$ in
case $T$ has discrete spectrum.

\begin{cor}\label{AA1} Assume that $(X,\mathcal B,p,T)$
is an ergodic dynamical system. Then $T$ has discrete spectrum if
and only if for each weakly mixing rigid system
$(Y,\mathcal B_Y,p_Y,S)$ the Cartesian product system $T\times S$
remains rigid.
\end{cor}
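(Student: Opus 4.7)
The plan is to address the two implications separately.

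First I would handle the easy direction: assume $T$ has discrete spectrum and $S$ is weakly mixing and rigid. As recalled at the start of Section~\ref{rigidonly}, the closure $K$ of $\{T^n:n\in\Z\}$ in the strong operator topology is then a compact abelian group. Pick any rigidity sequence $(n_m)$ for $S$, and use the compactness of $K$ to pass to a subsequence $(n_{m_k})$ with $T^{n_{m_k}}\to R$ strongly for some $R\in K$. Then $T^{n_{m_{k+1}}-n_{m_k}}\to RR^{-1}=Id$ strongly, and since rigidity of $S$ is automatically two-sided (the identity $\|f-f\circ S^{-n_m}\|_2=\|f\circ S^{n_m}-f\|_2$ gives $S^{-n_{m_k}}\to Id$ as well), one gets $S^{n_{m_{k+1}}-n_{m_k}}\to Id$ strongly. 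Strong continuity of the tensor product on uniformly bounded unitaries then yields $(T\times S)^{n_{m_{k+1}}-n_{m_k}}\to Id$, so $T\times S$ is rigid along $(n_{m_{k+1}}-n_{m_k})$.

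For the converse, assume $T$ is ergodic but not of discrete spectrum. Then the maximal spectral type of $T$ has a nontrivial continuous component, so there is some $F\in L_2(X,p)$ whose spectral measure $\mu:=\nu_F^T$ is continuous and nonzero. The proof of Proposition~\ref{AA0} uses $\mu$ only via the hypothesis that $\mu$ is continuous and compactly supported; under this weaker assumption it still produces a continuous probability measure $\nu$ on $\T$, rigid along some sequence $(n_m)$, such that $\mu\ast\nu$ is absolutely continuous with respect to Lebesgue measure. Let $(Y,\cb_Y,p_Y,S):=(G_\nu)$ be the Gaussian measure space construction associated with $\nu$; then $S$ is weakly mixing and rigid, and the first chaos provides a nonzero $g\in L_{2,0}(Y,p_Y)$ whose spectral measure $\nu_g^S$ is (a multiple of) $\nu$, hence mutually absolutely continuous with $\nu$.

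Suppose now, for contradiction, that $T\times S$ were rigid along some sequence $(q_k)$. Then $F\ot g$ would be rigid along $(q_k)$, which means that its spectral measure with respect to $U_{T\times S}=U_T\ot U_S$, namely $\mu\ast\nu_g^S$, is a Dirichlet measure. Since $\nu_g^S\ll\nu$, the standard fact that $\alpha\ll\beta$ implies $\alpha\ast\gamma\ll\beta\ast\gamma$ gives $\mu\ast\nu_g^S\ll\mu\ast\nu$, so this spectral measure is absolutely continuous with respect to Lebesgue. But by Remark~\ref{Rokhlinsingular} every Dirichlet measure is singular, so the only measure that is simultaneously Dirichlet and absolutely continuous is the zero measure, contradicting $F\neq 0\neq g$. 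Hence $T\times S$ admits no rigidity sequence, completing the proof.

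The main obstacle is the backward direction, and within that the application of Proposition~\ref{AA0} to a measure $\mu$ that need not come from a weakly mixing rigid transformation; verifying that the argument there goes through under the sole hypothesis of continuity of $\mu$, together with the convolution-absolute-continuity transfer $\nu_g^S\ll\nu\Rightarrow\mu\ast\nu_g^S\ll\mu\ast\nu$, are the two technical points to check carefully.
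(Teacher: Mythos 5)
Your proof is correct, and in the converse direction it takes a genuinely different route from the paper's. The forward direction is exactly the paper's argument (compactness of the strong-operator closure of $\{T^n:n\in\mathbb Z\}$, plus the difference trick $T^{n_{m_{k+1}}-n_{m_k}}\to RR^{-1}=Id$). For the converse, the paper first extracts from the hypothesis that $T$ itself must be rigid (pair it with any rigid weakly mixing $S$ and restrict to $X$-measurable functions), so the continuous measure $\nu\ll\nu^T$ supplied by the failure of discrete spectrum is automatically a Dirichlet measure; it then forms $G_\nu$, notes that every rigidity sequence for $T$ is one for $G_\nu$, transfers the hypothesis to the rigid weakly mixing system $G_\nu$, and quotes Proposition~\ref{AA0} as a black box to reach a contradiction. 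You instead bypass the rigidity of $T$ entirely by observing that the proof of Proposition~\ref{AA0} never uses the Dirichlet property of $\mu$: only continuity enters, through Wiener's lemma giving $a_n\to 0$, so the Salem-measure construction already produces, for an arbitrary nonzero continuous spectral measure $\mu=\nu_F^T$, a continuous rigid $\nu$ with $\mu\ast\nu$ absolutely continuous with respect to Lebesgue measure; you then contradict the singularity of Dirichlet measures (Remark~\ref{Rokhlinsingular}) on the single spectral measure $\nu_{F\otimes g}^{T\times S}=\mu\ast\nu_g^S$. Your route buys a slightly stronger intermediate fact (the Proposition~\ref{AA0} construction applies to any ergodic $T$ without discrete spectrum, rigid or not) at the price of reopening that proposition's proof rather than citing its statement. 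One small point to keep straight in your version: the GMC is applied to the symmetrization of $\nu$, so the first-chaos spectral measure $\nu_g^S$ is equivalent to $\nu\ast\nu^*$ (or $\nu+\nu^*$) rather than literally ``a multiple of $\nu$''; this is harmless, since $\mu\ast(\nu\ast\nu^*)=(\mu\ast\nu)\ast\nu^*$ is still absolutely continuous by exactly the convolution transfer principle you state (and in the additive symmetrization the same Salem estimate applies to $\nu^*$ because $a_{-n}=a_n$), but as written the phrase is not quite accurate.
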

\begin{proof} Assume that $T$ is an ergodic rotation and let $S$
be weakly mixing, $S^{n_m}\to Id$. By passing to a subsequence if
necessary, $T^{n_m}\to R\in C(T)$, and so $T^{n_{m+1}-n_m}\to Id$
and still $S^{n_{m+1}-n_m}\to Id$; thus $T\times S$ is rigid (see
also Proposition~\ref{sparse}).

To prove the converse, suppose that $T$ does not have discrete spectrum,
but $T\times S$ is rigid for each $S$ which is rigid and weakly
mixing. Then there is some continuous $\nu$ with $\nu\ll\nu^T$. It
follows that $\nu$ is a Dirichlet measure, and if
$\widehat{\nu^T}(n_m)\to\nu^T(\mathbb T)$ then
$\widehat{\nu}(n_m)\to \nu(\mathbb T)$. Consider the Gaussian system
$G_\nu$ given by $\nu$. Then $G_\nu$ is weakly mixing and each
sequence which is a rigid sequence for $T$ is also a rigid
sequence for $G_\nu$. It follows that $G_\nu\times S$ is rigid for
each weakly mixing rigid $S$ which is in conflict with from
Proposition~\ref{AA0}.
\end{proof}

\begin{rem}  Using the viewpoint in Bergelson and Rosenblatt~\cite{bergros},
it is clear that Corollary~\ref{AA1} has a unitary version.  That is,
a unitary operator $U$ on a Hilbert space $H_U$ has discrete spectrum if and only if for every
weakly mixing rigid unitary operator $V$ on a Hilbert space $H_V$, the
product $U\times V$ on $H_U\times H_V$ is a rigid unitary operator.
\end{rem}

The following is a folklore result.

\begin{prop}\label{A2} Assume that $T$ and $S$ are ergodic
transformations with discrete spectrums. Then they are isomorphic if and only if they have the same
rigidity sequences.
\end{prop}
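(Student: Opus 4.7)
The forward direction is immediate: isomorphic transformations have unitarily equivalent Koopman operators, and the existence of $(n_m)$ with $U_T^{n_m}\to Id$ strongly is a unitary invariant, so $T\cong S$ yields equal families of rigidity sequences. The plan for the converse is to use the Halmos--von Neumann theorem, which reduces the problem to showing that the family of rigidity sequences of an ergodic discrete-spectrum transformation determines its group of eigenvalues $E(T)\subset\mathbb T$.

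The first step is to observe that for $T$ ergodic with discrete spectrum, a sequence $(n_m)$ is a rigidity sequence for $T$ if and only if $\gamma^{n_m}\to 1$ for every $\gamma\in E(T)$. This follows from the fact that the eigenfunctions span $L_2(X,p)$ together with Proposition~\ref{rigidfacts}: the maximal spectral type is supported on the countable set $E(T)$, and convergence $z^{n_m}\to 1$ in measure with respect to such a discrete measure is equivalent to pointwise convergence on each atom, which via the eigenfunction decomposition is equivalent to rigidity of $U_T$ on all of $L_2$.

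Now suppose $T$ and $S$ have the same rigidity sequences, and assume for contradiction that $E(T)\neq E(S)$; by symmetry we may pick $\gamma\in E(T)\setminus E(S)$. The plan is to produce a rigidity sequence for $S$ which is not a rigidity sequence for $T$. Let $H\le\mathbb T$ be the countable subgroup generated by $E(S)\cup\{\gamma\}$, viewed as a discrete abelian group, and let $K=\widehat{H}$ be its Pontryagin dual, a compact metric abelian group. The map $\iota\colon\mathbb Z\to K$ sending $n$ to the character $\eta\mapsto\eta^n$ has dense image, since its annihilator in $H$ is trivial. Because $\gamma\notin E(S)$, the image of $\gamma$ in $H/E(S)$ is nontrivial, so by Pontryagin duality for discrete abelian groups there is a character $\xi\in K$ with $\xi|_{E(S)}\equiv 1$ and $\xi(\gamma)\neq 1$. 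By density of $\iota(\mathbb Z)$, and by translating within the dense orbit, we can extract an increasing sequence $(n_m)$ of positive integers with $\iota(n_m)\to\xi$ in $K$, i.e.\ $\eta^{n_m}\to\xi(\eta)$ for every $\eta\in H$. Then $\eta^{n_m}\to 1$ for all $\eta\in E(S)$, making $(n_m)$ a rigidity sequence for $S$ by the criterion above, while $\gamma^{n_m}\to\xi(\gamma)\neq 1$, so $(n_m)$ fails to be rigid for $T$. This contradicts the assumption, so $E(T)=E(S)$, and Halmos--von Neumann yields $T\cong S$.

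The main obstacle is the careful extraction of the separating character $\xi$ and the verification that $\iota(\mathbb Z)$ is dense in $K$; both rely on standard Pontryagin duality, but one must be attentive to the distinction between $H$ as a discrete group (whose dual is compact and monothetic under $\iota$) and its topology inherited from $\mathbb T$. A minor technical point is ensuring $(n_m)$ can be taken strictly increasing in $\mathbb Z^+$; this is immediate because the orbit $\{\iota(n):n\ge M\}$ is still dense in $K$ for every $M$, so one selects $n_m$ inductively.
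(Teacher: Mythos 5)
Your proof is correct, but it takes a genuinely different route from the paper's. The paper works directly with the compact monothetic group $C(T)$, the closure of $\{T^n:n\in\mathbb Z\}$ in the strong operator topology: having the same rigidity sequences means exactly that $T^{n_m}\to Id$ if and only if $S^{n_m}\to Id$, so the assignment $T^n\mapsto S^n$ extends to a topological group isomorphism $C(T)\to C(S)$ carrying rotation by $T$ to rotation by $S$; since (as recalled at the start of Section~\ref{rigidonly}) $T$ is isomorphic to translation by $T$ on $C(T)$ with Haar measure, and likewise for $S$, the isomorphism follows at once. You instead argue on the dual side: you reduce via Halmos--von Neumann to the equality $E(T)=E(S)$ of eigenvalue groups, and in the contrapositive you manufacture, by Pontryagin duality applied to the discrete group generated by $E(S)\cup\{\gamma\}$, a rigidity sequence for $S$ along which $\gamma^{n_m}$ stays bounded away from $1$. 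Your route is longer and needs the extra duality bookkeeping (the separating character $\xi$, density of the forward orbit $\{\iota(n):n\ge M\}$ in $\widehat H$, metrizability coming from countability of the eigenvalue group), all of which you handle correctly, including the non-obvious preliminary that rigidity of $T$ along $(n_m)$ is equivalent to $\gamma^{n_m}\to1$ for every $\gamma\in E(T)$. What it buys is an explicit witness: whenever the eigenvalue groups differ you exhibit a concrete sequence that is rigid for one system and not the other, something the paper's direct construction of the isomorphism does not produce. The two arguments are of course dual to one another, since $E(T)$ is naturally identified with the character group of $C(T)$.
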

\begin{proof} Consider $C(T)$ and $C(S)$ respectively. Both are
given as weak closure of powers. Take the map:
$$
F:C(T)\to C(S), \;\;F(T^n)=S^n,\;n\in\Z.$$
We easily show that
this extends to a homeomorphism equivariant with rotation by $T$
and $S$ respectively.
Now refer back to the comments about centralizers at the beginning
of Section~\ref{rigidonly}.  We know that $T$ is isomorphic to the translation by $T$
on $C(T)$ considered with Haar measure and
$S$ is isomorphic to the translation by $S$
on $C(S)$ considered with Haar measure. Hence, $T$ and $S$ are
isomorphic.
\end{proof}

\begin{cor}\label{AA2} Assume that $(X,\mathcal B_X,p_X,T)$
is an ergodic transformation with a discrete spectrum, and $(Y,\mathcal B_Y,p_Y,S)$ is ergodic and has
the same rigidity sequences as $T$. Then $S$ is isomorphic to $T$.
\end{cor}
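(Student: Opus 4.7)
The plan is to first establish that $S$ has discrete spectrum, and then invoke Proposition~\ref{A2} to conclude. Both ingredients, Corollary~\ref{AA1} and Proposition~\ref{A2}, do all the heavy lifting; the work consists of transferring rigidity sequences between $T$ and $S$ via an auxiliary weakly mixing rigid transformation.

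To verify that $S$ has discrete spectrum, by Corollary~\ref{AA1} it is enough to show that $S\times W$ is rigid for every weakly mixing rigid $W$. Fix such a $W$. Since $T$ has discrete spectrum, the easy direction of Corollary~\ref{AA1} applied to $T$ guarantees that $T\times W$ is rigid, so there is an increasing sequence $(n_m)$ with $(T\times W)^{n_m}\to Id$ in the strong operator topology. In particular $(n_m)$ is a rigidity sequence for $T$ and simultaneously a rigidity sequence for $W$. Using the hypothesis that $T$ and $S$ possess exactly the same rigidity sequences, the sequence $(n_m)$ is a rigidity sequence for $S$ as well. Hence $(S\times W)^{n_m}\to Id$, so $S\times W$ is rigid along $(n_m)$. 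Because $W$ was arbitrary, the nontrivial direction of Corollary~\ref{AA1} now applies to the ergodic transformation $S$ and yields that $S$ has discrete spectrum.

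At this point $T$ and $S$ are both ergodic with discrete spectrum and share the same rigidity sequences, so Proposition~\ref{A2} delivers directly that $S$ is isomorphic to $T$.

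The main conceptual step is the transfer through the auxiliary weakly mixing rigid factor $W$: Corollary~\ref{AA1} characterizes discrete spectrum by the property that rigidity is preserved under joining with any weakly mixing rigid system, so having all of $T$'s rigidity sequences forces $S$ to inherit this property verbatim. There is no genuine obstacle beyond correctly orchestrating the two directions of Corollary~\ref{AA1}; the Halmos--von Neumann-style classification packaged in Proposition~\ref{A2} then completes the argument without further input.
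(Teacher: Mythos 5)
Your proposal is correct and follows the paper's own route exactly: the paper's (very terse) proof also deduces that $S$ has discrete spectrum from Corollary~\ref{AA1} and then concludes via Proposition~\ref{A2}, and the transfer argument you spell out — pushing a common rigidity sequence for $T\times W$ over to $S\times W$ using the hypothesis that $T$ and $S$ share rigidity sequences — is precisely the step the paper leaves implicit in the phrase ``it follows directly.'' No gaps; your version simply makes the two directions of Corollary~\ref{AA1} explicit.
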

\begin{proof} It follows directly from
Corollary~\ref{AA1} that $S$ has discrete spectrum. The result
follows from Proposition~\ref{A2}.
\end{proof}

\subsection{\bf Cocycle Methods}
\label{seccocycle}

\subsubsection{\bf Tools} \label{tools}
We will now describe tools using cocycles over rotations to
produce weakly mixing transformations with a prescribed sequence
as rigidity sequences. We will start with a transformation $T$ having
discrete spectrum and its sequence of rigidity. (In fact, for
applications, we will consider one dimensional
rotations by irrational $\alpha$ and the sequence given the denominators of
$\alpha$.) Then we will consider cocycles over $T$ with values in
locally compact Abelian groups. We will then pass to the
associated unitary operators (weighted operators) and we will try
to ``lift'' some rigidity sequences for the rotation to the
weighted operator. Once such an operator has continuous spectrum
we apply the GMC which preserves rigidity. Another option to
obtain ``good'' weakly mixing transformations will be to pass to
Poisson suspensions (in case we extend by a locally compact and
not compact group) - which in a sense will be even easier as
ergodicity of Poisson suspension is closely related to the
fact that the cocycles are not coboundaries. See Remark~\ref{Poisson},
and also \cite{CFS}, ~\cite{De-Fr-Le-Pa}
and ~\cite{Ro}, for details concerning
ergodic properties of Poisson suspensions.  Note: at times
in this section $\mathbb T$ denotes the unit circle in $\mathbb C$
with multiplicative notation, and at times it will mean $[0,1)$
with addition modulo one.  The reader will be able to distinguish
which model for the circle is being used by the context of the
discussion.

\subsubsection{\bf Compact Group Extensions and Weighted Operators}
Assume that $T$ is an ergodic transformation acting on a standard
Lebesgue space $\xbm$. Let $G$ be a compact metric
Abelian group with Haar measure $\la_G$.  We take the $\la_G$
measurable sets, denoted by $\mathcal G$ as our measurable sets
for $(G,\mathcal B_G,\la_G)$. A measurable map
$\va:X\to G$ generates a
{\em cocycle} $\va^{(\cdot)}(\cdot)$ which is given by $\va^{(n)}(\cdot):X\to G$,
$n \in \mathbb Z$, by the formula, for $x \in X$,
\beq\label{cocycle}
\va^{(n)}(x)=\left\{\begin{array}{ccc}\va(x)+\va(Tx)+\ldots+\va(T^{n-1}x)&\mbox{if}&
n>0,\\
0&\mbox{if}& n=0,\\
-(\va(T^{-n}x)+\ldots+\va(T^{-1}x))&\mbox{if}&n<0.\end{array}\right.\eeq
Using $T$ and $\va$ we define a {\em compact group extension}
$T_\varphi$ of $T$ which acts on the space $(X\times
G,\cb\ot\cb_G,p\ot\la_G)$ by the formula \beq\label{gext}
T_\varphi(x,g)=(Tx,\va(x)+g)\;\;\mbox{for}\;\;(x,g)\in X\times G.
\eeq Notice that for each $n\in \Z$ and $(x,g)\in X\times G$
\beq\label{gext0} (T_\varphi)^n(x,g)=(T^nx,\va^{(n)}(x)+g).\eeq
The natural decomposition of $L_2(G,\la_G)$ using the character
group $\widehat{G}$ yields the decomposition
\beq\label{gext1}
L_2(X\times G,p\ot\la_G)=\bigoplus_{\chi\in\widehat{G}}L_2(X,p)\ot
(\C\chi).\eeq
Here $\C\chi$ is the one-dimensional subspace spanned by the
character $\chi$.
To understand ergodic and other mixing properties of
$T_\varphi$ we need to study the associated Koopman operator
$U_{T_\varphi}$,
$$U_{T_\varphi}F=F\circ T_\varphi,\;\mbox{for}\;\;F\in L_2(X\times
G,p\ot\la_G).$$ As the (closed) subspaces $L_2(X,p)\ot(\C\chi)$
in~(\ref{gext1}) are $U_{T_\varphi}$-invariant, we can examine
those mixing properties separately on all such subspaces (notice
that for $\chi=1$ we consider the original Koopman operator
$U_T$). It is well-known and not hard to see that the map
$f\ot\chi\mapsto f$ provides a spectral equivalence of
$U_{T_\varphi}|_{L_2(X,p)\ot(\C\chi)}$ and the operator
$V_{\chi\circ\va}^T$ acting on $L_2\xbm$ by the formula
\beq\label{gext2} V^T_{\chi\circ\va}f=\chi\circ\va\cdot f\circ
T\;\;\mbox{for}\;\;f\in L_2(X,p).\eeq Each such operator is an
example of a {\em weighted operator} $V^T_\xi$ over $T$, where
$\xi:X\to \mathbb T$ is a measurable function with values in
the (multiplicative) circle $\mathbb T$ and
$V_\xi^Tf=\xi\cdot f\circ T$.

Assume now that $T$ is an ergodic transformation with discrete
spectrum, i.e.\ without loss of generality, we can assume that $X$
is a compact monothetic metric group with $p=\la_X$ Haar measure
on $X$, and $Tx=x+x_0$ where $x_0$ and $\{nx_0:\:n\in\Z\}$ is
dense in $X$. Assume that $\xi:X\to \mathbb T$ is measurable.
By Helson's analysis \cite{He} (see also e.g.\ \cite{Iw-Le-Ru}):

\begin{thm}[\cite{He}]\label{helson} For $T$ as above, the maximal
spectral type of $V^T_\xi$ is either discrete or continuous. If it
is continuous then either it is singular or it is
Lebesgue.\end{thm}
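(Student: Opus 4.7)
The plan is to exploit the interaction between the weighted operator $V := V_\xi^T$ and the family of multiplication operators $W_\chi f(x) = \chi(x) f(x)$ indexed by $\chi \in \widehat{X}$. A direct calculation using $Tx = x + x_0$ yields the commutation identity $W_\chi V W_\chi^{-1} = \overline{\chi(x_0)}\, V$. Because $X$ is an infinite compact monothetic group, the subgroup $H := \{\chi(x_0): \chi \in \widehat{X}\}$ is dense in $\mathbb{T}$. The commutation relation says that conjugation by $W_\chi$ scales the spectrum of $V$ by $\overline{\chi(x_0)}$, so via the Borel functional calculus $W_\chi E_V(B) W_\chi^{-1} = E_V(\chi(x_0) B)$, and consequently the maximal spectral type $\sigma$ of $V$ is quasi-invariant under translation by each element of $H$.

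For the discrete-versus-continuous dichotomy, I would show that a single eigenvalue already forces discrete spectrum. If $V f_0 = \lambda_0 f_0$ with $f_0 \neq 0$, then $|f_0|$ is $T$-invariant and hence a.e.\ constant by ergodicity of $T$; normalising $|f_0| = 1$ we obtain $\xi(x) = \lambda_0 f_0(x)/f_0(Tx)$, so $M_{\overline{f_0}} V M_{f_0} = \lambda_0 U_T$, where $M_g$ denotes multiplication by $g$. Thus $V$ is unitarily equivalent to $\lambda_0 U_T$, and since $T$ has discrete spectrum so does $V$. Hence $\sigma$ is either purely discrete or purely continuous.

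Suppose now that $\sigma$ is continuous, and let $L_2(X,p) = H_{\mathrm{ac}} \oplus H_{\mathrm{s}}$ be the splitting corresponding to the Lebesgue decomposition of $\sigma$ with respect to Haar measure $\lambda_{\mathbb{T}}$ on $\mathbb{T}$. Because $\lambda_{\mathbb{T}}$ is translation-invariant, both absolute continuity and singularity with respect to $\lambda_{\mathbb{T}}$ are preserved by translation, so the commutation relation forces both summands to be $W_\chi$-invariant for every $\chi$. The $*$-algebra generated by the $M_\chi$'s has strong-operator closure equal to the multiplication algebra $L_\infty(X, p)$, so each of $H_{\mathrm{ac}}, H_{\mathrm{s}}$ is invariant under multiplication by every $g \in L_\infty(X,p)$. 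Maximal abelianness of this multiplication algebra in $B(L_2(X,p))$ forces each subspace to have the form $\mathbf{1}_Y \cdot L_2(X,p)$ for some measurable $Y \subset X$; then $V$-invariance gives $T^{-1}Y = Y$ mod $p$-null, and ergodicity of $T$ gives $p(Y) \in \{0,1\}$. Either $H_{\mathrm{ac}} = \{0\}$, so $\sigma$ is singular, or $H_{\mathrm{ac}} = L_2(X,p)$, so $\sigma \ll \lambda_{\mathbb{T}}$.

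In the latter case, write $\phi := d\sigma/d\lambda_{\mathbb{T}}$. The $H$-quasi-invariance of $\sigma$ says that for each $h \in H$ the function $\phi(h^{-1}z)$ has the same zero set as $\phi(z)$ modulo Lebesgue-null, i.e.\ $\{\phi > 0\}$ is $H$-invariant mod $\lambda_{\mathbb{T}}$-null. A standard Fourier argument shows that every dense subgroup of $\mathbb{T}$ acts ergodically on $(\mathbb{T}, \lambda_{\mathbb{T}})$ by translation: an $H$-invariant $F \in L_2(\mathbb{T}, \lambda_{\mathbb{T}})$ has Fourier coefficients satisfying $(h^n - 1)\widehat{F}(n) = 0$ for all $h \in H$, and density of $H$ forces $\widehat{F}(n) = 0$ for all $n \neq 0$. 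Hence $\{\phi > 0\}$ has Lebesgue measure $0$ or $1$; since $\sigma \neq 0$ the latter holds, so $\phi > 0$ a.e.\ and $\sigma \sim \lambda_{\mathbb{T}}$. The main subtlety is translating the operator-level identity $W_\chi V W_\chi^{-1} = \overline{\chi(x_0)} V$ cleanly into geometric statements about the maximal spectral type and spectral subspaces, and keeping the two notions of \emph{continuous} (no atoms vs.\ absolutely continuous with respect to $\lambda_{\mathbb T}$) carefully distinguished.
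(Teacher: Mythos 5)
This theorem is stated in the paper with the attribution ``[\cite{He}]'' and is not proved there at all --- the authors simply quote Helson's dichotomy (adding only that they will use just the first part). So there is no in-paper proof to compare against; what you have written is a self-contained proof, and it is correct. It is, moreover, essentially Helson's original argument: the whole theorem is driven by the commutation relation $W_\chi V W_\chi^{-1}=\overline{\chi(x_0)}\,V$, which you verify correctly, together with the observation that $H=\{\chi(x_0):\chi\in\widehat X\}$ is an infinite (hence dense, and countable, since $X$ is compact metric) subgroup of $\mathbb T$ --- here you are implicitly using that $X$ is infinite, which holds because the paper's standing assumption is that the underlying probability space is non-atomic. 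Each of your steps checks out: the eigenfunction normalization giving $M_{\overline{f_0}}VM_{f_0}=\lambda_0 U_T$ and hence the pure-point/continuous dichotomy; the rotation-covariance $W_\chi E_V(B)W_\chi^{-1}=E_V(\chi(x_0)B)$ of the spectral resolution, which makes $H_{\mathrm{ac}}$ and $H_{\mathrm{s}}$ invariant under every $W_\chi$ and therefore (by Stone--Weierstrass plus maximal abelianness of the multiplication algebra) of the form $\mathbf 1_Y L_2$, after which $V$-invariance and ergodicity of $T$ kill one summand; and the final upgrade from $\sigma\ll\lambda_{\mathbb T}$ to $\sigma\sim\lambda_{\mathbb T}$ via ergodicity of the dense subgroup $H$ acting by rotations. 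The only stylistic caution is that ``discrete or continuous'' in the statement means purely atomic versus atomless, while the second dichotomy concerns the Lebesgue decomposition; you flag this distinction explicitly, which is exactly the right thing to do.
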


We will use only the first part of this theorem. An important
practical point that comes from this theorem is that once we find
a function $f\in L_2\xbm$ such that the spectral measure $\sigma_f =
\sigma_f^{V_\xi^T}$ is continuous, then
$V^T_\xi$ has purely continuous spectrum. Consider $f=1$.
Using~(\ref{gext0}), for each $n\in\Z$, we obtain
\beq\label{gext3} \langle
\left(V^T_\xi\right)^n1,1\rangle=\int_X\xi^{(n)}(x)\,dp(x).\eeq It
follows that if there is a subsequence $(n_m)_{m\geq1}$ such that
\beq\label{gext4} \int_X\xi^{(n_m)}(x)\,dp(x)\to 0\;\Rightarrow\;
V^T_\xi\;\mbox{has continuous spectrum}.\eeq It is also nice to
note in passing that we have a bit stronger
result:\beq\label{gext5}
\int_X\xi^{(n_m)}(x)\,dp(x)\to 0\;\Rightarrow\;
\left(V^T_\xi\right)^{n_m}\to 0\;\mbox{weakly},\eeq which follows
directly from~(\ref{gext4}); indeed,
$$\int_X\xi^{(n_m)}(x)f(T^{n_m}x)\ov{f(x)}\,dp(x)\to 0$$
whenever $f$ is a character of $X$.

It is well-known and easy to check that if $(n_m)$ is a rigidity
sequence for $T$
\beq\label{gext6}
\xi^{(n_m)}\to1\;\mbox{in measure}\Rightarrow\;
\left(V^T_{\xi}\right)^{n_m}\to Id\;\mbox{strongly}.\eeq

\subsubsection{\bf $\R$-extensions, Weighted Operators and Poisson
Suspensions} We will now consider the case $G=\R$. We assume now
that $f:X\to\R$ is a cocycle for $T$ acting ergodically on a
standard probability Borel space $\xbm$. We consider $T_f$
$$
T_f(x,r)=(Tx,f(x)+r)$$ acting on $(X\times\R,p\ot\la_{\R})$. Note
that we are now on a standard Lebesgue space with a $\sigma$-finite
(and not finite) measure. In particular, constants are not
integrable.

To study spectrally $T_f$ we will write it slightly differently,
namely
$$
T_f=T_{f,\tau}$$ where $\tau$ is the natural action of $\R$ on
itself by translations: $\tau_t(r)=r+t$ and
$$
T_{f,\tau}(x,r)=(Tx,\tau_{f(x)}(r)).$$ This transformation is a
special case of so called {\em Rokhlin extension} (of $T$), see for
example  \cite{Le-Le}, and the spectral analysis below is similar to the
one in \cite{Le-Le}. So let us just imagine a slightly more
general situation
$$T_{f,\cs}(x,y)=(Tx,S_{f(x)}(y))$$ where $\cs=(S_t)$ is a flow
acting on $\ycn$ ($\nu$ can be finite or infinite). We will denote
the spectral measure of $a\in L_2\ycn$ (for the Koopman
representation $t\mapsto U_{S_t}$ on $L_2\ycn$) by
$\sigma_{a,\cs}$.

The space $L_2(X\times Y,p\ot\nu)$ is nothing but a tensor product
of two Hilbert subspaces, so to understand spectral measures we
only need to study spectral measures for tensors $a\ot b$ and we
have

$$
\int_{X\times Y}(a\ot b)\circ (T_{f,\cs})^n\cdot\ov{a\ot
b}\,dp\,d\nu$$
$$=\int_X\int_Y a(T^nx)\ov{a(x)}b(S_{f^{(n)}(x)}y)
\ov{b(y)}\,dp(x)\,d\nu(y)$$
$$=\int_Xa(T^nx)\ov{a(x)}\left(\int_Y e^{2\pi
itf^{(n)}(x)}\,d\sigma_{b,\cs}(t)\right)\,dp(x)$$
$$= \int_Y\left(\int_X e^{2\pi itf^{(n)}(x)}
a(T^nx)\ov{a(x)}\,dp(x)\right)\,d\sigma_{b,\cs}(t).$$

\begin{prop}\label{ll1}
If $T^{n_k}\to Id$ and $f^{(n_k)}\to0$ in measure then $(n_k)$ is
a rigidity sequence for $\tfs$.\end{prop}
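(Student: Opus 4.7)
The plan is to use the explicit formula for the inner product $\langle (a\otimes b)\circ (T_{f,\mathcal S})^n, a\otimes b\rangle$ derived just before the statement, and then show termwise convergence of the integrand followed by a dominated convergence argument. Since $L_2(X\times Y,p\otimes\nu)$ is the Hilbert space tensor product $L_2(X,p)\otimes L_2(Y,\nu)$ and finite linear combinations of simple tensors $a\otimes b$ are norm-dense, and since the Koopman operator $U_{T_{f,\mathcal S}}$ is an isometry, it suffices to prove
\[
\langle (a\otimes b)\circ(T_{f,\mathcal S})^{n_k},\,a\otimes b\rangle\;\to\;\|a\|_2^2\,\|b\|_2^2
\]
for arbitrary $a\in L_2(X,p)$ and $b\in L_2(Y,\nu)$ (convergence of diagonal matrix coefficients to the norm squared is equivalent to strong convergence to the identity on the dense set, and hence everywhere).

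By the computation preceding the statement, this inner product equals
\[
\int_Y I_{n_k}(t)\, d\sigma_{b,\mathcal S}(t),\qquad
I_{n_k}(t):=\int_X e^{2\pi i t f^{(n_k)}(x)}\,a(T^{n_k}x)\,\overline{a(x)}\,dp(x).
\]
First I would fix $t\in\R$ and analyze $I_{n_k}(t)$. Writing $a\circ T^{n_k}=a+r_k$ with $\|r_k\|_2\to 0$ (which follows from $T^{n_k}\to Id$ strongly), Cauchy--Schwarz bounds the contribution of $r_k$ by $\|r_k\|_2\|a\|_2\to 0$. The remaining term is $\int_X e^{2\pi i t f^{(n_k)}(x)}|a(x)|^2\,dp(x)$; since $f^{(n_k)}\to 0$ in measure, the integrand converges to $|a(x)|^2$ in measure, is bounded in modulus by $|a|^2\in L_1(p)$, so by bounded convergence in measure it tends to $\|a\|_2^2$. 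Thus $I_{n_k}(t)\to\|a\|_2^2$ pointwise in $t$.

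Next I would pass to the outer integral. Cauchy--Schwarz gives the uniform bound $|I_{n_k}(t)|\le \|a\|_2^2$ independently of $t$ and $k$. Since $\sigma_{b,\mathcal S}$ is a finite positive Borel measure on $\R$ with total mass $\|b\|_2^2$, the dominated convergence theorem yields
\[
\int_\R I_{n_k}(t)\,d\sigma_{b,\mathcal S}(t)\;\to\;\|a\|_2^2\,\sigma_{b,\mathcal S}(\R)=\|a\|_2^2\|b\|_2^2,
\]
which is exactly what we needed. Extending to arbitrary $F\in L_2(X\times Y,p\otimes\nu)$ by density and unitarity of $U_{T_{f,\mathcal S}}$ completes the proof.

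I do not foresee a serious obstacle: the argument is essentially a clean application of dominated convergence, and the finiteness of the spectral measure $\sigma_{b,\mathcal S}$ (even when $\nu$ is only $\sigma$-finite) is what makes the outer integral harmless. The only point worth double-checking is that the density/tensor-product reduction goes through in the $\sigma$-finite case, but this is standard since $L_2$ of a product of $\sigma$-finite spaces is the Hilbert tensor product of the factors.
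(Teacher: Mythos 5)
Your proposal is correct and follows essentially the same route as the paper: both use the matrix-coefficient formula derived just before the statement, establish pointwise-in-$t$ convergence of the inner integral to $\|a\|_2^2$, and then apply dominated convergence against the finite measure $\sigma_{b,\mathcal S}$. The only difference is that the paper restricts to $a\in L^\infty(X,p)$ and asserts the inner convergence directly, whereas you handle general $a\in L_2$ by splitting off $a\circ T^{n_k}-a$ and spell out the density reduction; these are routine refinements of the same argument.
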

\begin{proof}Take $a\in L^\infty(X,p)$ and notice that
by assumption for each $t\in\R$
$$
\int_X e^{2\pi
itf^{(n_k)}(x)}a(T^{n_k}x)\ov{a(x)}\,dp(x)\to\int_X|a|^2\,dp.$$ By
the Lebesgue Dominated theorem
$$\int_Y\left(\int_X e^{2\pi itf^{(n_k)}(x)}
a(T^{n_k}x)\ov{a(x)}\,dp(x)\right)d\sigma_{b,\cs}(t)\to
\int_Y\left(\int_X|a|^2\,dp\right)d\sigma_{b,\cs}=\|a\ot
b\|_{L_2(p\ot\la_{\R})}^2.$$
\end{proof}

We need more information about sequences of the form $$\int_X
e^{2\pi itf^{(n)}(x)} a(T^nx)\ov{a(x)}\,dp(x),\,\,n\in\Z.$$ In
fact, they turn out to be again Fourier coefficients of some
spectral measures. Indeed, consider $V_t$ acting on $L_2\xbm$ by
the formula
$$
(V_ta)(x)=e^{2\pi itf(x)}a(Tx).$$ This is nothing but a weighted
unitary operator and
$$
\langle V_t^n a,a\rangle=\int_X e^{2\pi itf^{(n)}(x)}
a(T^nx)\ov{a(x)}\,dp(x).$$ (Notice that we came back to the finite
measure-preserving case.)

Clearly, for $b\in L_2(\R)$ with compact support and $\mathcal S = \tau$
$$
\widehat{\sigma}_{b,\cs}(t)=\int_{\R}b\circ S_t\cdot\ov{b}\,dr
$$
$$=\int_{\R}b(r+t)\ov{b(r)}\,dr=(b\ast\ov{b})(-t).$$
Hence, the Fourier transform of $\sigma_{b,\cs}$ is square
summable, and therefore this spectral measure is absolutely
continuous. In fact, the maximal spectral type of $\cs$ is
Lebesgue, and we can see the maximal spectral type of $U_{T_f}$ as
an integral (against ``Lebesgue'' measure) of the maximal spectral
types of the family indexed by $t\in\R$ of weighted operators.

Suppose now that $U_{T_f}$ has an eigenvalue $c$, $|c|=1$. Then we
cannot have that all spectral measures $\sigma_{a\ot b,T_f}$ are
continuous. In fact, we must have that $c$ appears as an
eigenvalue for ``many'' $V_t$ (on a set of $t\in\R$ of positive
Lebesgue measure), and the following result is well-known (it is
an exercise).
\begin{lem}
The scalar $c$ is an eigenvalue of $V_t$ if and only if we can solve the
following functional equation:
$$
e^{2\pi itf}=\frac {c\cdot \xi\circ T}{\xi}$$ in measurable functions
$\xi:X\to\mathbb T$.
\end{lem}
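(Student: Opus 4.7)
The plan is to check the two directions directly from the definition of $V_t$, the only subtle point being that an $L_2$-eigenfunction must be forced to have constant modulus so that we can rescale it into the group $\mathbb T$, and the small index gymnastics caused by the asymmetric placement of $\xi$ and $\xi\circ T$ in the stated equation.

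For the forward direction, suppose $c\in\mathbb T$ is an eigenvalue of $V_t$, so there is a nonzero $\eta\in L_2(X,p)$ with $V_t\eta=c\eta$, i.e.\
\[
e^{2\pi it f(x)}\,\eta(Tx)=c\,\eta(x)\quad\text{a.e.}
\]
Taking absolute values and using $|e^{2\pi it f}|=|c|=1$ gives $|\eta|\circ T=|\eta|$ a.e. Ergodicity of $T$ forces $|\eta|$ to be a positive constant, so after normalization we may assume $|\eta|\equiv 1$, i.e.\ $\eta\colon X\to\mathbb T$. Define $\xi:=\overline{\eta}$, which is again $\mathbb T$-valued. Then $\eta=1/\xi$, and rearranging the eigenfunction equation yields
\[
e^{2\pi it f(x)}=\frac{c\,\eta(x)}{\eta(Tx)}=\frac{c\,\xi(Tx)}{\xi(x)},
\]
which is exactly the functional equation claimed.

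For the converse, suppose $\xi\colon X\to\mathbb T$ is measurable and satisfies $e^{2\pi it f}=c(\xi\circ T)/\xi$. Put $\eta:=\overline{\xi}=1/\xi$; then $\eta\in L_\infty(X,p)\subset L_2(X,p)$ (we are on a probability space), $\eta$ is nonzero, and
\[
(V_t\eta)(x)=e^{2\pi it f(x)}\eta(Tx)=\frac{c\,\xi(Tx)}{\xi(x)}\cdot\frac{1}{\xi(Tx)}=\frac{c}{\xi(x)}=c\,\eta(x),
\]
so $c$ is an eigenvalue of $V_t$.

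I do not expect any real obstacle here; the only nontrivial input is the ergodicity of $T$, used to upgrade an $L_2$-eigenfunction to a genuinely $\mathbb T$-valued one, and the complex-conjugation substitution $\eta=\bar\xi$ that reconciles the placement of $\xi$ and $\xi\circ T$ in the displayed equation with the usual coboundary-type formulation.
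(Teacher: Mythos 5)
Your proof is correct, and it is exactly the standard argument the paper has in mind: the authors explicitly leave this lemma as an exercise, and the two ingredients you isolate (ergodicity of $T$ forcing an $L_2$-eigenfunction of the unitary weighted operator $V_t$ to have constant modulus, and the substitution $\xi=\overline\eta$ to match the placement of $\xi$ and $\xi\circ T$) are precisely what is needed. No gaps.
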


It follows that having an eigenvalue for $U_{T_f}$ means that we
can solve the above multiplicative equations on a set of positive
Lebesgue measure of $t\in\R$. We are now in the framework of the
classical Helson's problem (e.g.\ \cite{Mo-Sch}) of passing from
multiplicative coboundaries to additive coboundaries. Using known
results in this area (\cite{Mo-Sch}, and see also the appendix in
\cite{Le-Pa}) we obtain the following (remember that constant
functions are not elements of $L_2(X\times\R,p\otimes\la_{\R})$).

\begin{prop}\label{He}
If $U_{T_f}$ has an eigenvalue then $f$ is an additive
quasi-coboundary, that is there exist a measurable $g:X\to\R$ and
$r\in\R$ such that $f(x)=r+g(x)-g(Tx)$ for $p$-a.e.\ $x\in X$.
\end{prop}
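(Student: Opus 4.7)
The plan is to unpack the eigenvalue equation for $U_{T_f}$ into a family of multiplicative equations of the type that appeared in the preceding lemma, and then invoke the Moore--Schmidt / Helson passage from multiplicative to additive coboundaries to extract an $\R$-valued quasi-coboundary representation for $f$.

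First I would translate an eigenvalue of $U_{T_f}$ into concrete data. Suppose $F\in L_2(X\times\R,p\ot\la_\R)$ is nonzero and $F\circ T_f=cF$ with $|c|=1$. Taking the Fourier transform of $F$ in the fiber variable, write $F(x,r)=\int_\R\widehat F(x,t)\,e^{2\pi itr}\,dt$ and use that $T_f(x,r)=(Tx,f(x)+r)$. Matching Fourier coefficients in the equation $F(Tx,f(x)+r)=cF(x,r)$ yields
\beq\label{planstar}
\widehat F(Tx,t)\,e^{2\pi itf(x)}=c\,\widehat F(x,t)\quad\text{for a.e. }(x,t).
\eeq
Because $F\neq 0$, the set $E:=\{t\in\R:\widehat F(\cdot,t)\not\equiv 0\}$ has positive Lebesgue measure. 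For each such $t$, equation~(\ref{planstar}) says $|\widehat F(\cdot,t)|$ is $T$-invariant and hence, by ergodicity of $T$, a.e.\ constant; dividing by this constant gives a measurable unimodular $\xi_t:X\to\mathbb T$ satisfying
\[
e^{2\pi itf(x)}=c\cdot\frac{\xi_t(Tx)}{\xi_t(x)},
\]
which is exactly the relation of the lemma preceding the proposition. So $c$ is an eigenvalue of the weighted operator $V_t$ for every $t$ in the positive-measure set $E$.

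At this point I would appeal to the Moore--Schmidt theorem in the form developed for this kind of Helson problem (as cited via \cite{Mo-Sch} and the appendix of \cite{Le-Pa}): if for a set of $t\in\R$ of positive Lebesgue measure the multiplicative cocycles $e^{2\pi itf}$ are coboundaries up to a multiplicative constant, then the additive cocycle $f$ itself must be cohomologous to a constant. Concretely this produces $r\in\R$ and a measurable $g:X\to\R$ with
\[
f(x)=r+g(x)-g(Tx)\quad\text{for }p\text{-a.e. }x\in X,
\]
which is the desired conclusion. Measurability of $t\mapsto\xi_t$ (needed to feed the family into Moore--Schmidt) can be arranged by normalizing $\widehat F(\cdot,t)$ jointly measurably in $(x,t)$ on the set $E$.

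The main obstacle I anticipate is not the spectral/Fourier step, which is essentially bookkeeping, but the invocation of Moore--Schmidt: one must (i) select the family $\{\xi_t\}_{t\in E}$ in a jointly measurable way, and (ii) handle the constant $c$ that appears in the multiplicative relation so that the Helson-style theorem of \cite{Mo-Sch} applies verbatim. Both are standard once the framework of \cite{Le-Pa} is adopted, but they are the genuine content of the proof; the Fourier-analytic extraction of the family $\{\xi_t\}$ is really just a reinterpretation of the eigenvalue condition.
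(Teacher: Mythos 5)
Your proposal is correct and follows essentially the same route as the paper: reduce the eigenvalue of $U_{T_f}$ to the statement that $c$ is an eigenvalue of the weighted operators $V_t$ for a positive Lebesgue-measure set of $t$ (the paper asserts this via the direct-integral decomposition of the maximal spectral type of $U_{T_f}$ over $t\in\R$, whereas you derive it explicitly by a fiberwise Fourier transform of the eigenfunction and ergodicity of $T$ applied to $|\widehat F(\cdot,t)|$), and then invoke the Moore--Schmidt passage from multiplicative to additive quasi-coboundaries exactly as the paper does via \cite{Mo-Sch} and the appendix of \cite{Le-Pa}. The concerns you flag (measurable selection of $\xi_t$, the constant $c$) are the same points the paper leaves to the cited references, so there is no gap relative to the paper's own level of detail.
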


\vspace{2mm}

Therefore, if $f$ is a non-trivial cocycle then automatically
$U_{T_f}$ has continuous spectrum and classically the Poisson
suspension over $T_f$ is ergodic, hence weakly mixing (see Remark~\ref{Poisson}). Recall
that from spectral point of view Poisson suspension over
$(X\times \R,p\ot\la_{\R},T_f)$ will be the same as Gaussian
functor over $(L_2(X\times\R,p\ot\la_{\R}),U_{T_f})$. In
particular, if $(T_f)^{n_t}\to Id$ on
$L_2(X\times\R,p\ot\la_{\R})$ then $(n_t)$ will be a rigidity
sequence for the suspension (in view of Proposition~\ref{ll1}).
 From the above discussion, it follows that to have a weakly mixing
 transformation $\widetilde {T }_f$ with a rigidity sequence $(N_t)$, we need:
(i) $f$ is not an additive coboundary,\, (ii) $T^{N_t}\to Id$,\, (iii) $f^{(N_t)}\to 0$ in measure.

\subsubsection{\bf Denominators of $\alpha$ and Rigidity} \label{contfrac}

We have already seen that the sequence $(2^n)$ is a rigidity
sequence for a weakly mixing transformation. We can construct some
other explicit examples of rigidity sequences by using known
results from the theory of ``smooth'' cocycles over
one-dimensional rotations. This will allow us to show that if
$\alpha$ is irrational, and $(q_n)$ stands for its sequence of
denominators then $(q_n)$ is also a rigidity sequence for a weakly
mixing transformation. The most interesting case is of course the
bounded partial quotient case (for example for the Golden Mean).

So assume that $\va:\T\to\R$ is a smooth mean-zero cocycle. We use
the term ``smooth'' here in a not very precise way; it may refer
to a good speed of decaying of the Fourier transform of $\va$.

We recall first that one of consequences of the Denjoy-Koksma
Inequality for $AC_0$ (absolutely continuous mean-zero) cocycles
is that \beq\label{DK} \va^{(q_n)}\to 0\;\;\mbox{uniformly}\eeq
for every irrational rotation by $\alpha$, see \cite{Herman}.
Another type of Denjoy-Koksma inequality has been proved in
\cite{Aa-Le-Ma-Na} for functions $\va$ whose Fourier transform is
of order $\mbox{O}(1/|n|)$ -- as its consequence we have the
following: \beq\label{DK1} \mbox{If
$\widehat{\va}(n)=\mbox{o}(\frac1{|n|})$, $\widehat{\va}(0)=0$
then $\va^{(q_n)}\to0$ in measure}\eeq for every
rotation by an irrational $\alpha$.

We would like also to recall another (unpublished) result by M.
Herman \cite{Herman1}.  While this may not be available, one
can see also Krzy\.zewski~\cite{Kr} for generalizations of Herman's result.

\begin{thm}\label{MH} Assume that a mean-zero $\va:\T\to\R$ is in
$L_2(\T,\la_{\T})$ and its Fourier transform is concentrated on a
lacunary subset of $\Z$. Suppose that
$$
\va(x)=g(x)-g(x+\alpha), \;\;\la_{\T}-\mbox{a.e.}$$ for some
irrational $\alpha\in[0,1)$. Then $g\in L_2(\T,\la_{\T})$.
\end{thm}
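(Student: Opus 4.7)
My approach is to pass between the functional equation and the Fourier expansion. Writing $\varphi(x)=\sum_k c_k e^{2\pi i n_k x}$ with the $n_k$ lacunary and $\sum_k|c_k|^2=\|\varphi\|_2^2<\infty$, the formal Fourier coefficients of any solution are forced (off of $n=0$) to be $\hat g(n_k)=c_k/(1-e^{2\pi i n_k\alpha})$ and $\hat g(n)=0$ otherwise. So the whole theorem reduces to showing
\[
\sum_k \frac{|c_k|^2}{|1-e^{2\pi i n_k\alpha}|^2}<\infty.
\]
Indeed, if this sum is finite, the series $\tilde g(x):=\sum_k\frac{c_k}{1-e^{2\pi i n_k\alpha}}e^{2\pi i n_k x}$ converges in $L_2$ and satisfies the coboundary equation, so by ergodicity of the rotation $R_\alpha:x\mapsto x+\alpha$ the given $g$ must equal $\tilde g+C$ almost everywhere, giving $g\in L_2$.

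To control this sum, I would introduce the partial sums $g_N(x):=\sum_{|k|\le N}\frac{c_k}{1-e^{2\pi i n_k\alpha}}e^{2\pi i n_k x}$, which are lacunary trigonometric polynomials obeying $g_N-g_N\circ R_\alpha=\varphi_N$, the $N$-th partial sum of $\varphi$. Since the $n_k$ are lacunary and $\sum|c_k|^2<\infty$, one has $\varphi_N\to\varphi$ in $L_2$ and a.e.\ along a subsequence, hence the measurable function $h_N:=g_N-g$ satisfies $h_N-h_N\circ R_\alpha\to 0$ in $L_2$. The plan is to leverage this near-invariance, together with the rigidity of lacunary Fourier sums, to force $\|g_N\|_{L_2}=\bigl(\sum_{|k|\le N}|\gamma_k|^2\bigr)^{1/2}$ to remain bounded as $N\to\infty$.

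The key analytic tool is the Zygmund--Khintchine equivalence of norms for lacunary trigonometric polynomials: for any $P=\sum a_k e^{2\pi i n_k x}$ on a lacunary support one has $\|P\|_p\asymp_p\|P\|_2\asymp\bigl(\sum|a_k|^2\bigr)^{1/2}$ for every $p\in(0,\infty)$, with sub-Gaussian tails. A consequence is that a sequence of lacunary polynomials which is Cauchy in measure is already Cauchy in $L_2$, and conversely a family whose $L_2$-norms diverge cannot have any measurable accumulation point in measure, displaying instead law-of-the-iterated-logarithm type oscillations. If one can show that $g_N$, after adjusting by an appropriate constant $c_N$, converges in measure to $g$, the Zygmund--Khintchine inequality then yields $L_2$-convergence and hence the desired summability.

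The main obstacle, and the delicate step of the argument, is the transition ``$h_N-h_N\circ R_\alpha\to 0$ in $L_2$ $\Rightarrow$ $h_N-c_N\to 0$ in measure for some constants $c_N$.'' For a Diophantine $\alpha$ this is immediate from a spectral gap, but for Liouville $\alpha$ no such gap exists and the denominators $|1-e^{2\pi i n_k\alpha}|$ may become very small. Here one must use more than the linear equation: the combination of ergodicity of $R_\alpha$, the uniqueness of measurable coboundary solutions up to additive constants (in the spirit of Gottschalk--Hedlund), and the rigidity of lacunary Fourier sums. If $\sum|\gamma_k|^2=\infty$, the partial sums $g_N$ would diverge wildly on a set of full measure by the lacunary LIL, and this wild oscillation cannot be reconciled with the existence of the fixed measurable function $g$ at which $g_N$ (up to constants) is supposed to accumulate in measure. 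Running this contradiction carefully is where Herman's argument, and its subsequent refinements by Krzy\.zewski, do the real work.
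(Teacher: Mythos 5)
First, a point of comparison: the paper does not prove this statement at all. It is quoted as an unpublished theorem of Herman, with a pointer to Krzy\.zewski's paper for generalizations, so your proposal cannot be measured against an argument in the text and has to stand on its own. Your reduction is sound: it suffices to show $\sum_k |c_k|^2/|1-e^{2\pi i n_k\alpha}|^2<\infty$, since then $\tilde g\in L_2$ solves the same cohomological equation and ergodicity of $R_\alpha$ forces $g=\tilde g+C$ a.e. The problem is the step you yourself flag as delicate, which is in fact the entire content of the theorem, and your plan for it does not close. The implication ``$h_N-h_N\circ R_\alpha\to 0$ in $L_2$ $\Rightarrow$ $h_N-c_N\to 0$ in measure'' is simply false: take $h_N=N\,e^{2\pi i q_N x}$ with $q_N$ a denominator of a Liouville $\alpha$ chosen so that $N\|q_N\alpha\|\to 0$; then $h_N-h_N\circ R_\alpha\to 0$ in $L_2$ while $h_N$ is nowhere near constant in measure. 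You correctly note that one must exploit the special form $h_N=g_N-g$ with $g$ a \emph{fixed} measurable function, but the contradiction you sketch (lacunary LIL forces $g_N$ to oscillate wildly, which ``cannot be reconciled'' with accumulation at $g$) presupposes that $g_N$ accumulates at $g$ in measure --- which is exactly what has not been established. Ending with ``this is where Herman's argument does the real work'' is circular: that argument is the theorem.

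The standard way to make this work is to change which family of functions you test. Instead of the partial sums $g_N$, use the Birkhoff sums $\varphi^{(n)}=g-g\circ R_\alpha^n$. Because $g$ is a single finite measurable function, the distributions of $g-g\circ R_\alpha^n$ are tight \emph{uniformly in} $n$ (from $p\{|g-g\circ S|>2\lambda\}\le 2\,p\{|g|>\lambda\}$ for any measure-preserving $S$); no convergence in measure needs to be proved. Each $\varphi^{(n)}$ is an $L_2$ function with lacunary spectrum and coefficients $c_k(1-e^{2\pi i n n_k\alpha})/(1-e^{2\pi i n_k\alpha})$, and the Paley--Zygmund/Zygmund inequality for lacunary series (a lacunary polynomial that is bounded by $\lambda$ off a set of measure less than a constant depending only on the lacunarity ratio has $L_2$-norm $O(\lambda)$) converts tightness into $\sup_n\|\varphi^{(n)}\|_2<\infty$. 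Finally, averaging over $n$ and using
\begin{equation*}
\frac1N\sum_{n=1}^N\bigl|1-e^{2\pi i n n_k\alpha}\bigr|^2\longrightarrow 2\qquad(N\to\infty)
\end{equation*}
together with Fatou's lemma yields $\sum_k |c_k|^2/|1-e^{2\pi i n_k\alpha}|^2<\infty$, which feeds into your (correct) reduction. So the right-hand half of your argument is fine, but the left-hand half needs to be rebuilt around the sums $\varphi^{(n)}$ rather than the partial sums $g_N$.
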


Fix $\alpha\in[0,1)$ irrational, and let
$\alpha=[0:a_1,a_2,\ldots]$ stand for the continued fraction
expansion of $\alpha$. Denote by $(q_n)$ the sequence of
denominators of $\alpha$: $q_0=1$, $q_1=a_1$ and
$q_{n+1}=a_{n+1}q_n+q_{n-1}$ for $n\geq2$. Then
$$
\frac{q_{n+2}}{q_{n}}=\frac{a_{n+2}q_{n+1}+q_n}{q_{n}}\geq
a_{n+2}+1\geq2.$$ It follows that \beq\label{lac1}
(q_{2n})\;\mbox{is lacunary}.\eeq Moreover, \beq\label{lac2}
q_n\|q_n\alpha\|\leq 1\;\mbox{for each}\;n\geq1.\eeq We define
$\va:\T\to\R$ by \beq\label{lac3} \va(x)=\sum\limits_{n=0}^\infty
a_{q_{2n}}\cos2\pi iq_{2n}x\eeq where for $n\geq 1$
\beq\label{lac3a} a_{q_{2n}}=\frac1{\sqrt n}\|q_{2n}\alpha\|.\eeq
We then have $\va:\T\to\R$, $\widehat{\va}(n)=\mbox{o}(1/|n|)$ and
$(a_{q_{2n}})\in l_2$ in view of~(\ref{lac2}). Now, suppose that
\beq\label{lac4}\va(x)=g(x)-g(x+\alpha)\eeq for a measurable
$g:\T\to\R$. In view of Theorem~\ref{MH} and~(\ref{lac1}), $g\in
L_2(\T,\la_{\T})$. Hence,
$$
g(x)=\sum\limits_{k=-\infty}^\infty b_k e^{2\pi ikx}.$$ Furthermore, by
comparing Fourier coefficients on both sides in~(\ref{lac4}),
$$
b_k=0\;\;\mbox{if}\;k\neq
q_{2n}\;\mbox{and}\;\;b_{q_{2n}}=a_{q_{2n}}/(1-e^{2\pi
iq_{2n}\alpha})$$ with $(b_{q_{2n}})\in l_2$. However
$$
|b_{q_{2n}}|=a_{q_{2n}}/\|q_{2n}\alpha\|=1/\sqrt n$$ which is a
contradiction. We hence proved the following.

\begin{prop}\label{lac6} For each irrational $\alpha\in[0,1)$
there is a mean-zero $\va:\T\to\R$ such that
$\widehat{\va}(n)=\mbox{o}(1/|n|)$ and $\va$ is not an additive
coboundary.\end{prop}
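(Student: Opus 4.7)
The plan is to engineer a real cocycle whose Fourier coefficients are supported on the lacunary set $\{\pm q_{2n}\}$, tuned so that $\widehat{\va}(n)=o(1/|n|)$ but so that the only possible ``formal'' solution of the cohomological equation fails to lie in $L_2(\T,\la_\T)$. The mechanism to convert failure in $L_2$ into failure for arbitrary measurable $g$ is Herman's theorem (Theorem~\ref{MH}), which forces any measurable transfer function for a cocycle with lacunary Fourier support to be automatically in $L_2$.

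First I would set $\va(x) = \sum_{n\ge 0} a_{q_{2n}} \cos(2\pi q_{2n} x)$ with coefficients $a_{q_{2n}} = \tfrac{1}{\sqrt{n}}\|q_{2n}\alpha\|$ for $n \ge 1$. Using the Diophantine bound $q_n\|q_n\alpha\|\le 1$ from (\ref{lac2}), one checks $a_{q_{2n}} \le \frac{1}{\sqrt{n}\,q_{2n}}$, so $(a_{q_{2n}}) \in \ell_2$ (hence $\va \in L_2$ and is mean-zero) and $\widehat{\va}(k) = o(1/|k|)$ because only the frequencies $k = \pm q_{2n}$ contribute. The key structural property needed later is that the support $\{\pm q_{2n}\}$ is lacunary, which follows from $q_{2n+2}/q_{2n}\ge 2$, established in (\ref{lac1}).

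Next, suppose for contradiction that $\va(x) = g(x) - g(x+\alpha)$ for some measurable $g\colon \T\to \R$. By Theorem~\ref{MH} applied to this lacunary situation, we would be forced to conclude $g\in L_2(\T,\la_\T)$, so $g$ admits a Fourier expansion $g(x) = \sum_k b_k e^{2\pi i k x}$ with $(b_k)\in \ell_2$. Matching Fourier coefficients on both sides of the cohomological equation shows $b_k = 0$ unless $k = \pm q_{2n}$, and
\[
|b_{\pm q_{2n}}| \;=\; \frac{a_{q_{2n}}/2}{|1 - e^{\mp 2\pi i q_{2n}\alpha}|} \;\asymp\; \frac{a_{q_{2n}}}{\|q_{2n}\alpha\|} \;=\; \frac{1}{\sqrt n}.
\]
Since $\sum_n 1/n = \infty$, the coefficients $(b_{q_{2n}})$ fail to be square-summable, contradicting $g\in L_2$.

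The main obstacle is really the step invoking Herman's theorem: one must arrange that the Fourier support is genuinely lacunary in order to pass from the (easy) $L_2$-obstruction to the (desired) measurable obstruction. The rest of the argument is a careful balancing act in choosing $a_{q_{2n}}$: make them large enough that the putative transfer function $g$ has $|b_{q_{2n}}| \sim 1/\sqrt{n}$ (non-$\ell_2$), but small enough that $\va$ itself has $|\widehat{\va}(q_{2n})| = o(1/q_{2n})$. The factor $\|q_{2n}\alpha\|$ is precisely the ratio that converts the division-by-denominator needed to solve the equation into a cancellation, while the $1/\sqrt{n}$ is the sharp growth keeping $\va$ in the right regularity class but pushing $g$ out of $L_2$.
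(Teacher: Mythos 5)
Your proposal is correct and follows essentially the same route as the paper: the same cocycle $\va(x)=\sum_n a_{q_{2n}}\cos(2\pi q_{2n}x)$ with $a_{q_{2n}}=\frac{1}{\sqrt n}\|q_{2n}\alpha\|$, the same appeal to the lacunarity of $(q_{2n})$ together with Herman's theorem to force any measurable transfer function into $L_2$, and the same Fourier-coefficient comparison yielding $|b_{q_{2n}}|\asymp 1/\sqrt n\notin\ell_2$. Your handling of the factor of $2$ from the cosine and the use of $\asymp$ in place of the paper's equality are if anything slightly more careful.
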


Using~(\ref{DK1}) and Proposition~\ref{ll1} we hence obtain the
following.

\begin{prop}\label{lac7} For $\va$ satisfying the assertion of
Proposition~\ref{lac6} the sequence $(q_n)$ of denominators of
$\alpha$ is a rigidity sequence of $T_\va$ on
$L_2(\T\times\R,\la_{\T}\ot\la_{\R})$ and $U_{T_{\va}}$ has
continuous spectrum.\end{prop}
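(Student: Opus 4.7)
The plan is to combine the two main tools developed earlier in this subsection: Proposition~\ref{ll1} to produce rigidity, and Proposition~\ref{He} together with Proposition~\ref{lac6} to rule out eigenvalues of $U_{T_\va}$.

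For rigidity, I would apply Proposition~\ref{ll1} to the special flow $T_\va=T_{\va,\tau}$, where $\tau$ is translation on $\R$. Two conditions must be verified: $T^{q_n}\to Id$ strongly on $L_2(\T,\la_\T)$, and $\va^{(q_n)}\to 0$ in measure on $\T$. The first is classical, since the denominators of the continued fraction satisfy $\|q_n\alpha\|\to 0$. The second is exactly the conclusion of the Denjoy--Koksma-type statement~(\ref{DK1}), whose hypotheses hold by construction of $\va$: by~(\ref{lac3})--(\ref{lac3a}) the non-zero Fourier coefficients of $\va$ sit on $\pm q_{2n}$ with modulus $\tfrac{1}{2\sqrt n}\|q_{2n}\alpha\|$, so in particular $\widehat\va(0)=0$ and $\widehat\va(k)=\mbox{o}(1/|k|)$. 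Proposition~\ref{ll1} then yields that $(q_n)$ is a rigidity sequence for $T_\va$.

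For continuous spectrum, I would argue by contradiction. Suppose $U_{T_\va}$ has an eigenvalue. By Proposition~\ref{He} there exist a measurable $g:\T\to\R$ and a real number $r$ with $\va(x)=r+g(x)-g(Tx)$. I aim to conclude $r=0$, which forces $\va$ to be an additive coboundary and contradicts Proposition~\ref{lac6}. Telescoping the cohomological identity and applying Birkhoff's ergodic theorem to the mean-zero integrable function $\va$ over the ergodic rotation $T$ gives, for $\la_\T$-a.e.\ $x$,
\[
0=\lim_{N\to\infty}\frac1N\sum_{k=0}^{N-1}\va(T^kx)=r+\lim_{N\to\infty}\frac{g(x)-g(T^Nx)}{N},
\]
so $g(T^Nx)/N\to r$ almost surely. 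On the other hand, since $T$ preserves $\la_\T$, the random variable $g\circ T^N$ has the same distribution as $g$, and any a.e.\ finite real-valued function divided by $N$ tends to $0$ in measure; hence $g(T^Nx)/N\to 0$ in probability. Uniqueness of the limit in probability forces $r=0$, giving the desired contradiction.

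The main obstacle is precisely this $r=0$ step: Proposition~\ref{He} only provides a measurable $g$, not an integrable one, so the naive route of integrating the cohomological identity against $\la_\T$ is not available. The distributional argument above bypasses this delicate point; everything else then reduces to a direct assembly of Proposition~\ref{ll1}, Proposition~\ref{He}, the Denjoy--Koksma inequality~(\ref{DK1}), and the non-coboundary conclusion of Proposition~\ref{lac6}.
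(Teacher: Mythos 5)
Your proposal is correct and follows essentially the same route as the paper, which simply cites the Denjoy--Koksma statement~(\ref{DK1}) and Proposition~\ref{ll1} for rigidity, and Proposition~\ref{He} combined with Proposition~\ref{lac6} for continuity of the spectrum. The only addition is your explicit Birkhoff/distribution argument that the constant $r$ in the quasi-coboundary equation must vanish for the mean-zero $\va$; the paper leaves this standard reduction implicit, and your treatment of it is a correct way to close that small gap.
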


By using GMC method or by passing to the relevant Poisson
suspension we obtain:

\begin{cor}\label{denominators} For each sequence $(q_n)$ of
denominators there exists a
weakly mixing transformation $R$ such that $R^{q_n}\to
Id$.\end{cor}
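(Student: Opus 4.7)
The plan is to combine Proposition~\ref{lac7} with one of the two standard functorial constructions already developed in the excerpt — the Gaussian measure space construction (GMC) or the Poisson suspension — to pass from the $\sigma$-finite skew product $T_\varphi$ to a weakly mixing probability-preserving transformation $R$ while preserving rigidity along $(q_n)$. The whole difficulty has been absorbed into the cocycle construction in Proposition~\ref{lac6} and the Denjoy--Koksma estimate behind Proposition~\ref{lac7}; the corollary itself is a packaging of those results.

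First, I would apply Proposition~\ref{lac7} to the given irrational $\alpha$ to produce a mean-zero cocycle $\varphi:\T\to\R$ such that $U_{T_\varphi}$ has purely continuous spectrum on $L_2(\T\times\R,\la_\T\ot\la_\R)$ and $U_{T_\varphi}^{q_n}\to Id$ in the strong operator topology. Let $\sigma$ be a maximal spectral type of $U_{T_\varphi}$. Since the spectrum is continuous, $\sigma$ is a continuous positive Borel measure on $\T$, and strong convergence $U_{T_\varphi}^{q_n}\to Id$ gives $\widehat{\sigma}(q_n)\to\sigma(\T)$ via Proposition~\ref{rigidfacts}; in other words, $\sigma$ is a continuous Dirichlet measure.

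Next I would symmetrize to $\sigma_s=\sigma\ast\sigma^*$ — still a continuous Dirichlet measure, cf.\ Remark~\ref{GMC} — and set $R:=G_{\sigma_s}$. By Proposition~\ref{spectral}, $R$ is a weakly mixing probability-preserving transformation and $R^{q_n}\to Id$, which is the desired conclusion. As an alternative, one can take the Poisson suspension $\widetilde{T_\varphi}$: since $\varphi$ is not an additive coboundary (this is exactly what is established inside the proof of Proposition~\ref{lac6} using Herman's Theorem~\ref{MH}), Proposition~\ref{He} shows that $U_{T_\varphi}$ has no eigenvalues, so $T_\varphi$ has no non-trivial invariant sets of finite measure, and $\widetilde{T_\varphi}$ is ergodic and therefore weakly mixing. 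By Remark~\ref{Poisson}, $\widetilde{T_\varphi}$ is spectrally isomorphic to $G_{\sigma}$, so the unitary rigidity along $(q_n)$ transfers directly.

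No genuinely new obstacle arises; the only point meriting care is the passage from rigidity of the unitary operator $U_{T_\varphi}$ acting on an infinite-measure $L_2$-space to rigidity of a finite measure-preserving transformation, and this is precisely what the spectral reformulation of rigidity in Proposition~\ref{rigidfacts} combined with the GMC or Poisson suspension is designed to accomplish.
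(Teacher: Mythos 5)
Your proposal is correct and follows essentially the same route as the paper, whose entire proof of this corollary is the single sentence ``By using GMC method or by passing to the relevant Poisson suspension we obtain'' applied to Proposition~\ref{lac7}. You have merely spelled out the details (symmetrization of the maximal spectral type before applying the GMC, and the no-eigenvalue/ergodicity argument for the Poisson suspension) that the authors leave implicit.
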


\begin{rem} \label{wazna}(i) We would like to emphasize that in general the
sequence $(q_n)$ of denominators of $\alpha$ is not lacunary.
Indeed, assume that $\alpha=[0:a_1,a_2,\ldots]$ stands for the
continued fraction expansion of $\alpha$. Suppose that for a
subsequence $(n_k)$ we have $a_{n_k+1}=1$, $a_{n_k}\to\infty$.
Then by the recurrence formula $q_{m+1}=a_{m+1}q_m+q_{m-1}$ we
obtain that
$$
\liminf_{n\to\infty}\frac{q_{n+1}}{q_n}=1.$$ So the
sequences of denominators are another type of non-lacunary
sequences which can be realized as rigidity sequences for weakly
mixing transformations, besides the ones in Section~\ref{ratesofgrowth}.
\medskip

\noindent (ii) As the above shows $\{q_n:\:n\geq1\}$ is always a Sidon set
(see~\cite{Ka},\cite{Ru}); indeed,
$$
\{q_n:\:n\geq1\}=\{q_{2n}:n\geq1\}\cup\{q_{2n+1}:\:n\geq 0\}.$$ It
follows that the set of denominators is the union of two lacunary
sets and hence is a Sidon set (\cite{Ka},\cite{Ru}).
\medskip

\noindent (iii) The assertion of Theorem~\ref{MH} is true for functions
whose Fourier transform is concentrated on a Sidon set and when
$T$ is an arbitrary ergodic rotation on a compact metric Abelian
group (by the proof of the main result in \cite{He} or by
\cite{Kr}).
\medskip

\noindent (iv) It follows that to prove Proposition~\ref{lac6} we could have
used all denominators, with (for example) $a_{q_n}=\frac1{\sqrt
n}\|q_n\alpha\|$.
\medskip

\noindent (v) Eisner and Grivaux~\cite{EG} obtain some results in the
direction of Corollary~\ref{denominators}, but their examples
are restricted to badly approximated irrational numbers.
\end{rem}

There are also more complicated constructions showing that for
each irrational $\alpha$ there is an absolutely continuous
mean-zero $\va:\T\to\R$ which is not a coboundary --
see~\cite{Li-Vo}. We can then use such cocycles and~(\ref{DK1})
for another proof of the above corollary.

Here is a concrete example of Corollary~\ref{denominators}
using the continued fraction expansion of the Golden Mean.

\begin{cor} The Fibonacci
sequence is a rigidity sequence for some weakly mixing transformation .
\end{cor}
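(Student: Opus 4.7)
The plan is to exhibit an irrational number $\alpha$ whose continued fraction denominators $(q_n)$ are precisely the Fibonacci numbers, and then invoke Corollary~\ref{denominators}. The natural candidate is the reciprocal of the Golden Mean, $\alpha = \frac{\sqrt{5}-1}{2}$, which satisfies $\alpha = \frac{1}{1+\alpha}$ and therefore has continued fraction expansion $[0;1,1,1,\ldots]$, so that $a_n = 1$ for every $n\geq 1$.

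With these partial quotients, the recurrence $q_{n+1} = a_{n+1}q_n + q_{n-1}$ from the construction preceding Equation~(\ref{lac1}) becomes $q_{n+1} = q_n + q_{n-1}$. Combined with the initial values $q_0 = 1$ and $q_1 = a_1 = 1$, this recurrence generates exactly the Fibonacci sequence $1,1,2,3,5,8,13,\ldots$. So the denominators of the convergents of $\alpha$ coincide with the Fibonacci numbers.

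Now I simply apply Corollary~\ref{denominators} to this particular $\alpha$: there exists a weakly mixing transformation $R$ such that $R^{q_n} \to Id$ in the strong operator topology, and $(q_n)$ being the Fibonacci sequence yields the claim. No additional work is needed — the entire machinery, including the construction of a mean-zero smooth cocycle $\varphi$ that is not a coboundary (Proposition~\ref{lac6}), the Denjoy--Koksma type statement~(\ref{DK1}), the rigidity of $T_\varphi$ along $(q_n)$ (Proposition~\ref{lac7}), and the passage to a weakly mixing transformation via the GMC or Poisson suspension, has already been established in Section~\ref{contfrac} for arbitrary irrational $\alpha$. The only new content is the elementary identification of Fibonacci numbers with the denominators of the Golden Mean's continued fraction expansion, so there is no real obstacle: the proof is essentially a one-line reduction to Corollary~\ref{denominators}.
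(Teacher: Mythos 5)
Your proof is correct and is exactly the paper's argument: the paper presents this corollary as an immediate instance of Corollary~\ref{denominators} applied to the Golden Mean, whose continued fraction expansion $[0;1,1,1,\ldots]$ gives denominators $q_{n+1}=q_n+q_{n-1}$, $q_0=q_1=1$, i.e.\ the Fibonacci numbers. Nothing further is needed.
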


\begin{rem} Suppose we take a
increasing sequence like the Fibonacci sequence, which is obtained by recursion.
That is, we have
$z = F(x_1,\ldots,x_K) = \sum\limits_{k=1}^K c_kx_k$ where $c_k$
are whole numbers, and we have
$n_{m+1} = F(n_m,\ldots,n_{m-K+1})$ for all $m$.
Is this always a rigidity sequence for a weakly mixing transformation?
\end{rem}

\begin{rem} It is not clear how to characterize rigidity sequences
that cannot be IP rigidity sequences. See
Proposition~\ref{powers}, and its generalization
Proposition~\ref{ipodometer} for examples of this phenomenon. In
reference to the above, it would be interesting to show that
$FS((q_n))$ is not a rigidity net.
\end{rem}

\begin{rem} The above results can be used to answer positively the
following question: {\em Given an increasing sequence $(n_m)$ of
integers is there a weakly mixing transformation $R$ such that
$R^{n_{m_k}}\to Id$ for some subsequence $(n_{m_k})$ of $(n_m)$?}
In fact, we have already answered this question (see
Proposition~\ref{folklore1}), but we will now take a very different
approach. We start with the following well-known lemma; see
for example ~\cite{Kw-Le-Ru}.
\begin{lem} Given an increasing sequence $(n_m)$ of natural
numbers, consider the set of $\alpha \in [0,1)$ such that a
subsequence of $(n_m)$ is a subsequence of denominators of
$\alpha$. This is a generic subset of $[0,1)$.
\end{lem}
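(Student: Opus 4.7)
The plan is to exhibit the set in the lemma as containing a dense $G_\delta$. Write $Q(\alpha)$ for the set of continued-fraction denominators of an irrational $\alpha$, and let $E_m = \{\alpha \in [0,1)\setminus\Q : n_m \in Q(\alpha)\}$. Setting $G := \bigcap_{M\ge 1}\bigcup_{m\ge M} E_m$, any $\alpha \in G$ has infinitely many $n_m$ appearing in its (increasing) sequence of denominators, which produces a subsequence of $(n_m)$ that is a subsequence of $(q_k(\alpha))$. Since $\Q$ is meager, it suffices to show that each $\bigcup_{m\ge M} E_m$ is open and dense in $[0,1)$.

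Openness of $E_m$ will be routine: the convergent denominator $q_k(\alpha)$ depends only on the first $k$ partial quotients, so $\{\alpha : q_k(\alpha) = n_m\}$ is a union of CF cylinders and $E_m = \bigcup_k \{\alpha : q_k(\alpha) = n_m\}$ is open. For density, I would fix a CF cylinder $C$ determined by a prefix $[0;a_1,\ldots,a_N]$ with $N$-th convergent denominator $q_N$, and produce, for each $M \ge 1$, some $m \ge M$ and $\alpha \in C$ with $n_m \in Q(\alpha)$. The key observation is that whenever $n > q_N$ and $m'$ is coprime to $n$ with $m'/n \in C$, the regular (finite) CF expansion of $m'/n$ must start with $[0;a_1,\ldots,a_N]$ and terminate at some level $k > N$ with $q_k(m'/n) = n$; appending any irrational tail then produces $\alpha \in C$ realizing $n$ as a denominator.

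The density step thus reduces to the number-theoretic statement that for any fixed interval $I \subset [0,1]$ of positive length and every sufficiently large $n$, there exists $m'$ coprime to $n$ with $m'/n \in I$. This will be the main obstacle, but it follows from the standard distribution estimate
\[
\#\{m' : 1\le m'<n,\ \gcd(m',n)=1,\ m'/n \in I\} = \phi(n)|I| + O\bigl(2^{\omega(n)}\bigr),
\]
where $\phi$ denotes Euler's totient and $\omega(n)$ the number of distinct prime divisors of $n$, together with the bounds $2^{\omega(n)} = n^{o(1)}$ and $\phi(n) \gg n/\log\log n$. Choosing $m \ge M$ large enough that $n_m$ exceeds the threshold coming from this estimate will furnish the required $\alpha$, and Baire category then yields the lemma.
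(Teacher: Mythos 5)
Your argument is correct, but note that the paper does not actually prove this lemma: it is stated as well known and deferred to the reference \cite{Kw-Le-Ru}, so there is no in-text proof to compare against. What you supply is a legitimate self-contained argument, and the skeleton (exhibit $\bigcap_{M}\bigcup_{m\ge M}E_m$ as a dense $G_\delta$ inside the irrationals, with $E_m$ a union of rank-$k$ cylinders and hence relatively open) is exactly the natural one. The two substantive points both check out: (i) a reduced fraction $m'/n_m$ lying strictly inside the cylinder $C_{a_1,\ldots,a_N}$ has a regular continued fraction expansion beginning with $a_1,\ldots,a_N$ and terminating with final denominator $n_m$, so appending any irrational tail produces $\alpha\in C$ with $n_m$ among its denominators (you should insist on the \emph{interior} of the cylinder, or take $n_m>q_N+q_{N-1}$, to dodge the endpoint $\frac{p_N+p_{N-1}}{q_N+q_{N-1}}$ and the usual two-representation ambiguity for rationals, but this costs nothing); and (ii) the M\"obius inclusion--exclusion count $\phi(n)\lvert I\rvert+O\bigl(2^{\omega(n)}\bigr)$, together with $2^{\omega(n)}=n^{o(1)}$ and $\phi(n)\gg n/\log\log n$, does guarantee a totative fraction of $n_m$ in any fixed subinterval once $n_m$ is large. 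One small remark: the analytic input in (ii) can be replaced by the purely combinatorial bound on the largest gap between consecutive totatives of $n$ (the Jacobsthal function, which is $\le 2^{\omega(n)}$), or by Legendre's theorem that $\lvert\alpha-p/q\rvert<\tfrac1{2q^2}$ forces $p/q$ to be a convergent, which gives an explicit open neighborhood of each reduced fraction inside $E_m$; either variant keeps the proof elementary. So the proposal is a complete and correct proof of a statement the paper only cites.
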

\noindent Now, given $(n_m)$ choose any irrational $\alpha$ so that for some
subsequence $(n_{m_k})$ we have all numbers $n_{m_k}$ being
denominators of $\alpha$. Then use previous arguments to construct a weakly
mixing ``realization'' of the whole sequence of denominators of
$\alpha$.
\end{rem}

\subsubsection{\bf Integer Lacunarity Case}
We will now give an alternative proof of Proposition~\ref{integerratios}
using the cocycle methods that have been developed here.
Assume that $(n_m)_{m\geq0}$ is an increasing sequence of positive
integers such that $n_0=1$, $n_{m+1}/n_m\in\Z$ with
\beq\label{odo1} \rho_m:=n_{m+1}/n_m\geq2\;\;\mbox{for}\;m\geq0.
\eeq Notice that in view of~(\ref{odo1}) there exists a constant
$C>0$ such that \beq\label{odo2}
\frac1{n_{m+1}^2}+\frac1{n_{m+2}^2}+\ldots\leq \frac
C{n_{m}^2}\;\;\mbox{for each $m\geq0$}.\eeq

Let $X=\Pi_{m=1}^\infty\{0,1,\ldots,\rho_m-1\}$ which is a
metrizable compact group when we consider the product topology and
the addition is meant coordinatewise with carrying the remainder
to the right. On $X$ we consider Haar measure $p_X$ which is the
usual product measure of uniform measures. Define $Tx=x+\hat 1$,
where
$$
\hat 1=(1,0,0,\ldots).$$ The resulting dynamical system is called
the $(n_m)$-{\em odometer}.

For each $t\geq 0$ set
$$
D_0^{n_m}=\{x\in X:\:x_0=x_1=\ldots=x_{m-1}=0\}.$$ Note that
$\{D^{n_m}_0,TD^{n_m}_0,\ldots,T^{n_m-1}D^{n_m}_0\}$ is a Rokhlin
tower fulfilling the whole space $X$ and \beq\label{odo3}\hat 1\in
TD^{n_m}_0 \;\;\mbox{for each $m\geq0$}.\eeq

The character group $\widehat X$ of $X$ is discrete and is
isomorphic to the (discrete) group of roots of unity of
degree~$n_m$, $m\geq0$. More precisely, for $m\geq0$ set
$$
1_{n_m}(x)=\vep^j_{n_m}:=e^{2\pi ij/n_m}\;\;\mbox{for $x\in
T^jD^{n_m}_0$,}\;j=0,1,\ldots, n_m-1.$$ Then $\widehat
X=\{1_{n_m}^j:\:j=0,1,\ldots,n_m-1,m\geq0\}$.

 From now on we will consider $f\in L_{2,0}(X,p_X)$ whose Fourier
transform is ``concentrated'' on $\{1_{n_m}:\:m\geq0\}$. We
have \beq\label{odo4} f(x)=\sum\limits_{m=1}^\infty
a_{n_m}1_{n_m}(x),\;\sum\limits_{m=1}^\infty|a_{n_m}|^2<+\infty.\eeq

{\bf A) Small divisors.} Assume that $f$ satisfies~(\ref{odo4})
and suppose that \beq\label{odo5}
f(x)=g(x)-g(x+\hat 1)\;\;\mbox{for $p_X$-a.e.}\;x\in X.\eeq Suppose
moreover that $g\in L_2(X,p_X)$. Hence
$g(x)=\sum\limits_{\chi\in\widehat{X}}b_{\chi}\chi(x)$ and by comparison
of Fourier coefficients on both sides in~(\ref{odo4}) we obtain
$$
b_\chi=0\;\mbox{whenever $\chi\neq 1_{n_m}$ and
$a_{n_m}=b_{n_m}(1-1_{n_m}(\hat 1))$ for $m\geq1$}.$$
Using~(\ref{odo3}) we obtain that
$b_{n_m}=\frac{a_{n_m}}{1-\vep_{n_m}}$, so
$$
|b_{n_m}|^2=\frac{|a_{n_m}|^2}{|1-\vep_{n_m}|^2}=n_m^2|
a_{n_m}|^2\;\;\mbox{for $m\geq1$}.$$ We have proved the following
\beq\label{odo6} \mbox{(\ref{odo5}) has an $L_2$-solution if and
only if $(n_ma_{n_m})_m\in l_2$.}\eeq

{\bf B) Estimate of $L_2$-norms for the cocycle.} Assume that
$n\in\N$ then
$$
f^{(n)}(x)=\sum\limits_{m=1}^\infty a_{n_m}(1+1_{n_m}(\hat 1)+\ldots+
1_{n_m}((n-1)\hat 1))1_{n_m}(x)
$$$$=\sum\limits_{m=1}^\infty a_{n_m}\left(\sum_{j=0}^{n-1}
\vep_{n_m}^j\right)1_{n_m}(x).$$ Fix $n=n_{m_0}$. We then have
$$f^{(n_{m_0})}(x)=\sum_{m=1}^{m_0} a_{n_m}\left(
\sum\limits_{j=0}^{n_{m_0}-1} \vep_{n_m}^j\right)1_{n_m}(x)+
\sum\limits_{m=m_0+1}^\infty a_{n_m}\left(\sum_{j=0}^{n_{m_0}-1}
\vep_{n_m}^j\right)1_{n_m}(x)$$$$ =\sum\limits_{m=m_0+1}^\infty
a_{n_m}\frac{1-\vep_{n_m}^{n_{m_0}}}{1-\vep_{n_m}}1_{n_m}(x).$$
Since $|1-\vep_{n_m}|=1/n_m$ and $|1-\vep^{n_{m_0}}_{n_m}|\leq
n_{m_0}/n_t$, \beq\label{odo7}
\|f^{(n_{m_0})}\|^2_{L_2(X,p_X)}\leq
n_{m_0}^2\sum\limits_{m=m_0+1}^\infty |a_{n_m}|^2.\eeq

{\bf C) Sidon sets and a ``good'' function.} According to
\cite{Ru} (see Example 5.7.6 therein) every infinite subset of a
discrete group contains an infinite Sidon set. Hence we can choose
a subsequence $(n_{m_k})$ of $(n_m)$ so that \beq\label{odo8}
\mbox{$\{1_{n_{m_k}}:\:k\geq1\}$ is a Sidon subset of $\widehat
X$}.\eeq We set \beq\label{odo9} f(x)=\sum\limits_{k=1}^\infty
\frac1{\sqrt kn_{m_k}}1_{n_{m_k}}(x).\eeq Suppose now
that~(\ref{odo5}) has a measurable solution $g$ (we should
consider $f$ real valued, so in fact we should consider
$f+\overline{f}$ below). In view of~(\ref{odo8}) and~\cite{Kr},
$g\in L_2(X,p_X)$. But
$$\sum\limits_{m=1}^\infty|n_m a_{n_m}|^2=\sum\limits_{k\geq1}\frac1k,$$
so by~(\ref{odo6}), we cannot obtain an $L_2$-solution. This
means that $f$ is not a measurable coboundary. According
to~(\ref{odo7}), the definition of $f$ and~(\ref{odo1}) for each
$s\geq1$
$$
\|f^{(n_{s})}\|^2_{L_2(X,p_X)}\leq n_s^2\sum\limits_{m=s+1}^\infty
|a_{n_m}|^2= n_s^2\sum\limits_{k\geq1:\:n_{m_k}\geq n_{s+1}}^\infty
|a_{n_{m_k}}|^2=n_s^2\sum\limits_{k=k_s}^\infty |a_{n_{m_k}}|^2
$$$$
\leq \frac{n_s^2}{k_s}\sum\limits_{j=s+1}^\infty
\frac1{n_{j}^2}\leq\frac C{k_s}\to0$$ as clearly $k_s\to\infty$
when $s\to\infty$.

Using our general method we hence proved the following.

\begin{prop}\label{intlac} Assume that $(n_m)$ is an increasing
sequence of integers with $n_{m+1}/n_m$ being an integer at
least~2. Then there exists a weakly mixing transformation $R$ such
that $R^{n_m}\to Id$.\end{prop}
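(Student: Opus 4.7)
The plan is to apply the cocycle machinery of Section~\ref{seccocycle} directly to the $(n_m)$-odometer $T$ on $X = \prod_{m\geq 1}\{0,\ldots,\rho_m-1\}$ (with $\rho_m = n_{m+1}/n_m$, Haar measure $p_X$, and $Tx=x+\hat 1$) that was just set up before the proposition. The Rokhlin tower $\{T^jD_0^{n_m}\}_{j=0}^{n_m-1}$ exhausts $X$ and $T^{n_m}$ acts on it as a cyclic permutation of subtowers, so $T$ has discrete spectrum and $(n_m)$ is already a rigidity sequence for $T$. It therefore suffices to exhibit a measurable $f : X \to \mathbb{R}$ which (i) is not an additive quasi-coboundary over $T$, and (ii) satisfies $f^{(n_m)} \to 0$ in measure. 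Indeed, Proposition~\ref{ll1} then makes $(n_m)$ a rigidity sequence for the $\mathbb{R}$-extension $T_{f,\tau}$, Proposition~\ref{He} forces $U_{T_f}$ to have continuous spectrum, and finally the GMC applied to the maximal spectral type of $U_{T_f}$ (or, equivalently, the Poisson suspension $\widetilde T_f$) produces the desired weakly mixing $R$ with $R^{n_m}\to Id$.

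To construct $f$, first use the standard fact that every infinite subset of a discrete abelian group contains an infinite Sidon subset (see~\cite{Ru}, Example 5.7.6) to pick a subsequence $(n_{m_k})$ of $(n_m)$ for which $\{1_{n_{m_k}} : k\geq 1\}$ is a Sidon subset of $\widehat X$. Then set
\[
f(x) = \sum_{k=1}^\infty \frac{1}{\sqrt{k}\, n_{m_k}}\, 1_{n_{m_k}}(x),
\]
replacing $f$ by $f+\overline{f}$ if a real-valued cocycle is wanted; the character set $\{1_{n_{m_k}}^{\pm 1}\}$ remains Sidon, so the arguments below go through unchanged.

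The main obstacle is (i). Suppose $f = g - g\circ T$ for some measurable $g$. The key input is the Sidon extension of Theorem~\ref{MH} recorded in Remark~\ref{wazna}(iii): since the Fourier transform of $f$ is concentrated on a Sidon set, any measurable solution $g$ must lie in $L_2(X,p_X)$. Expanding $g$ in a Fourier series and matching coefficients as in Part A preceding the proposition, the relation $a_{n_{m_k}} = b_{n_{m_k}}(1 - \varepsilon_{n_{m_k}})$ together with $|1-\varepsilon_{n_{m_k}}| = 1/n_{m_k}$ forces $|b_{n_{m_k}}| = n_{m_k}|a_{n_{m_k}}| = 1/\sqrt{k}$, which is not square-summable, contradicting $g\in L_2$. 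The same argument applied to the Sidon set $\{1\}\cup\{1_{n_{m_k}}\}$ rules out the quasi-coboundary case $f = r + g - g\circ T$: one gets $g\in L_2$, matching the trivial character forces $r=0$, and we reduce to the coboundary case.

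Part (ii) is a direct computation exactly as in Part B preceding the proposition. The terms with $m\leq m_0$ in the geometric sum for $f^{(n_{m_0})}$ telescope to zero, and $|1-\varepsilon_{n_m}^{n_{m_0}}| \leq n_{m_0}/n_m$ on the tail gives
\[
\|f^{(n_{s})}\|_{L_2}^2 \leq n_s^2 \sum_{k\geq k_s} |a_{n_{m_k}}|^2 \leq \frac{n_s^2}{k_s}\sum_{j\geq s+1}\frac{1}{n_j^2} \leq \frac{C}{k_s},
\]
where $k_s$ is the least $k$ with $n_{m_k}\geq n_{s+1}$ and $C$ is the constant from~(\ref{odo2}). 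Since $k_s\to\infty$, we get $f^{(n_s)}\to 0$ in $L_2$, hence in measure. Combining (i) and (ii) with Propositions~\ref{ll1} and~\ref{He} and the GMC as outlined gives the result; the genuinely delicate point is the Sidon-set application of Theorem~\ref{MH} in (i), everything else being a direct Fourier calculation on the odometer.
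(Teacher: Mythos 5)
Your proposal is correct and follows essentially the same route as the paper's own proof: the $(n_m)$-odometer, the Sidon subsequence from Rudin's Example 5.7.6, the function $f=\sum_k \frac{1}{\sqrt{k}\,n_{m_k}}1_{n_{m_k}}$, the coefficient-matching argument in Part A together with the $L_2$-regularity of measurable transfer functions over Sidon spectra, the Part B tail estimate, and finally Propositions~\ref{ll1} and~\ref{He} with the GMC or Poisson suspension. Your explicit handling of the quasi-coboundary case (forcing $r=0$ by adjoining the trivial character to the Sidon set) is a small point of added care that the paper leaves implicit, but it is not a different argument.
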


We will now discuss the problem of $IP$-rigidity along $(n_m)$.
First of all notice that $(n_m)$ is a sequence of $IP$ rigidity
for the $(n_m)$-odometer (indeed, $\sum\limits_{m=1}^\infty
|1_{n_m}(\hat 1)-1|<+\infty$).

Let us also notice that if $R$ is weakly mixing and $R^{n_m}\to
Id$ then by passing to a subsequence, we will get
$IP-R^{n_{m_k}}\to Id$. But if we then set $m_k=n_{m_k}$ then
$m_{k}$ divides $m_{k+1}$ and $(m_k)$ is a sequence of $IP$-rigidity
for $R$. It means that if the sequence $(n_{m+1}/n_m)$ is
unbounded then, at least in some cases, it is a sequence of
$IP$-rigidity for a weakly mixing transformation. On the other
hand we have already seen (Corollary~\ref{powers}) that when
$\rho_t=a$, $t\geq1$ then $IP$-rigidity does not take place. The
proposition below generalizes that result and shows that in the
bounded case an $IP$-rigidity is excluded.

\begin{prop}\label{ipodometer} Assume that
$n_{t+1}/n_t\in\N\setminus\{0,1\}$, $t\geq0$, and
$\sup_{t\geq0}n_{t+1}/n_t=:C<+\infty$. Then for any weakly mixing
transformation $R$ for which $R^{n_t}\to Id$, the sequence $(n_t)$
is not a sequence of $IP$-rigidity.
\end{prop}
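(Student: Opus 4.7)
The plan is to quantitatively extend the IP-obstruction sketched for the pure-power case $(a^m)$ in Corollary~\ref{powers}. Let $C_0 := \sup_t \rho_t$, which by hypothesis is a finite integer with $C_0 \ge 2$, and set $L := C_0 - 1$. Suppose, for contradiction, that $R$ is weakly mixing and IP-rigid along $(n_t)$. Fix a mean-zero $f \in L_2(X,p)$ with $\|f\|_2 = 1$ and choose $\epsilon > 0$ with $L\epsilon < 1$. By IP-rigidity there is $t_0$ such that $\|f \circ R^\sigma - f\|_2 < \epsilon$ for every IP sum $\sigma \in FS(n_t : t \ge t_0)$.

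The crucial combinatorial step is a greedy expansion. Writing $M_k := n_{t_0+k}/n_{t_0}$, one has $M_0 = 1$ and $M_{k+1}/M_k = \rho_{t_0+k} \in \{2,\ldots,C_0\}$; I claim every positive integer $m$ admits a representation $m = \sum_{k \ge 0} c_k M_k$ with $c_k \in \{0,1,\ldots,L\}$. Indeed, setting $k_0 = \max\{k : M_k \le m\}$ and $c_{k_0} = \lfloor m/M_{k_0}\rfloor$, the inequality $m < M_{k_0+1} \le C_0 M_{k_0}$ forces $c_{k_0} \le C_0 - 1 = L$, and the residue $m - c_{k_0} M_{k_0} < M_{k_0}$ is expanded inductively. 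Given such an expansion, for $i = 1,\ldots,L$ define $\sigma_i := \sum_{k:\, c_k \ge i} n_{t_0+k}$: each $\sigma_i$ is either $0$ or an element of $FS(n_t : t \ge t_0)$, and $\sum_{i=1}^{L} \sigma_i = \sum_k c_k n_{t_0+k} = n_{t_0} m$. Since $R$ preserves $p$, a telescoping triangle inequality gives $\|f \circ R^{n_{t_0} m} - f\|_2 \le \sum_{i=1}^{L} \|f \circ R^{\sigma_i} - f\|_2 < L \epsilon$, so $\text{Re}\langle f \circ R^{n_{t_0} m}, f\rangle \ge 1 - (L\epsilon)^2/2 > 1/2$ for every $m \ge 1$. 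On the other hand, weak mixing is preserved under positive powers, so $R^{n_{t_0}}$ is weakly mixing, and applied to the mean-zero $f$ (via Proposition~\ref{wmone} or Wiener's lemma) this yields $\tfrac{1}{M}\sum_{m=1}^{M} |\langle f \circ R^{n_{t_0} m}, f\rangle| \to 0$, contradicting the uniform lower bound $1/2$.

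The main obstacle, and what forces $L > 1$ when $C_0 > 2$, is that the IP set $FS(n_t : t \ge t_0)/n_{t_0}$ of multipliers of $n_{t_0}$ generally has density zero in $\N$, so one cannot apply weak mixing of $R^{n_{t_0}}$ directly along this IP set. The greedy/base-$C_0$ expansion saves the argument: the bound $\rho_t \le C_0$ forces every digit $c_k$ to lie in $\{0,1,\ldots,L\}$, so a uniformly bounded number $L = C_0 - 1$ of IP sums from $FS(n_t : t \ge t_0)$ already covers all of $\N_{\ge 1}$, at the cost of only enlarging the rigidity error by the factor $L$ in the triangle inequality.
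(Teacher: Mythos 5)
Your proof is correct, and its combinatorial heart coincides with the paper's: the step where you write $n_{t_0}m=\sum_k c_k n_{t_0+k}$ in the mixed-radix expansion with digits $c_k\le C_0-1$ and then split the exponent into the level sets $\sigma_i=\sum_{k:c_k\ge i}n_{t_0+k}$ is exactly the paper's decomposition of $R^{\sum_t a_tn_t}$ into a composition of at most $C$ maps of the form $R^{\sum_t\delta_tn_t}$, $\delta_t\in\{0,1\}$ (the paper's claim~(\ref{cla3})). Where you genuinely diverge is in how the contradiction is extracted. The paper introduces the $(n_t)$-odometer $T$, shows that every rigidity sequence of $T$ consists of integers whose lowest nonzero digit index tends to infinity, concludes that every rigidity sequence of the odometer would be a rigidity sequence of $R$, and then appeals to Corollary~\ref{AA2} (more precisely to its proof, since only one containment of rigidity sequences is available) to rule this out for a weakly mixing $R$. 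You instead observe that the bounded-digit decomposition gives the uniform estimate $\|f\circ R^{n_{t_0}m}-f\|_2<L\epsilon$ for \emph{every} $m\ge 1$, i.e.\ near-rigidity along the full arithmetic progression $n_{t_0}\mathbb{N}$, and then contradict weak mixing of $R^{n_{t_0}}$ directly via Wiener's lemma; this is precisely the mechanism of Corollary~\ref{powers}, quantified so as to absorb the factor $L=C_0-1$. Your route is more elementary and self-contained: it avoids the odometer, the spectral/isomorphism machinery behind Corollary~\ref{AA2}, and the slight awkwardness of invoking a "same rigidity sequences" statement when only one inclusion has been proved. What the paper's route buys in exchange is the intermediate statement of independent interest that, under the hypotheses, every rigidity sequence of the $(n_t)$-odometer would be inherited by $R$, which ties the result to their general theme that discrete-spectrum systems are determined by their rigidity sequences.
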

\begin{proof} Recall that $Tx=x+\hat 1$ where $X$ stands for the
$(n_t)$-odometer. Each natural number $r\geq1$ can be expressed in
a unique manner as
$$
r=\sum\limits_{t=0}^Na_tn_t,\; 0\leq a_t<\rho_t=n_{t+1}/n_t.$$ Assume
that $T^{r_m}\to Id_X$. Write
$$
r_m=\sum\limits_{t=0}^{N_m}a^{(m)}_tn_t,\; 0\leq a^{(m)}_t<\rho_t$$ and
set $k_m=\max\{t\geq0:\:a^{(m)}_0=\ldots=a^{(m)}_t=0\}$ (so
$r_m=\sum\limits_{t=k_m}^{N_m}a^{(m)}_tn_t$). We claim that
\beq\label{cla1} k_m\to\infty\;\;\mbox{whenever}\;m\to\infty.\eeq
Indeed, suppose that the claim does not hold. Then without loss of
generality we can assume that there exists $t_0\geq0$ such that
$k_m=t_0$ for all $m\geq1$, that is
$$
r_m=a^{(m)}_{t_0}n_{t_0}+\sum\limits_{t=t_0+1}^{N_m}a^{(m)}_tn_t
\;\;\mbox{with}\;1\leq a_{t_0}^{(m)}<\rho_{t_0}. $$ Consider the
tower $\{D^{n_{t_0+1}}_0,\ldots,D^{n_{t_0+1}}_{n_{t_0+1}-1}\}$ and let
$A=D^{n_{t_0+1}}_0$. Notice that for each $i\geq0$ and $j\geq1$ we
have $T^{in_{t_0+j}}A=A$. It follows that
$$
T^{r_m}A=T^{a^{(m)}_{t_0}n_{t_0}}\left(T^{
\sum\limits_{t=t_0+1}^{N_m}a^{(m)}_tn_t}(A)\right)$$$$=
T^{a^{(m)}_{t_0}n_{t_0}}(A)\in
\{D^{n_{t_0+1}}_1,\ldots,D^{n_{t_0+1}}_{n_{t_0+1}-1}\},$$ where the latter
follows from the fact that $1\leq a_{t_0}^{(m)}<\rho_{t_0}$. So
$p_X(T^{r_m}(A)\triangle A)=2/n_{t_0+1}$ and hence
$(r_m)$ is not a rigidity sequence for $T$, a contradiction.
Thus~(\ref{cla1}) has been shown.

Assume now that $R$ is a weakly mixing transformation for which
$(n_t)$ is its $IP$-rigidity sequence. It follows that we have a
convergence along the net \beq\label{cla2}
R^{\sum\limits_{t=k}^N\eta_tn_t}\to
Id\;\;\mbox{whenever}\;\;k\to\infty\;\mbox{and}\;\eta_k=1,
0\leq\eta_t\leq1,t\geq k+1.\eeq We claim that also
\beq\label{cla3} R^{\sum_{t=k}^Na_tn_t}\to
Id\;\;\mbox{whenever}\;\;k\to\infty\;\;\mbox{and}\;1\leq
a_k\leq\rho_k-1, 0\leq a_t\leq\rho_t-1, t\geq k+1.\eeq Indeed, we
write $R^{\sum\limits_{t=k}^Na_tn_t}$ as the composition of at most
$S_1\circ\ldots \circ S_D$ with $D\leq C=\max_t\rho_t$
automorphisms of the form $R^{\sum\limits_{t=k}^N\delta_tn_t}$ with
$\delta_t\in\{0,1\}$ (to define the first automorphism $S_1$ we
put $\delta_t=1$ as soon as $a_t\geq1$ and $\delta_t=0$ elsewhere,
for the second automorphism $S_2$ we put $\delta_t=1$ as soon as
$a_t\geq 2$ and $\delta_t=0$ elsewhere, etc.). Notice that for
each $i=1,\ldots,D$
$$
S_i=R^{\sum_{t=k_i}^Nn_t}\;\;\mbox{with}\;\;k_i\geq k.$$
Therefore, if we assume that in~(\ref{cla2}),
$\|R^{\sum_{t=k}^Nn_t}\|<\vep$ for $k\geq K$ then
$\|S_1\circ\ldots\circ S_D\|<D\vep$ (see Remark~\ref{metric1}) and
thus~(\ref{cla3}) follows.

Combining (\ref{cla3}) and (\ref{cla1}) we see that each rigidity
sequence for $T$ is also a rigidity sequence for $R$. This
however contradicts Corollary~\ref{AA2} (or rather to its
proof).
\end{proof}

{\bf Question.} If $(\rho_t)$ is bounded, but not always a whole
number, can it still $(n_t)$ be a sequence of $IP$-rigidity for some weakly
mixing transformation? How fast does $(\rho_t)$ have to grow for
$(n_t)$ to be a sequence of $IP$-rigidity for some weakly
mixing transformation?

\section{\bf Non-Recurrence} \label{nonrecurrence}

In the previous sections, we have seen that the characterization
of which sequences $(n_m)$ exhibit rigidity for some ergodic, or
more specifically weakly mixing dynamical system will, most
certainly be difficult. The only aspect that is totally clear at this time is that
these sequences must have density zero because their gaps tend to
infinity. Lacunary sequences are always candidates for
consideration in such a situation. However, we have seen in Remark~\ref{linformeg} d)
that there are lacunary sequences which cannot be rigidity sequences for
even ergodic transformations, let alone weakly mixing ones.

In a similar vein, we would like to characterize which increasing
sequences $(n_m)$ in $\mathbb Z^+$ are not recurrent for some
ergodic dynamical system i.e. $(n_m)$ has the property that for some
ergodic system $(X,\mathcal B,p,T)$ and some set $A$ of positive
measure, the sets $T^{n_m}A$ are disjoint from $A$ for all $m \ge
1$. So we are taking {\em recurrence along $(n_m)$} here to mean that $p(T^{n_m}A \cap A) > 0$
for some $m$.  A central unanswered question is the following
\medskip

\noindent {\bf Question:} Is it the case that any lacunary sequence is
a sequence of non-recurrence for a weakly mixing system?
\medskip

\begin{rem} \label{unions} It is not hard to see that any lacunary sequence
fails to be a recurrent sequence for
some ergodic dynamical system. Indeed, this happens even with
ergodic rotations of $\mathbb T$. See Pollington~\cite{Poll},
de Mathan~\cite{deM}, and Furstenberg~\cite{Furst}, p. 220. They show that for any lacunary
sequence $(n_m)$ there is some $\gamma\in \mathbb T$ of infinite
order, and some $\delta > 0$, such $|\gamma^{n_m} -1| \ge \delta$
for all $m \ge 1$. The arguments there also give information
about the size of the set of rotations that work for a given
lacunary sequence. The constructions in these articles are made
more difficult, as with a number of other results about lacunary
sequences, by not knowing the degree or nature of lacunarity. But
if one just wants some ergodic dynamical system to exhibit
non-recurrence, then the construction is easier. This was observed
by Furstenberg~\cite{Furst}. In short, his argument goes like
this. Suppose
$(n_k)$ is lacunary, say $\frac {n_{k+1}}{n_k} \ge \lambda > 1$
for all $k \ge 1$. Depending only on $\lambda$, we can choose $K$
so that the subsequences $(p_{m,j}:m\ge 1)$ given by $p_{m,j} = n_{j+Km},
j = 0,\ldots,K-1$ each have lacunary constant $\inf\limits_{m \ge
1} \frac {p_{m+1,j}}{p_{m,j}} \ge 5$. Then for each $j$, a
standard argument shows that there is a closed perfect set $C_j$
such that for all $\gamma \in C_j$, we have $|\gamma^{p_{m,j}}- 1|
\ge \frac 1{100}$ for all $m\ge 1$. We can choose
$(\gamma_0,\ldots,\gamma_{K-1})$ with each $\gamma_j \in C_j$ and
such that $\gamma_0,\ldots,\gamma_{K-1}$ are independent. Then we
would know that the transformation $T$ of the $K$-torus $\mathbb
T^K$ given by $T(\alpha_0,\ldots,\alpha_{K-1}) =
(\gamma_0\alpha_0,\ldots,\gamma_{K-1}\alpha_{K-1})$ is ergodic
Also, for a sufficiently small $\epsilon$, the arc $I$ of radius
$\epsilon$ around $1$ in $\mathbb T$ will give a set $C =
I\times\ldots\times I \subset T^K$ such that for all $n_k$, we have
$T^{n_k}C$ and $C$ disjoint. Indeed, each $n_k$ is some $p_{m,j}$
and so $T^{n_k}C$ and $C$ are disjoint because in the $j$-th
coordinate $T^{n_k}$ corresponds to the rotation
$\gamma_j^{p_{m,j}} I$ which is disjoint from $I$.
\end{rem}

The main idea in Remark~\ref{unions} that appears in Furstenberg
~\cite{Furst} gives us this basic principle.

\begin{prop} \label{wmunions} If a sequence $\mathbf n$ is a finite
union of sequences $\mathbf n_i,i=1,\ldots, I$, each of which is a
sequence of
non-recurrence for some weakly mixing transformation $T_i$, then
$\mathbf n$ is also a sequence of non-recurrence for a weakly mixing
transformation.
\end{prop}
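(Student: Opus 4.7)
The plan is to prove this by taking the Cartesian product of the transformations $T_i$, which works because (i) finite products of weakly mixing transformations remain weakly mixing, and (ii) non-recurrence on any one coordinate is enough to force non-recurrence on a product set.

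First I would set up the data: for each $i=1,\ldots,I$ let $T_i$ act on $(X_i,\mathcal B_i,p_i)$ and choose $A_i\in\mathcal B_i$ with $p_i(A_i)>0$ such that $p_i(T_i^n A_i\cap A_i)=0$ for every $n\in\mathbf n_i$. Form the product system $T=T_1\times\cdots\times T_I$ on $(X,\mathcal B,p)=\prod_i(X_i,\mathcal B_i,p_i)$ and the product set $A=A_1\times\cdots\times A_I$, which has $p(A)=\prod_i p_i(A_i)>0$. Given $n\in\mathbf n$, fix an index $i$ with $n\in\mathbf n_i$; since $T^n A\cap A\subseteq X_1\times\cdots\times(T_i^n A_i\cap A_i)\times\cdots\times X_I$, we have $p(T^n A\cap A)\le p_i(T_i^n A_i\cap A_i)=0$. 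Thus $\mathbf n$ is a non-recurrence sequence for $T$ with witness $A$.

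It remains to verify that $T$ is weakly mixing. I would use the standard criterion that a transformation $R$ is weakly mixing iff $R\times R$ is ergodic (equivalently, iff the maximal spectral type on $L_{2,0}$ is continuous). By an elementary induction it suffices to treat two factors: if $T_1$ and $T_2$ are weakly mixing then $(T_1\times T_2)\times(T_1\times T_2)$ is isomorphic to $(T_1\times T_1)\times(T_2\times T_2)$. Each of $T_i\times T_i$ is ergodic since $T_i$ is weakly mixing, and the product of an ergodic transformation with a weakly mixing one is ergodic; hence $(T_1\times T_1)\times(T_2\times T_2)$ is ergodic, so $T_1\times T_2$ is weakly mixing. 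Iterating this argument $I-1$ times gives that $T=T_1\times\cdots\times T_I$ is weakly mixing.

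There is no real obstacle here; the only delicate step is the spectral verification that the product of weakly mixing systems stays weakly mixing, but this is classical (it is also immediate from the fact that the convolution of continuous measures is continuous, applied to the maximal spectral types on $L_{2,0}$). Everything else is a direct coordinate-wise argument that mirrors the torus construction already described in Remark~\ref{unions}.
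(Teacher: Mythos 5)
Your argument is correct and is essentially the same as the paper's: form the product $T_1\times\cdots\times T_I$ with the product witness set, and observe that non-recurrence in a single coordinate forces non-recurrence of the product set. The paper simply takes the weak mixing of the finite product as known, whereas you spell out the standard verification; there is no substantive difference.
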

\begin{proof} Write $\mathbf n_i = (\mathbf n_i(j):j\ge 1)$.
We take $T$ = $T_1\times\ldots\times T_I$. This is a weakly
mixing transformation since each $T_i$ is weakly mixing. There is
a set $C_i$ such that $T_i^{\mathbf n_i(j)}C_i$ is disjoint from $C_i$
for all $j$. So $C = C_1\times\ldots\times C_i$ has the property
that $T^{\mathbf n_i(j)}C$ is disjoint from $C$ for all $i$ and all
$j$.  That is, the sequence $T_1\times\ldots\times T_I$ is not recurrent
along $\mathbf n$ for the set $C$.
\end{proof}

\begin{rem} \label{different} a) This property of non-recurrent sequences
does not hold for rigidity sequences. For example, consider $\mathbf n_1 = (2^{m^2})$ and
$\mathbf n_2 = (2^{m^2}+1)$.  By Proposition~\ref{fastworks} below, these
are both sequences of non-recurrence for a weakly mixing transformation
and hence by the above their union is too.  These two sequences are
also rigidity sequences for ergodic rotations and weakly
mixing transformations
by Proposition~\ref{ratiogrows}.
But  the union of these two sequences is not a rigidity
sequence for an ergodic transformation
because a rigidity sequence for an ergodic transformation cannot have infinitely many terms differing
by $1$.
\medskip

\noindent b) Here is a related example that shows how rigidity sequences
and non-recurrent sequences behave differently.  The sequence $A = (p: p\,\,\text {prime})$
is not recurrent but the sequence  $A = (p - 1: p\,\,\text {prime})$ is recurrent.  See
S\'ark\"ozy~\cite{sark} and apply the Furstenberg Correspondence Principle.  But we see from either
Proposition~\ref{unifdist} or Proposition~\ref{sumset}
that neither sequence is a rigidity sequence for an ergodic transformation.
\end{rem}

We see that the non-recurrence phenomenon is both pervasive
and not, depending on how one chooses the quantifiers. As usual,
the measure-preserving transformations of a non-atomic separable
probability space $(X,\mathcal B, p)$ can be given the weak topology,
and become a complete pseudo-metric group $\mathcal G$ in this
topology. By a generic transformation, we mean an element in
a set that contains some dense $G_\delta$ set in $\mathcal G$.

\begin{prop} \label{revise} The generic transformation is both weakly
mixing and rigid,
and moreover is not recurrent along some increasing sequence
in $\mathbb Z^+$.
\end{prop}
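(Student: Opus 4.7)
The plan is to separate two ingredients: a genericity statement for the pair (weakly mixing, rigid), and a per-$T$ construction of a non-recurrence sequence from a rigidity sequence.

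For the genericity part, I apply Proposition~\ref{together} with $(n_m)=(m)$: the set $\mathcal B_{(m)}$ of $S\in Aut\xbm$ admitting both a rigidity subsequence and a mixing subsequence of $(m)$ is a dense $G_\delta$. Every $S\in \mathcal B_{(m)}$ is rigid by definition; moreover, having any mixing subsequence forces weak mixing (any non-trivial eigenfunction of $U_S$ would contradict $U_S^{n_{m_k}}\to 0$ on $L_{2,0}$), so $\mathcal B_{(m)}\subset\mathcal W$. Alternatively, intersect the dense $G_\delta$ from Proposition~\ref{folklore1} (applied to $(m)$) with the dense $G_\delta$ set $\mathcal W$ of Remark~\ref{wmisgeneric}. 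Either way, we identify a dense $G_\delta$ set $\mathcal H\subset Aut\xbm$ consisting of transformations that are simultaneously weakly mixing and rigid; the first two assertions of the proposition are then contained in $\mathcal H$.

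Next I fix any $T\in\mathcal H$ and produce a sequence, depending on $T$, along which $T$ is non-recurrent. Since $T$ is ergodic and $(X,\mathcal B,p)$ is non-atomic, Rokhlin's lemma with tower height $2$ yields $A\in\mathcal B$ of positive measure with $A\cap T^{-1}A=\emptyset$, whence also $TA\cap A=\emptyset$. Choose any rigidity sequence $(N_k)$ for $T$. Since $p(T^{N_k}A\triangle A)\to 0$, I extract a subsequence $(n_m)$ with $n_m\ge 2$ and
\[\sum_{m=1}^\infty p(T^{n_m}A\triangle A)<\tfrac{1}{200}p(A).\]
Because $T$ preserves $p$, the same bound holds for $T^{-n_m}A\triangle A$. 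Define
\[B=A\cap\bigcap_{m\ge 1} T^{n_m}A\cap\bigcap_{m\ge 1} T^{-n_m}A,\qquad C=TB.\]
Then $p(B)\ge p(A)-2\cdot\tfrac{1}{200}p(A)>0$, hence $p(C)=p(B)>0$, and $C\subset TA$ is disjoint from $A$. For any $c\in C$, write $c=Tb$ with $b\in B$; since $b\in T^{-n_m}A$ we get $T^{n_m-1}c=T^{n_m}b\in A$, while $C\cap A=\emptyset$, so $T^{n_m-1}C\cap C=\emptyset$ for every $m$. Thus the increasing sequence $(n_m-1)\subset\mathbb Z^+$ is a non-recurrence sequence for $T$, completing the proof.

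The construction is essentially a measure-theoretic bookkeeping exercise, so I do not anticipate a genuine obstacle. The one point requiring care is the order of quantifiers: the Rokhlin set $A$ must be chosen before the rigidity subsequence is extracted, because $p(T^{N_k}A\triangle A)\to 0$ is available only for a fixed $A$; once $A$ is fixed, summability of the errors is arranged by picking $(n_m)$ sufficiently sparsely within $(N_k)$. The same scheme, with $(n_m-1)$ replaced by $(n_m+k)$ for other small non-zero $k$ and with the Rokhlin tower of height $2$ replaced by one of height $2K+1$, is what powers the extension to shifts by every $k$ with $0<|k|\le K$ sketched in the introduction.
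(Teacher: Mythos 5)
Your proposal is correct and follows essentially the same route as the paper: genericity of the pair (weakly mixing, rigid) via Proposition~\ref{together}/Remark~\ref{wmandrigid}, then, for a fixed such $T$, a set $A$ disjoint from $TA$, a sparsified rigidity subsequence making $\sum_m p(T^{n_m}A\triangle A)$ small relative to $p(A)$, and a positive-measure subset $C\subset TA$ with $T^{n_m-1}C\subset A$, hence $T^{n_m-1}C\cap C=\emptyset$. (The factor $\bigcap_m T^{n_m}A$ in your definition of $B$ is superfluous but harmless.)
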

\begin{proof} Remark~\ref{wmandrigid} pointed out that
the generic transformation is
weakly mixing and rigid. Fix such a transformation $T$ and
some $(n_m)$ such that $\|f\circ
T^{n_m} - f\|_2 \to 0$ for all $f \in L_2(X,p)$. Then it
follows that for any set $A$, $\lim\limits_{m \to \infty}
p(T^{n_m}A \Delta A) = 0$. Since $T$ is ergodic, we can choose a
set $A$ with $p(A) > 0$ and $T A$ and $A$ disjoint. Let $B = T
A$. We have $B$ and $A$ disjoint, and $\lim\limits_{m\to \infty}
p(T^{n_m-1}B \Delta A) = 0$. Hence, we can pass to a subsequence
$(m_s)$ so that $\sum\limits_{s=1}^\infty p(T^{n_{m_s}-1}B \Delta
A) \le \frac 1{100} p(A)= \frac 1{100}p(B)$. So
$$C = B\backslash
\bigcup\limits_{s=1}^\infty T^{-(n_{m_s}-1)}(T^{n_{m_s}-1}B\Delta
A)$$ will have $p(C) > 0$. Also, $C \subset B=TA$ is disjoint
from $A$. But at the same time
\[T^{n_{m_s}-1}C\subset T^{n_{m_s}-1}B\setminus\left(T^{n_{m_s}-1}B\triangle
A\right)=T^{n_{m_s}-1}B\cap A\subset A\]
for all $s\geq 1$.

Thus, the generic transformation is weakly mixing and rigid, and
additionally for some
sequence of powers $T^{n_{m_s}}$ and some set $C$ of positive measure,
we have non-recurrence because $C$ is disjoint from all
$T^{n_{m_s}}C$.
\end{proof}

\begin{rem}\label{notrecshift} a) This argument can easily be
used to show that if $T$ is rigid along $(n_m)$, then for any
$K$, by passing to a subsequence $(n_{m_s})$, we can have for
each $k \not= 0, |k| \le K$, the transformation $T$ is non-recurrent
along $(n_{m_s} + k)$ for some set $C_k, p(C_k) > 0$.  By taking $S$ to be
a product of $T$ with itself $2K$ times, we can arrange that the
weakly mixing transformation $S$ is rigid along $(n_m)$ and
there is one set $C, p(C) > 0$, such that $S$ is non-recurrent for $C$
along each of the sequences $(n_{m_s} + k)$ with $0 < |k| \le K$.
\medskip

\noindent b) One cannot restrict the sequence along which the non-recurrence
is to occur.  It is not hard to see that the class of transformations that is
non-recurrent for some set along a fixed sequence is a meager set of transformations.
\end{rem}

We do have some specific, interesting examples of the failure of
recurrence for a weakly mixing dynamical system.  See Chacon~\cite{Chacon}
for the construction of the rank one Chacon transformation.  The important point
here is that the non-recurrence occurs along a lacunary sequence $(n_m)$ with
ratios $n_{m+1}/n_m$ bounded. See Remark~\ref{moreChacon} a) for more information
about this example.

\begin{prop}\label{Chacon}
The Chacon transformation is not recurrent for sequence $(n_m) = (\frac
{3^{m+1} -1}2 -1)$.
\end{prop}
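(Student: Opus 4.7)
The Chacon transformation $T$ is built by cutting-and-stacking with initial tower height $h_0 = 1$ and inductive rule $h_{m+1} = 3h_m + 1$: at each stage one cuts the base $B_m$ of the stage-$m$ tower $\tau_m$ into three equal pieces $B_m^{[1]}, B_m^{[2]}, B_m^{[3]}$, sets $B_{m+1} := B_m^{[1]}$, places a single new spacer $I_{m+1}$ of measure $p(B_{m+1})$ atop the middle column, and stacks to form $\tau_{m+1}$. Solving the recurrence gives $h_m = (3^{m+1}-1)/2$, so the sequence in question is precisely $n_m = h_m - 1$. My aim is to exhibit a set $C$ of positive measure with $p(T^{h_m-1} C \cap C) = 0$ for every $m \geq 1$.

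The natural candidate for $C$ is the spacer $I_{m_0+1}$ for a fixed stage $m_0 \geq 1$, or if needed a suitable subset of it. In $\tau_{m_0+1}$ the spacer $I_{m_0+1}$ sits at absolute height $2h_{m_0}$, with $T I_{m_0+1} = B_{m_0}^{[3]}$ --- the bottom of column~3 --- immediately above it at height $2h_{m_0}+1$. The non-recurrence condition $p(T^{h_m-1} I_{m_0+1} \cap I_{m_0+1}) = 0$ is thus equivalent to $p(T^{h_m}(I_{m_0+1}) \cap B_{m_0}^{[3]}) = 0$. For each $m \geq m_0+1$, the spacer $I_{m_0+1}$ decomposes inside $\tau_m$ into $3^{m-m_0-1}$ sub-pieces indexed by a multi-index $\alpha \in \{1,2,3\}^{m-m_0-1}$ recording the column chosen at each intermediate cut; each sub-piece has a computable absolute height in $\tau_m$, given by an explicit sum of the $h_j$'s for $m_0 < j < m$ together with $+1$ spacer adjustments. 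Applying $T^{h_m}$ shifts each sub-piece up by $h_m$, and using $h_{m+1} = 3h_m + 1$, I would show by induction on $m$ that inside the embedded $\tau_{m_0+1}$ substructure the displaced image always corresponds either to ``top of column~2'' (at height $2h_{m_0}-1$ of $\tau_{m_0+1}$) or to some other level different from $B_{m_0}^{[3]}$; in particular never to $B_{m_0}^{[3]}$ itself. The finitely many small $m \leq m_0$ are handled by passing to a subset $C \subset I_{m_0+1}$ obtained by excising finitely many bad subsets, which preserves positive measure provided $m_0$ was chosen sufficiently large at the outset.

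The main obstacle is the combinatorial verification that the displaced height never lands on $B_{m_0}^{[3]}$. I plan to parametrize each sub-piece by the multi-index $\alpha$, derive a closed-form expression for its absolute height as a sum $\sum_j a_j(\alpha) h_{j} + \sigma(\alpha)$ in terms of the $h_j$'s with small spacer correction $\sigma(\alpha)$, and then compare the shifted height $h_m + (\text{sum})$ with the decomposition of $B_{m_0}^{[3]}$'s heights in $\tau_m$. The crucial arithmetic input is the identity $h_{m+1} - h_m = 2h_m + 1$ together with the fact that $B_{m_0}^{[3]}$ sits at the specific height $2h_{m_0}+1$ (one $T$-step above the spacer); this one-step gap is exactly what prevents the shifted images from ever hitting $B_{m_0}^{[3]}$. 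A conceptually cleaner alternative would exploit partial rigidity of the Chacon map along $(h_m)$ to argue that $T^{h_m-1}$ acts as near-$T^{-1}$ on the bulk of $I_{m_0+1}$ (shifting it to the top of column~2, which is disjoint from $B_{m_0}^{[3]}$), and then eliminate the remaining ``column-3'' error by iterating the refinement or by a Borel--Cantelli-type argument.
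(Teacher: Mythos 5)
Your overall strategy---fix a single spacer level and use the self-similar tower structure to show that its images under $T^{n_m}$ miss it---is exactly the strategy of the paper's proof, which encodes the towers symbolically via the recursion $B_{m+1}=B_m\,B_m\,1\,B_m$ and tracks the positions of the first added spacer. The problem is that everything after your (correct) reduction to $p\bigl(T^{h_m}I_{m_0+1}\cap B_{m_0}^{[3]}\bigr)=0$ is a plan rather than a proof: the assertion that the displaced sub-pieces ``never land on $B_{m_0}^{[3]}$'' \emph{is} the content of the proposition, and you defer it (``I would show by induction\dots'', ``I plan to parametrize\dots''). This verification is not a formality, and it does not follow from the ``one-step gap'' heuristic. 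For instance, in $\tau_2$ the stage-$2$ spacer occupies level $8$ while a copy of the stage-$1$ spacer occupies level $11$, so $T^{n_1}$ carries one spacer generation exactly onto a piece of another; whether a given spacer level returns to a prescribed nearby level under $T^{n_m}$ depends on the detailed arithmetic of the positions. Concretely, the positions $P_k$ of the pieces of your chosen spacer in $\tau_k$ satisfy $P_{k+1}=P_k\cup(h_k+P_k)\cup(2h_k+1+P_k)$, and what must be proved is that the difference set $P_k-P_k$ avoids $n_m+1=h_m$ (equivalently, that $P_k-P_k$ avoids $n_m$ after the shift by one level); since differences between distinct copies have the form $h_k+d$, $h_k+1+d$, $2h_k+1+d$ with $d\in P_k-P_k$, the induction hypothesis has to be strengthened to exclude several families of coincidences at once. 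Until that is carried out, the proof is incomplete.

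Your proposed shortcuts do not close the gap. Partial rigidity gives $T^{h_m}\to\frac12(Id+T^{-1})$ only in the weak operator topology, hence only $p(T^{n_m}C\cap C)\to 0$, whereas non-recurrence requires this measure to be exactly zero for \emph{every} $m$. A Borel--Cantelli/excision argument in the style of Proposition~\ref{revise} would need summability of $p(T^{n_m}I_{m_0+1}\cap I_{m_0+1})$, which weak convergence does not supply and which would in any case have to be extracted from the very tower combinatorics you are postponing. (One simplification you miss: no excision is needed for $m\le m_0$, since then $2h_{m_0}+n_m<h_{m_0+1}$, so $T^{n_m}$ maps the level $I_{m_0+1}$ of $\tau_{m_0+1}$ entirely onto a different level of the same tower; the only nontrivial cases are $m>m_0$, and those are precisely the ones your induction leaves unproved.)
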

\begin{proof} Let $n_m = \frac
{3^{m+1} -1}2 -1$. The transformation $T$ that we are using here is
constructed
inductively as follows. Take the current stack (single tower)
$T_m$ of interval and cut it in thirds $T_{m,1}$, $T_{m,2}$, and
$T_{m,3}$. Add a spacer $s$ of the size of levels above the
middle third $T_{m,2}$, and let the new stack $T_{m+1}$ consists of
$T_{m,1}$, $T_{m,2}$, $s$, and $T_{m,3}$ in that order from bottom
to top. We take $T_0$ to be $[0,1)$ to start this construction.
So it is easy to see that the height $h_m$ of our $m$-th tower
is $\frac {3^{m+1} -1}2$. To see the failure of recurrence, use the standard
symbolic dynamics for $T$ i.e. assign the symbol $1$ to all the
spacer levels and $0$ to the rest of the levels. Then let $B_m$
be the name of length $h_m$ of a point in the base of the $m$-th
tower $T_m$. Then $B_0 = 0$, $B_1 = 0010$, and $B_{m+1} = B_m\
B_m\ 1\ B_m$ in general. It is a routine check that when one
shifts $B_k$ by $n_m = h_m-1$ for $k$ larger than $m$, and compares this
with $B_k$, then they have no common occurrences of $1$. So if $A$
is the first added spacer level, then we have $T^{n_m}A$ and $A$
disjoint for all $m$.
\end{proof}

\begin{rem} \label{moreChacon} a)  The Chacon transformation is mildly mixing so it
cannot have rigidity sequences at all.  But there is partial
rigidity in that $T^{h_m} \to \frac 12(Id +T^{-1})$
in the strong operator topology.  So $(h_m)$ and  $(h_m +1)$ are
recurrent sequences for $T$ in a strong sense, while the above is
showing that $(h_m -1)$ is not recurrent for $T$.
\smallskip

\noindent b)The obvious question here is what other sequences, besides ones
like the one above for the Chacon transformation, can be show
to be sequences of non-recurrence for weakly mixing transformations
via classical cutting and stacking constructions?
\medskip

\noindent b) Friedman and King~\cite{FK} consider a class of weakly
mixing, but not strongly mixing, transformations constructed by
Chacon; they prove that these, unlike the Chacon transformation
above, are lightly mixing (see \cite{FK} for the definition)
and so there is always recurrence for
these transformations along any increasing sequence. Hence, these
transformations form a meager set by the category result in Proposition~\ref{revise}.
\end{rem}

If we have a sufficient growth rate assumed for $(n_m)$, we can
give a construction of a weakly mixing transformation which
exhibits non-recurrence along the sequence. The argument here
starts like the constructions in Section~\ref{diophantine}

\begin{prop}
\label{fastworks}
Suppose we have a sequence $(n_m)$ such that
$\sum\limits_{k=1}^\infty n_m/n_{m+1} < \infty$. Then there is a
weakly mixing transformation $T$ for which $(n_m)$ is an IP
rigidity sequence and such that $T$ is not recurrent for
$(n_m-1)$.
\end{prop}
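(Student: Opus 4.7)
The plan is to construct $T$ via the Gaussian measure space construction (GMC) from a continuous measure $\nu$ on $\mathbb T$ whose support admits uniform control of $\sum_m\|n_mx\|$, and then to adapt the argument of Proposition~\ref{revise}, exploiting the enhanced rate of convergence coming from $\sum n_m/n_{m+1}<\infty$ to avoid passing to a subsequence of $(n_m-1)$.

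First I would invoke the construction of Erd\H{o}s and Taylor~\cite{ET} (cf.\ Remark~\ref{IPinfo}): under the hypothesis, for any prescribed $\vep>0$ one may produce a compact perfect set $K_\vep\subset[0,1)$ on which $\sum_m\|n_mx\|\le\vep$ holds uniformly. Concretely, one works with $x$ of the form $\sum_{k\ge M_0}\delta_k/n_{k+1}$ with $\delta_k\in\{0,1\}$; the tails of $\sum n_m/n_{m+1}$ make $\sum_m\|n_mx\|$ uniformly small on this Cantor-like set once $M_0$ is large. Then I would pick any continuous Borel probability measure $\nu_0$ supported on $K_\vep$, symmetrize $\nu=\nu_0\ast\nu_0^*$, and apply the GMC to produce a weakly mixing dynamical system $(X,\mathcal B,p,T)$. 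By Proposition~\ref{ABCD} together with Proposition~\ref{spectralIP}, the condition $\sum\|n_mx\|\le 2\vep$ on $\operatorname{supp}\nu$ forces $\gamma^\sigma\to 1$ in measure with respect to $\nu$ as $\sigma\to\infty$ in $FS(n_m)$, so $T$ is IP-rigid along $(n_m)$.

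The quantitative ingredient I need is summable rigidity. Using $1-\cos(2\pi n_mx)\le\pi^2\|n_mx\|$ together with the control on $\operatorname{supp}\nu^T$ afforded by the exponential structure $\nu^T\ll\sum_k\nu^{\ast k}/k!$ of the GMC, one obtains a chaos-wise estimate of the form
\[\sum_{m=1}^\infty\|g_k\circ T^{n_m}-g_k\|_2^2\le 2\pi^2k\vep\,\|g_k\|_2^2\]
for any $g_k$ in the $k$-th Wiener chaos. By ergodicity and the Rokhlin Lemma I would then choose $A\in\mathcal B$ with $0<p(A)<1/2$ and $TA\cap A=\emptyset$, arranged so that $1_A-p(A)$ lies in a finite direct sum of low Hermite chaoses; summing the above bound over $k$ and taking $\vep$ small enough yields $\sum_m p(T^{n_m}A\triangle A)\le\tfrac1{100}p(A)$.

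Finally, setting $B=TA$ (so $p(T^{n_m-1}B\triangle A)=p(T^{n_m}A\triangle A)$), I would define
\[C=B\setminus\bigcup_{m=1}^\infty T^{-(n_m-1)}\bigl(T^{n_m-1}B\triangle A\bigr).\]
Then $p(C)\ge\tfrac{99}{100}p(A)>0$, $C\subset B$ is disjoint from $A$, and for every $m$,
\[T^{n_m-1}C\subset T^{n_m-1}B\setminus(T^{n_m-1}B\triangle A)\subset A,\]
so $T^{n_m-1}C\cap C=\emptyset$. The hard part will be the transfer step in the third paragraph: the naive bivariate Gaussian estimate for a threshold set $\{f>c,\,f\circ T\le c\}$ of the Gaussian variable $f$ only delivers $p(T^{n_m}A\triangle A)=O(\sqrt{1-\widehat\nu(n_m)})$, which need not be summable. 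I expect the resolution to be choosing $A$ inside a finite-dimensional Hermite chaos — for instance a first-chaos partition built from the one-step joint distribution of $f$ and $f\circ T$, which still secures $TA\cap A=\emptyset$ — so that the Hermite chaos bound above transfers directly to $1_A$ with a summable total dominated by $C\vep\,p(A)$.
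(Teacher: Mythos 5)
Your overall architecture coincides with the paper's: build a Cantor-type compact set on which the quantities $\|n_mx\|$ are summably small, put a continuous symmetric measure on it, apply the GMC to get a weakly mixing $T$ that is IP-rigid along $(n_m)$ via Erd\H{o}s--Taylor and Proposition~\ref{OKforIP}, and then run the disjointification of Proposition~\ref{revise} without passing to a subsequence. You have also correctly located the one genuinely delicate point: converting the summable estimate $\sum_m\|f\circ T^{n_m}-f\|_2^2<\infty$ for the first-chaos Gaussian variable $f$ into a summable estimate $\sum_m p(T^{n_m}A\triangle A)<\infty$ for an actual set $A$ of positive measure, since the naive computation for a threshold set of a Gaussian only yields $p(T^{n_m}A\triangle A)=O(\sqrt{1-\widehat\nu(n_m)})$.

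However, your proposed resolution of that point cannot work. A non-constant random variable lying in a finite direct sum of Wiener chaoses has a law with no atoms (this is a classical property of finite chaos sums), whereas $1_A-p(A)$ takes exactly two values; so no set $A$ with $0<p(A)<1$ can have $1_A-p(A)$ in a finite sum of Hermite chaoses, and the ``first-chaos partition'' you describe does not exist. The paper's fix is elementary and makes no reference to the chaos decomposition: for real-valued $f$, expanding $|f\circ T^n-f|^2$ with $f=f^+-f^-$ and discarding the non-negative cross terms $2(f^+\circ T^n)f^-+2(f^-\circ T^n)f^+$ gives $\|f^{\pm}\circ T^n-f^{\pm}\|_2^2\le\|f\circ T^n-f\|_2^2$; applying the same computation to $(f-L)^+$ with $L=1/2$ and noting that $|F\circ T^n-F|\ge L$ on the relevant parts of $T^nA\triangle A$, where $F=f1_{\{f\ge L\}}$ and $A=\{f\ge L\}$, one obtains $p(T^{n_m}A\triangle A)\le C\,\|f\circ T^{n_m}-f\|_2^2$, which is summable --- a bound linear rather than square-root in $1-\widehat\nu(n_m)$. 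Note also that one should not try to force $TA\cap A=\emptyset$ for this threshold set: since $0<p(A)<1$ and $T$ is ergodic, one instead picks $A_0\subset A$ of positive measure with $TA_0\cap A=\emptyset$ and sets $B=TA_0\setminus\bigcup_{m\ge M}T^{-(n_m-1)}(T^{n_m}A\triangle A)$ for $M$ large enough that $p(B)>0$; the finitely many remaining indices $m<M$ are then handled by shrinking $B$ inductively, using that every power of the weakly mixing $T$ is ergodic.
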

\begin{proof}
Choose a non-decreasing sequence of whole numbers $(h_m)$ with
$1/h_m \ge 10(n_m/n_{m+1})$ and $\sum\limits_{m=1}^\infty
1/h_m < \infty$. We construct a Cantor set $\mathcal C$ with constituent intervals
at each level that are arcs of size $1/(n_mh_m)$ around some of
the $n_m$-th roots of unity (determined as part of the induction).
Our conditions allow us to find in each such constituent interval
many points $j/n_{m+1}$ because $1/n_{m+1}$ is sufficiently
smaller than $1/n_mh_m$, and then select in these constituent
intervals new ones of length $1/(n_{m+1}h_{m+1})$ around some of
the $n_{m+1}$-th roots of unity for $m \ge M$. The resulting
Cantor set $\mathcal C$ has the property that for all points $x$
in the set $n_mx$ is within
$1/h_m$ of an integer
for $m \ge M$. Also, it follows that if we take a continuous, positive
measure $\nu_0$ on $\mathcal C$, with $\nu_0([0,1)) = 1$, then
$|1- \widehat {\nu_0}(n_m)| \le 1/h_m$ for $m \ge M$.

Now we take the GMC construction corresponding to the
symmetrization $\omega$ of $\nu_0$.  We can use Proposition~\ref{OKforIP} and the
result from Erd\H{o}s and Taylor~\cite{ET} cited in Remark~\ref{IPinfo}
to conclude that $(n_m)$ is an IP rigidity sequence for the weakly
mixing transformation $T = G_{\omega}$.  This
gives us a weakly mixing dynamical system $(X,\mathcal B,p,T)$ and a
function $f$ of norm one in $L_2(X,p)$ such that $\|f\circ T^{n_m} -
f\|_2^2 \le C/h_m$ for $m \ge M$. The function $f$ here is the
one in the GMC such that $\nu_f^T = \omega$.
It follows immediately, by the fact that $\omega$ is symmetric and
by the symmetric Fock space construction in the GMC,
that $f$ is a Gaussian variable from the first chaos. So $f$ is real-valued,
and being a Gaussian random variable it takes both positive and negative values.
So we have a non-constant, real-valued function of norm
one such that $\|f\circ T^{n_m} - f\|_2^2 \le 4/h_m$ for $m \ge
M$.

Now we claim that both the positive part $f^+$ and the negative
part $f^-$ of $f$ satisfies the same inequality. To see this, write
\[\int |f\circ T^{n_m} - f|^2\,dp =
\int |f^+\circ T^{n_m} -f^-\circ T^{n_m} - f^+ + f^-|^2\, dp.\]
Expand this into the sixteen terms involved. Use that fact that
the terms $f^+f^-$ and $(f^+\circ T^{n_m})(f^-\circ T^{n_m})$ are
zero, and regroup terms to see that
\begin{eqnarray*} \int |f\circ T^{n_m} - f|^2\,dp &=&
\int |f^+\circ T^{n_m} - f^+|^2\,dp\\
&+&\int |f^-\circ T^{n_m} - f^-|^2\,dp\\
&+& \int 2(f^+\circ T^{n_m})f^- + 2(f^-\circ T^{n_m})f^+ \,dp.
\end{eqnarray*}
Because $2(f^+\circ T^{n_m})f^- + 2(f^-\circ T^{n_m})f^+ $ is
positive, we have
\[\int |f\circ T^{n_m} - f|^2\,dp
\ge \int |f^+\circ T^{n_m} - f^+|^2\,dp\] and
\[\int |f\circ T^{n_m} - f|^2\,dp
\ge \int |f^-\circ T^{n_m} - f^-|^2\,dp.\]

In addition, the same argument above shows that for every constant
$L$, the function $(f-L)^+$ also would satisfy this last estimate
too. Hence, taking $L = 1/2$, we would have $F= f1_{\{f \ge L\}}$
satisfies this inequality too and not being the zero function. But
let $A = \{f \ge L\}$. On $T^{n_k}A \backslash A$, we would have
$|F\circ T^{n_m} - F|^2\ge 1/4$. So $p(T^{n_m}A\backslash A) \le
64/h_m$. But similarly, on $A\backslash T^{n_m}A$, we would have
$|F\circ T^{n_m} - F|^2\ge 1/4$. So $p(A\backslash T^{n_m}A) \le
64/h_m$. The result is that from our original GMC construction, we
can infer the existence of a proper set $A$ of positive measure,
which depends on the original function $f$ and not on $m$, such
that $p(T^{n_m}A\Delta A) \le 64/h_m$ for all $m \ge M$. Hence,
$\sum\limits_{m=1}^\infty p(T^{n_m}A\Delta A) < \infty$.

We do have to also make certain that the set $A$ here is a proper
set i.e. $p(A) < 1$. But $f$ is a Gaussian random variable and
so both
$f^+$ and $f^-$ are non-trivial, and so it is easy to choose a value of $L$
so that the above construction gives us a proper set $A$ of positive measure.

Now we can use the convergence of $\sum\limits_{m=1}^\infty
p(T^{n_m}A\Delta A)$ to construct a set of positive measure $B$
for which $T^{n_m-1}B$ and $B$ are disjoint for all $m$. The
argument is a variation on the one given in
Proposition~\ref{revise}. There is the issue that $TA$ and $A$ are
not necessarily disjoint. But there is some subset $A_0$ of $A$
of positive measure such that $T A_0$ and $A$ are disjoint. So if
we take $B = T A_0\backslash \bigcup\limits_{m=M}^\infty T^{-(n_m
-1)}(T^{n_m}A\Delta A)$, for suitably large $M$, then we would
have $p(B) > 0$, $B \subset T A_0\backslash A$, and $T^{n_m-1}B
\subset A$ for all $m \ge M$. Hence $T^{n_m-1}B$ and $B$ disjoint
for all $m \ge M$.

Now we need to revise the result above so that we get the same
disjointness for all $m$. But here we know that $T$ is weakly
mixing, and consequently all of its powers are ergodic. So one
can inductively revise $B$ as follows. One takes a subset $B_1$
of $B$ such that $T^{n_1-1}B_1$ and $B_1$ are disjoint, then one
takes a subset $B_2$ of $B_1$ such that $T^{n_2 -1}B_2$ and $B_2$
are disjoint, and so on. After a finite number of steps one ends
up with a subset $B_{M-1}$ of $B$ such that $T^{n_m -1}B_{M-1}$
and $B_{M-1}$ are disjoint for all $m \ge 1$. Now, with $B_{M-1}$
replacing $B$, we have $T^{n_m-1}B$ and $B$ disjoint for all $m
\ge 1$. So this construction gives a weakly mixing transformation
that is not only IP rigid along $(n_m)$, but such that along
$(n_m-1)$ it is not recurrent.
\end{proof}

\begin{rem}\label{notrecshiftagain}  With the hypothesis of
Proposition~\ref{fastworks}, by taking $S$ to be
a product of $T$ with itself $2K$ times, we can arrange that the
weakly mixing transformation $S$ is rigid along $(n_m)$ and
there is one set $C, p(C) > 0$, such that $S$ is non-recurrent for $C$
along each of the sequences $(n_m + k)$ with $0 < |k| \le K$.
\end{rem}

\begin{rem} Proposition~\ref{wmunions} allows us to use Proposition~\ref{fastworks}
to give other examples of non-recurrent sequences. Again, as in Remark~\ref{different} a),
both $(2^{n^2})$
and $(2^{n^2}+1)$ satisfy the hypothesis of Proposition~\ref{fastworks},
so there is a weakly mixing transformation for which $\mathbf n =
(\ldots,2^{n^2}, 2^{n^2}+1,\ldots)$ is a sequence of non-recurrence,
even though $\mathbf n$ does not satisfy the hypothesis of
Proposition~\ref{fastworks}.
\end{rem}

\begin{rem} Using Proposition~\ref{specialinfrankone}, we can construct
examples of non-recurrence along $(n_m - 1)$ for $T$ which is weakly mixing
and rank one.  For
example, take $(n_m)$ such that $n_{m+1}/n_m \ge 2$ is a whole
number for all $m$ and such that $\sum\limits_{m=1}^\infty \frac {n_m}{n_{m+1}}
< \infty$.
\end{rem}

\begin{rem} a) In Proposition~\ref{fastworks},
replacing our original sequence by $(n_m +1)$, we
conclude this fact: whenever $(n_m)$ is increasing and
$\sum\limits_{m=1}^\infty \frac {n_m}{n_{m+1}} < \infty$, there exists a
weakly mixing transformation $T$ and a set of positive measure $B$
such that $T^{n_m}B$ and $B$ are disjoint for all $m$. Of course
now the transformation is IP rigid along $(n_m+1)$.
\smallskip

\noindent b) The condition we are using of course will not hold
for lacunary sequences like $n_m = 2^m$. But it is the case that
if $(k_m)$ is lacunary, and $\delta > 1$, then the subsequence
$(n_m) = (k_{\lfloor
m^\delta\rfloor})$ will have our series property. So speeding
up the exponent for a lacunary sequence slightly will give us the
type of non-recurrence that we want.
\end{rem}

\begin{rem} Consider the series
$\sum\limits_{m=1}^\infty p(T^{n_m}A\Delta A)$.  Can this be convergent for all
$A$? This is not obviously impossible, although it seems to us unlikely.
But if one replaces this by the corresponding functional version, then
it cannot be convergent for all functions. First,
one can see that $\sum\limits_{m=1}^\infty p(T^{n_m}A\Delta A)=
\sum\limits_{m=1}^\infty\|f\circ T^{n_m} - f\|_2^2$ if we take $f
= 1_A$. So the question is, can we have a dynamical system in
which the square function $Sf = \left
(\sum\limits_{m=1}^\infty|f\circ T^{n_m} - f|^2\right )^{1/2}$ is
always an $L_2$-function? But it is clear that this is not
possible. If it were, then one can show there is a homogeneous
inequality $\|Sf\|_2 \le K\|f\|_2$ for some constant $K$. Then
one takes again $f=1_A$, and sees that $\sum\limits_{m=1}^\infty
p(T^{n_m}A\Delta A)\le K^2 p(A)$. But this cannot be. Indeed,
just take a very long Rokhlin tower with $A$ as the base and the
left-hand side could exceed the right-hand side.
\end{rem}
\bigskip

\noindent {\bf Acknowledgements}: \,We would like to think S. Eigen, R. Kaufman,
J. King, V. Ryzhikov, S. Solecki, Y. Son, and B. Weiss for their input.

\scriptsize

\medskip

{\small
\parbox[t]{3.5in}
{V. Bergelson\\
Department of Mathematics\\
Ohio State University\\
Columbus, OH 43210, USA\\
E-mail: vitaly@math.ohio-state.edu}
\bigskip

{\small
\parbox[t]{3.5in}
{A. del Junco\\
Department of Mathematics\\
University of Toronto\\
Toronto, M5S 3G3, Canada\\
E-mail: deljunco@math.toronto.edu}
\bigskip

{\small
\parbox[t]{5in}
{M. Lema\'nczyk\\
Faculty of Mathematics and Computer Science\\
Nicolaus Copernicus University,
Toru\'n, Poland, and\\
Institute of Mathematics\\
Polish Academy of Sciences, Warsaw, Poland\\
E-mail:
mlem@mat.uni.torun.pl}
\bigskip

{\small
\parbox[t]{5in}
{J. Rosenblatt\\
Department of Mathematics\\
University of Illinois at Urbana-Champaign\\
Urbana, IL 61801, USA\\
E-mail: rosnbltt@illinois.edu}
\bigskip

\end{document}